\newcommand\bibstyle@comma{\bibpunct(),a,,}
\newcommand\bibstyle@semicolon{\bibpunct();a,,}
\pretocmd\cite{\citestyle{comma}}\relax\relax
\pretocmd\citep{\citestyle{semicolon}}\relax\relax
\numberwithin{equation}{section}
\def\RR{\mathbb R}
\def\ZZ{\mathbb Z}
\newcommand{\rank}{\operatorname{rk}} 
\newcommand{\vecop}{\operatorname{vec}}
\newcommand{\diag}{\operatorname{diag}} 
\newcommand{\sign}{\operatorname{sign}} 
\newcommand{\tr}{\operatorname{tr}} 
\newcommand{\Prob}{\mathbb{P}} 
\newcommand{\E}{\mathbb{E}} 
\newcommand{\VPsi}{\operatorname{\Psi}}
\newcommand{\distr}{\operatorname{d}} 
\newcommand{\prob}{\operatorname{p}} 
\DeclareMathOperator*{\esssup}{ess\,sup}
\newcommand {\argmin}[1]{\underset{#1}{\rm{argmin}}}
\newcommand{\vertiii}[1]{{\left\vert\kern-0.25ex\left\vert\kern-0.25ex\left\vert #1 
    \right\vert\kern-0.25ex\right\vert\kern-0.25ex\right\vert}}
\newtheorem{theorem}{Theorem}[section]
\newtheorem{proposition}{Proposition}[section]
\newtheorem{lemma}{Lemma}[section]
\newtheorem{corollary}{Corollary}[section]
\theoremstyle{definition}
\newtheorem{remark}{Remark}[section]
\newtheorem{assumption}{Assumption}
\xpatchcmd{\proof}{\@addpunct{.}}{\@addpunct{:}}{}{}
\DeclareFontFamily{U}{mathx}{\hyphenchar\font45}
\DeclareFontShape{U}{mathx}{m}{n}{<-> mathx10}{}
\DeclareSymbolFont{mathx}{U}{mathx}{m}{n}
\DeclareMathAccent{\widebar}{0}{mathx}{"73}
\newcommand{\mockalph}[1]{}
\begin{document}

\def\spacingset#1{\renewcommand{\baselinestretch}%
{#1}\small\normalsize} \spacingset{1}

\newtheorem*{assumptionBIC*}{\assumptionnumber}
\providecommand{\assumptionnumber}{}
\makeatletter
\newenvironment{assumptionBIC}[2]
 {%
  \renewcommand{\assumptionnumber}{Assumption #1#2}%
  \begin{assumptionBIC*}%
  \protected@edef\@currentlabel{#1#2}%
 }
 {%
  \end{assumptionBIC*}
 }
\makeatother


\title{
Local Whittle estimation of high-dimensional \\
long-run variance and precision matrices
\footnote{AMS subject classification. Primary: 62M15, 62H12. Secondary: 62H20.}
\footnote{Keywords and phrases: High-dimensional time series, frequency domain, short- and long-range dependence, spectral density estimation, shrinkage estimation, local Whittle estimation.}
\footnote{This work was carried out during stays of the first and second authors in the Department of Statistics and Operation Research at the University of North Carolina, Chapel Hill. The first and second authors thank the department, in particular, Vladas Pipiras for their hospitality. The first author was supported by the National Research Foundation of Korea (NRF-2019R1F1A1057104, NRF-2022R1F1A1066209). The second author was supported by the DFG (RTG 2131) and the NSF grant 1934985. The third author was supported in part by the NSF grant DMS-1712966.}}

\author{
Changryong Baek\\Sungkyunkwan University \and
Marie-Christine D\"uker\\ Cornell University \and
Vladas Pipiras\\ University of North Carolina}

\maketitle

\bigskip

\begin{abstract}
\noindent 
This work develops non-asymptotic theory for estimation of the long-run variance matrix and its inverse, the so-called precision matrix, for high-dimensional time series under general assumptions on the dependence structure including long-range dependence. 
The estimation involves shrinkage techniques which are thresholding and penalizing versions of the classical multivariate local Whittle estimator. 
The results ensure consistent estimation in a double asymptotic regime where the number of component time series is allowed to grow with the sample size as long as the true model parameters are sparse. 
The key technical result is a concentration inequality of the local Whittle estimator for the long-run variance matrix around the true model parameters. In particular, it handles simultaneously the estimation of the memory parameters which enter the underlying model.
Novel algorithms for the considered procedures are proposed, and a simulation study and a data application are also provided.
\end{abstract}

\section{Introduction} \label{se:intro}
Spectral density matrices characterize the component and temporal dependence of multivariate time series, and its estimation is of interest in many areas, including economics and neuroscience.
The long-run variance and precision matrices give, respectively, information about correlations and partial correlations between different component series around zero frequency; see \cite{dahlhaus2000graphical}. 
Their estimation are the frequency domain analogues of covariance and inverse covariance estimation; see \cite{fan2016overview} for a survey on large (inverse) covariance matrix estimation.
Obtaining an estimate for the spectral density matrix can become particularly challenging in a high-dimensional regime when the number of component series becomes relatively large compared to the length of the time series.
In this regime, estimation often employs different shrinkage methods.
The development, theoretical verification and application of different shrinkage methods has been an active research area, along with a growing interest in non-asymptotic theory in high-dimensional statistics; see \cite{wainwright_2019} for a survey on non-asymptotic theory.
\par
This paper develops non-asymptotic theory for estimation of the long-run variance and precision matrices of a stationary multivariate time series around zero frequency while allowing for short- and long-range dependence. 
The local Whittle estimation is used with thresholding and LASSO-type penalizations.
Our non-asymptotic theory allows to infer consistency results on the estimators around the true parameters in a high-dimensional regime where the number of component series can be large compared to the number of observations. We also note that our non-asymptotic results are new even for the one-dimensional case.
\par
Our setting is as follows. Consider a $p$-dimensional second-order stationary time series
$X_{n}=(X_{1,n}, \dots , X_{p,n})'$, $n \in \ZZ$, with zero mean and autocovariance matrix function $\Gamma_{X}(k)=\E X_{n+k}X_{n}'$, $k \in \ZZ$. Suppose that its spectral density matrix $f_{X}(\lambda)$, $\lambda \in (-\pi, \pi)$, related to the autocovariance matrix through $\Gamma_{X}(k)=\int_{-\pi}^{\pi} e^{ik \lambda} f_{X}(\lambda) d \lambda $, satisfies
\begin{equation} \label{eq:f}
f_{X}(\lambda) = \lambda^{-D_{0}}G(\lambda)\lambda^{-D_{0}}, \hspace{ 0.2cm }
G(\lambda) \sim G_{0}, \hspace{ 0.2cm } \text{ as } \lambda \to 0^{+},
\end{equation}
where $\sim$ denotes componentwise asymptotic equivalence, $D_{0}=\diag(d_{0,1}, \dots ,d_{0,p})$ with $d_{0,r} \in (-1/2,1/2)$, $r=1, \dots , p$, $\lambda^{-D_{0}} = \diag(\lambda^{-d_{0,1}}, \dots, \lambda^{-d_{0,p}}) $ and $G_{0}=(G_{0,rs})_{r,s=1, \dots ,p}$ is Hermitian symmetric and positive definite. 
Each individual component series $X_{r,n}$, $n \in \ZZ$, satisfies \eqref{eq:f} with memory parameter $d_{0,r}$. The case $d_{0,r} = 0$ is associated with short-range dependence, the case $d_{0,r}>0$ with long-range dependence and $d_{0,r}<0$ with antipersistence.
We refer to \cite{beran2013long,PipirasTaqqu} for more details on univariate short- and long-range dependence and \cite{KechagiasPipiras} for a discussion on multivariate long-range dependence.
The matrix $G_{0}$ is the long-run variance matrix and $P_{0}=G_{0}^{-1}$ is the precision matrix. They are the focus of this work.
\par
In the presence of long-range dependence, local Whittle estimation is commonly used to estimate the parameters $(D_{0},G_{0})$ of the model \eqref{eq:f}. We introduce here the classical multivariate local Whittle estimators for $(D_{0},G_{0})$ and refer to Section \ref{s:EstAss} for a detailed explanation of the used shrinkage techniques. In particular, we aim to utilize a thresholding technique to estimate the long-run variance matrix $G_{0}$ sparsely (see \eqref{eq:thresholding} below) and a LASSO-type estimator to estimate the precision matrix $P_{0}$ sparsely (see \eqref{eq:Prholasso} below). 
\par
The local Whittle estimators $(\widehat{D},\widehat{G})$ introduced in \cite{Robinson2008} are given by
\begin{equation} \label{eq:LW}
(\widehat{D},\widehat{G})=
\argmin{(D,G)} \ \ell(D,G)
\end{equation}
for the negative log-likelihood
\begin{equation} \label{eq:neglog}
\ell(D,G) = 
\frac{1}{m} \sum_{j=1}^{m} (\log|\lambda_{j}^{-D} G \lambda_{j}^{-D}|+
\tr( I_{X}(\lambda_{j}) \lambda_{j}^{D} G^{-1}  \lambda_{j}^{D} )),
\end{equation}
where $| \cdot |:=\det(\cdot)$ and $\tr(\cdot)$ denote the determinant and the trace of a matrix, 
\begin{equation} \label{eq:periodogram}
I_{X}(\lambda)
= 
\frac{1}{2 \pi N} \Big( \sum_{n=1}^{N} X_{n} e^{i n \lambda} \Big) \Big( \sum_{n=1}^{N} X_{n} e^{i n\lambda} \Big)^{*}
\end{equation}
is the periodogram for sample size $N$
and $m$ is the number of Fourier frequencies $\lambda_{j}=2\pi j /N$ used in estimation.
The optimization problem \eqref{eq:neglog} can be reduced to
\begin{equation} \label{eq:R(D)}
\widehat{D}
=
\argmin{D} \ R(D)
\hspace{0.2cm}
\text{ with }
\hspace{0.2cm}
R(D)=\frac{1}{m} \sum_{j=1}^{m} \log|\lambda_{j}^{-D} \widehat{G}(D)\lambda_{j}^{-D}|,
\end{equation}
where
\begin{equation} \label{eq:G(D)}
\widehat{G}(D) = \frac{1}{m} \sum_{j=1}^{m} \lambda_{j}^{D} I_{X}(\lambda_{j}) \lambda_{j}^{D}.
\end{equation}
\par
Local Whittle estimation was studied by multiple authors.
\cite{robinson1995gaussian} showed consistency and asymptotic normality of the univariate local Whittle estimators.
In the bivariate case $p=2$, the asymptotic normality of the local Whittle estimators of memory parameters $d_{0,1},d_{0,2}$ was established in \cite{Robinson2008}, and that of all model parameters in \cite{BaekKechagiasPipiras2020}. Asymptotic normality results in special cases of \eqref{eq:f} but general fixed dimension $p$ appear in \cite{SHIMOTSU2007:Gaussian,Nielsen2011:LW}. In \cite{DuekerPipiras2019:multiLW}, an asymptotic normality result for the local Whittle estimators \eqref{eq:LW} of all model parameters and general fixed $p$ was obtained. 
\par
The graphical local Whittle estimator is an $l_{1}$-penalized version of the negative log-likelihood in \eqref{eq:neglog}, as proposed in \cite{BKP2017multi} with the focus on its good numerical performance. It can also be written as a function of the precision matrix $P_{0}$.
We refer to Section \ref{s:EstAss} and equation \eqref{eq:lassoyyy} below for a detailed description of the penalized objective function.
The objective function coincides with that used in estimating covariance matrices sparsely; see \cite{Bien2011:Sparse}. On the other hand, for a penalization of the respective inverse, it coincides with the graphical LASSO estimator; see \cite{friedman2008:Sparse}. 
\cite{DuekerPipiras2019:multiLW} derived asymptotic results for the estimators of the long-run variance matrix and the precision matrix in a ``fixed $p$, large $N$" regime under the discussed $l_{1}$-penalization.
\par
Sparse covariance and its precision matrix estimation were studied by numerous authors. LASSO-type estimators were investigated by \cite{rothman2008sparse}, \cite{cai2011constrained} and \cite{shu2019estimation}. See also \cite{cai2016estimating} for a review of recent developments.
Thresholding based strategies were pursued by \cite{bickel2008,bickel2008covariance}, \cite{rothman2009generalized} and \cite{cai2011adaptive}.
In contrast, there is less research work for high-dimensional spectral density matrix estimation.
\par
The works of \cite{shu2019estimation}, \cite{Sun2018:LargeSpectral} and \cite{fiecas2019} are probably the closest to our work.
\cite{shu2019estimation} considered the estimation of large covariance and its precision matrices from high-dimensional sub-Gaussian or heavier-tailed observations with slowly decaying temporal dependence.
\cite{Sun2018:LargeSpectral} developed a non-asymptotic theory for estimation of the spectral density matrix of multivariate time series under short-range dependence, that is, when $D_{0} \equiv 0$ in \eqref{eq:f}.
The work in \cite{fiecas2019} developed some non-asymptotic theory for estimation of the spectral density matrix and its inverse for a class of time series exhibiting short-range dependence under a mixing condition.
In contrast, we allow for a quite general dependence structure including short- and long-range dependence and antipersistence. 
\par
From a practical perspective estimating the spectral density matrix and its inverse have applications in many fields including signal processing \citep{schneider2016partial}, neuroscience \citep{fiecas2011generalized,bowyer2016coherence,bordier2017graph} and economics \citep{granger1969investigating,hansen1983dimensionality,politis2011higher,plagborg2021local,cavicchioli2022goodness}. 
The spectral density matrix captures contemporaneous correlation and correlation across different lags. It therefore provides a richer description of the dependence structure in a multivariate time series than the covariance matrix. 
\par
The literature review shows that there is a gap in theoretical results concerning high-dimensional spectral density estimation for time series possibly exhibiting long-range dependence.
We are the first to provide non-asymptotic theoretical results for thresholding and graphical local Whittle estimation which allow to infer consistency in a possibly double asymptotic regime of large $p$ and $N$. The presence of long-range dependence and the simultaneous estimation of the memory parameters $D_{0}$, make it particularly challenging to derive non-asymptotic results. We overcome those challenges by using a uniform concentration inequality and controlling the difference between the sample and the population version of the matrix $D_{0}$ simultaneously. Our theoretical results turn out to be useful not only for thresholding and graphical local Whittle estimators but can be applied to derive consistency for other kinds of penalized estimators. We demonstrate that by deriving consistency results for estimators based on the coherence matrix and a constrained $l_{1}$-minimization (CLIME). We also address the question of consistent model selection by adopting different thresholding procedures to the spectral setting.
Additionally, we introduce novel algorithms to compute the thresholded and penalized local Whittle estimators.
The results are accompanied by a simulation study which assesses the numerical performance of the suggested algorithms and estimators. 
\par
The rest of the paper is organized as follows. In Section \ref{s:EstAss}, we discuss our estimation procedure and present some assumptions required for our theoretical analysis. In Section \ref{s:mainResults}, we present an outline of the proof, our main results and some discussions of those results. Appendix \ref{se:tablesResults} provides more technical details for the statements of our results, allowing to keep the notation in the paper's main body shorter.
In Section \ref{s:algorithm}, we introduce two algorithms to compute the penalized graphical local Whittle estimators. The performance of those algorithms is analyzed in a simulation study conducted in Section \ref{s:sim} with complementary results in Appendix \ref{app:table}. An application can be found in Section \ref{s:realdata}. We conclude with Section \ref{s:conclusions}.
The proofs can be found in Appendix \ref{s:proofsA}. In Appendices \ref{s:proofsB} and \ref{s:appSTR}, we provide some technical results and their proofs. Finally, Appendix \ref{se:linearprocesses} provides the proofs for an extension to linear processes.
\par
\textit{Notation:}
For the reader's convenience, we give a collection of notation used throughout the paper.
We denote the maximum and minimum eigenvalues of a symmetric or Hermitian matrix $A$ by $\lambda_{\max}(A)$ and $\lambda_{\min}(A)$, respectively.
To indicate that a matrix $A$ is positive (semi-)definite, we write $A \succ 0$ $(A \succcurlyeq 0)$. 
We use a range of different matrix norms, namely, the maximum norm, the spectral norm and the Frobenius norm, defined respectively as $\| A \|_{\max}=\max_{1 \leq r,s \leq p} |A_{rs}|$, $\| A \|=\sqrt{\lambda_{\max}(A'A)}$ and $\| A \|_{F}=\sqrt{\tr(A'A)}$ for a matrix $A$. 
We use $e_{r}$ to denote the $r$th unit vector in $\RR^{p}$ for $r=1,\dots,p$.
For the vectorized version of a matrix $A$, we write $\vecop(A)$. The vec operator transforms a
matrix into a vector by stacking its columns one underneath the other.
For a $p \times N$ matrix, composed of $N$ $p$-dimensional vectors $v_{1},\dots,v_{N}$, we write $[v_{1} : \dots : v_{N}]$.
We let $L^2(0,1)$ be the space of square-integrable functions on $(0,1)$ with respect to the Lebesgue measure. If $A$ is an integral operator on $L^2(0,1)$ of the form $(Af)(x)=\int_{0}^{1} k(x,y) f(y) dy$, then $A$ is called Hilbert-Schmidt if and only if
\begin{equation} \label{eq:HSnorm}
\int_{0}^{1} \int_{0}^{1} |k(x,y)|^2 dx dy < \infty,
\end{equation}
where the double integral in \eqref{eq:HSnorm} is denoted as $\| A \|_{2}^{2}$ and called the Hilbert-Schmidt norm.
Let further $A:V \to W$ be a linear operator with normed spaces $V,W$. We write $\| A \|_{op} = \sup_{x \neq 0} \|Ax\|_{W}/\|x\|_{V}$
for the operator norm of $A$, where $\|\cdot\|_{W}$ denotes the norm on $W$.
As a further convention we write $a \succsim b$ if there exists a universal constant $c$ such that $a \geq cb$. 
We further use the notation $\frac{\partial}{\partial x}$ to denote the partial derivative with respect to $x$ and 
$\nabla$ to denote the gradient $\nabla f = \sum_{r=1}^{p}e_{r}\frac{\partial}{\partial x_{r}}f$ of a function $f : \RR^{p} \to \RR$.

\section{Estimation methods and assumptions}
\label{s:EstAss}
In this section, we formulate the long-run variance and precision matrix estimation through the thresholding and graphical local Whittle estimators, respectively. Furthermore, we give the required assumptions to prove non-asymptotic bounds which ensure consistency results for the estimators in a double asymptotic regime of large $p$ and $N$.
\par
The thresholding and graphical local Whittle estimators require estimation of the memory parameters $D_{0} = \diag(d_{0,1}, \dots, d_{0,p})$. We propose here to estimate each $d_{0,r}, r=1, \dots , p$, by the univariate local Whittle estimator; see Remark \ref{re:univd} below for a discussion of this.
We introduce a notation different from that used for the multivariate local Whittle estimators in \eqref{eq:LW} to emphasize the use of the univariate version of the local Whittle estimator. For a multivariate time series satisfying \eqref{eq:f}, each individual, univariate time series $\{X_{r,n}\}_{n \in \ZZ}$, $r = 1,\dots, p$, satisfies
\begin{equation*}
f_{X,rr}(\lambda) = \lambda^{-2d_{0,r}} g_{r}(\lambda), 
\hspace{ 0.2cm }
g_{r}(\lambda) \sim g_{0,r}, \hspace{ 0.2cm } \text{ as } \lambda \to 0^{+},
\end{equation*}
where $g_{r}(\lambda)=G_{rr}(\lambda)$, $g_{0,r}=G_{0,rr}$, and $f_{X,rs}(\lambda)$ and $G_{rs}(\lambda)$ denote the $(r,s)$th entry of $f_{X}(\lambda)$ and $G(\lambda)$, respectively. Then, the univariate local Whittle estimator for $d_{0,r}$ is given by
\begin{equation} \label{eq:univLWd}
\widehat{d}_{r} = \argmin{ d \in \Theta } \ R_{r}(d)
\hspace{0.2cm}
\text{ with }
\hspace{0.2cm}
R_{r}(d)=\frac{1}{m} \sum_{j=1}^{m} \log (\lambda_{j}^{-2d} \widehat{g}_{r}(d)),
\end{equation}
where the set of admissible estimates is defined as $\Theta = \{ d ~|~ \Delta_{1} \leq d \leq \Delta_{2} \}$ with $ -\frac{1}{2} < \Delta_{1} < \Delta_{2} < \frac{1}{2}$ and
\begin{equation*} 
\widehat{g}_{r}(d) = \frac{1}{m} \sum_{j=1}^{m} \lambda_{j}^{2d} I_{X,rr}(\lambda_{j}),
\end{equation*}
where $I_{X,rs}(\lambda)$ denotes the $(r,s)$th entry of the periodogram $I_{X}(\lambda)$ in \eqref{eq:periodogram}.
After estimating each individual memory parameter $d_{0,r}$ by \eqref{eq:univLWd}, we want to estimate the long-run variance matrix $G_{0}$ and the precision matrix $P_{0}$ sparsely by thresholding and graphical local Whittle estimation, respectively.
\par
\textit{Thresholding local Whittle:} We propose to use hard thresholding to estimate the long-run variance matrix sparsely. (Soft or adaptive thresholding could also be used.) That is, 
\begin{equation} \label{eq:thresholding}
T_{\rho}(\widehat{G}_{rs}(\widehat{D}))=
\begin{cases}
\widehat{G}_{rs}(\widehat{D}),	& \hspace{0.2cm} \text{ if } |\widehat{G}_{rs}(\widehat{D})| \geq \rho, \\
0, 						& \hspace{0.2cm} \text{ if } |\widehat{G}_{rs}(\widehat{D})| < \rho,
\end{cases}
\end{equation}
where $\rho>0$ is a threshold and $T_{\rho}(\cdot)$ is a thresholding operator applied to $\widehat{G}_{rs}(\widehat{D})$, the $(r,s)$th entry of the estimator for the long-run variance matrix \eqref{eq:G(D)} and the components of $\widehat{D}$ are estimated univariately by \eqref{eq:univLWd}.
\par
\textit{Graphical local Whittle:} The precision matrix $P_{0}=G_{0}^{-1}$ can be estimated sparsely by the graphical local Whittle estimator, a penalized version of the negative log-likelihood function \eqref{eq:neglog}.
The penalized estimator $\widehat{P}_{\rho}$ is given by
\begin{equation} \label{eq:Prholasso}
\widehat{P}_{\rho}=\argmin{P \succ 0} \ \ell_{\rho}(\widehat{D},P),
\end{equation}
where $\widehat{D}$ is estimated univariately by \eqref{eq:univLWd} and
\begin{equation} \label{eq:lassoyyy}
\begin{aligned}
\ell_{\rho}(D,P)=-\frac{1}{m} \sum_{j=1}^{m} &\log|\lambda_{j}^{D} P \lambda_{j}^{D}|
+\tr( \widehat{G}(D)P) + \rho \| P \|_{1,off}
\end{aligned}
\end{equation}
with a penalty parameter $\rho>0$ and the $l_{1}$-norm $\| \cdot \|_{1,off}$ excluding the diagonal elements. 
\par
Next, we give some assumptions, required to establish our theoretical results. Other assumptions appear in the statements of our results. Subsequently, we discuss those assumptions in several remarks.

\begin{assumption} \label{ass:f0}
Suppose that
\begin{equation} \label{eq:af}
f_{X}(\lambda) = \lambda^{-D_{0}} G(\lambda) \lambda^{-D_{0}}, \hspace{ 0.2cm } 
G(\lambda) \sim G_{0},
\end{equation}
where $\sim$ denotes componentwise asymptotic equivalence, $G_{0}=(G_{0,rs})_{r,s=1,\dots,p}$ is Hermitian symmetric and positive definite and $D_{0} \in \{ D \in \mathcal{M}_{\diag} ~|~ \Delta_{1}I_{p} \preccurlyeq D \preccurlyeq \Delta_{2}I_{p} \}$, where $\mathcal{M}_{\diag}$ denotes the set of all real-valued diagonal matrices.
We further suppose that the positive eigenvalues of $G_{0}$ can be bounded from below as
\begin{equation} \label{as:eigenvalueG0}
\lambda_{\min}(G_{0}) \geq \underline{k} > 0.
\end{equation}
\end{assumption}

\begin{assumption} \label{ass:G-G}
For some $q \in (0,1]$, the spectral density matrix satisfies 
\begin{equation} \label{eq:ass:G-G}
|f_{X,rs}(\lambda_{j})-\lambda_{j}^{-d_{0,r}-d_{0,s}} G_{0,rs}| \leq \bm{c}_{G,1}\lambda_{j}^{2 q-d_{0,r}-d_{0,s}}
\end{equation}
for $j = 1, \dots, m$ and some $\bm{c}_{G,1}>0$.
\end{assumption}

\begin{assumption} \label{ass:derivative}
The function $f_{X,rs}(\lambda)$ is differentiable on $\lambda \in (-\pi,\pi)\backslash \{0\}$ and there is a constant $\bm{c}_{G,2} >0$ such that 
\begin{equation*}
\Big| \frac{\partial}{\partial \lambda} f_{X,rs}(\lambda) \Big| \leq \bm{c}_{G,2} \lambda^{-1-d_{0,r}-d_{0,s}}.
\end{equation*}
\end{assumption}

Besides assumptions on the spectral density matrix of the underlying process, we will also impose some mild assumptions on the process itself. In particular, our results are valid not only for Gaussian time series but also for a large class of non-Gaussian processes.
Our assumption will be formulated in terms of sub-Gaussian random variables, that is, their distribution is dominated by a centered Gaussian distribution.
More precisely, we call a random variable $X$ sub-Gaussian if there is a constant $c$ such that
\begin{equation*}
\E( | X |^{r})^{\frac{1}{r}} \leq  c r^{\frac{1}{2}} \hspace{0.2cm} \text{ for all } r \geq1.
\end{equation*}
We further denote $\| X \|_{\phi} = \sup_{ r \geq 1} r^{-\frac{1}{2}} \E( | X |^{r})^{\frac{1}{r}}$, the sub-Gaussian norm of a real-valued random variable $X$. Gaussian random variables belong to the class of sub-Gaussian random variables.
We refer to \cite{vershynin2010introduction} for more details on sub-Gaussian random variables.

\begin{assumption} \label{ass:GnG}
The time series $\{ X_{n} \}$ is assumed to be either Gaussian or to have a linear representation
$X_{n} = \sum_{j\in \ZZ} \VPsi_{j}\varepsilon_{n-j}$ with $\sum_{j\in \ZZ} \| \VPsi_{j} \|_{F}^{2} < \infty$ and independent mean $0$ innovations $\{ \varepsilon_{j} \}_{j \geq 1}$, where each component $\varepsilon_{r,j}$, $r =1,\dots,p$ of the random vector $\varepsilon_{j}$ is assumed to be sub-Gaussian, satisfying
\begin{equation} \label{eq:ACsubGgamma}
\| \varepsilon_{r,j} \|_{\phi} \leq \gamma
\end{equation}
for some constant $\gamma \in (0, \infty)$.
\end{assumption}

\begin{assumption} \label{ass:antip}
The number of frequencies $m=m(N)$ used in estimation and the lower bound of the interval of admissible estimates $\Delta_{1}$ satisfy
\begin{equation*}
m \succsim N^{-2\Delta_{1}}.
\end{equation*}
\end{assumption}

Our work intends to provide non-asymptotic results. However, we impose some mild assumptions on our choices of the number of frequencies $m$ and the sample size $N$ to simplify some of our bounds. Throughout the paper we suppose that the number of frequencies and the sample size satisfy $m, N > 2$. Those assumptions allow us to use $\log(m)+1 \leq 2\log(m)$ and $\log(m)+1 \leq (\log(m)+1)^{2}$, and the same for $N$. Another assumption we impose is $m \leq \frac{N}{2}-1$ which ensures that the bound on the bias term of the periodogram is finite. 

We use different measures of sparsity for the long-run variance and the precision matrices. Both are commonly used in the respective literatures.
\par
In the context of thresholding, a commonly used measure of sparsity for the long-run variance matrix $G_{0}$ is given by 
\begin{equation} \label{eq:sparsityconditionG0}
\| G_{0} \|_{a}^{a}= \max_{r=1,\dots,p} \sum_{s=1}^{p} |G_{0,rs}|^{a}
\end{equation}
for $a \in [0,1)$. This measure was proposed in
\cite{bickel2008covariance} and shown to capture a variety of sparsity patterns. It was further applied in the context of spectral density estimation in a non-asymptotic regime in \cite{Sun2018:LargeSpectral}.
\par
For the precision matrix, we define the set
\begin{equation} \label{as:cardianlityS}
\operatorname{S}=\{ (r,s) ~|~P_{0,rs} \neq 0, r \neq s\}
\end{equation}
and bound its cardinality with $|\operatorname{S}| \leq \operatorname{s}$.
\par
We will also use
\begin{equation*}
\vertiii{G} = \esssup_{\lambda \in (-\pi,\pi)} \|G(\lambda)\| = \esssup_{\lambda \in (-\pi,\pi)} \| \lambda^{D_{0}} f_{X} (\lambda) \lambda^{D_{0}} \|
\end{equation*}
as a measure of stability of the time series $\{X_{n}\}$. This follows \cite{basu2015} and \cite{Sun2018:LargeSpectral} who considered the case $G(\lambda) \equiv f_{X}(\lambda)$ and $D_{0} \equiv 0$, which is associated with short-range dependence of the underlying time series. See also the second paragraph of Section 2 in \cite{Sun2018:LargeSpectral} for a discussion on how $\vertiii{G}$ acts as a measure of stability.
\par
The following remarks comment on the model, estimation procedure and on the assumptions above. Remark \ref{re:paramP} comments on the model and Remark \ref{re:univd} concerns estimating the memory parameters univariately,
Remark \ref{re:discussA1A2} is on Assumptions \ref{ass:f0} and \ref{ass:G-G}, and Remarks \ref{re:diff}, \ref{re:model} and \ref{re:antipe} are on Assumptions \ref{ass:derivative}, \ref{ass:GnG} and \ref{ass:antip}, respectively.

\begin{remark} \label{re:paramP}
In this work, we assume that the matrix $G_{0}$ in \eqref{eq:f} is possibly complex valued Hermitian symmetric. In order to achieve sparsity, both real and imaginary part need to be zero. Related literature has also studied an alternative way to parametrize the matrix $G_{0}$. Proposed by \cite{Robinson2008} and further studied in \cite{DuekerPipiras2019:multiLW} and \cite{BaekKechagiasPipiras2020}, one can write $G_{0}$ in terms of polar coordinates, that is, 
\begin{equation*}
G_{0}=(\omega_{kl}e^{\sign(k-l) i \phi_{kl}})_{k,l=1, \dots ,p}
\end{equation*}
with the so-called phase parameter $\phi_{kl} \in (-\pi/2, \pi/2)$ and $\omega_{kl} \in \RR$.
In this parametrization, one cannot test for uncorrelatedness between component series ($\omega_{kl}e^{\sign(k-l) i \phi_{kl}} = 0$), since the respective phase parameter $\phi_{kl}$ is not identifiable for $\omega_{kl}=0$, $k \neq l$; see \cite{DuekerPipiras2019:multiLW} and \cite{BaekKechagiasPipiras2020} for a related discussion. 
\end{remark}

\begin{remark} \label{re:univd}
Our proposed estimation procedure involves estimating the memory parameters $d_{0,1}, \dots , d_{0,p}$ by the univariate local Whittle estimators \eqref{eq:univLWd} rather than using the multivariate estimator of the matrix $D_{0}=\diag(d_{0,1}, \dots , d_{0,p})$ in \eqref{eq:R(D)}. The reasons are twofold, one is theoretical, the other computational. 

The theoretical reason is that getting a concentration inequality on $\| \widehat{D}-D_{0} \|_{\max}$ for the multivariate estimates of $D_{0}$ involves a concentration inequality on $| \log|\widehat{G}(\widehat{D})|-\log|G_{0}| |$. 
In the asymptotic regime $N \to \infty$ and for fixed dimension $p$, a consistency result for $| \log|\widehat{G}(\widehat{D})|-\log|G_{0}| |$ can be achieved easily by combining the continuous mapping theorem and a consistency result on $\| \widehat{G}(\widehat{D})-G_{0} \|_{\max}$.
However, our non-asymptotic setting involves an inequality of the form $| \log|\widehat{G}(\widehat{D})|-\log|G_{0}| | \leq C p \| \widehat{G}(\widehat{D})-G_{0} \|$, with a generic constant $C$. The additional $p$ weakens the results in the sense that $p$ has to grow much slower than $m$ in order to achieve consistency.
Bounding further as $\| \widehat{G}(\widehat{D})-G_{0} \| \leq p\| \widehat{G}(\widehat{D})-G_{0} \|_{\max}$ results in an additional $p$. A potential way to avoid the second $p$ one gets through bounding the operator norm might be to impose a sparsity assumption on $G_{0}$ and threshold $\widehat{G}$ in the objective function \eqref{eq:R(D)}.
That is, estimating $D_{0}$ as in \eqref{eq:R(D)} involves an estimator for $G_{0}$. However, the estimator $\widehat{D}$ for $D_{0}$ is not based on a thresholded version of $\widehat{G}$. One possibility to address this issue is to introduce a shrinkage on $\widehat{G}$ in \eqref{eq:R(D)} by using a thresholded version of $\widehat{G}$. 

On the other hand, computationally, it is faster to minimize $p$ univariate functions as opposed to optimizing a matrix function over a certain set of diagonal matrices.
In a simulation study in Appendix \ref{appF:univsmulti}, we show that the difference between the multivariate and univariate estimates is negligible.
\end{remark}

\begin{remark} \label{re:discussA1A2}
Assumption \ref{ass:f0} with \eqref{eq:af} coincides with the basic model \eqref{eq:f} and supposes additionally that the true memory parameters $D_{0}$ are contained in the interval of admissible estimates. Besides assuming that the matrix $G_{0}$ is positive definite, we suppose in \eqref{as:eigenvalueG0} that the eigenvalues of $G_{0}$ are bounded from below. This is a typical assumption in sparse covariance estimation; see \cite{rothman2008sparse}. Assumption \ref{ass:G-G} is a smoothness condition and controls the second order terms of the spectral density matrix. 
Usually, the componentwise relation $f_{X}(\lambda) = \lambda^{-D_{0}} G_{0} \lambda^{-D_{0}}(1 + O(\lambda^{2q}))$, as $\lambda \to 0^{+}$, is imposed to derive asymptotic results in the context of spectral density estimation. However, we require a slightly stronger assumption \eqref{eq:ass:G-G} in order to control the bias terms to derive non-asymptotic results.
\end{remark}

\begin{remark} \label{re:diff}  
Assumption \ref{ass:derivative} is required to ensure that the bias term is asymptotically negligible.
This kind of assumption appears in the asymptotic literature regarding local Whittle estimation as well; see Assumption A.2 in \cite{robinson1995gaussian} and Assumption A.1 in \cite{Robinson2008}. However, those assumptions typically only require differentiability in an epsilon region around the origin. We need to impose differentiability in a region which includes all frequencies used in estimation 
and allows for all choices of $m =1, \dots , \frac{N}{2}$. 
\end{remark}

\begin{remark} \label{re:model}
Our main assumptions on the underlying process are the parametrization of the spectral density in terms of the matrices $(D_{0},G_{0})$ as formalized in Assumption \ref{ass:f0}, and Assumption \ref{ass:GnG} which ensures that the series is either Gaussian or follows a linear representation. Though, we require the innovations of the linear representation to be sub-Gaussian, our results can be used to derive statements for sub-exponential innovations or assuming finite fourth moments; see Remark \ref{re:different_innovations} for a more detailed discussion. 
Our assumptions allow for quite general long- and short-range dependent linear time series. For long-range dependence, examples are multivariate FARIMA series as defined in \cite{KechagiasPipiras}. For short-range dependence, examples are multivariate ARMA models.
\end{remark}

\begin{remark} \label{re:antipe}
Assumption \ref{ass:antip} is satisfied, in particular, when $\Delta_{1} = 0$, that is, when the underlying time series exhibits only short- or long-range dependence. In other words, the assumption is only needed when the true memory parameters $d_{0,r}$, $r=1,\dots,p$, are known to take also values in $(-\frac{1}{2},0)$. The case $d_{0,r} < 0$ contributes to the non-asymptotic bounds in our main results in form of two terms $m N^{2\Delta_{1}}$ and $m \left(\sum_{j=1}^{m}\lambda_{j}^{4 \Delta_{1}}\right)^{-1}$. 
Assumption \ref{ass:antip} is necessary to ensure that our non-asymptotic bounds prove consistency, which is the case as long as both quantities go to infinity while the sample size increases.
Assumption \ref{ass:antip} not only controls $m N^{2\Delta_{1}}$ but is also sufficient to control the second quantity, since
\begin{equation*}
\sum_{j=1}^{m}\lambda_{j}^{4 \Delta_{1}}
\leq c N^{-4\Delta_{1}} \sum_{j=1}^{m} j^{4 \Delta_{1}} 
\leq c N^{-4\Delta_{1}} 
\begin{cases}
\frac{1}{4 \Delta_{1} + 1} m^{4 \Delta_{1} + 1}, & \text{ if } \Delta_{1} \in (-\frac{1}{4},0), \\ 
\log(m)+1, & \text{ if } \Delta_{1} \leq -\frac{1}{4}.
\end{cases}
\end{equation*}
\end{remark}

\section{Main results}
\label{s:mainResults}
In this section, we present our main results. Section \ref{se:idea} provides a roadmap for our proofs which reveals what kind of results are necessary to prove consistency for both the thresholding and graphical local Whittle estimation. This includes in particular a consistency result on the maximum norm of $\widehat{G}(\widehat{D})-G_{0}$. Subsequently, we formally state our main results in Section \ref{se:statements}, that is, consistency results for the thresholding and graphical local Whittle estimators. 
Section \ref{se:alternativeestimators} discusses alternative estimators for precision matrix estimation and their convergence rates. We provide results on consistent model selection in Section \ref{se:modelselection}.
In Section \ref{se:comparison}, we discuss how our results compare to existing results in the literature.

\subsection{Proof idea} \label{se:idea}
In contrast to the spectral density estimation under short-range dependence, allowing for long-range dependence and antipersistence requires estimation of two different kinds of model parameters, the matrix $G_{0}$ and the memory parameters $d_{0,r}$, $r=1, \dots,p$. For this reason, deriving a concentration inequality becomes particularly challenging. Results for the graphical and thresholding local Whittle estimators require a concentration inequality on the event 
\begin{equation} \label{eq:mainevent}
\{\| \widehat{G}(\widehat{D})-G_{0} \|_{\max} > \delta \} .
\end{equation}
For this, our theoretical analysis reveals that the memory parameters $d_{0,r}$, $r=1, \dots,p$, have to be controlled simultaneously, and we propose to derive a concentration inequality on the event \eqref{eq:mainevent} by incorporating the event $\{\| \widehat{D}-D_{0}\|_{\max} \leq \varepsilon\}$, and then use multiple bounds of the probability of the event of interest \eqref{eq:mainevent} in terms of events which are representable as quadratic forms of i.i.d.\ sub-Gaussian random vectors. 
A key tool in our analysis is a uniform concentration inequality introduced by \cite{dicker2017}. For completeness, we present a slightly modified version of their result in Appendix \ref{s:uniformCI}.
\par
Before we present the main results, we introduce some further notation. We write the population analogue of $\widehat{G}(D)$ in \eqref{eq:G(D)} as
\begin{equation} \label{eq:Gtilde}
\widetilde{G}(D) = \frac{1}{m} \sum_{j=1}^{m} \lambda_{j}^{D-D_{0}} G_{0} \lambda_{j}^{D-D_{0}}
\end{equation}
and the respective univariate counterpart as $\widetilde{g}_{r}(d_{r})=\widetilde{G}_{rr}(D)$.
Furthermore, we write
\begin{equation} \label{eq:Tl}
L(d)=\int_{0}^{1} x^{2 d} dx = \frac{1}{2d+1}
\text{ for } d > -1/2.
\end{equation}
\par
We now present the aforementioned inequalities on the probability of the event \eqref{eq:mainevent}, which give insights into what kind of concentration inequalities are required to prove the desired consistency result on $\| \widehat{G}(\widehat{D})-G_{0} \|_{\max}$. A detailed analysis can be found in the proof of Proposition \ref{prop:Op} below.
For some $\delta, \varepsilon>0$,
\begin{align}
&
\Prob( \| \widehat{G}(\widehat{D})-G_{0} \|_{\max} > \delta ) \nonumber
\\ & =
\Prob \bigg( 
\{ \| \widehat{G}(\widehat{D})-G_{0} \|_{\max} > \delta \} \cap
\Big(\{ \| \widehat{D}-D_{0} \|_{\max} \leq \varepsilon \} \cup \{ \| \widehat{D}-D_{0} \|_{\max} > \varepsilon \} \Big)
\bigg) \nonumber
\\ & \leq
\Prob( \{
\| \widehat{G}(\widehat{D})-\widetilde{G}(\widehat{D}) \|_{\max} > \delta/2 \} \cap \{ \| \widehat{D}-D_{0} \|_{\max} \leq \varepsilon \} ) \nonumber
\\ & \hspace{1cm} +
\Prob\bigg( \Big(\{
\| \widetilde{G}(\widehat{D})-G_{0} \|_{\max} > \delta/2 \} \cap \{ \| \widehat{D}-D_{0} \|_{\max} \leq \varepsilon \} \Big)
\cup
\{ \| \widehat{D}-D_{0} \|_{\max} > \varepsilon \} \bigg) \nonumber
\\ & \leq
\Prob( \{
\| \widehat{G}(\widehat{D})-\widetilde{G}(\widehat{D}) \|_{\max} > \delta/2 \} \cap \{ \| \widehat{D}-D_{0} \|_{\max} \leq \varepsilon \} )
+ 
\Prob( \| \widehat{D}-D_{0} \|_{\max} > \eta )
\label{eq:ineq1}
\\ & \leq
\sum_{r,s=1}^{p} \Prob( \sup_{ D \in \Omega(\varepsilon) } 
| \widehat{G}_{rs}(D)-\widetilde{G}_{rs}(D) | > \delta/2 )
+ 
\Prob( \| \widehat{D}-D_{0} \|_{\max} > \eta ) \label{eq:ineq2}
\end{align}
with
\begin{equation*} 
\Omega(\varepsilon)= \{ D \in \mathcal{M}_{\diag} | \Delta_{1} I_{p} \preccurlyeq D \preccurlyeq \Delta_{2} I_{p} \text{ and } \| D - D_{0} \|_{\max} \leq \varepsilon\}
\end{equation*}
and $\eta=\min\{\varepsilon, \frac{\delta}{4} (\|G_{0}\| \log(N) \lambda_{m}^{-2\varepsilon} 
L(-\varepsilon))^{-1} \}$. The relation \eqref{eq:ineq1} will follow from Lemma \ref{le:FrobeniusGtoD}.
This way, the problem reduces to finding a uniform concentration inequality on $| \widehat{G}_{rs}(D)-\widetilde{G}_{rs}(D) |$ and a concentration inequality on $\| \widehat{D}-D_{0} \|_{\max}$. Instead of considering the maximum, we bound the probability of $\{\| \widehat{D}-D_{0} \|_{\max} > \eta\}$ componentwise for each $r=1,\dots,p$ as
\begin{equation} \label{eq:ineq3}
\begin{aligned}
\Prob( | \widehat{d}_{r}-d_{0,r}| > \eta )
& \leq
\Prob( | \widehat{g}_{r}(d_{0,r})-\widetilde{g}_{r}(d_{0,r}) | > \eta_{1} ) 
\\& \hspace{1cm} +
\Prob( \sup_{d \in \Theta_{1} } | \frac{1}{m} \sum_{j=1}^{m} \Big(\frac{j}{m}\Big)^{2d-2d_{0,r}} (\lambda_{j}^{2d_{0,r}}I_{X,rr}(\lambda_{j})-g_{0,r}) | > \eta_{2} )
\\& \hspace{2cm} +
\Prob( | \frac{1}{m} \sum_{j=1}^{m} (l_{j}-1) ( \lambda_{j}^{2d_{0,r}}I_{X,rr}(\lambda_{j})-g_{0,r}) | \mathds{1}_{\{d_{0,r} \geq \Delta_{1} + \frac{1}{2}\}} 
> \eta_{3} )
\end{aligned}
\end{equation}
with $\Theta_{1}$ as in \eqref{eq:defTheta1}; see Remark \ref{re:nonuniformly}. Furthermore, with $\ell= \exp(\frac{1}{m} \sum_{j=1}^{m} \log(j))$ and $\Delta$ as in Remark \ref{re:nonuniformly}, 
\begin{align} \label{eq:lj}
l_{j}=
\begin{cases}
\Big(\frac{j}{\ell}\Big)^{2(-\frac{1}{2}+\Delta)}, \hspace{0.2cm} &1 \leq j \leq \ell, \\
\Big(\frac{j}{\ell}\Big)^{2(\Delta_{1}-d_{0,r})}, \hspace{0.2cm} &\ell < j \leq m.
\end{cases}
\end{align}
For the sake of simplicity, $\eta_{1},\eta_{2},\eta_{3}>0$ in \eqref{eq:ineq3} and the arguments for \eqref{eq:ineq3} are not further specified; we refer to the proof of Proposition \ref{prop:Op} for more details.
The inequalities \eqref{eq:ineq2} and \eqref{eq:ineq3} reveal that it is enough to prove a uniform concentration inequality for an object of the form
\begin{equation} \label{eq:crucialterms}
\sup_{ D \in \Omega } 
| \widehat{H}_{rs}(D) - \widetilde{H}_{rs}(D) |
\end{equation}
with $\widehat{H}_{rs}(D), \widetilde{H}_{rs}(D)$ denoting the $(r,s)$th elements of
\begin{equation} \label{eq:crucialterms1}
\widehat{H}(D) = \frac{1}{m} \sum_{j=1}^{m} t_{j}(D) I_{X}(\lambda_{j}) t_{j}(D)
\hspace{0.2cm}
\text{ and }
\hspace{0.2cm}
\widetilde{H}(D) = \frac{1}{m} \sum_{j=1}^{m} t_{j}(D) \lambda_{j}^{-D_{0}} G_{0} \lambda_{j}^{-D_{0}} t_{j}(D),
\end{equation}
where $t_{j}(D)=\diag(t_{j,1}(d_{1}),\dots,t_{j,p}(d_{p})) \in \mathcal{M}_{\diag}$ consist of suitable functions of $d$'s. The set $\Omega$ is of the form
\begin{equation} \label{eq:OmegaAB}
\Omega = \{ D \in \mathcal{M}_{\diag} ~|~ A \preccurlyeq D \preccurlyeq B \}
\end{equation}
with
\begin{equation} \label{eq:AB}
A=\diag(a_{1},\dots,a_{p}),
\hspace{0.2cm}
B=\diag(b_{1},\dots,b_{p}).
\end{equation}
The functions $t_{j,r}:[\Delta_{1},\Delta_{2}] \to [0,\infty)$ are assumed to be differentiable on $(\Delta_{1},\Delta_{2})$ with bounded derivatives.

From here on, \eqref{eq:crucialterms} can be separated into a probabilistic and a deterministic part as
\begin{equation} \label{eq:results_GhatGtilde}
\begin{aligned}
\sup_{ D \in \Omega } 
| \widehat{H}_{rs}(D) - \widetilde{H}_{rs}(D) | 
\leq
\sup_{ D \in \Omega} 
| \widehat{H}_{rs}(D) - \E \widehat{H}_{rs}(D) | +
\sup_{ D \in \Omega } 
| \E \widehat{H}_{rs}(D) - \widetilde{H}_{rs}(D) |.
\end{aligned}
\end{equation}
We treat both terms separately. For the first summand, we need a high probability upper bound stated in Lemma \ref{prop:supGhat}. 
On the other hand, the second term in \eqref{eq:results_GhatGtilde} is deterministic and an upper bound is given in Lemma \ref{prop:bias}.
Lemmas \ref{prop:supGhat} and \ref{prop:bias} are stated in Appendix \ref{s:proofsA}. Both are crucial to infer upper bounds on the 
probabilities in \eqref{eq:ineq2} and \eqref{eq:ineq3}. Those results are stated in Propositions \ref{prop:supGhat1}--\ref{prop:supGhat4} below and its proofs can be found in Appendix \ref{s:proofsA}.

\textit{Technical contributions:}
The following points highlight our main technical contributions and give some orientation of how the different appendices contribute. 

The statement for a probabilistic bound on the first summand of \eqref{eq:results_GhatGtilde} is stated in Lemma \ref{prop:supGhat} in Appendix \ref{s:proofsA}.
The key tool to handle the first summand of \eqref{eq:results_GhatGtilde} is a uniform concentration bound of \cite{dicker2017}, slightly reformulated to serve better our needs in Appendix \ref{s:uniformCI}. Our arguments above show how the incorporation of the event $\{ \| \widehat{D}-D_{0} \|_{\max} \leq \varepsilon \}$ allows one to get to the setting where that bound could potentially be applicable. Making the bound workable for the probabilities in \eqref{eq:ineq2} and \eqref{eq:ineq3} was another major challenge and can be found in Appendix \ref{se:B1}. The difficulties involved allowing for general dependence structure (short- and long-range dependence, and antipersistence) and, more importantly, developing non-asymptotic theory in terms of any dimension $p$ and sample size $N$. Though there are certainly many works on local Whittle estimation (and we adapt some of their techniques), non-asymptotic results are not available even for the one-dimensional case $p=1$. Key to our non-asymptotic developments are bounds of independent interest on various autocovariance matrices under general dependence assumptions in Appendix \ref{se:B3}. Those results are derived by replacing the matrices by integral operators. We believe that those inequalities are crucial in order to derive non-asymptotic theory in any context involving
high-dimensional long-range dependence.

To deal with the bounds on the second summand of \eqref{eq:results_GhatGtilde} and deterministic part of the quantities in \eqref{eq:ineq2} and \eqref{eq:ineq3}, we state Lemma \ref{prop:bias}. The proof involves the derivation of a series of non-asymptotic bounds for the periodogram in Appendix \ref{se:B2}.

\subsection{Statements} \label{se:statements}

In this section we formally state our main results. In order to keep the statements as simple as possible, we moved the expressions of some quantities to Appendix \ref{se:tablesResults}.

We further introduce the following quantity which will allow us to express our bounds in a simplified way and emphasize the necessary distinction between different ranges of the  memory parameters as will become clearer in the proofs
\begin{equation} \label{eq:delta-0-1/4}
\begin{gathered}
\widebar{\Delta}_{N} =  \max_{r=1,\dots,p} (\mathds{1}_{\{d_{0,r} \leq 0 \}} + \log(N) \mathds{1}_{\{d_{0,r} > 0\}}).
\end{gathered}
\end{equation}

\begin{proposition} \label{prop:supGhat1}
Let $\{X_{n}\}_{n \in \ZZ}$ be a $p$-dimensional, stationary, centered time series with spectral density $f_{X}$ and suppose Assumptions \ref{ass:f0}--\ref{ass:antip} are satisfied. Then, for any $\varepsilon \in (0,\min\{\Delta_2,-\Delta_1,q\})$ with $q$ as in Assumption \ref{ass:G-G}, there are positive constants $c_{1},c_{2}$ such that for any $\bm{C} \geq 1$,
\begin{equation*}
\Prob \Big( \sup_{D \in \Omega(\varepsilon) }
| \widehat{G}_{rs}(D) - \widetilde{G}_{rs}(D) | > \nu \Big)
\leq 
c_{1} p^{-c_{2}\bm{C}} \label{eq:cor1}
\end{equation*}
for
\begin{equation} \label{eq:nuinprop1}
\nu= \bm{C} \vertiii{G}
\sqrt{\frac{\log(p)}{\mathcal{R}_{1}}} 
+
\mathcal{T}_{1}(\varepsilon),
\end{equation}
where $\mathcal{R}_{1}= \min\{ (\log(N)\widebar{\Delta}_{N})^{-1} \mathcal{R}_{11}, (\log(N)\widebar{\Delta}_{N})^{-2} \mathcal{R}_{12} \}$ with $\widebar{\Delta}_{N}$ as in  \eqref{eq:delta-0-1/4} and $\mathcal{R}_{1i},\ i=1,2$ are characterized in Table \ref{label 1}. A representation of $\mathcal{T}_{1}(\varepsilon)$ can be found in Table \ref{label 2}.
\end{proposition}

The parameter $\varepsilon$ controls the deviation of the estimated memory parameters $\widehat{D}$ around $D_{0}$, and will be chosen appropriately in Proposition \ref{prop:Op} below.
To ensure that the deviation of $\widehat{G}_{rs}(D)$ around $\widetilde{G}_{rs}(D)$ can be controlled, 
the quantities which characterize $\nu$ in \eqref{eq:nuinprop1} have to satisfy $\mathcal{R}_{1} \succsim \vertiii{G}^2 \log(p)$ and 
$mN^{-2\varepsilon} \succsim \big(1 + \frac{1}{2\varepsilon} \big) Q_{m}$ with $Q_{m}$ defined in \eqref{eq:mR:Qrs} in Appendix \ref{se:tablesResults}. 
The quantities in $\nu$ can be expected to satisfy those assumptions since our bounds are sharp enough to get $\mathcal{R}_{1} \to \infty$ and $\mathcal{T}_{1}(\varepsilon) \to 0$ as $N \to \infty$. Those asymptotics are crucial in order to achieve consistency which entails a negligible bias. This observation can be made not only for Proposition \ref{prop:supGhat1} but as well in the similarly structured Propositions \ref{prop:supGhat2}--\ref{prop:supGhat4} below. 

\begin{remark}
Proposition \ref{prop:supGhat1} and subsequent results provide non-asymptotic bounds when estimating quantities of interest. In the considered setting, $\{ X_{n} \}_{n \in \ZZ}$ is a stationary series with a spectral density $f_{X}(\lambda)$ and observed for $n=1,\dots,N$, and of fixed dimension $p$. But note that our non-asymptotic bounds are expressed in terms of $p,N,\vertiii{G}$ and other quantities. When $p$ changes, the dependence structure of $\{ X_{n} \}_{n \in \ZZ}$ also changes and our bounds adjust through changing $\vertiii{G}$ and those other quantities (e.g. $\bm{c}_{G,1}, \bm{c}_{G,2}$ in Assumptions \ref{ass:G-G} and \ref{ass:derivative}).
The same with changing $N$. We note that because of the term $\log(p)/\mathcal{R}_{1}$, in \eqref{eq:nuinprop1}, the obtained bounds suggest consistent estimation in a typical high-dimensional regime where $p$ is much larger than $N$, but $\log(p)$ is much smaller than $N$ (or the power of $N$). We also note that because of the constants $\bm{c}_{G,1}, \bm{c}_{G,2}$ in Proposition \ref{prop:supGhat1}
and similar subsequent results are absolute in the sense that they do not depend on $p,N$ and the underlying stationary series; the dependence on the latter is captured through the other quantities in the bounds.
\end{remark}

The following three propositions give upper bounds on the probabilities in \eqref{eq:ineq3}. Those are the probabilities required to control the estimates for the memory parameters $D_{0}$.

\begin{proposition} \label{prop:supGhat2}
Let $\{X_{n}\}_{n \in \ZZ}$ be a $p$-dimensional, stationary, centered time series with spectral density $f_{X}$ and suppose Assumptions \ref{ass:f0}--\ref{ass:antip} are satisfied. Then, there are positive constants $c_{1},c_{2}$ such that for any $\bm{C} \geq 1$,
\begin{equation*} 
\Prob \Big( 
| \widehat{g}_{r}(d_{0,r}) - g_{0,r} | > \nu_{1} \Big)
\leq 
c_{1} p^{-c_{2}\bm{C}}
\end{equation*}
for
\begin{equation} \label{eq:nu1}
\nu_{1}= \bm{C}
\vertiii{G} \sqrt{\frac{\log(p)}{\mathcal{R}_{2}}} 
+
\mathcal{T}_{2},
\end{equation}
where $\mathcal{R}_{2}=\min\{ \widebar{\Delta}^{-1}_{N} \mathcal{R}_{21}, \widebar{\Delta}^{-2}_{N} \mathcal{R}_{22} \}$ with $\widebar{\Delta}_{N}$ as in  \eqref{eq:delta-0-1/4} and $\mathcal{R}_{2i},\ i=1,2$ are characterized in Table \ref{label 1}. A representation of $\mathcal{T}_{2}$ can be found in Table \ref{label 2}.
\end{proposition}

In order to ensure meaningful estimation, the quantities which characterize $\nu_{1}$ in \eqref{eq:nu1} have to satisfy $\mathcal{R}_{2} \succsim \vertiii{G}^2 \log(p)$ and $m \succsim \log(m) Q_{m}$ with $Q_{m}$ defined in \eqref{eq:mR:Qrs} in Appendix \ref{se:tablesResults}.
\par
For the following proposition, we use the set $\Theta_{1}$ in \eqref{eq:defTheta1}, which is characterized by some $\Delta > 0$.

\begin{proposition} \label{prop:supGhat3}
Let $\{X_{n}\}_{n \in \ZZ}$ be a $p$-dimensional, stationary, centered time series with spectral density $f_{X}$ and suppose Assumptions \ref{ass:f0}--\ref{ass:antip} are satisfied. Then, there are positive constants $c_{1},c_{2}$ such that for any $\bm{C} \geq 1$,
\begin{equation*} 
\Prob \Big( \sup_{d \in \Theta_{1} } 
| \frac{1}{m} \sum_{j=1}^{m} \Big(\frac{j}{m}\Big)^{2d-2d_{0,r}} \lambda_{j}^{2d_{0,r}} (I_{X,rr}(\lambda_{j})-\lambda_{j}^{-2d_{0,r}}g_{0,r}) |
> \nu_{2} \Big)
\leq 
c_{1} p^{-c_{2}\bm{C}}
\end{equation*}
for
\begin{equation} \label{eq:nu2}
\nu_{2}= 
\bm{C} \vertiii{G} \sqrt{\frac{\log(p)}{\mathcal{R}_{3}}} 
+
\mathcal{T}_{3},
\end{equation}
where $\mathcal{R}_{3}= \min\{ (\log(m) \widebar{\Delta}_{N})^{-1} \mathcal{R}_{31}, (\log(m) \widebar{\Delta}_{N})^{-2} \mathcal{R}_{32} \}$ with $\widebar{\Delta}_{N}$ as in  \eqref{eq:delta-0-1/4} and $\mathcal{R}_{3i},\ i=1,2$ are characterized in Table \ref{label 1}. A representation of $\mathcal{T}_{3}$ can be found in Table \ref{label 2}.
\end{proposition}

In order to ensure meaningful estimation, the quantities which characterize $\nu_{2}$ in \eqref{eq:nu2} have to satisfy $\mathcal{R}_{3} \succsim \vertiii{G}^2 \log(p)$ and $m^{2\widetilde{\Delta}_{r}}  \succsim Q_{m}$ with $Q_{m}$ defined in \eqref{eq:mR:Qrs} in Appendix \ref{se:tablesResults}. 
\par
The next proposition gives a bound on the third probability in \eqref{eq:ineq3}. Recall the definitions of $\ell$ and $l_{j}$ given in \eqref{eq:lj}.

\begin{proposition} \label{prop:supGhat4}
Let $\{X_{n}\}_{n \in \ZZ}$ be a $p$-dimensional, stationary, centered time series with spectral density $f_{X}$ and suppose Assumptions \ref{ass:f0}--\ref{ass:antip} are satisfied. Then, there are positive constants $c_{1},c_{2}$ such that for any $\bm{C} \geq 1$,
\begin{equation*} 
\Prob \Big( | \frac{1}{m} \sum_{j=1}^{m} (l_{j}-1) \lambda_{j}^{2d_{0,r}} (I_{X,rr}(\lambda_{j})-\lambda_{j}^{-2d_{0,r}}g_{0,r})| \mathds{1}_{\{d_{0,r} \geq \Delta_{1} + \frac{1}{2} \}}
> \nu_{3} \Big)
\leq 
c_{1} p^{-c_{2}\bm{C}}
\end{equation*}
for
\begin{equation} \label{eq:nu3}
\nu_{3}= \bm{C}
\vertiii{G} \sqrt{\frac{\log(p)}{\mathcal{R}_{4}}} 
+
\mathcal{T}_{4},
\end{equation}
where $\mathcal{R}_{4}=\min\{ \widebar{\Delta}_{N}^{-1} \mathcal{R}_{41}, \widebar{\Delta}_{N}^{-2} \mathcal{R}_{42} \}$ with $\widebar{\Delta}_{N}$ as in  \eqref{eq:delta-0-1/4} and $\mathcal{R}_{4i},\ i=1,2$ are characterized in Table \ref{label 1}. A representation of $\mathcal{T}_{4}$ can be found in Table \ref{label 2}.
\end{proposition}

In order to ensure meaningful estimation, the quantities which characterize $\nu_3$ in \eqref{eq:nu3} have to satisfy $\mathcal{R}_{4} \succsim \vertiii{G}^2 \log(p)$, $m \succsim \ell^{1-2\Delta} Q_{m}$ and $m \succsim \log(m) Q_{m}$ with $Q_{m}$ defined in \eqref{eq:mR:Qrs} in Appendix \ref{se:tablesResults}.
\par
Propositions \ref{prop:supGhat1}--\ref{prop:supGhat4} combined together enable us to obtain a consistency result for \eqref{eq:mainevent}, which is stated in the following proposition. Recall the definition of $\underline{k}$ given in \eqref{as:eigenvalueG0} and of the function $L$ in \eqref{eq:Tl}.

\begin{proposition} \label{prop:Op}
Suppose that the assumptions in Propositions \ref{prop:supGhat1}--\ref{prop:supGhat4} hold.
Then, there are positive constants $c_{1},c_{2}$ such that for any $\bm{C} \geq 1$,
\begin{equation} \label{eq:consFrob}
\Prob( \| \widehat{G}(\widehat{D})-G_{0} \|_{\max} > \delta )
\leq
c_{1} p^{2-c_{2}\bm{C}}
\end{equation}
for
\begin{equation} \label{eq:deltaepsilon}
\begin{aligned}
\delta=
\max \{ 
2 \nu,  \varepsilon 4 \|G_{0}\| \log(N) \lambda_{m}^{-2\varepsilon} L(-\varepsilon) \},
\hspace{0.2cm}
\varepsilon= \max_{i=1,2,3} \eta_{i},
\end{aligned}
\end{equation}
with $\nu$ as in \eqref{eq:nuinprop1},
\begin{equation} \label{eq:eta1eta2eta3}
\begin{aligned}
\eta_{1}^{2} = 
8(\mathcal{V}_{1}(m) \underline{k} )^{-1} \nu_{1},
\hspace{0.2cm}
\eta_{2}^{2} =
8(\mathcal{V}_{1}(m)L(\Delta_{2}-\Delta_{1}) \underline{k} )^{-1} \nu_{2},
\hspace{0.2cm}
\eta_{3}^{2} =
(\mathcal{V}_{2}(m) \underline{k} )^{-1} \nu_{3},
\end{aligned}
\end{equation}
where $\nu_{i}$, $i=1,2,3$ are as in \eqref{eq:nu1}, \eqref{eq:nu2} and \eqref{eq:nu3} and 
\begin{equation} \label{eq:nu1nu2}
\mathcal{V}_{1}(m)=\frac{1}{3}\frac{1}{m^4} \sum_{i,j=1}^{m} (i-j)^2,
\hspace{0.2cm}
\mathcal{V}_{2}(m)=\frac{1}{m} \sum_{j=1}^{m} (l_{j}-1),
\end{equation}
and it is assumed that $\varepsilon \in (0,\frac{1}{2})$.
\end{proposition}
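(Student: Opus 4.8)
The plan is to assemble the chain of inequalities \eqref{eq:ineq1}--\eqref{eq:ineq3}, feed in the four high-probability bounds of Propositions \ref{prop:supGhat1}--\ref{prop:supGhat4}, and check that the thresholds $\delta,\varepsilon$ prescribed in \eqref{eq:deltaepsilon} make every step valid. First I would fix $\varepsilon=\max_{i=1,2,3}\eta_{i}$ with the $\eta_{i}$ of \eqref{eq:eta1eta2eta3}. Since each $\nu_{i}$ tends to $0$ as $N\to\infty$ (this is precisely the ``meaningful estimation'' condition recorded after Propositions \ref{prop:supGhat2}--\ref{prop:supGhat4}), while $\mathcal{V}_{1}(m)$ and $\mathcal{V}_{2}(m)$ in \eqref{eq:nu1nu2} stay bounded below, one gets $\varepsilon\in(0,\min\{\Delta_{2},-\Delta_{1},\widetilde{q},\tfrac12\})$ for all large $N$, so Proposition \ref{prop:supGhat1} applies with this $\varepsilon$ and the standing requirement $\varepsilon\in(0,\tfrac12)$ holds. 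With $\delta$ as in \eqref{eq:deltaepsilon} we have $\delta\ge\varepsilon\,4\|G_{0}\|\log(N)\lambda_{m}^{-2\varepsilon}L(-\varepsilon)$, hence $\tfrac{\delta}{4}(\|G_{0}\|\log(N)\lambda_{m}^{-2\varepsilon}L(-\varepsilon))^{-1}\ge\varepsilon$, so the quantity $\eta$ appearing just before \eqref{eq:ineq2} equals $\varepsilon$. Lemma \ref{le:FrobeniusGtoD} then gives that on $\{\|\widehat{D}-D_{0}\|_{\max}\le\eta\}$ one has $\|\widetilde{G}(\widehat{D})-G_{0}\|_{\max}\le\delta/2$, which is exactly the passage to \eqref{eq:ineq1}; bounding $|\widehat{G}_{rs}(\widehat{D})-\widetilde{G}_{rs}(\widehat{D})|$ on $\{\|\widehat{D}-D_{0}\|_{\max}\le\varepsilon\}$ by the supremum over $D\in\Omega(\varepsilon)$ produces \eqref{eq:ineq2}.

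Next I would bound the two terms of \eqref{eq:ineq2}. Because $\delta\ge 2\nu$, monotonicity of probabilities together with Proposition \ref{prop:supGhat1} gives $\Prob(\sup_{D\in\Omega(\varepsilon)}|\widehat{G}_{rs}(D)-\widetilde{G}_{rs}(D)|>\delta/2)\le c_{1}p^{-c_{2}\bm{C}}$ for every pair $(r,s)$, so the double sum is at most $c_{1}p^{2-c_{2}\bm{C}}$. For the remaining term I bound $\Prob(\|\widehat{D}-D_{0}\|_{\max}>\varepsilon)\le\sum_{r=1}^{p}\Prob(|\widehat{d}_{r}-d_{0,r}|>\varepsilon)$ and, for each $r$, establish the componentwise decomposition \eqref{eq:ineq3}.

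The decomposition \eqref{eq:ineq3} I would obtain by linearizing the univariate local Whittle objective $R_{r}$ of \eqref{eq:univLWd} around $d_{0,r}$. Using that $\widehat{d}_{r}$ minimizes $R_{r}$ over $\Theta$, together with a mean-value argument, one reduces $|\widehat{d}_{r}-d_{0,r}|$ to the ``score'' $R_{r}'(d_{0,r})$ divided by a lower bound on the ``information'' $R_{r}''(d)$ over the $d$-range between $\widehat{d}_{r}$ and $d_{0,r}$; the latter is bounded below, up to lower-order terms, by $\mathcal{V}_{1}(m)\,\underline{k}$, the factor $\underline{k}$ from \eqref{as:eigenvalueG0} controlling $\widehat{g}_{r}(d)$ from below and $\mathcal{V}_{1}(m)$ in \eqref{eq:nu1nu2} being the empirical variance-of-$\log\lambda_{j}$ proxy. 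Centering $\lambda_{j}^{2d_{0,r}}I_{X,rr}(\lambda_{j})$ at $g_{0,r}$ then splits the score into exactly the three deviations on the right of \eqref{eq:ineq3}: the term built from $\widehat{g}_{r}(d_{0,r})-\widetilde{g}_{r}(d_{0,r})$, the uniform-in-$d$ weighted average over $\Theta_{1}$, and the $l_{j}$-weighted correction active only in the antipersistent regime $d_{0,r}\ge\Delta_{1}+\tfrac12$ (with $l_{j}$ as in \eqref{eq:lj} and normalization $\mathcal{V}_{2}(m)$). Calibrating $\eta_{1},\eta_{2},\eta_{3}$ as in \eqref{eq:eta1eta2eta3} so that, on the intersection of the three complementary events, $|\widehat{d}_{r}-d_{0,r}|\le\varepsilon$ holds, and noting that $\eta_{i}\ge\nu_{i}$ for all large $N$ (since $\nu_{i}\to 0$), Propositions \ref{prop:supGhat2}, \ref{prop:supGhat3} and \ref{prop:supGhat4} bound each of the three probabilities in \eqref{eq:ineq3} by $c_{1}p^{-c_{2}\bm{C}}$. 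Summing over $r=1,\dots,p$ and over the three terms, and combining with the double sum above, yields $\Prob(\|\widehat{G}(\widehat{D})-G_{0}\|_{\max}>\delta)\le c_{1}p^{2-c_{2}\bm{C}}$ after relabeling the constants and taking the smallest of the exponents $c_{2}$.

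I expect the third step---establishing \eqref{eq:ineq3} for the univariate estimator---to be the main obstacle. The delicate point is to get a clean, uniform lower bound on the local Whittle information $R_{r}''$ producing exactly the normalizations $\mathcal{V}_{1}(m)$, $\mathcal{V}_{2}(m)$, $L(\Delta_{2}-\Delta_{1})$ and $\underline{k}$ of \eqref{eq:eta1eta2eta3}, and to partition the score correctly into the three quadratic-form-type deviations while respecting every memory-regime distinction ($d_{0,r}\le 0$ versus $d_{0,r}>0$ through $\widebar{\Delta}_{N}$, the thresholds $\Delta_{1}+\tfrac12$ and $\tfrac14$, the $l_{j}$ weights) as well as the $\log N$ and $\log m$ factors that propagate both from $\widetilde{G}(\widehat{D})-G_{0}$ and from $\nabla R_{r}$. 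Once Propositions \ref{prop:supGhat1}--\ref{prop:supGhat4} are taken as given, the rest is threshold bookkeeping---checking $\delta\ge 2\nu$, $\delta\ge\varepsilon\,4\|G_{0}\|\log(N)\lambda_{m}^{-2\varepsilon}L(-\varepsilon)$, $\eta=\varepsilon$ and $\varepsilon<\tfrac12$---together with union bounds over the $p^{2}$ entries of $\widehat{G}$ and the $p$ components of $\widehat{D}$.
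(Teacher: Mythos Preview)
Your overall architecture---steps 1 and 2, the use of Lemma \ref{le:FrobeniusGtoD}, the verification that $\eta=\varepsilon$ under the choice \eqref{eq:deltaepsilon}, the union bounds over $p^2$ and $p$ terms---matches the paper exactly, and your threshold bookkeeping is correct.

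The divergence is in step 3, and here you take a genuinely different route from the paper. You propose a score/information argument: Taylor-expand $R_r$ around $d_{0,r}$, use $R_r'(\widehat{d}_r)=0$, and bound $|\widehat{d}_r-d_{0,r}|\le |R_r'(d_{0,r})|/\inf R_r''$. The paper instead follows \cite{robinson1995gaussian} and works directly with the \emph{function values} $S_r(d)=R_r(d)-R_r(d_{0,r})$: it compares $S_r$ to its deterministic analogue $\widetilde{S}_r(d)=\widetilde{R}_r(d)-\widetilde{R}_r(d_{0,r})$ built from $\widetilde{g}_r$, establishes the pointwise lower bound $\widetilde{S}_r(d)\ge\eta^2\mathcal{V}_1(m)$ on $N_\eta^c\cap\Theta_1$ (Lemma \ref{le:infS}, via a Jensen-type identity and Mercer's mean-value relation), and then controls $|S_r-\widetilde{S}_r|$ via Lemma \ref{le:subS}. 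On $\Theta_2$ it invokes Robinson's inequality (Lemma \ref{le:Sd2}), which is where the $l_j$ weights and $\mathcal{V}_2(m)$ enter. The constant $\underline{k}$ appears only through $g_{0,r}\ge\lambda_{\min}(G_0)\ge\underline{k}$, i.e.\ it bounds the \emph{population} $g_{0,r}$, not the random $\widehat{g}_r(d)$ as you suggest.

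Your approach faces two concrete obstacles that the paper's method sidesteps. First, $R_r''(d)=\widehat{g}_r''(d)/\widehat{g}_r(d)-(\widehat{g}_r'(d)/\widehat{g}_r(d))^2$ is random; a uniform-in-$d$ lower bound of the form $\mathcal{V}_1(m)\underline{k}$ would itself require a concentration result on $\widehat{g}_r$ and its derivatives, making the argument circular. Second, and more seriously, the objective $R_r$ behaves \emph{nonuniformly} near $d=d_{0,r}-\tfrac12$ (Remark \ref{re:nonuniformly}); a single Hessian lower bound over all of $\Theta$ is not available, which is precisely why the $\Theta_1/\Theta_2$ split is needed. (Incidentally, $d_{0,r}\ge\Delta_1+\tfrac12$ is not the antipersistent regime; it is the case where $d_{0,r}-\tfrac12$ lies inside $\Theta$, forcing $\Theta_2\neq\emptyset$.) The paper's function-value comparison handles both issues: the lower bound on $\widetilde{S}_r$ is purely deterministic, and the $\Theta_2$ piece is treated separately with a tailored argument that produces $\mathcal{V}_2(m)$.
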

In order to ensure meaningful estimation, the term $\mathcal{V}_{2}(m)$ needs to be positive. This is proven in Lemma \ref{le:V2positive}.

\begin{remark}
In Corollary \ref{cor:Op}, we formally state an analogous result to Proposition \ref{prop:Op} under the assumption that the underlying process is either short- or long-range dependent. Note that in contrast to Proposition \ref{prop:Op}, Corollary \ref{cor:Op} does not require Assumption \ref{ass:antip}. In particular, one can infer an asymptotic result without requiring any further assumptions on the relation between $m$ and $N$ besides $\frac{1}{m} + \frac{m}{N} \to \infty$ for $N \to \infty$ which coincides with Assumption 4 in \cite{robinson1995gaussian}.
A numeric illustration of our non-asymptotic results is given in Appendix \ref{appF:rate} in terms of Corollary \ref{cor:Op}.
\end{remark}

The following propositions give non-asymptotic consistency results for the graphical and thresholding local Whittle estimators, respectively.

\begin{proposition} \label{prop:Opspectral}
Let $\{X_{n}\}_{n \in \ZZ}$ be a $p$-dimensional, stationary, centered time series with spectral density $f_{X}$ and suppose Assumptions \ref{ass:f0}--\ref{ass:antip} are satisfied. 
Then, there are positive constants $c_{1},c_{2}$, such that choosing a threshold $\rho = \delta$ as in \eqref{eq:deltaepsilon} yields, for any $\bm{C} \geq 1$, 
\begin{equation*} 
\begin{gathered}
\Prob(\| T_{\rho}(\widehat{G}(\widehat{D}))-G_{0} \|^2_{F} > 13 p \| G_{0} \|_{a}^{a} \rho^{2-a} )
\leq
c_{1} p^{2-c_{2}\bm{C}},
\\
\Prob(\| T_{\rho}(\widehat{G}(\widehat{D}))-G_{0} \| > 7 \| G_{0} \|_{a}^{a} \rho^{1-a} )
\leq
c_{1} p^{2-c_{2}\bm{C}}
\end{gathered}
\end{equation*}
for any $a \in [0,1)$ and $T_{\rho}$ as in \eqref{eq:thresholding}.
\end{proposition}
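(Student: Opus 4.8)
The plan is to reduce the statement to the classical deterministic analysis of hard thresholding, since all of the genuinely probabilistic work is already contained in Proposition \ref{prop:Op}. First I would invoke Proposition \ref{prop:Op} with $\delta=\rho$ as in \eqref{eq:deltaepsilon} to pass to the event
\[
\mathcal{G}=\bigl\{\,\|\widehat{G}(\widehat{D})-G_{0}\|_{\max}\le \rho\,\bigr\},
\]
which has probability at least $1-c_{1}p^{2-c_{2}\bm{C}}$. It then remains only to show that, on $\mathcal{G}$, the two \emph{deterministic} inequalities $\|T_{\rho}(\widehat{G}(\widehat{D}))-G_{0}\|_{F}^{2}\le 13\,p\,\|G_{0}\|_{a}^{a}\rho^{2-a}$ and $\|T_{\rho}(\widehat{G}(\widehat{D}))-G_{0}\|\le 7\,\|G_{0}\|_{a}^{a}\rho^{1-a}$ hold, whence the two probability bounds follow at once.

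For the deterministic step I would abbreviate $\widehat{G}=\widehat{G}(\widehat{D})$ and decompose entrywise. Using $(T_{\rho}(\widehat{G}))_{rs}-G_{0,rs}=(\widehat{G}_{rs}-G_{0,rs})\mathds{1}_{\{|\widehat{G}_{rs}|\ge\rho\}}-G_{0,rs}\mathds{1}_{\{|\widehat{G}_{rs}|<\rho\}}$, on $\mathcal{G}$ one gets
\[
\bigl|(T_{\rho}(\widehat{G}))_{rs}-G_{0,rs}\bigr|\le \rho\,\mathds{1}_{\{|\widehat{G}_{rs}|\ge\rho\}}+|G_{0,rs}|\,\mathds{1}_{\{|\widehat{G}_{rs}|<\rho\}},
\]
and also $|\widehat{G}_{rs}|<\rho\Rightarrow |G_{0,rs}|<2\rho$. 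I would then split the indices $s$ into ``large'' ($|G_{0,rs}|\ge\rho$) and ``small'' ($|G_{0,rs}|<\rho$), and apply the two elementary $\ell_{a}$-sparsity inequalities
\[
\#\{s:\,|G_{0,rs}|\ge t\}\le t^{-a}\|G_{0}\|_{a}^{a},
\qquad
\sum_{s:\,|G_{0,rs}|<t}|G_{0,rs}|^{b}\le t^{b-a}\|G_{0}\|_{a}^{a}\quad(b\ge a),
\]
to bound, row by row, the number of retained entries and the total mass of the discarded entries. Summing over $s$ and using $\|M\|\le\max_{r}\sum_{s}|M_{rs}|$ for Hermitian $M$ yields the operator-norm bound; squaring the entrywise estimate, summing over all $r,s$, and using the second inequality with $b=2$ yields the Frobenius bound, the factor $p$ coming from summing the per-row estimate over all $p$ rows, and the constants $13$ and $7$ from tracking the $2\rho$ inflation of the discard threshold together with the survivor counts.

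The part I expect to require the most care is the accounting of ``off-target survivors'', i.e.\ indices $s$ with $|\widehat{G}_{rs}|\ge\rho$ but $|G_{0,rs}|$ small. On $\mathcal{G}$ each such $s$ forces $|\widehat{G}_{rs}-G_{0,rs}|$ to be comparable to $\rho$, so each contributes $O(\rho)$, and the number of such $s$ per row is again controlled by $\|G_{0}\|_{a}^{a}\rho^{-a}$ via the first sparsity inequality applied with a halved threshold; this is also the regime in which the calibration $\rho=\delta$ is natural, since $\delta\ge2\nu$ whereas the genuine sampling fluctuation of $\widehat{G}(\widehat{D})$ around $G_{0}$ is of order $\nu$ up to a lower-order bias. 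No probabilistic input beyond Proposition \ref{prop:Op} enters: the deterministic thresholding step is exactly as in the short-range-dependent treatments of \cite{bickel2008covariance,Sun2018:LargeSpectral}, and everything specific to long-range dependence and the simultaneous estimation of $D_{0}$ has already been absorbed into the concentration inequality producing $\mathcal{G}$.
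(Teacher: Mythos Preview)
Your proposal is correct and follows essentially the same approach as the paper: invoke Proposition~\ref{prop:Op} to obtain the high-probability bound on $\|\widehat{G}(\widehat{D})-G_{0}\|_{\max}$, then carry out the purely deterministic hard-thresholding analysis of \cite{bickel2008covariance,Sun2018:LargeSpectral} on that event. In fact the paper's own proof is even terser, simply pointing to Proposition~\ref{prop:Op} and then to Proposition~3.6 of \cite{Sun2018:LargeSpectral} for the remaining deterministic step, which you have spelled out in more detail.
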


\begin{proposition} \label{prop:OpPrecision}
Let $\{X_{n}\}_{n \in \ZZ}$ be a $p$-dimensional, stationary, centered time series with spectral density $f_{X}$ and suppose Assumptions \ref{ass:f0}--\ref{ass:antip} are satisfied. 
Then, there are positive constants $c_{1},c_{2}$, such that choosing a penalty parameter $\rho = \delta$ as in \eqref{eq:deltaepsilon} yields, for any $\bm{C} \geq 1$, 
\begin{equation*} 
\Prob( \| \widehat{P}_{\rho}(\widehat{D})-P_{0} \|^{2}_{F} >  \frac{16^2}{\underline{k}^4} (p+\operatorname{s}) \rho^{2} )
\leq
c_{1} p^{2-c_{2}\bm{C}}
\end{equation*}
with $\operatorname{s}$ as in \eqref{as:cardianlityS} and $\frac{16}{\underline{k}^2} \sqrt{p+\operatorname{s}}  \rho \leq \| P_{0} \|$.
\end{proposition}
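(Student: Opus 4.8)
The plan is to follow the standard primal analysis of the graphical LASSO estimator, adapted to the fact that here the empirical input is $\widehat{G}(\widehat{D})$ rather than a sample covariance, and the associated concentration is provided by Proposition \ref{prop:Op}. Work on the event $E = \{\| \widehat{G}(\widehat{D})-G_{0} \|_{\max} \leq \delta\}$, which by Proposition \ref{prop:Op} has probability at least $1-c_{1}p^{2-c_{2}\bm{C}}$ when $\rho = \delta$ is chosen as in \eqref{eq:deltaepsilon}. On $E$, I would set $\Delta = \widehat{P}_{\rho}(\widehat{D}) - P_{0}$ and use the convexity of $\ell_{\rho}(\widehat{D},\cdot)$: since $\widehat{P}_{\rho}(\widehat{D})$ minimizes it over $P \succ 0$, the function $G(t) := \ell_{\rho}(\widehat{D}, P_{0} + t\Delta) - \ell_{\rho}(\widehat{D}, P_{0})$ is convex with $G(0)=0$ and $G(1)\leq 0$. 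First I would show that $G(t) > 0$ on the boundary of a suitable Frobenius-norm ball around $P_{0}$ (of radius $r = \frac{16}{\underline k^{2}}\sqrt{p+\operatorname{s}}\,\rho$), which by convexity forces the minimizer to lie strictly inside, giving $\|\Delta\|_{F} \leq r$.

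The core is the lower bound on $G(t)$. Writing $A(P) = -\frac{1}{m}\sum_{j}\log|\lambda_{j}^{D}P\lambda_{j}^{D}| + \tr(\widehat{G}(\widehat{D})P)$ for the smooth part, a second-order Taylor expansion with integral remainder gives, for $\|\Delta\|_{F}$ bounded,
\begin{equation*}
A(P_{0}+\Delta) - A(P_{0}) = \tr\big((\widehat{G}(\widehat{D}) - G_{0})\Delta\big) + \vecop(\Delta)' \Big[\int_{0}^{1}(1-v)\,(P_{0}+v\Delta)^{-1}\otimes(P_{0}+v\Delta)^{-1}\,dv\Big]\vecop(\Delta),
\end{equation*}
using $\nabla^{2}(-\log\det P) = P^{-1}\otimes P^{-1}$ and the fact that the $\log\det$ term built from $\lambda_{j}^{D}P\lambda_{j}^{D}$ differs from $\log|P|$ only by a term linear in $D$ (hence with vanishing Hessian in $P$). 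The quadratic term is bounded below by $c_{*}\|\Delta\|_{F}^{2}$ where $c_{*}$ is controlled by $\lambda_{\min}(P_{0}+v\Delta)^{2} \geq (\,\|G_{0}\| + r\,)^{-2}$; here I use $\lambda_{\min}(G_{0})\geq\underline k$, i.e.\ $\|P_{0}\|\leq 1/\underline k$, and the assumption $r \leq \|P_{0}\|$ to get a clean constant such as $c_{*} \gtrsim \underline k^{4}$. For the linear term I split over diagonal and off-diagonal, bound $|\tr((\widehat{G}(\widehat{D})-G_{0})\Delta)| \leq \delta \|\Delta\|_{1} \leq \delta(\|\Delta_{\operatorname{S}\cup\mathrm{diag}}\|_{1} + \|\Delta_{\operatorname{S}^{c}}\|_{1})$, and combine with the penalty difference $\rho(\|P_{0}+\Delta\|_{1,off} - \|P_{0}\|_{1,off}) \geq \rho(\|\Delta_{\operatorname{S}^{c}}\|_{1} - \|\Delta_{\operatorname{S}}\|_{1})$. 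With $\rho = \delta$, the $\|\Delta_{\operatorname{S}^{c}}\|_{1}$ contributions cancel favorably and the remaining $\ell_{1}$ terms, supported on $\operatorname{S}\cup\mathrm{diag}$ of size at most $p+\operatorname{s}$, are bounded by $\sqrt{p+\operatorname{s}}\,\|\Delta\|_{F}$ via Cauchy--Schwarz. Putting this together yields $G(t) \geq c_{*}t^{2}\|\Delta\|_{F}^{2} - c'\rho\sqrt{p+\operatorname{s}}\,t\|\Delta\|_{F}$ for $t\|\Delta\|_{F}$ on the relevant scale, which is positive once $t\|\Delta\|_{F} = r$ with $r$ chosen as the stated $\frac{16}{\underline k^{2}}\sqrt{p+\operatorname{s}}\,\rho$, giving the conclusion.

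The main obstacle I anticipate is making the Taylor-remainder argument uniform on the Frobenius ball: one must ensure that $P_{0}+v\Delta \succ 0$ for all $v\in[0,1]$ and all $\Delta$ with $\|\Delta\|_{F}\leq r$, which is exactly where the hypothesis $\frac{16}{\underline k^{2}}\sqrt{p+\operatorname{s}}\,\rho \leq \|P_{0}\|$ is used (it guarantees $\|\Delta\| \leq \|\Delta\|_{F} \leq r \leq \|P_{0}\| \leq 1/\underline k$, keeping the smallest eigenvalue of $P_{0}+v\Delta$ away from zero so that $A$ is twice differentiable along the segment), and a careful bookkeeping of the numerical constants to land on $16^{2}/\underline k^{4}$. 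A secondary technical point is verifying that the extra $\lambda_{j}^{D}$ factors in the $\log\det$ term genuinely contribute nothing to the Hessian in $P$ and only a $P$-linear term that is independent of the optimization — this is immediate from $\log|\lambda_{j}^{D}P\lambda_{j}^{D}| = \log|P| + 2\sum_{r}d_{r}\log\lambda_{j}$, so it drops out of $G(t)$ entirely.
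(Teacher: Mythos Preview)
Your proposal is correct and follows essentially the same route as the paper's proof, which adapts the primal argument of \cite{rothman2008sparse}: reduce via convexity to showing $Z(\Delta)>0$ on the Frobenius sphere of radius $\lambda=\frac{16}{\underline{k}^{2}}\sqrt{p+\operatorname{s}}\,\rho$, Taylor-expand $-\log|P|$ with integral remainder, lower-bound the quadratic term using $\lambda_{\min}((P_{0}+v\Delta)^{-1})^{2}\geq \frac{1}{4}\underline{k}^{2}$ (via $\|P_{0}\|+\lambda\leq 2\|P_{0}\|$, which is exactly where the hypothesis $\lambda\leq\|P_{0}\|$ enters), and handle the linear-plus-penalty part by the support decomposition and Cauchy--Schwarz over $\operatorname{S}\cup\mathrm{diag}$. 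Two small slips to fix in your write-up: the relevant eigenvalue bound is $\lambda_{\max}(P_{0}+v\Delta)^{-2}\geq(\|P_{0}\|+r)^{-2}$ (not $\|G_{0}\|$), and the resulting curvature constant is $c_{*}=\tfrac{1}{8}\underline{k}^{2}$, not $\underline{k}^{4}$ --- the $\underline{k}^{4}$ appears only after squaring the radius $r$.
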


Propositions \ref{prop:Opspectral} and \ref{prop:OpPrecision} can be expressed in terms of the quantities in Corollary \ref{cor:Op} when allowing only for long- and short-range dependence. That is, $\delta$ in Propositions \ref{prop:Opspectral} and \ref{prop:OpPrecision} can be chosen as in Corollary \ref{cor:Op} defined in terms of \eqref{eq:cor:Rs}.

In contrast to Proposition \ref{prop:Opspectral}, Proposition \ref{prop:OpPrecision} states only a result for the Frobenius norm. A result on the operator norm can be inferred based on the inequality $\| \cdot \| \leq \| \cdot \|_{F}$. However, the operator norm provides the same convergence rate as the Frobenius norm. In contrast, for thresholding estimators, the convergence rates differ by one $p$; see Proposition \ref{prop:Opspectral}. 
The literature on covariance estimation has addressed this problem by considering alternative estimators. Section \ref{se:alternativeestimators} below presents a modified graphical local Whittle estimator based on estimating the coherence matrix and a constrained $l_{1}$-minimization for inverse matrix estimation
(CLIME) version.

\begin{remark}
The probability bounds in Propositions \ref{prop:supGhat1}--\ref{prop:OpPrecision} are given in terms of $p$ which might suggest consistency only in the limit of $p \to \infty$, as long as $\bm{C}$ is large enough. Note, however, that $\bm{C}=\bm{C}_{m}$ may depend on $m$ and $N$, and enters in our choices of $\nu$ in \eqref{eq:nuinprop1} and $\nu_{i}$, $i=1,2,3$, in \eqref{eq:nu1}, \eqref{eq:nu2} and \eqref{eq:nu3}. Under suitable assumptions, one can in fact have 
\begin{equation*}
\bm{C}_{m} \sqrt{ \frac{\log(p)}{\mathcal{R}_{i}}} \to 0
\hspace{0.2cm} \text{ and } \hspace{0.2cm}
\bm{C}_{m} \to \infty
\hspace{0.2cm} \text{ as } \hspace{0.2cm}
m \to \infty
\hspace{0.2cm} \text{ for } \hspace{0.2cm}
i =1, \dots,4, 
\end{equation*}
in Propositions \ref{prop:supGhat1}--\ref{prop:supGhat4}, so that the resulting probability bounds are small even for fixed low-dimensional $p$. In the latter case, even when $p=1$, these results also provide new non-asymptotic exponential bounds, for example, on the probability of $\{ 
\| \widehat{G}(\widehat{D}) - G_{0} \|_{\max} > \delta \}$.
\end{remark}

\begin{remark} \label{re:nonuniformly}
The work by \cite{robinson1995gaussian} concerns consistency results for the univariate local Whittle estimators for $G_{0}$ and $D_{0}$. The consistency of $\widehat{D}$ is stated in Theorem 1 in \cite{robinson1995gaussian}. The proof in the asymptotic regime mentions that the function $R(D)$ behaves nonuniformly around $D=d_{0}-\frac{1}{2}$. For this reason, one has to consider the cases $d_{0}-\frac{1}{2} < \Delta_{1}$ and $d_{0}-\frac{1}{2} \geq \Delta_{1}$ to separate the set $\Theta$ as $\Theta = \Theta_{1} \cup \Theta_{2}$ with
\begin{equation} \label{eq:defTheta1}
\Theta_{1}=
\begin{cases}
\{ d ~|~ d_{0} - \frac{1}{2} + \Delta \leq d \leq \Delta_{2} \}, &\hspace{0.2cm} \text{ if } d_{0} \geq \Delta_{1} + \frac{1}{2}, \\
\{ d ~|~ \Delta_{1} \leq d \leq \Delta_{2} \}, &\hspace{0.2cm} \text{ if } d_{0} < \Delta_{1} + \frac{1}{2},
\end{cases}
\end{equation}
\begin{equation} \label{eq:defTheta2}
\Theta_{2}=
\begin{cases}
\{ d ~|~ \Delta_{1} \leq d < d_{0} - \frac{1}{2} + \Delta \}, &\hspace{0.2cm} \text{ if } d_{0} \geq \Delta_{1} + \frac{1}{2}, \\
\emptyset, &\hspace{0.2cm} \text{ if } d_{0} < \Delta_{1} + \frac{1}{2},
\end{cases}
\end{equation}
where $\Delta \in (0,\Delta_{2})$.
As displayed in \eqref{eq:defTheta1}, separating $\Theta$ is only necessary if $d_{0} < \Delta_{1} + \frac{1}{2}$. The case $d_{0} < \Delta_{1} + \frac{1}{2}$ includes $d_{0} \leq 0$ since $-\frac{1}{2} < \Delta_{1}$. Note that $d_{0} \leq 0$ coincides with the prior knowledge that the observed time series is not long-range dependent. The sets $\Theta_{1}$ and $\Theta_{2}$ are used to determine the range of admissible estimates $d$ for an individual component series $X_{r,n}$. Then, $\Theta_{1}$ and $\Theta_{2}$ depend on $d_{0,r}$ instead of $d_{0}$. Throughout the paper we do not reflect the dependence on $r$ in the notation of $\Theta_{1}$ and $\Theta_{2}$.
\end{remark}

\subsection{Alternative estimators for precision matrix} \label{se:alternativeestimators}
We study here a modified graphical local Whittle estimator based on estimating the coherence matrix and a CLIME version; see Sections \ref{se:Spice} and \ref{se:Clime}. The section and in particular the corresponding proofs in Appendix \ref{s:proofsA} also emphasize the value of our Proposition \ref{prop:Op} since it can be used to infer consistency results even for modified versions of penalized local Whittle estimators.

\subsubsection{Modified graphical local Whittle} \label{se:Spice}
As known for covariance matrices, the rate of convergence can be improved for the operator norm by considering the correlation matrix instead; see \cite{rothman2008sparse} and \cite{shu2019estimation} for temporally correlated data. Analogously, we can consider the coherence matrix. Let $G_{0} = W_{0} \Gamma_{0} W_{0}$, where $W_{0}=\diag(G_{0,11}^{1/2},\dots, G_{0,pp}^{1/2})$ and $\Gamma_{0}$ is the true coherence matrix. Then, the precision matrix satisfies $P_{0} = W_{0}^{-1} \Gamma_{0}^{-1} W_{0}^{-1}$ and therefore $K_{0} := \Gamma_{0}^{-1} = W_{0} P_{0} W_{0}$. We write $\widehat{\Gamma} = \widehat{\Gamma}(\widehat{D})$ and $\widehat{W} = \widehat{W}(\widehat{D})$ for their sample counterparts, and indicate their dependence on the matrix $D$.
The matrix $K_{0}$ can then be estimated as
\begin{equation} \label{eq:Khatrho}
\widehat{K}_{\rho} = \argmin{K \succ 0} \ \ell^{\Gamma}_{\rho}(\widehat{D},K),
\end{equation}
where $\widehat{D}$ is estimated univariately by \eqref{eq:univLWd} and
\begin{equation} \label{eq:onjfunctionGamma}
\begin{aligned}
\ell^{\Gamma}_{\rho}(D,K)=-\frac{1}{m} \sum_{j=1}^{m} &\log|\lambda_{j}^{D} K \lambda_{j}^{D}|
+\tr( \widehat{\Gamma}(D)K) + \rho \| K \|_{1,off}.
\end{aligned}
\end{equation}
Then, we can define a modified coherence-based graphical local Whittle estimator
\begin{equation} \label{eq:modGLW}
\widehat{P}^{M}_{\rho} = \widehat{W}^{-1} \widehat{K}_{\rho} \widehat{W}^{-1},
\end{equation}
where $\widehat{W}(\widehat{D}) = \diag(\widehat{G}_{11}^{1/2}(\widehat{D}),\dots, \widehat{G}_{pp}^{1/2}(\widehat{D}))$.
The following statement gives a non-asymptotic consistency result on the spectral norm.
\begin{proposition} \label{prop:OperatorPrecisionmodified}
Let $\{X_{n}\}_{n \in \ZZ}$ be a $p$-dimensional, stationary, centered time series with spectral density $f_{X}$ and suppose Assumptions \ref{ass:f0}--\ref{ass:antip} are satisfied. 
Then, there are positive constants $c_{1},c_{2}$, such that choosing a penalty parameter $\rho = \delta$ as in \eqref{eq:deltaepsilon} yields, for any $\bm{C} \geq 1$, 
\begin{equation*} 
\Prob( \| \widehat{P}^{M}_{\rho}(\widehat{D})-P_{0} \| >  30 \times 48 \max\{1, 1 / \underline{k}^9 \} \max\{1, \| K_{0} \| \} \sqrt{\operatorname{s}} \ \rho )
\leq
c_{1} p^{2-c_{2}\bm{C}}
\end{equation*}
with $\operatorname{s}$ as in \eqref{as:cardianlityS} and $48 \max\{1, 1/\underline{k}^4 \} \sqrt{\operatorname{s}}  \rho \leq \| K_{0} \|$.
\end{proposition}

In contrast to Proposition \ref{prop:OpPrecision}, Proposition \ref{prop:OperatorPrecisionmodified} provides a convergence rate for the spectral norm which shows that the modified graphical local Whittle estimator \eqref{eq:modGLW} can achieve the same convergence rate as the thresholding local Whittle estimator.

A handy result to prove Proposition \ref{prop:OperatorPrecisionmodified} and of independent interest is the following lemma which gives a consistency result for the coherence matrix estimator in \eqref{eq:Khatrho}.

\begin{lemma} \label{col:OperatorPrecisionmodified}
Let $\{X_{n}\}_{n \in \ZZ}$ be a $p$-dimensional, stationary, centered time series with spectral density $f_{X}$ and suppose Assumptions \ref{ass:f0}--\ref{ass:antip} are satisfied. 
Then, there are positive constants $c_{1},c_{2}$, such that choosing a penalty parameter $\rho = \delta$ as in \eqref{eq:deltaepsilon} with $N, p$ such that $\delta < 1$ yields, for any $\bm{C} \geq 1$, 
\begin{equation*} 
\Prob( \| \widehat{K}_{\rho}(\widehat{D}) - K_{0} \|^2_{F} >  (48 \max\{1, 1/\underline{k}^4 \} )^2 \operatorname{s} \rho^{2} )
\leq
c_{1} p^{2-c_{2}\bm{C}}
\end{equation*}
with $\operatorname{s}$ as in \eqref{as:cardianlityS} and $48 \max\{1, 1/\underline{k}^4 \}  \sqrt{\operatorname{s}}  \rho \leq \| K_{0} \|$. 
\end{lemma}
Note that we assume $N, p$ such that $\delta < 1$ only to achieve a simplified representation of the result. In general, it is possible to state the result for any fixed $N,p$.

\subsubsection{CLIME estimation} \label{se:Clime}
CLIME estimation for i.i.d. samples was introduced in \cite{cai2011constrained} and further studied in \cite{shu2019estimation} allowing for temporal correlation. We adopt their approach to the spectral domain and set $\widehat{\Theta} = (\widehat{\theta}_{rs})_{r,s =1,\dots,p}$ to be the solution of the minimization problem
\begin{equation*}
\min \| \Theta \|_{1} 
\hspace{0.2cm}
\text{ subject to }
\hspace{0.2cm}
\| \widehat{G}(\widehat{D}) \Theta - I_{p} \|_{\max} \leq \rho,
\end{equation*}
where $\rho$ is a tuning parameter. Then, the CLIME estimator is defined as
\begin{equation} \label{eq:CLIMEestimator}
\widehat{P}^{C}_{\rho} = (\widehat{\theta}^{C}_{rs})_{r,s =1,\dots,p}
\hspace{0.2cm}
\text{ with }
\hspace{0.2cm}
\widehat{\theta}_{rs}^{C} = \widehat{\theta}_{sr}^{C}
= 
\widehat{\theta}_{rs} \mathds{1}_{ \{ |\widehat{\theta}_{rs}| \leq |\widehat{\theta}_{sr}| \} }
+
\widehat{\theta}_{sr} \mathds{1}_{ \{ |\widehat{\theta}_{rs}| > |\widehat{\theta}_{sr}| \} }.
\end{equation}

We impose the sparsity assumption \eqref{eq:sparsityconditionG0} on $P_{0}$.

\begin{proposition} \label{prop:OperatorCLIME}
Let $\{X_{n}\}_{n =1,\dots,N}$ be a $p$-dimensional, stationary, centered time series with spectral density $f_{X}$ and suppose Assumptions \ref{ass:f0}--\ref{ass:antip} are satisfied. 
Then, there are positive constants $c_{1},c_{2}$, such that choosing a penalty parameter $\rho = \delta$ as in \eqref{eq:deltaepsilon} yields, for any $\bm{C} \geq 1$, 
\begin{equation*} 
\Prob( \| \widehat{P}^{C}_{\rho}(\widehat{D}) -P_{0} \|^{2} >  24 \| P_{0} \|_{1}^{1-a} \| P_{0} \|^{a}_{a} \rho^{1-a} )
\leq
c_{1} p^{2-c_{2}\bm{C}}
\end{equation*}
for any $a \in [0,1)$.
\end{proposition}

\subsection{Graphical model selection consistency}
\label{se:modelselection}
We give here results on consistent recovery of the sparsity pattern and sign consistency for our estimators. For the long-run variance matrix estimation, we focus on the thresholding local Whittle estimator. For the precision matrix, we consider a thresholded CLIME estimator and conclude with a discussion on consistent graph recovery for other estimators. The proofs of the statements in this section can be found in Appendix \ref{s:proofsA}.

The following proposition gives a non-asymptotic result for consistent graph recovery of the thresholding local Whittle.
\cite{rothman2009generalized} consider covariance matrix estimation for i.i.d. $p$-dimensional random vectors in a high-dimensional regime. In particular, their Theorem 2 states that the thresholding operator consistently recovers the sparsity pattern. The proof of Theorem 2 in \cite{rothman2009generalized} is generic and proves, combined with our Proposition \ref{prop:Op}, consistent recovery of the sign and sparsity pattern.
\begin{proposition} \label{prop:revoverythreshold}
Let $\{X_{n}\}_{n =1,\dots,N}$ be a $p$-dimensional, stationary, centered time series with spectral density $f_{X}$ and suppose Assumptions \ref{ass:f0}--\ref{ass:antip} are satisfied. 
Then, there are positive constants $c_{1},c_{2}$, such that choosing a threshold $\rho = \delta$ as in \eqref{eq:deltaepsilon} yields, for any $\bm{C} \geq 1$, 
\begin{equation} \label{eq:GMSC:TH1}
\Prob\left( T_{\rho}(\widehat{G}_{rs}(\widehat{D})) = 0 \text{ for all } r,s \text{ such that } G_{0,rs} = 0 \right)
\geq
1-c_{1} p^{2-c_{2}\bm{C}}.
\end{equation}
If we additionally assume that all non-zero elements of $G_{0}$ satisfy $|G_{0,rs}| > \tau$, where $\tau$ is of the same order as $\rho$, we have
\begin{equation} \label{eq:GMSC:TH2}
\Prob\left( \sign( T_{\rho}(\widehat{G}_{rs}(\widehat{D})) G_{0,rs} ) = 1  \text{ for all } r,s \text{ such that } G_{0,rs} \neq 0 \right) \geq
1-c_{1} p^{2-c_{2}\bm{C}}.
\end{equation}
\end{proposition}

The CLIME estimator in \eqref{eq:CLIMEestimator} can be modified to recover the support of the precision matrix. More precisely, we conduct an additional thresholding step by applying \eqref{eq:thresholding} to $\widehat{P}^{C}_{\rho}(\widehat{D})$ in \eqref{eq:CLIMEestimator}. This procedure follows Section 4 in \cite{cai2011constrained} who considered inverse covariance estimation for i.i.d.\ data. Subsequently, \cite{shu2019estimation} used a thresholded CLIME for inverse covariance estimation under temporal dependence; see Theorem 5 in \cite{shu2019estimation} for their result on consistent sparsity and sign recovery. Following their arguments, we can state the following proposition.

\begin{proposition} \label{prop:revoveryCLIME}
Let $\{X_{n}\}_{n \in \ZZ}$ be a $p$-dimensional, stationary, centered time series with spectral density $f_{X}$ and suppose Assumptions \ref{ass:f0}--\ref{ass:antip} are satisfied. 
Then, there are positive constants $c_{1},c_{2}$, such that choosing a penalty parameter $\rho = \delta$ as in \eqref{eq:deltaepsilon} yields, for any $\bm{C} \geq 1$, 
\begin{equation} \label{eq:GMSC:CLIME1}
\Prob\left( T_{\tau}(\widehat{P}^{C}_{\rho}(\widehat{D})) = 0 \text{ for all } r,s \text{ such that } P_{0,rs} = 0 \right)
\geq
1-c_{1} p^{2-c_{2}\bm{C}}.
\end{equation}
If we additionally assume that all non-zero elements of $G_{0}$ satisfy $|G_{0,rs}| > \tau$, where $\tau$ is of the same order as $\rho$, we have
\begin{equation} \label{eq:GMSC:CLIME2}
\Prob\left( \sign( T_{\tau}(\widehat{\theta}^{C}_{rs}) P_{0,rs} ) = 1  \text{ for all } r,s \text{ such that } P_{0,rs} \neq 0 \right) \geq
1-c_{1} p^{2-c_{2}\bm{C}}.
\end{equation}
\end{proposition}

For the graphical local Whittle estimation, consistent recovery of the sparsity pattern is not expected. This is discussed in \cite{zou2006adaptive} for the classical LASSO in a regression context.
There have been several approaches to modify the classical LASSO to achieve consistent graph recovery. For instance the consideration of an adaptive version \citep{zou2006adaptive} and a thresholded LASSO \citep{zhou2010thresholded,ravikumar2011high,wang2021thresholded}. 

An adaptive version is based on a weighted penalty, where the weights are data driven using a preliminary estimator. Though our results are expected to be helpful to prove consistency for an adaptive version, a detailed investigation goes beyond the scope of this work. A related discussion on the difficulties of proving consistent recovery of the sparsity pattern with help of an extended Bayesian information criterion can be found in Remark \ref{re:eBIC}.

A thresholding graphical local Whittle estimator is expected to consistently recover the sparsity pattern. It is similar in flavor to the thresholded CLIME and a consistency result can be inferred with help of Proposition \ref{prop:Op}.

\subsection{Comparison to existing results} \label{se:comparison}
In this section, we compare our results to related work. Existing results are either for short-range dependent time series or, if they allow for stronger temporal and spatial dependence, the dependence measure is characterized by an unknown and unestimated quantity. In particular, results for data with stronger dependence structure have only been derived in the time domain.

\textit{\cite{Sun2018:LargeSpectral}:} 
We recover recently proven results on spectral density estimation at frequency zero for short-range dependent time series. \cite{Sun2018:LargeSpectral} supposed that $D_{0} \equiv 0$ and could prove results in a $\log(p)/m \to 0$ regime. We get the same result by setting $\Delta_{1}= \Delta_{2}= \varepsilon=0$ in \eqref{eq:deltaepsilon}. 
Strictly speaking, we do not allow for $\varepsilon =0$ since $\mathcal{T}_{1}(\varepsilon)$ in Table \ref{label 2} involves $(1 + \frac{1}{2\varepsilon})$. However, a look into the proof of Proposition \ref{prop:supGhat1} reveals that for $\varepsilon =0$, one can bound the respective term by $\log(m)$ instead. We refrained from incorporating the case $\varepsilon =0$ explicitly for simplicity.
We note also that our result includes an additional $\log(N)$. However, this is an artifact of using a slightly simplified notation to make reading easier. More precisely, to prove Proposition \ref{prop:Op}, we apply a uniform concentration inequality in Lemma \ref{prop:supGhat} which involves the supremum of a partial derivative; see \eqref{eq:TandboldT}. The supremum is taken over a closed set. In particular, the set is not empty. For this reason, even when the set contains only one point as in the short-range dependent case ($D_{0} \equiv 0$), the derivative is included in the respective bounds. The logarithm appears because of the derivative in our uniform concentration inequality. This can be easily avoided by considering the supremum over a half-open interval. However, it would require careful distinction through all our proofs between whether the set is empty or not. In order to avoid over complicated notation, we refrained from incorporating this case.

\textit{\cite{shu2019estimation}:} This related work focusses on the estimation of the covariance matrix and its inverse, with the results in the time domain. However, \cite{shu2019estimation} also allow for long-range dependence. They assume that the correlation $\rho^{i}_{kn} = \frac{\sigma^{i}_{kn}}{\sigma^{i}_{kk} \sigma^{i}_{nn}} $ with $\sigma^{i}_{kn} = \E X_{i,k} X_{i,n} $ in the component series $X_{i,n}$ satisfies
\begin{equation}
\max_{i =1, \dots, p} | \rho^{i}_{kn} | \leq C | k-n|^{2\alpha-1}
\hspace{0.2cm}
\text{ for }
\hspace{0.2cm}
k \neq n.
\end{equation}
For $\alpha \in (0,\frac{1}{2})$ the individual time series can thus be long-range dependent in the sense that the correlation sequences are not absolutely summable. To make a fair comparison, we will assume a known memory parameters $D_{0}$ and, for simplicity, further ignore the bounds on the deterministic part and only consider the case when the time series is short- or long-range dependent. Then, based on Proposition \ref{prop:supGhat1} for $\varepsilon=0$ and $\Delta_{1}=0$, our convergence rate simply reduces to
\begin{equation} \label{eq:forcomparison}
\nu = \bm{C}
\vertiii{G} \sqrt{\frac{\log(p)}{\mathcal{R}_{1}}},
\hspace{0.2cm}
\mathcal{R}_{1}= \min\{ (\log(N)\widebar{\Delta}_{N})^{-1} \mathcal{R}_{11}, (\log(N)\widebar{\Delta}_{N})^{-2} \mathcal{R}_{12} \}
\end{equation}
with $\widebar{\Delta}_{N}$ as in \eqref{eq:delta-0-1/4} and
\begin{equation*}
\mathcal{R}_{11}=
m^{1-2\Delta_{2}},
\hspace{0.2cm}
\mathcal{R}_{12}=
\min\Big\{
m^{2-4\widebar{\Delta}_{u}},
m^{\frac{3}{2} - 2\widebar{\Delta}_{u} }
\Big\}.
\end{equation*}
In \cite{shu2019estimation}, the result analogous to our Proposition 3.1 is Lemma A.2., (i) with rates given in Remark 2. 
Based on Remark 2 in \cite{shu2019estimation}, the quantity analogous to $\mathcal{R}_{1}$ in \eqref{eq:forcomparison} is given by
\begin{equation}
\mathcal{R} = 
\begin{cases}
N^{1-2\alpha}, \hspace{0.2cm} & \alpha \in (\frac{1}{4}, \frac{1}{2}), \\
\min \Big\{ N^{1-2\alpha} , N^{\frac{1}{2}} \Big\}, \hspace{0.2cm} & \alpha \in (0,\frac{1}{4}), \\
\min \Big\{ (\log(N))^{-1} N ,  N^{\frac{1}{2}} \Big\}, \hspace{0.2cm} & \alpha = \frac{1}{4}. \\
\end{cases}
\end{equation}
Since the results are in the time domain, the convergence rates are in terms of the sample size $N$ rather than the number of frequencies used in estimation $m$ as in our results and \cite{Sun2018:LargeSpectral}. See also Remark \ref{re:discussionlog}, for a discussion on why our bounds include $\log(N)$. Otherwise, our bounds coincide with those in \cite{shu2019estimation} for $\Delta_{2} = \alpha$. In contrast to our statements, the results in \cite{shu2019estimation} are not non-asymptotic. Furthermore, they do not estimate $\alpha$.

\section{The choice of the shrinkage parameters and algorithms} \label{s:algorithm}
Both thresholding \eqref{eq:thresholding} and graphical local Whittle estimation \eqref{eq:Prholasso} depend respectively on the threshold and penalization parameter $\rho$. In this section, we discuss how to select $\rho$. This choice plays a critical role in finite sample performance. We propose to use cross-validation for the thresholding parameter and an extended Bayesian information criterion (eBIC) for graphical local Whittle estimation.

\subsection{Thresholding local Whittle estimation} \label{ss:tuning-threshold}

Cross-validation is generally suggested to select the penalization parameter for thresholding covariance matrix estimation; see \cite{bickel2008covariance}. \cite{Sun2018:LargeSpectral} modified it to sampling over Fourier frequencies in order to account for temporal dependence in spectral density matrix estimation. Similarly, we propose to select the optimal thresholding parameter $\rho$ by cross-validating over the periodogram \eqref{eq:periodogram}. More precisely, we split the sequence of the periodogram evaluated at different frequencies into two groups. Then, we apply the thresholding estimator of the long-run variance matrix \eqref{eq:thresholding} to the first group. The long-run variance matrix estimator from the latter group is used as a reference. The optimal thresholding parameter is selected by minimizing the average Frobenius norm (squared) between the thresholding estimators and the reference estimators of the long-run variance matrix. The detailed procedure can be found in Algorithm \ref{al:tuning-threshold}. As in \cite{Sun2018:LargeSpectral}, this procedure remains to be justified in theory even for short-range dependent series.

\begin{algorithm} \caption{Threshold selection by cross-validation over periodogram} \label{al:tuning-threshold}
	\KwIn{$I_X(\lambda_j)$, $j=1, \ldots, m$, $\widehat D$. Range $\mathcal{L}$ of $\rho$. Number $N_{1}$ of validation sets.}
	\KwOut{Optimal threshold parameter $\widehat \rho := \argmin{\rho \in \mathcal{L}}~ \widehat R_\rho $.}
	\For{ $\rho \in \mathcal{L}$}{
	\For{ $k=1, \ldots, N_1$}{
		1. Randomly divide $\{1, \ldots, m \}$ into two sets $J_1$ and $J_2$ of sizes $m_1 = [m/2]$ and $m_{2} = m-[m/2]$, respectively.\\
		2. Calculate $\widehat G_{1,k} = m_1^{-1} \sum_{j \in J_1} \lambda_j^{\widehat D} I_X(\lambda_j) \lambda_j^{\widehat D}$ and $\widehat G_{2,k} = m_2^{-1} \sum_{j \in J_2} \lambda_j^{\widehat D} I_X(\lambda_j) \lambda_j^{\widehat D}$.
	}
	Obtain $\widehat R_\rho = {N_1}^{-1} \sum_{k=1}^{N_1} \| T_\rho (\widehat G_{1,k}(\widehat D)) - \widehat G_{2,k} (\widehat D) \|_F^2$. 
}
\end{algorithm}

\subsection{Graphical local Whittle estimation} \label{ss:tuning-Graphical}
For the penalization parameter in graphical local Whittle estimation, we suggest to use an extended Bayesian information criterion (eBIC). Tuning parameter selection for penalized likelihood estimation by an eBIC has been studied by multiple authors allowing the dimension to grow with the sample size. The eBIC was proposed by \cite{foygel2010extended} for Gaussian graphical models and further used in \cite{foygel2011bayesian} for model selection in sparse generalized linear models.
Later, \cite{gao2012tuning} proved that using the eBIC to select the tuning parameter in penalized likelihood estimation with the so-called SCAD penalty (\cite{fan2001variable}) can lead to consistent graphical model selection.
See also \cite{chen2012extended}.
To be more specific, we use the following criterion
\begin{equation} \label{e:eBIC}
\widehat{P}_{\rm eBIC} = \argmin{P \in \mathbb{G}}~ {\rm eBIC}_\gamma (P), \hspace{0.2cm}
{\rm eBIC}_\gamma(P) = {\rm tr}(\widehat{G}(\widehat{D}) P) - \log |P| + \| P \|_0 \frac{1}{N} (\log N + 4 \gamma \log p),
\end{equation}
where $\mathbb{G}$ is the set of all $\widehat{P}_{\rho}$ estimated by \eqref{eq:Prholasso} over a range of $\rho$ and $\|P\|_0$ denotes the norm counting the number of non-zero elements in $P$. Furthermore, the criterion is indexed by a parameter $\gamma \in [0,1]$; see \cite{foygel2010extended} and \cite{chen2008extended} for the Bayesian interpretation of $\gamma$.
Then, $\widehat{P}_{\rm eBIC}$ gives us a data-driven estimate of $\widehat{P}_{\rho}$ and the associated penalty $\rho$.

In order to determine $\widehat{P}_{\rho}$ over a range of $\rho$ and the set $\mathbb{G}$, we consider an algorithm to compute the graphical local Whittle estimator for a given penalty parameter $\rho$. This algorithm is a natural extension of graphical LASSO algorithms for real symmetric covariance matrices to complex-valued Hermitian matrices. The generalization to complex-valued Hermitian matrices is necessary since the true precision matrix $P_{0}$ is possibly complex-valued.
We propose a complex-valued alternating linearization method (ALM) which is a variation of alternating direction method of multipliers (ADMM) proposed in \cite{scheinberg2010sparse}. Our limited simulation study shows that the proposed method converges faster than the na\"ive ADMM algorithm. We also note that the complex-valued ADMM for SRD series was considered by \cite{jung2015graphical}.

The ALM (Algorithm \ref{al:ALM}) solves the problem 
\begin{equation*}
\argmin{P, Y} \left\{ -\log|P| + {\rm tr}(\widehat G P) +  \rho \| Y \|_{1,off}  \right\}
\end{equation*}
subject to $ P = Y$ being positive definite. It invokes the ADMM algorithm to find sparse positive definite estimates of $P_{0}$ by introducing augmented Lagrangian. It furthermore carefully selects augmented Lagrangian penalty parameter $\mu_k$ so that the positive definiteness is achieved throughout the iterations. For the initial estimator $\widetilde{P}$ in lower dimensions, we take $(\widehat{G}(\widehat{D}))^{-1}$. The shrinkage operator in Step 4 of Algorithm \ref{al:ALM} is defined as ${\rm shrink}(M,\nu) = \sign(M_{rs}) \max( |M_{rs}| - \nu, 0) $ for a matrix $M=(M_{rs})_{r,s =1, \dots, p}$ and some $\nu \geq 0$. 
Step 5 of Algorithm \ref{al:ALM} requires to update $\mu_{k}$. 
It is reduced by a constant factor $\eta_{\mu}$ on every $N_{\mu}$ iteration till a lower bound is achieved by following the idea of
\cite{scheinberg2010sparse}. That is,  set $\mu$ as $\max\{\mu \eta_{\mu}, \widebar{\mu} \}$ to reduce $\mu$ by a constant factor $\eta_{\mu}$ on every $N_{\mu}$ iteration. In this paper, we choose   $\mu_{0} = .01$, $N_{\mu} = 10$,  $\widebar{\mu} = 10^{-3}$ and $\eta_{\mu} = 1/4$. Finally, we terminate the ALM algorithm following the stopping rules in (20) in \cite{scheinberg2010sparse} except that the first condition is replaced by stopping after 1000 iterations.

\begin{algorithm} \caption{Alternating linearization method algorithm for graphical local Whittle}
	\KwIn{$Y^0 = {\rm diag}(\widetilde P)$ with initial estimator $\widetilde P$ of $P_{0}$, $\Lambda^0 = 0$, $\mu_0$, $\rho$.} \label{al:ALM}
	\KwOut{Sparse estimation of $P_{0}$.}
	{\bf Repeat until convergence:} \\
	\For{ $k=0, 1, \ldots$}{
		1. Let $W^{k+1} = Y^k + \mu_k (\Lambda^k - \widehat G)$ and perform the singular value decomposition $W = U {\rm diag}(\eta_1,\dots,\eta_p) V^*$. \\
		2. $X^{k+1} = U {\rm diag}(\gamma_1, \dots, \gamma_p) V^*$, where $\gamma_i = .5 (\eta_i + \sqrt{\eta_i^2 + 4\mu_k})$, $i=1, \ldots, p$.\\
		3. $Y^{k+1} = {\rm shrink}(X^{k+1} - \mu_{k}(\widehat G - (X^{k+1})^{-1}), \mu_{k} \rho)$. \\
		4. $\Lambda^{k+1} = \widehat G - (X^{k+1})^{-1} + (X^{k+1} - Y^{k+1})/\mu_{k}$. \\
		5. Pick $\mu_{k+1} \le \mu_k$.
	}
\end{algorithm}

We conclude with a discussion on the criterion \eqref{e:eBIC}.
\begin{remark} \label{re:eBIC}
The criterion \eqref{e:eBIC} is adapted from \cite{foygel2010extended}. The log-likelihood function in equation (2) in \cite{foygel2010extended} which gives an estimate for the inverse covariance matrix of a Gaussian model is replaced by the negative log-likelihood function $\ell$ in \eqref{eq:neglog} in terms of the matrix $P_{0}=G_{0}^{-1}$. The criterion seems to perform well in our simulation study. However,
from a theoretical perspective, a consistency result can only be established when the penalization term in \eqref{e:eBIC} is chosen in dependence of the rate of convergence of the $(r,s)$th component of $\widehat{G}(\widehat{D})$ around the true $G_{0,rs}$. 
In their theoretical results, \cite{foygel2010extended} considered independent and identically distributed Gaussian random vectors. In this case, the penalization depends 
on the convergence rate of the deviation of the sample covariance matrix around the true covariance matrix, that is $\sqrt{\frac{\log(p)}{N}}$. \cite{foygel2010extended} proved that the eBIC selects the correct model consistently in a high-dimensional regime $p,N\to \infty$; see Theorem 5 in \cite{foygel2010extended}.
To establish an analogous consistency result in our setting, we suggest to replace the penalization $\frac{\log(p)}{N}$ in the eBIC objective function by $\delta^{2}$ in \eqref{eq:deltaepsilon}, since $\delta$ gives the convergence rate of the maximum norm of $\widehat{G}(\widehat{D})$ around the true $G_{0}$. The detailed proof of such a consistency result goes beyond the scope of this work and, from a practical perspective, the usage of \eqref{e:eBIC} seems to be more natural, since including $\delta$ in the penalization would involve a number of unknown parameters.
\end{remark}

\section{Simulation study}
\label{s:sim}
In this section, we examine the proposed methods through simulations. Our two-stage approach first estimates the memory parameters $D_{0}$ and non-sparse long-run variance matrix $G_{0}$ based on the local Whittle estimator \eqref{eq:G(D)}. Then, we apply either thresholding or graphical local Whittle estimation to get sparse estimators. Several tuning parameters need to be selected for our methods. We comment first on the number of frequencies used in local Whittle estimation. Details about the selection of tuning parameters for sparse estimation are provided in the subsequent sections. 

The selection of the number of frequencies $m$ is important in practice and should be balanced: be small enough to capture long-range dependence and large enough to get reliable estimates. In univariate local Whittle estimation, asymptotic theory suggests $m=O(N^{.8})$; see \cite{robinson1995gaussian}.
There are several papers studying data dependent bandwidth. In this regard, the most influential paper is \cite{henry2001robust}. \cite{henry2001robust} suggests a bandwidth minimizing the mean squared error of the univariate local Whittle estimator. A visual approach to ensure the balance between capturing long-range dependence and getting reliable estimates is proposed in \cite{BaekKechagiasPipiras2020}. \cite{BaekKechagiasPipiras2020} used the so-called local Whittle plots which present estimates of the memory parameters as function of the tuning parameter $m$ supplemented with confidence intervals. 

In our simulation study we base our choice on the asymptotic theory in \cite{robinson1995gaussian} suggesting $m = [N^{.8}]$, where $[x]$ is the largest integer less than or equal to $x$.

\subsection{Thresholding local Whittle estimation} \label{s:sub2Sim}

We consider the following three data generating processes (DGPs) to evaluate the finite sample performance of the thresholding local Whittle method. The one-sided and two-sided VARFIMA(0, $D$, 0) (Vector Autoregressive Fractionally Integrated Moving Average) models are used to generate multivariate long-range dependent time series. 
See \cite{KechagiasPipiras, kechagias2015identification} for definitions of these models.
We consider dimensions $p=20, 40, 60$ with sample sizes $N=200, 400, 1000$. Long-range dependent parameters $D_{0}$ are selected at random from .1 to .45, if not specified otherwise. We use the notation $G(r,s)$ to denote the $(r,s)$th entry of $G_{0}$.
To be more precise, the DGP's are given as follows:

\begin{itemize}
	\item[]
	\begin{itemize}
		\item[(thDGP1)] One-sided VARFIMA($0, D, 0$) with $G_{0} = (I_{[p/20]} \otimes G_1)$, where the diagonal entries of $G_1$ are .159 except $G_1(1,1) = G_1(6,6) = .312$, $G_1(11,11)= G_1(14,14) = .212$, $G_1(3,3) = G_1(20, 20) = .189$, and $G_1(1,6) = -.208 -.064i$, $G_1(3,20) = -.074+.015i$, and $G_1(11,14) = -.105-.013i$.
		
		\item[(thDGP2)] Two-sided VARFIMA($0, D, 0$) with $G_{0} = (I_{[p/20]} \otimes G_2)$, where the diagonal entries of $G_2$ are 1 and $G_2(3,9) = .5 +.2i$, $G_2(5, 14) = .4+ .2i$.
		
		\item[(thDGP3)] Two-sided VARFIMA($0, D, 0$) with banded $G_{0}$ matrix given by  $G_{0} = (I_{[p/20]} \otimes G_3)$,
		where the diagonal entries of $G_3$ are 1 and $G_3(r,r+1) = .4 + .2i$, $r=1, \ldots, 19$.  
	\end{itemize} 
\end{itemize}
For the reader's convenience, the sparsity patterns imposed on $G_{k}$, $k=1,2,3,$ are depicted in Figure \ref{f:graphical-DGP}. 

The thresholding parameter $\rho$ is selected based on cross-validation introduced in Section \ref{ss:tuning-threshold}. We set $\mathcal{L}$ to be the smallest and the largest value of $|\widehat G(\widehat D)|$ in \eqref{eq:G(D)}. We evaluated the performance of the thresholding local Whittle estimator using the mean squared error of the  parameters $\widehat D$, total number of misspecified coefficients
$ \sum_{r \ge s} ( \mathds{1}_{\{ T_{\rho}(\widehat G_{rs}(\widehat D) = 0\}} - \mathds{1}_{\{G_{0,rs} = 0\}})^2$, 
the Frobenius norm $\| T_{\rho}(\widehat G(\widehat D)) - G_{0} \|_F$ and the spectral norm $\| T_{\rho}(\widehat G(\widehat D)) - G_{0} \|$. The performance measures are calculated based on 1000 iterations.

\begin{figure}
	\centering
	\includegraphics[width=5 in, height=1.6 in]{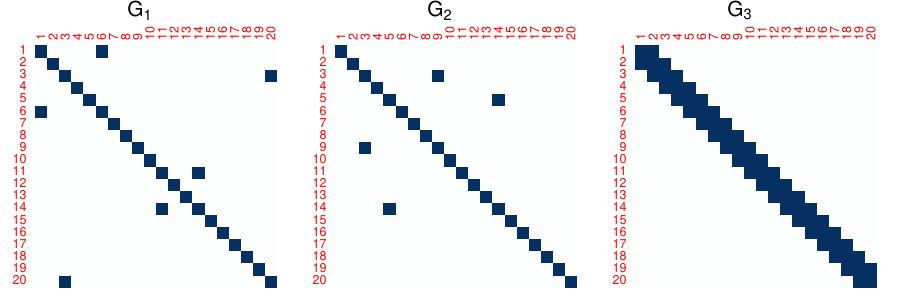}
	\vspace{-2 mm}
	\caption{Sparsity patterns of DGPs.} \label{f:graphical-DGP}
\end{figure}

\begin{figure}
	\centering
	\includegraphics[width=6 in, height=1.85 in]{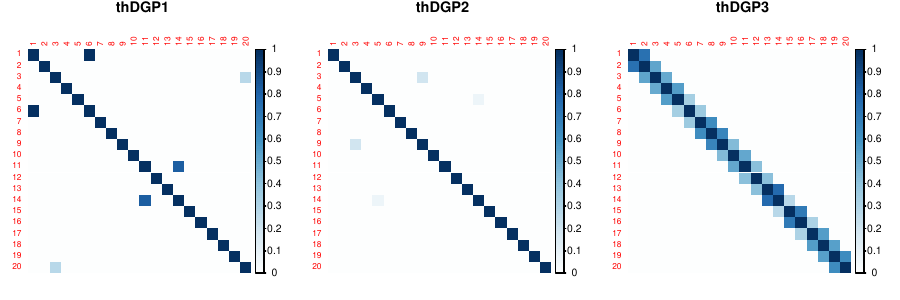}
	\vspace{-2 mm}
	\caption{The number of times having non-zero coefficients using thresholding local Whittle estimation with cross-validation tuning parameter selection where $p=20$, $N=200$.} \label{f:thDGP-bic}
\end{figure}

Figure \ref{f:thDGP-bic} shows the proportion of times each component is estimated to be non-zero using our thresholding local Whittle approach with $p=20$ and $N=200$. The estimation is close to the true sparsity pattern though some locations are more difficult to be estimated correctly. For example, in thDGP1, $G_1(3, 20)$ is detected as non-zero less frequently, but this is natural since $|G_1(3, 20)| = .075$ is smaller than the other coefficients. However, such misspecification vanishes as sample size increases. Table \ref{tab:threshold} in Appendix \ref{app:table} shows the performance measures calculated for $G_{0}$. It can be observed that all performance measures are decreasing as the sample size increases. 

\subsection{Graphical local Whittle estimation} \label{s:sub1Sim}
We consider the following three DGPs to see the performance of the graphical local Whittle estimation. We introduce the matrices $P_{k}$, $k=1,2,3$, to define the sparsity pattern of $P_{0}$. The notation $P_{k}(r,s)$, $k=1,2,3$, denotes the $(r,s)$th entry of $P_{k}$.

\begin{itemize}
	\item[]
\begin{itemize}
	\item[(DGP1)] One-sided VARFIMA($0, D, 0$) with $P_{0} = (I_{[p/20]} \otimes P_1)$, where $P_1(r,r) = 6.28$, $P_1(1,6) = 4.20+1.29i$, $P_1(3,20) = 2.46 - .51i$ and $P_1(11,14) = 3.12+.39i$. 
	 
	\item[(DGP2)] Two-sided VARFIMA($0, D, 0$) with $P_{0} = (I_{[p/20]} \otimes P_2)$, where the diagonal entries of $P_{2}$ are 1 and $P_2(3,9) = .5 + .2i$, $P_2(5,14) = .4 + .2i$. 

 	\item[(DGP3)] Two-sided VARFIMA($0, D, 0$) with banded $P_{0}$ matrix given by  $P_{0} = (I_{[p/20]} \otimes P_3)$, where the diagonal entries of $P_3$ are 1 and $P_3(r,r+1) = .2 + .1i$, $r=1, \ldots, 19$.  
 	
\end{itemize} 
\end{itemize}

The penalty parameter $\rho$ is chosen by the eBIC in \eqref{e:eBIC} with $\gamma = 1$. 
Also, the ALM algorithm requires the Lagrangian penalty parameter, which as noted above is set to decrease by $\frac{1}{4}$ on every $10$th iteration with $\mu_0 = 10^{-2}$ but it is taken no smaller than $10^{-6}$. The same performance measures are used as those for thresholding local Whittle estimation in Section \ref{s:sub2Sim}, but the inverse of long-run variance $P_{0}$ is used instead of $G_{0}$.

\begin{figure}[t!]
	\centering
	\includegraphics[width=6 in, height=2 in]{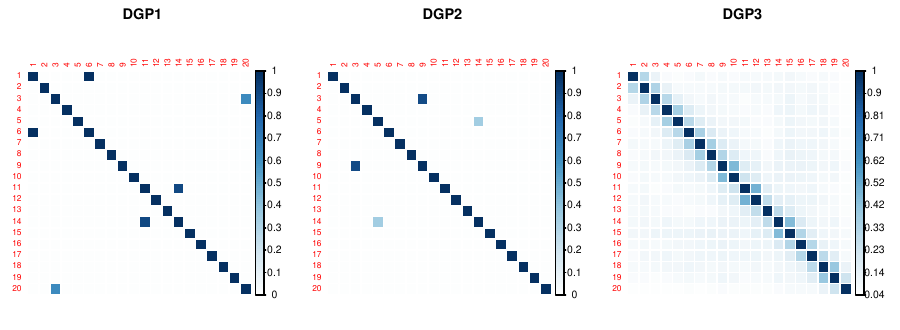}
	\vspace{-2 mm}
	\caption{The number of times having non-zero coefficients using graphical local Whittle estimation with eBIC tuning parameter selection where $p=20$, $N=200$.} \label{f:DGP1}
\end{figure}

Figure \ref{f:DGP1} shows  the proportions of estimated non-zero coefficients in $P_k$, $k=1,2,3$, when the dimension is $p=20$ and the sample size is $N=200$. Note that the sparsity patterns of $P_k$, $k=1,2,3$, are the same as in Figure \ref{f:graphical-DGP}. It can be observed that the graphical local Whittle estimator recovers the sparsity pattern of the underlying model. For example, the non-zero coefficient $P_1(11, 14)$ is found to be non-zero about 90\% of times by our proposed method. More detailed performance measures can be found in 
Table \ref{tab:graphicalLW}. Table \ref{tab:graphicalLW} suggests that for all considered models our performance measures tend to improve as the sample size increases for fixed dimension. The other way around, for fixed sample size $N$, the performance measures are getting worse as the dimension increases. We can conclude that the graphical local Whittle estimator correctly identifies zero coefficients and yields estimates close to the true values as the sample size increases.

\section{Real data application}
\label{s:realdata}

\begin{figure}
	\centering
	\includegraphics[width=6 in, height=3 in]{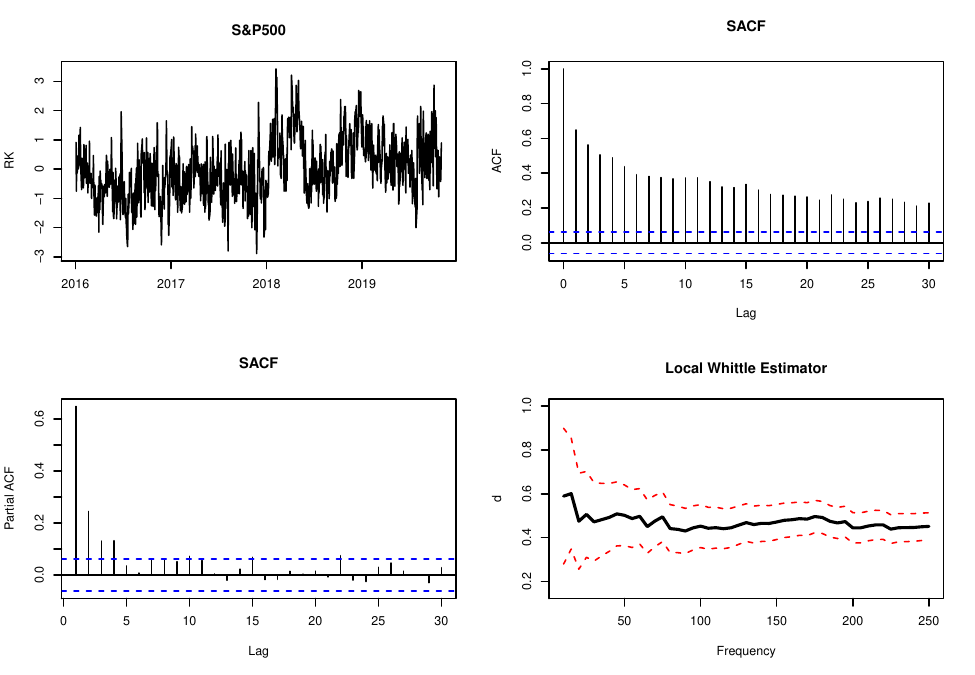}
	\vspace{-2 mm}
	\caption{Time plot (top left), sample ACF plot (top right), sample PACF plot (bottom left) and the local Whittle estimators (bottom right) for S\&P 500.} \label{f:sp500}
\end{figure}

In this section, we apply our proposed methods to 31 realized volatilities obtained by aggregating the 5-min within-day returns taken from the Oxford Man Institute of Quantitative Finance (\url {http://www.oxford-man.ox.ac.uk}). We adjusted the different opening days over the stock markets by applying linear interpolation and log-transforming the data. Furthermore, we removed the possible mean changes in the data by following the proposed procedure in \cite{baek2014distinguishing}. We also studentized each series to have zero-mean and unit variance in order to focus on volatility linkage.
The total number of observations is 1001 dating from Jan 4, 2016 to Oct 31, 2019. The 31 global stock indices are 
AEX index (AEX), 
All Ordinaries (AORD),
Bell 20 index (BFX),  
S\&P BSE Sensex (BSESN), 
PSI All Shares Gross Return Index (BVLG),
BVSP BOVESPA Index (BVSP), 
Dow Jones Industrial Average (DJI),
CAC 40 (FCHI),
FTSE MIB (FTMIB), 
FTSE 100 (FTSE),
DAX (GDAXI),
S\&P/TSX Composite index (GSPTSE),
HANG SENG Index (HSI), 
IBEX 35 Index (IBEX), 
Nasdaq 100 (IXIC),
Korea Composite Stock Price Index (KS11), 
Karachi SE 100 Index (KSE),
IPC Mexico (MXX), 
Nikkei 225 (N225),
NIFTY 50 (NSEI),
OMX Copenhagen 20 Index (OMXC20),
OMX Helsinki All Share Index (OMXHPI),
OMX Stockholm All Share Index (OMXSPI),
Oslo Exchange All-share Index (OSEAX),
Russel 2000 (RUT),
Madrid General Index (SMSI),
S\&P 500 Index (SPX),
Shanghai Composite Index (SSEC),
Swiss Stock Market Index (SSMI),
Straits Times Index (STI), 
EURO STOXX 50 (STOXX50E).

Figure \ref{f:sp500} shows some exploratory plots for S\&P 500 such as time plot (top left), sample autocorrelation plot (top right), sample partial autocorrelation plot (bottom left) and long-range dependent parameter estimates over the number of frequencies used (bottom right). It shows typical features of long-range dependent time series: non-cyclical trends, slow decay of autocorrelations and  memory parameters close to .5. This suggests that multivariate long-range dependence modeling is meaningful and we applied our methods to estimate long-run variance and precision matrices. 

\begin{figure}[t!]
	\centering
	\includegraphics[width=6.5 in, height=2.2 in]{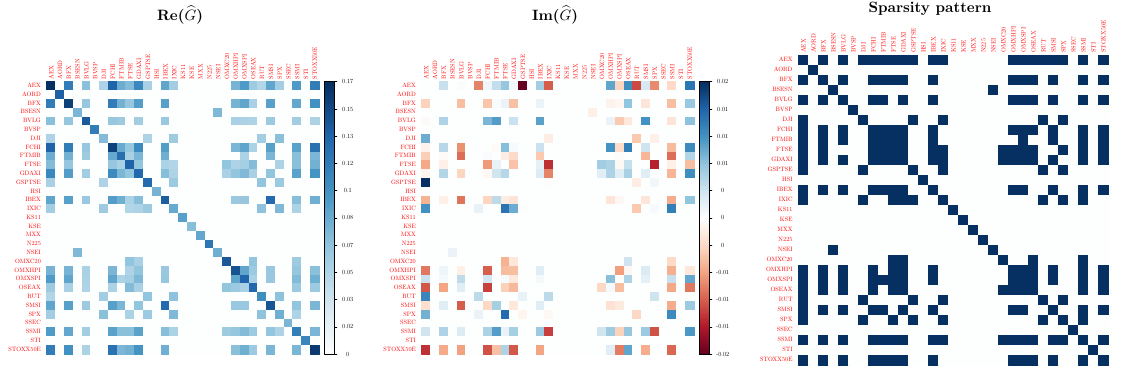}
	\vspace{-2 mm}
	\caption{Thresholding local Whittle estimation.} \label{f:G}
\end{figure}

\begin{figure}[t!]
	\centering
	\includegraphics[width=6.5 in, height=2.2 in]{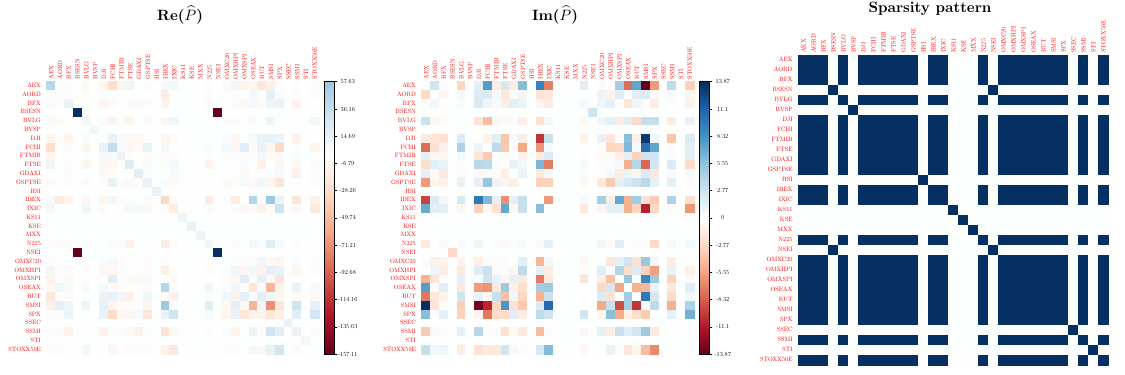}
	\vspace{-2 mm}
	\caption{Graphical local Whittle estimation.} \label{f:Ginv}
\end{figure}

\begin{figure}[t!]
	\centering
	\includegraphics[width=5.5 in, height=3 in]{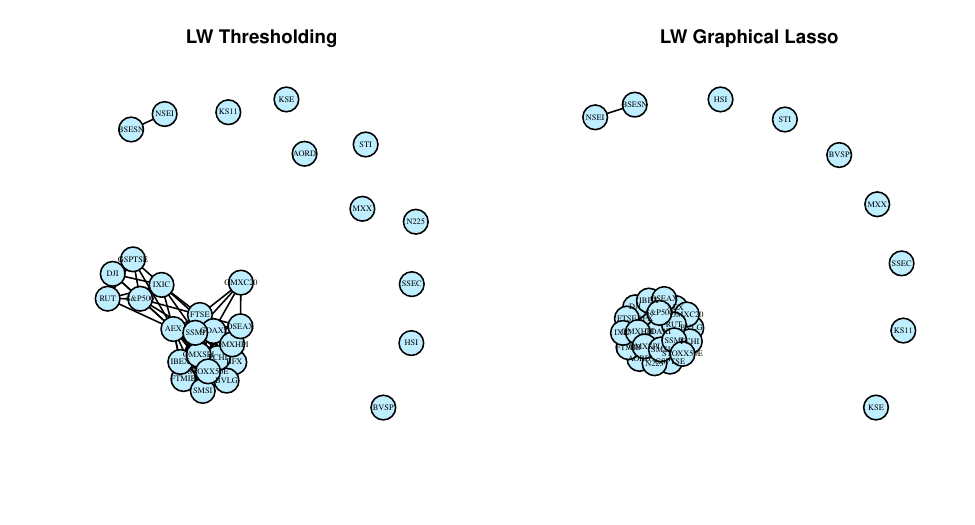}
	\vspace{-10 mm}
	\caption{Network graph representation.} \label{f:graph}
\end{figure}

Sparse long-run variance matrix estimation by thresholding is presented in Figure \ref{f:G}. The left panel shows the real parts of $\widehat G$ and the imaginary parts can be found in the middle panel. The right panel presents the sparsity pattern, that is, the locations of the non-zero coefficients are colored dark blue. 
Figure \ref{f:Ginv} follows the same structure as Figure \ref{f:G} but shows the estimated sparse precision matrix $\widehat P$ using graphical local Whittle estimation. The penalty parameter is selected by using eBIC in \eqref{e:eBIC} with $\gamma=1$. 
BSESN and NSEI, both Indian market indices, showed the largest real coefficients in absolute term. 

Observe that the thresholding method gives sparser estimation. We also observe the clustering of stock market indices for both long-run variance and precision matrices. It is seen more clearly from a network representation of linkages as in Figure \ref{f:graph}. More interestingly, both methods give similar clusterings. The isolated nodes based on $\widehat P$ correspond
to India (BSESN/NSEI), China (SSEC), South Korea (KS11), Hong Kong (HSI), Singapore (STI), Mexico (MXX), Portugal (BSVP), Pakistan (KSE). The sparse long-run variance $\widehat G$ using thresholding adds Japan (N225) and Australia (AORL). That is, it seems that our empirical analysis tells that stock market indices can be roughly divided into the US-European market and somewhat independent markets from the rest of the world including Asia, India, Australia and Mexico in late 2010s. The thresholding method seems to further distinguish US and European markets. In fact, all US stock indices showed larger values of memory parameter estimates by having more than .4 which is not the case for European market indices. 
It is particularly interesting to see the grouping of multinational realized volatilities according to regional or spatial dependence.

Finally, we note but do not include the results here, that the analysis of the data assuming short-range dependence led to highly non-sparse patterns for the considered connectivity matrices. These findings were consistent with the scenario where we simulated data from LRD models as in Section \ref{s:sim} but worked as if they were SRD.

\section{Conclusions}
\label{s:conclusions}

In this work, we derived consistency results for the long-run variance and precision matrices
in a non-asymptotic regime allowing the underlying time series to admit a general dependence structure including long-range dependence. The results are derived under mild assumptions on the underlying time series which is allowed to be either Gaussian or have a linear representation. The shrinkage techniques are thresholding and graphical local Whittle estimation. Our non-asymptotic results can be used to infer consistency in a high-dimensional regime where the number of component series can be large compared to the sample size. 

The key technical contribution is the incorporation of the memory parameter matrix which carries information about the dependence structure of the underlying time series. Our results allow estimating those memory parameters simultaneously while estimating the long-run variance and the precision matrices sparsely. 

We see the proposed proof techniques as a basis to study other questions concerning high-dimensional long-range dependence.
Possible future directions include the use of other shrinkage methods, for example, adaptive penalizations; sparse estimation of fractionally (co)integrated vector autoregressive (VAR) models and sparse estimation of linear regression with long-range dependent errors.

\appendix

\setcounter{table}{0}
\renewcommand{\thetable}{\Alph{section}.\arabic{table}}

\section{Quantities in main results and special cases}
\label{se:tablesResults}

In this section we provide the expressions for several quantities appearing in Propositions \ref{prop:supGhat1}--\ref{prop:supGhat4}. In addition, we will discuss the case when the underlying time series admits only short- or long-range dependence.
\par
Recall that the non-asymptotic bounds in Propositions \ref{prop:supGhat1}--\ref{prop:supGhat4} are all of the form
\begin{equation} \label{eq:genrep}
\bm{C} \vertiii{G}
\sqrt{\frac{\log(p)}{\mathcal{R}_{i}}} 
+
\mathcal{T}_{i}
\hspace{0.2cm}
\text{ with }
\hspace{0.2cm}
\mathcal{R}_{i} = \min\{s_{i,N} \mathcal{R}_{i1}, s^{2}_{i,N} \mathcal{R}_{i2}\}.
\end{equation}
Here, $\mathcal{R}_{i}$'s arise in bounds on the probabilistic parts and $\mathcal{T}_{i}$'s on deterministic parts. The sequences $s_{i,N}$ can be found in Propositions \ref{prop:supGhat1}--\ref{prop:supGhat4}.
The quantities $\mathcal{R}_{i1},\mathcal{R}_{i2}$ and $\mathcal{T}_{i}$ are respectively given in Tables \ref{label 1} and \ref{label 2} for Propositions \ref{prop:supGhat1}--\ref{prop:supGhat4}.

Recall that $\Delta_{1},\Delta_{2}$ determine the interval of admissible estimates of the memory parameters and the quantity $\widebar{\Delta}_{N}$ is defined in \eqref{eq:delta-0-1/4}. We further introduce a few quantities which will allow us to express our bounds in a simplified way and emphasize the necessary distinction between different ranges of the  memory parameters as will become clearer in the proofs. Let 
\begin{equation} \label{eq:delta-u-l}
\begin{gathered}
\widebar{\Delta}_{u} = \max_{r =1,\dots,p}\{ \Delta_{2}, (\frac{1}{4} + d_{0,r})\mathds{1}_{\{d_{0,r} < \frac{1}{4}\}}\},
\\
\widebar{\Delta}_{l,1} = \max_{r=1,\dots,p} d_{0,r} \mathds{1}_{\{d_{0,r} < \Delta_{1} + \frac{1}{2}\}} ,
\hspace{0.2cm}
\widebar{\Delta}_{l,2} = \max_{r =1,\dots,p} (\frac{1}{4} \mathds{1}_{\{ d_{0,r} < \min\{\Delta_{1} + \frac{1}{2}, \frac{1}{4}\}\}} + d_{0,r} \mathds{1}_{\{d_{0,r} < \Delta_{1} + \frac{1}{2}\}}),
\end{gathered}
\end{equation}
where the subscripts $u$ and $l$ allude to the dependence on $\Delta_{2}$ and $\Delta_{1}$, respectively.
\par
For the bounds on the deterministic parts we will use
\begin{equation}
\label{eq:mR:Qrs}
Q_{m} = 
\vertiii{G} 
\frac{72 (\cos(\lambda_{m}/2))^{-2} }{\pi(1+2\min\{\Delta_{1},-\Delta_{2}\})}
+ \bm{c}_{G,2} 4(2+ \log(m))
\end{equation}
and $\widetilde{\Delta}_{r} = (d_{0,r}-\frac{1}{2}+\Delta) \mathds{1}_{\{d_{0,r} \geq \Delta_{1} +\frac{1}{2}\}} + \Delta_{1} \mathds{1}_{\{d_{0,r} < \Delta_{1} +\frac{1}{2}\}}$. Recall further that $q$ appears in Assumption \ref{ass:G-G}.

\begin{table}[h]
\centering
\renewcommand{\arraystretch}{1.7}
\begin{tabular}{c | c}
\hline
Prop. & $\mathcal{R}_{i1}$ and $\mathcal{R}_{i2}$ in \eqref{eq:genrep} (probabilistic parts) \\
\hline \hline
\ref{prop:supGhat1} & 
$\begin{gathered}
\vspace{0.02cm}\\
\mathcal{R}_{11}=
\min\Big\{
m^{1-2\Delta_{2}+2\varepsilon} N^{-2\varepsilon},
m^{1 - \Delta_{2} - \Delta_{1}+\varepsilon} N^{\Delta_{1}-\varepsilon} , m N^{\Delta_{1}-\varepsilon}
\Big\},
\\
\mathcal{R}_{12}=
\min\Big\{
m^{2-4\widebar{\Delta}_{u}+4\varepsilon} N^{-4\varepsilon},
m^{\frac{3}{2} - 2\widebar{\Delta}_{u} - 2\Delta_{1}+2\varepsilon} N^{2\Delta_{1}-2\varepsilon} , mN^{2\Delta_{1}-2\varepsilon},
m^{2}\Big( \sum_{j=1}^{m}\lambda_{j}^{4 \Delta_{1}} \Big)^{-1}
\Big\}.
\end{gathered}$
\\
\ref{prop:supGhat2}
& 
$\begin{gathered}
\vspace{0.02cm}\\
\mathcal{R}_{21}
=
\min \Big\{
m^{1-2 \Delta_{2}}, mN^{2\Delta_{1}} \Big\},
\hspace{0.2cm}
\mathcal{R}_{22}
=
\min \Big\{
m^{2-4\bar{\Delta}_{u}}, 
m^{2} \Big( \sum_{j=1}^{m} \lambda_{j}^{4\Delta_{1}} \Big)^{-1} \Big\}.
\end{gathered}$ 
 \\ 
\ref{prop:supGhat3} & 
$\begin{gathered}
\vspace{0.02cm}\\
\mathcal{R}_{31}= \min \Big\{
m^{1-2\Delta_{2}}, m^{2 \Delta}, m^{1-2\widebar{\Delta}_{l,1}+2\Delta_{1}}, mN^{2 \Delta_{1}}  
\Big\},
\\
\mathcal{R}_{32}= \min \Big\{
m^{2-4\widebar{\Delta}_{u}}, m^{4 \Delta}, m^{4\Delta} N^{1+4\Delta_{1}}, m^{2-4\widebar{\Delta}_{l,2}+4\Delta_{1}}, m^{2}N^{4 \Delta_{1}}, m^{2} \Big( \sum_{j=1}^{m} \lambda_{j}^{4\Delta_{1}} \Big)^{-1}  
\Big\}.
\end{gathered}$
 \\ 
\ref{prop:supGhat4} & 
$\begin{gathered}
\vspace{0.02cm}\\
\mathcal{R}_{41}
= \min\{m \ell^{-1+2\Delta}, m^{1-2 \Delta_{2}}\},
\hspace{0.2cm}
\mathcal{R}_{42}
= \min\{m \ell^{-2+4\Delta}, m^{2-4\bar{\Delta}_{u}}\}.
\end{gathered}$ 
\end{tabular}
\caption{Expressions for $\mathcal{R}_{i1}$ and $\mathcal{R}_{i2}$ in \eqref{eq:genrep}.}
\label{label 1}
\end{table}

\begin{table}[h]
\centering
\renewcommand{\arraystretch}{1.7}
\begin{tabular}{c | c}
\hline
Prop. & $\mathcal{T}_{i}$ in \eqref{eq:genrep} (deterministic parts) \\
\hline \hline
\ref{prop:supGhat1}  &
$\begin{gathered}
\vspace{0.02cm}\\
\mathcal{T}_{1}(\varepsilon)
=
\bm{c}_{G,1}
\frac{1}{2\pi} \lambda_{m}^{2q-2\varepsilon}
+
\frac{1}{2 \pi m} \Big(1 + \frac{1}{2\varepsilon} \Big)
\lambda_{1}^{-2\varepsilon} Q_{m}
\end{gathered}$
\\
\ref{prop:supGhat2}
&
$\begin{gathered}
\vspace{0.02cm}\\
\mathcal{T}_{2} =
\bm{c}_{G,1}
\frac{1}{2 \pi}  \lambda_{m}^{2 q}
+
\frac{1}{\pi m} \log(m)Q_{m}
\end{gathered}$ 
 \\ 
\ref{prop:supGhat3} & 
$\begin{gathered}
\vspace{0.02cm}\\
\mathcal{T}_{3}
=
\max_{r = 1,\dots,p} \mathcal{T}_{3}(\widetilde{\Delta}_{r}) 
\\= 
\max_{r = 1,\dots,p} \left(
\bm{c}_{G,1}
\frac{1}{2 \pi} \lambda_{m}^{2q} \frac{1}{2\widetilde{\Delta}_{r}} 
+
\frac{1}{2 \pi} m^{-2\widetilde{\Delta}_{r}} \Big(1+\frac{1}{1-2\widetilde{\Delta}_{r}} \Big) Q_{m} \right)
\end{gathered}$ 
 \\ 
\ref{prop:supGhat4} & 
$\begin{gathered}
\vspace{0.02cm}\\
\mathcal{T}_{4} = 
\bm{c}_{G,1}
\frac{1}{2 \pi} \lambda_{m}^{2q} \Big( \frac{1}{2\Delta} + 1 \Big)
+
\frac{1}{2 \pi m} \Big( \ell^{1-2\Delta} \Big(1+ \frac{1}{1-2\Delta} \Big)
+ \log(m) \Big) Q_{m}
\end{gathered}$ 
\end{tabular}
\caption{Expressions for $\mathcal{T}_{i}$ in \eqref{eq:genrep}.}
\label{label 2}
\end{table}

As a corollary of Proposition \ref{prop:Op}, we give the result where the underlying process is known to admit only short- or long-range dependence, that is, the true memory parameters satisfy $D_{0} \succcurlyeq 0$. 
\begin{corollary} \label{cor:Op}
Let $\{X_{n}\}_{n \in \ZZ}$ be a $p$-dimensional, stationary, centered time series with spectral density $f_{X}$ and suppose Assumptions \ref{ass:f0}--\ref{ass:GnG} are satisfied. Then, there are positive constants $c_{1},c_{2}$ such that for any $\bm{C} \geq 1$,
\begin{equation*} 
\Prob( \| \widehat{G}(\widehat{D})-G_{0} \|_{\max} > \delta )
\leq
c_{1} p^{2-c_{2}\bm{C}}
\end{equation*}
with $\delta$ as in \eqref{eq:deltaepsilon} and $\nu, \nu_{i}$, $i=1,2,3$, are as in \eqref{eq:nuinprop1}, \eqref{eq:nu1}, \eqref{eq:nu2} and \eqref{eq:nu3} with
\begin{equation} \label{eq:cor:Rs}
\begin{gathered}
\mathcal{R}_{11}=
m^{1-2\Delta_{2}+2\varepsilon} N^{-2\varepsilon}, 
\
\mathcal{R}_{12}=
\min\{
m^{2-4\widebar{\Delta}_{u}+4\varepsilon} N^{-4\varepsilon},
m^{\frac{3}{2} - 2\widebar{\Delta}_{u}+2\varepsilon} N^{-2\varepsilon}
\},
\\
\mathcal{R}_{21}
=
m^{1-2 \Delta_{2}},
\
\mathcal{R}_{22}
=
m^{2-4\bar{\Delta}_{u}},
\\
\mathcal{R}_{31}= \min \Big\{
m^{1-2\Delta_{2}}, m^{2 \Delta}, m^{1-2\widebar{\Delta}_{l,1}} 
\Big\},
\
\mathcal{R}_{32}= \min \Big\{
m^{2-4\widebar{\Delta}_{u}}, m^{4 \Delta}, m^{2-4\widebar{\Delta}_{l,2}}
\Big\},
\end{gathered}
\end{equation}
and it is assumed that $\varepsilon \in (0,\frac{1}{2})$.
\end{corollary}
The corollary is a simple consequence of setting $\Delta_{1} = 0$ in Proposition \ref{prop:Op}.

\section{Proofs of the main results}
\label{s:proofsA}

In this section we will give bounds on the probabilistic and deterministic terms in \eqref{eq:results_GhatGtilde}, stated respectively in Lemmas \ref{eq:prop:supGhatLs} and \ref{prop:bias}. Lemma \ref{eq:prop:supGhatLs} focusses on results under the assumption that the underlying process is Gaussian. Its analogue for linear processes can be found in Appendix \ref{se:linearprocesses}. Up to a constant the bounds are the same as for the Gaussian case. For this reason all proceeding results remain true.
Note that Lemma \ref{prop:bias} only relies on assumptions on the spectral density and remains valid as well.

The following Lemma \ref{prop:supGhat} gives multiple upper bounds on the probabilistic term in \eqref{eq:results_GhatGtilde}. Those different bounds are used later depending on whether the true parameters $d_{0,r}$ and $d_{0,s}$ are positive or negative. Recall in particular the notation $t_{j,r}(d)$ below \eqref{eq:crucialterms1} and $a_{r}, b_{r}$ in \eqref{eq:OmegaAB}--\eqref{eq:AB}.
Let
\begin{equation} \label{eq:prop:supGhatLs}
\begin{aligned}
L_{rs,i}
&= c_{r,i,N} \widetilde{c}_{s,i,N} 
\left( \| T(a_{r},a_{s}) \| +
\sup_{(d_{r},d_{s}) \in [a_{r},b_{r}] \times [a_{s},b_{s}]} \| \bm{T}(d_{r},d_{s}) \| \right),
\hspace{0.2cm}
i=1,\dots,4,
\\
L_{rs,5}
&= N^{\max\{d_{0,r},0\}+\max\{d_{0,s},0\}} 
\left( \| T(a_{r},a_{s}) \|_{F} +
\sup_{(d_{r},d_{s}) \in [a_{r},b_{r}] \times [a_{s},b_{s}]}  \| \bm{T}(d_{r},d_{s}) \|_{F} \right)
\end{aligned}
\end{equation}
with 
\begin{equation} \label{eq:TandboldT}
\begin{aligned}
T(d_{r},d_{s}) &= \diag( t_{1,r}(d_{r})t_{1,s}(d_{s}), \dots, t_{m,r}(d_{r})t_{m,s}(d_{s})),
\\
\bm{T}(d_{r},d_{s}) &= \diag( \| \nabla t_{1,r}(d_{r})t_{1,s}(d_{s}) \|_{F}, \dots, \| \nabla t_{m,r}(d_{r})t_{m,s}(d_{s}) \|_{F})
\end{aligned}
\end{equation} 
and
\begin{equation*} 
\begin{gathered}
c_{r,1,N}=N^{\max\{d_{0,r},0\}},
\widetilde{c}_{s,1,N}=N^{\max\{d_{0,s},0\}};
\hspace{0.2cm}
c_{r,2,N}= N^{\max\{d_{0,r},\frac{1}{4}\}},
\widetilde{c}_{s,2,N}= N^{\max\{d_{0,s},\frac{1}{4}\}};
\\
c_{r,3,N}= N^{\max\{d_{0,r},\frac{1}{4}\}},
\widetilde{c}_{s,3,N}=m^{\frac{1}{4}} N^{\max\{d_{0,s},0\}};
\hspace{0.2cm}
c_{r,4,N}=m^{\frac{1}{4}}N^{\max\{d_{0,r},0\}},
\widetilde{c}_{s,4,N}=m^{\frac{1}{4}}N^{\max\{d_{0,s},0\}}.
\end{gathered}
\end{equation*}
Note that $c_{r,i,N}$ is different from $\widetilde{c}_{r,i,N}$ only for $i=3$. This notation though will allow writing our arguments in a more unified way.

\begin{lemma} \label{prop:supGhat}
Let $\{X_{n}\}_{n \in \ZZ}$ be a $p$-dimensional stationary, centered, Gaussian time series with spectral density $f_{X}$ as in \eqref{eq:f}.
Then, there are positive constants $c_{1},c_{2}$ such that 
\begin{equation} \label{eq:concreal1}
\begin{aligned}
&\Prob \Big( \sup_{D \in \Omega } 
| \widehat{H}_{rs}(D) - \E \widehat{H}_{rs}(D) | > 
\vertiii{G} \nu \Big) \leq 
\mathcal{B}(r,s,i),
\hspace{0.2cm} i =2,\dots,5,
\end{aligned}
\end{equation}
for $\nu^2 \geq \gamma^4 L^2_{rs,i}/ (m^2 c_{2} )$, where
\begin{equation} \label{eq:mathcalB}
\mathcal{B}(r,s,i)=
c_{1} \exp\Bigg( -c_{2} \min 
\Bigg\{ 
\frac{ \nu m }{ \gamma^2 \widebar{\Delta}_{N} L_{rs,1} },
\frac{ \nu^2 m^2 }{ \gamma^4 \widebar{\Delta}_{N}^{2} L_{rs,i}^2 }
\Bigg\} \Bigg) 
\end{equation}
with $r \neq s$ if $i=3$ and $\Omega$ is given in \eqref{eq:OmegaAB}. The constant $\gamma = \frac{\sqrt{2}}{\Gamma(\frac{1}{2})}$ bounds the sub-Gaussian norm of a standard normal random variable as in \eqref{eq:ACsubGgamma} and $\widebar{\Delta}_{N}$ is defined in \eqref{eq:delta-0-1/4}. 
\end{lemma}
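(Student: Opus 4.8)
The plan is to rewrite the centered quantity $\widehat{H}_{rs}(D)-\E\widehat{H}_{rs}(D)$ as a centered quadratic form in a standard Gaussian vector, with the matrix of the quadratic form indexed by $D\in\Omega$, and then to invoke the uniform concentration inequality for quadratic forms of \cite{dicker2017} recorded in Appendix \ref{s:uniformCI}. Writing $\mathbf{X}=[X_{1}:\dots:X_{N}]$, the Gaussian vector $\vecop(\mathbf{X})$ has some $pN\times pN$ covariance matrix $\Sigma$ determined by $\Gamma_{X}$, hence by $f_{X}$, so $\vecop(\mathbf{X})\eqinlaw\Sigma^{1/2}\xi$ with $\xi\sim N(0,I_{pN})$. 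Since $\widehat{H}_{rs}(D)=\frac{1}{2\pi Nm}\sum_{j=1}^{m}t_{j,r}(d_{r})t_{j,s}(d_{s})\big(\sum_{n}X_{r,n}e^{in\lambda_{j}}\big)\overline{\big(\sum_{n}X_{s,n}e^{in\lambda_{j}}\big)}$ is a bilinear form in $\vecop(\mathbf{X})$, its real and imaginary parts are each of the form $\xi^{\top}A_{D}\xi$ for a real symmetric $A_{D}=\Sigma^{1/2}B_{rs}(D)\Sigma^{1/2}$, where $B_{rs}(D)$ collects the Fourier weights $e^{\pm in\lambda_{j}}$ and the scalars $t_{j,r}(d_{r})t_{j,s}(d_{s})$; note that $B_{rs}(D)$ is block diagonal over the $m$ Fourier frequencies with blocks of rank at most two. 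Then $|\widehat{H}_{rs}(D)-\E\widehat{H}_{rs}(D)|\le|\Re(\cdot)|+|\Im(\cdot)|$, so it suffices to establish a uniform-in-$D$ deviation bound for each of the two real quadratic forms.

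With this reduction in hand, I would apply the uniform inequality, which yields a Bernstein-type bound $c_{1}\exp\big(-c_{2}\min\{t/(\gamma^{2}M_{1}),\,t^{2}/(\gamma^{4}M_{2}^{2})\}\big)$, where $M_{1}$ dominates $\sup_{D\in\Omega}\|A_{D}\|$ together with the operator-norm Lipschitz modulus of $D\mapsto A_{D}$ on $\Omega$, $M_{2}$ dominates $\sup_{D\in\Omega}\|A_{D}\|_{F}$ together with the corresponding Frobenius Lipschitz modulus, and $\gamma=\sqrt{2}/\Gamma(1/2)$ bounds the sub-Gaussian norm of a standard normal. Taking $t=\vertiii{G}\nu$, the claim $\mathcal{B}(r,s,i)$ follows once one shows $M_{1}\lesssim\vertiii{G}\,\widebar{\Delta}_{N}L_{rs,1}/m$ and $M_{2}\lesssim\vertiii{G}\,\widebar{\Delta}_{N}L_{rs,i}/m$, since then the factor $\vertiii{G}$ cancels in both ratios; the side constraint $\nu^{2}\ge\gamma^{4}L_{rs,i}^{2}/(m^{2}c_{2})$ is exactly the range $t\gtrsim M_{2}$ in which the inequality holds in this clean form, and the restriction $r\ne s$ for $i=3$ reflects that $c_{r,3,N}\ne\widetilde{c}_{r,3,N}$, i.e.\ the two indices enter asymmetrically there.

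The heart of the proof is therefore the pair of norm estimates. For the operator norm, $\|A_{D}\|\le\|\Sigma^{1/2}B_{rs}(D)\Sigma^{1/2}\|$ factors into (i) a bound on the relevant blocks of $\Sigma$, equivalently on $f_{X,rr}(\lambda_{j})\lesssim\vertiii{G}\,N^{2d_{0,r}}j^{-2d_{0,r}}$ near the origin and its global counterpart — this produces the scalars $c_{r,i,N},\widetilde{c}_{s,i,N}$, and for the Frobenius route the factor $N^{\max\{d_{0,r},0\}+\max\{d_{0,s},0\}}$ — and (ii) a bound on $B_{rs}(D)$ of order $\tfrac{1}{m}\max_{j}\{t_{j,r}(d_{r})t_{j,s}(d_{s})\}$ which, by monotonicity and differentiability of the $t_{j,r}$, is controlled by $\|T(a_{r},a_{s})\|$ on the ``frozen'' part and by $\sup\|\bm{T}(d_{r},d_{s})\|$ on the Lipschitz-in-$D$ part, as in \eqref{eq:TandboldT}; the extra $\log N$ carried by $\widebar{\Delta}_{N}$ when $d_{0,r}>0$ comes from sums of the type $\tfrac{1}{m}\sum_{j}(j/N)^{-2d}$ familiar from local Whittle analysis under long-range dependence. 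The Frobenius bound is analogous, except that $B_{rs}(D)$ has effective rank of order $m$, so its Frobenius norm can be traded for its operator norm at the cost of powers of $m^{1/4}$; the different ways of splitting these powers and of distributing operator-versus-Frobenius norms between the two $\Sigma^{1/2}$ factors yield the admissible choices $L_{rs,i}$, $i=2,3,4$, while $L_{rs,5}$ comes from the cruder estimate through the Hilbert--Schmidt norm of the $t$-weights themselves.

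The main obstacle is precisely these uniform norm estimates, in two respects. First, one must relate the periodogram-based quadratic form to the Gaussian whitening $\Sigma^{1/2}$ with exactly the powers $N^{\max\{d_{0,r},0\}}$ and no more: a crude bound on $\|\Sigma\|$ is far too lossy under long-range dependence, so one needs frequency-domain arguments — Fej\'er-kernel estimates, bounds on DFTs of long-memory series, and the stability quantity $\vertiii{G}$ — to isolate only the unavoidable factor. Second, one must differentiate $D\mapsto A_{D}$ and bound its derivative by the $\bm{T}$ terms of \eqref{eq:TandboldT}, uniformly over $\Omega$ and consistently across all four index regimes $i=2,\dots,5$. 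Everything else — the split into real and imaginary parts, the insertion into the uniform inequality, and absorbing universal constants into $c_{1},c_{2}$ — is routine.
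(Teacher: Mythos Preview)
Your high-level architecture is exactly right and matches the paper: split into real and imaginary parts, express each as a quadratic form $\xi' A_{D}\xi$ in a standard Gaussian vector indexed by $D$, and feed this into the uniform Hanson--Wright inequality of \cite{dicker2017}. The paper does precisely this, building the quadratic forms explicitly via the Fourier matrices $R_{m}$, $\widetilde{R}_{m}$, $Q_{m}$ (so that the middle factor is diagonal or unitarily equivalent to diagonal, as Theorem \ref{th:Dicker_Th1} requires) and then reducing everything to bounds on $\|T\|$, $\|T\|_{F}$, $\|\bm{T}\|$, $\|\bm{T}\|_{F}$ on one side and on $\|\Sigma_{rr}\|$, $\|\Sigma_{rr}\|_{F}$ on the other.

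Where your proposal goes off track is in the mechanism that produces the factors $N^{\max\{d_{0,r},0\}}$ and $\widebar{\Delta}_{N}$. You attribute the extra $\log N$ in $\widebar{\Delta}_{N}$ to ``sums of the type $\tfrac{1}{m}\sum_{j}(j/N)^{-2d}$'' and suggest controlling $\Sigma$ via pointwise spectral density values $f_{X,rr}(\lambda_{j})$; you also invoke Fej\'er-kernel estimates. None of this appears in the proof of this lemma. The Fej\'er-kernel arguments belong to the bias lemma (Lemma \ref{prop:bias}), not here; no sum over $j$ of spectral density values is taken; and under long-range dependence the route ``$\|\Sigma\|\lesssim\sup_{\lambda}f(\lambda)$'' is unavailable since $f$ is unbounded at the origin. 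What actually happens is that, after writing $A_{m}=R_{m}\Sigma_{rr}R_{m}'$ (and block analogues for the off-diagonal cases, reduced via Lemma \ref{le:normineq2} to the $\Sigma_{rr}$, $\Sigma_{ss}$ blocks), one needs direct norm bounds on the Toeplitz matrix $\Sigma_{rr}$ itself: $\|\Sigma_{rr}\|\le c\,\vertiii{G}\,N^{\max\{2d_{0,r},0\}}\widebar{\Delta}_{N}$ and $\|\Sigma_{rr}\|_{F}\le c\,\vertiii{G}\,N^{\max\{2d_{0,r},1/2\}}\widebar{\Delta}_{N}$. These come from Lemmas \ref{le:ineqSpecSigmashort} and \ref{le:ineqSpecFrobSigma}, the latter proved by passing to an integral operator with piecewise constant kernel and invoking Gamma-function asymptotics for the FARIMA$(0,d,0)$ autocovariances; the $\log N$ is an artifact of this Toeplitz-norm analysis, not of any periodogram sum. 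The different indices $i=2,\dots,5$ then arise not merely from rank trade-offs but from inserting explicit diagonal normalizers $\mathcal{A}_{N,i}$ into the product $\Sigma^{1/2}\widetilde{R}_{m}'\,T\,\widetilde{R}_{m}\Sigma^{1/2}$ and choosing, block by block, whether to pay with $\|\Sigma_{rr}\|$ or $\|\Sigma_{rr}\|_{F}$ (or $\sqrt{2m}\,\|\cdot\|$ in lieu of $\|\cdot\|_{F}$); this is what forces the asymmetric choice $c_{r,3,N}\ne\widetilde{c}_{s,3,N}$ and hence the restriction $r\ne s$ for $i=3$. So the missing idea is: the covariance-matrix norm bounds under long-range dependence are the hard quantitative input, and they are obtained by operator-theoretic means rather than by anything resembling local Whittle frequency-domain sums.
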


\begin{proof}[Proof]
To prove the desired concentration inequality, we first introduce some general notation.
For a fixed frequency, the periodogram in \eqref{eq:periodogram} can be written as
\begin{equation} \label{eq:PeriodogramMatrix}
I_{X}(\lambda_{j}) = \frac{1}{2 \pi} \Big(
\mathcal{X}' (C_{j}C_{j}' + S_{j}S_{j}') \mathcal{X} + i \mathcal{X}' (C_{j}S_{j}' - S_{j}C_{j}') \mathcal{X} \Big)
\end{equation}
with $\mathcal{X}' = [ X_{1} : \dots : X_{N} ]$ and
\begin{equation}
\begin{aligned}
C_{j}' &= \frac{1}{\sqrt{N}}(\cos(\lambda_{j}) , \dots, \cos( (N-1) \lambda_{j}) , 1), \\
S_{j}' &= \frac{1}{\sqrt{N}}(\sin(\lambda_{j}) , \dots, \sin( (N-1) \lambda_{j}), 0);
\end{aligned}
\end{equation}
see equation (A.6) in \cite{Sun2018:LargeSpectral}. 
The event of interest can be separated into real and imaginary parts as
\begin{equation} \label{eq:realim}
\begin{aligned}
&\Prob \Big( \sup_{ D \in \Omega }  | \widehat{H}_{rs}(D) - \E \widehat{H}_{rs}(D) | > 
\vertiii{G} \nu \Big) \\
& \leq
\Prob \Big( \sup_{ D \in \Omega }  | \Re( \widehat{H}_{rs}(D) - \E \widehat{H}_{rs}(D) ) | > \frac{1}{2} \vertiii{G} \nu \Big) 
\\ & \hspace{1cm} + 
\Prob \Big( \sup_{ D \in \Omega }  | \Im( \widehat{H}_{rs}(D) - \E \widehat{H}_{rs}(D) ) | > \frac{1}{2} \vertiii{G} \nu \Big) .
\end{aligned}
\end{equation}
Note that the imaginary part of the diagonal elements is zero, that is, $\Im( e_{r}' I_{X}(\lambda_{j}) e_{r} ) = 0$. 
We consider the diagonal elements first $(r=s)$ and then distinguish the real and imaginary parts in \eqref{eq:realim} for the off-diagonal elements $(r \neq s)$.
\par
\textit{Diagonal elements:}
To rewrite the real-valued diagonal elements $\widehat{H}_{rr}(D)$ as a quadratic form, we define the $N \times 2m$ matrix
\begin{equation} \label{eq:Rm}
R_{m}'=
\Big[ C_{1} : S_{1} : \dots : C_{m} : S_{m} \Big]
\end{equation}
and the matrix-valued function
\begin{equation} \label{eq:fctT2}
T_{r}(d)=\diag( t_{1,r}(d), t_{1,r}(d),\dots, t_{m,r}(d), t_{m,r}(d)).
\end{equation}
Then, in view of \eqref{eq:PeriodogramMatrix}, \eqref{eq:Rm}, \eqref{eq:fctT2} and since $e_{r}'t_{j}(D) = e_{r}'t_{j,r}(d_{r})$, $\widehat{H}_{rr}(D)$ can be written  as
\begin{equation} \label{eq:Hrr_rewritten}
\begin{aligned}
\frac{1}{m} e_{r}' \sum_{j=1}^{m} t_{j}(D) I_{X}(\lambda_{j}) t_{j}(D) e_{r}
&=
\frac{1}{2 \pi m} e_{r}' \sum_{j=1}^{m} t_{j}(D) \mathcal{X}' (C_{j}C_{j}' + S_{j}S_{j}') \mathcal{X} t_{j}(D) e_{r}
\\&=
\frac{1}{2 \pi m} \sum_{j=1}^{m} \mathcal{E}_{r}' \Sigma_{rr}^{\frac{1}{2}} t_{j,r}(d_{r}) 
(C_{j}C_{j}' + S_{j}S_{j}')
t_{j,r}(d_{r}) \Sigma_{rr}^{\frac{1}{2}} \mathcal{E}_{r}
\\&=
\frac{1}{2 \pi m} \mathcal{E}_{r}' \Sigma_{rr}^{\frac{1}{2}} 
R_{m}' T^2_{r}(d_{r}) R_{m}
\Sigma_{rr}^{\frac{1}{2}} \mathcal{E}_{r}
\end{aligned}
\end{equation}
with $e_{r}'\mathcal{X}'=\mathcal{E}_{r}' \Sigma_{rr}^{\frac{1}{2}}$, where $\mathcal{E}_{r}=(\varepsilon_{r,1},\dots,\varepsilon_{r,N})'$ is Gaussian with $\E ( \mathcal{E}_{r} \mathcal{E}_{r}' )=I_{N}$ and 
$\Sigma_{rr} = (\Sigma_{rr}(n-k))_{n,k=1,\dots,N} = \E (\mathcal{X} e_{r} (\mathcal{X} e_{r})') $. In \eqref{eq:realCI1} below, we apply Theorem \ref{th:Dicker_Th1} with $K=1$ and $R=b_{r}-a_{r} \leq 1$ to obtain
\begin{align}
&
\Prob \Big( \sup_{ D \in \Omega }  
| \widehat{H}_{rr}(D) - \E \widehat{H}_{rr}(D) | 
> \vertiii{G} \nu \Big) \nonumber
\\&=
\Prob \Big( \sup_{d \in [a_{r},b_{r}] } 
| \mathcal{E}_{r}' \Sigma_{rr}^{\frac{1}{2}} R_{m}' T^2_{r}(d) R_{m} \Sigma_{rr}^{\frac{1}{2}} \mathcal{E}_{r} 
- 
\E (\mathcal{E}_{r}' \Sigma_{rr}^{\frac{1}{2}} R_{m}' T^2_{r}(d) R_{m} \Sigma_{rr}^{\frac{1}{2}} \mathcal{E}_{r}) | >
2 \pi m \vertiii{G} \nu \Big) 
\nonumber
\\&=
\Prob \Big( \sup_{d \in [a_{r},b_{r}] } 
| \mathcal{E}_{r}' R(d) \mathcal{E}_{r} \nonumber
- 
\E (\mathcal{E}_{r}' R(d) \mathcal{E}_{r}) | >
2 \pi m \vertiii{G} \nu \Big) 
\nonumber
\\&\leq
c_{1} \exp\Bigg( -c_{2} \min 
\Bigg\{ 
\frac{ \nu m \vertiii{G}}{ \gamma^2 \widetilde{\mathcal{T}}_{1} },
\frac{ \nu^2 m^2 \vertiii{G}^{2}}{ \gamma^4 \widetilde{\mathcal{T}}_{i}^2}
\Bigg\} \Bigg) \label{eq:realCI1}
\\&\leq
c_{1} \exp\Bigg( -c_{2} \min 
\Bigg\{ 
\frac{ \nu m }{ \gamma^2 \widebar{\Delta}_{N} \widetilde{L}_{1} },
\frac{ \nu^2 m^2 }{ \gamma^4 \widebar{\Delta}_{N}^{2} \widetilde{L}_{i}^2 }
\Bigg\} \Bigg) \label{eq:realCI2}
\end{align}
for $ \nu^2 \geq \gamma^4 \widetilde{\mathcal{T}}^2_{i}/(c_{2} m^2 \vertiii{G}^{2} )$
with $i=2,3,4$ and $R(d)=\Sigma_{rr}^{\frac{1}{2}} R_{m}' T^2_{r}(d) R_{m} \Sigma_{rr}^{\frac{1}{2}}$ and
\begin{equation} \label{eq:realmathcalT}
\begin{aligned}
\widetilde{\mathcal{T}}_{1}&=\| A_{m} \| \left( 
\| T^{2}_{r}(a_{r})\| +
\sup_{d \in [a_{r},b_{r}]} \| \frac{\partial}{\partial d} T^{2}_{r}(d)\| \right),
\\
\widetilde{\mathcal{T}}_{2}=\widetilde{\mathcal{T}}_{3}&=\| A_{m} \|_{F} \left(
\| T^{2}_{r}(a_{r})\| +
\sup_{d \in [a_{r},b_{r}]} \| \frac{\partial}{\partial d} T^{2}_{r}(d)\| \right),
\\
\widetilde{\mathcal{T}}_{4}&=\| A_{m} \| \left(
\| T^{2}_{r}(a_{r})\|_{F} +
\sup_{d \in [a_{r},b_{r}]} \| \frac{\partial}{\partial d} T^2_{r}(d) \|_{F} \right),
\end{aligned}
\end{equation}
where $A_{m}=R_{m} \Sigma_{rr} R_{m}'  $. Though $\widetilde{\mathcal{T}}_{2}=\widetilde{\mathcal{T}}_{3}$, we bound them differently in \eqref{eq:realCI2}.
Furthermore, 
\begin{equation} \label{eq:Ltilde}
\widetilde{L}_{1}
=L_{rr,1},
\hspace{0.2cm}
\widetilde{L}_{2}
=L_{rr,2},
\hspace{0.2cm}
\widetilde{L}_{3}
=L_{rr,4},
\hspace{0.2cm}
\widetilde{L}_{4}
=L_{rr,5}
\end{equation}
with $L_{rr,1}, L_{rr,2}, L_{rr,4}$ and $L_{rr,5}$ as in \eqref{eq:prop:supGhatLs}.
We get \eqref{eq:realCI2} by bounding the quantities in \eqref{eq:realmathcalT}. Note that
\begin{align}
\Vert A_{m} \Vert
&=
\Vert R_{m} \Sigma_{rr} R_{m}' \Vert
\leq
\Vert R_{m} \Vert^2 \Vert \Sigma_{rr} \Vert 
\leq
c \vertiii{G} N^{\max\{2d_{0,r},0\}} \widebar{\Delta}_{N},  \label{eq:specRS} \\
\Vert A_{m} \Vert_{F}
&=
\Vert R_{m} \Sigma_{rr} R_{m}' \Vert_{F}
\leq
\Vert R_{m} \Vert^2 \Vert \Sigma_{rr} \Vert _{F}
\leq
c \vertiii{G} N^{\max\{2d_{0,r},\frac{1}{2}\}} \log(N)^{\frac{1}{2}} \widebar{\Delta}_{N}, \label{eq:specRS1}
\\
\Vert A_{m} \Vert_{F}
&\leq
\sqrt{2m} \Vert R_{m} \Sigma_{rr} R_{m}' \Vert
\leq
c \vertiii{G} \sqrt{2m} N^{\max\{2d_{0,r},0\}} \widebar{\Delta}_{N}, \label{eq:specRS2}
\end{align}
where we used the submultiplicativity of the spectral norm, Lemma \ref{le:normineq1} and the fact that $\| A \|_{F} \leq \sqrt{\rank(A)} \| A \|$ for a matrix $A$ for the first inequalities in \eqref{eq:specRS}--\eqref{eq:specRS2}, respectively. The last inequalities in \eqref{eq:specRS} and \eqref{eq:specRS1} for the spectral and Frobenius norms of $\Sigma_{rr}$ follow by Lemmas \ref{le:ineqSpecSigmashort} and \ref{le:ineqSpecFrobSigma}. 
Furthermore, we used the fact that the spectral norm of $R_{m}$ can be calculated as
\begin{equation*} 
\Vert R_{m} \Vert= 1;
\end{equation*}
see Lemma C.4 in \cite{Sun2018:LargeSpectral}.
\par
\textit{Real part (off-diagonal):}
The real part of the off-diagonal elements $\Re(\widehat{H}_{rs}(D))$ can be written in terms of \eqref{eq:PeriodogramMatrix} as
\begin{equation} \label{eq:off_diag_rew}
\frac{1}{m}e_{r}' \sum_{j=1}^{m} t_{j}(D) \Re(I_{X}(\lambda_{j})) t_{j}(D) e_{s}
=
\frac{1}{2 \pi m} e_{r}' \sum_{j=1}^{m} t_{j}(D) \mathcal{X}'
(C_{j}C_{j}' + S_{j}S_{j}')
\mathcal{X} t_{j}(D) e_{s}.
\end{equation}
As in \eqref{eq:Hrr_rewritten}, we will write \eqref{eq:off_diag_rew} as a quadratic form but now using
$(e_{r}'\mathcal{X}' ~ e_{s}'\mathcal{X}' )=\mathcal{E}' \Sigma^{\frac{1}{2}}$, where $\mathcal{E}' = (\mathcal{E}_{r}' ~ \mathcal{E}_{s}')$ is a Gaussian vector with $\E(\mathcal{E} \mathcal{E}')=I_{2N}$ and 
\begin{equation} \label{eq:Sigma_rs}
\Sigma = 
\begin{pmatrix}
\Sigma_{rr} & \Sigma_{rs} \\
\Sigma_{sr} & \Sigma_{ss} 
\end{pmatrix}
\hspace{0.2cm}
\text{ with }
\hspace{0.2cm}
\Sigma_{rs} = (\Sigma_{rs}(n-k))_{n,k=1,\dots,N} = \E (\mathcal{X} e_{r} (\mathcal{X} e_{s})') .
\end{equation}
For this, define the $4m \times 2N$ matrix
\begin{equation*} 
\label{eq:Rtildem}
\widetilde{R}_{m} = 
\begin{pmatrix}
R_{m} & 0_{2m \times N} \\
0_{2m \times N} & R_{m}
\end{pmatrix}
\end{equation*}
with $R_{m}$ as in \eqref{eq:Rm}. 
Furthermore, define the matrix $F(d_{r},d_{s})=\diag(T_{r}(d_{r}),T_{s}(d_{s}))$ with $T_{r}$ as in \eqref{eq:fctT2} and 
\begin{equation} \label{eq:fctT}
M_{m} =
\begin{pmatrix}
0_{m,m} &  I_{m} \\
0_{m,m} & 0_{m,m}
\end{pmatrix}.
\end{equation}
Write
\begin{equation*}
\begin{aligned}
e_{r}' \sum_{j=1}^{m} t_{j}(D) \mathcal{X}'
(C_{j}C_{j}' + S_{j}S_{j}')
\mathcal{X} t_{j}(D) e_{s}
&=
\mathcal{E}' \Sigma^{\frac{1}{2}} \widetilde{R}_{m}' F(d_{r},d_{s}) M_{2m} 
F(d_{r},d_{s}) \widetilde{R}_{m} \Sigma^{\frac{1}{2}} \mathcal{E}
\\&=
\mathcal{E}' R(d_{r},d_{s}) \mathcal{E}
\end{aligned}
\end{equation*}
with
\begin{equation*}
R(d_{r},d_{s})=\Sigma^{\frac{1}{2}} \widetilde{R}_{m}' F(d_{r},d_{s}) M_{2m} F(d_{r},d_{s}) \widetilde{R}_{m} \Sigma^{\frac{1}{2}}.
\end{equation*}
In order to apply Theorem \ref{th:Dicker_Th1}, we further write
\begin{align}
&\Prob \Big( \sup_{ D \in \Omega }   | 
\sum_{j=1}^{m} e_{r}' (t_{j}(D) \mathcal{X}'
(C_{j}C_{j}' + S_{j}S_{j}')
\mathcal{X} t_{j}(D) - 
\E (t_{j}(D) \mathcal{X}'
(C_{j}C_{j}' + S_{j}S_{j}')
\mathcal{X} t_{j}(D) ) ) e_{s} | > \pi m \vertiii{G} \nu \Big) \nonumber
\\&=
\Prob \Big( \sup_{(d_{r},d_{s}) \in 
[a_{r},b_{r}] \times [a_{s},b_{s}]} | \mathcal{E}' R(d_{r},d_{s}) \mathcal{E}  - 
\E (\mathcal{E}' R(d_{r},d_{s}) \mathcal{E} ) | > \pi m \vertiii{G} \nu \Big). \label{eq:real_formthc1}
\end{align}
Note that the matrix $R(d_{r},d_{s})$ can be rewritten as
\begin{equation} \label{eq:Rrewrtitten}
\begin{aligned}
R(d_{r},d_{s})
&=
\Sigma^{\frac{1}{2}} \mathcal{A}_{N,i}^{-1} \widetilde{R}_{m}' \mathcal{A}_{2m,i} F(d_{r},d_{s}) M_{2m} F(d_{r},d_{s})  \mathcal{A}_{2m,i} \widetilde{R}_{m} \mathcal{A}_{N,i}^{-1} \Sigma^{\frac{1}{2}}
\end{aligned}
\end{equation}
for $i=1,\dots,4$ with
\begin{equation} \label{eq:mathcalAi}
\begin{aligned}
\mathcal{A}_{m,1}
&=
\diag(c_{r,1,N}I_{m},\widetilde{c}_{s,1,N}I_{m}),
\hspace{0.2cm}
\mathcal{A}_{m,2}
=
\diag(c_{r,2,N}I_{m},\widetilde{c}_{s,2,N}I_{m}),
\\
\mathcal{A}_{m,3}
&=
\diag(c_{r,3,N}I_{m},\widetilde{c}_{s,3,N}I_{m}),
\hspace{0.2cm}
\mathcal{A}_{m,4}
=
m^{\frac{1}{4}} \mathcal{A}_{m,1}.
\end{aligned}
\end{equation}
The matrices $\mathcal{A}_{N,i}$ in \eqref{eq:Rrewrtitten} are defined by replacing $I_{m}$'s in \eqref{eq:mathcalAi} by $I_{N}$. These matrices are to normalize $\Sigma^{\frac{1}{2}}$ in \eqref{eq:Rrewrtitten}.
\par
We continue to bound \eqref{eq:real_formthc1} by applying Theorem \ref{th:Dicker_Th1}. In order to verify the applicability of Theorem \ref{th:Dicker_Th1}, note that the matrix $\mathcal{A}_{2m,i} F(d_{r},d_{s}) M_{2m} F(d_{r},d_{s})  \mathcal{A}_{2m,i} $ in \eqref{eq:Rrewrtitten}  is not diagonal but can be represented as a unitary transformation of a diagonal matrix
\begin{equation} \label{eq:real_SVDTMT}
\begin{aligned}
&
\mathcal{A}_{2m,i} F(d_{r},d_{s})' M_{2m} F(d_{r},d_{s}) \mathcal{A}_{2m,i}
\\&=
\mathcal{A}_{2m,i}
\begin{pmatrix}
0_{2m,2m} & T_{r}(d_{r})T_{s}(d_{s})  \\
0_{2m,2m} & 0_{2m,2m}
\end{pmatrix}
\mathcal{A}_{2m,i}
\\&=
\begin{pmatrix}
0_{2m,2m} & c_{r,i,N}\widetilde{c}_{s,i,N}T_{r}(d_{r})T_{s}(d_{s}) \\
0_{2m,2m} & 0_{2m,2m}
\end{pmatrix}
\\&=
\begin{pmatrix}
I_{2m} & 0_{2m,2m} \\
0_{2m,2m}  &I_{2m}
\end{pmatrix}
\begin{pmatrix}
c_{r,i,N}\widetilde{c}_{s,i,N}T_{r}(d_{r})T_{s}(d_{s}) & 0_{2m,2m} \\
0_{2m,2m} & 0_{2m,2m}
\end{pmatrix}
\begin{pmatrix}
0_{2m,2m}  &I_{2m}  \\
I_{2m} & 0_{2m,2m}
\end{pmatrix}.
\end{aligned}
\end{equation}
For this reason, Theorem \ref{th:Dicker_Th1} remains applicable due to Remark \ref{re:Th1_diag}.
In \eqref{eq:real_imCI1} below, we apply Theorem \ref{th:Dicker_Th1} with $K=2$ and $R = b_{r} - a_{r} \leq 1$ to obtain
\begin{align}
&\Prob \Big( \sup_{(d_{r},d_{s}) \in 
[a_{r},b_{r}] \times [a_{s},b_{s}]} | \mathcal{E}' R(d_{r},d_{s}) \mathcal{E}  - 
\E (\mathcal{E}' R(d_{r},d_{s}) \mathcal{E} ) | > \pi m \vertiii{G} \nu \Big) \nonumber
\\&\leq
c_{1} \exp\Bigg( -c_{2} \min 
\Bigg\{ 
\frac{ \nu m \vertiii{G} }{ \gamma^2 \mathcal{T}_{1} },
\frac{ \nu^2 m^2 \vertiii{G}^2 }{ \gamma^4 \mathcal{T}^2_{i} }
\Bigg\} \Bigg) \label{eq:real_imCI1}
\\&\leq
c_{1} \exp\Bigg( -c_{2} \min 
\Bigg\{ 
\frac{ \nu m }{ \gamma^2 \widebar{\Delta}_{N} L_{rs,1} },
\frac{ \nu^2 m^2 }{ \gamma^4 \widebar{\Delta}_{N}^{2} L^2_{rs,i} }
\Bigg\} \Bigg) \label{eq:real_imCI2}
\end{align}
for $ \nu^2 \geq \gamma^4 \mathcal{T}^2_{i}/(c_{2} m^2 \vertiii{G}^2 )$ and $i=2,\dots,5$ with
\begin{equation} \label{eq:real_imTi}
\mathcal{T}_{1}=\| B_{m,1} \| L_{rs,1},
\hspace{0.2cm}
\mathcal{T}_{i}=\| B_{m,i} \|_{F} L_{rs,i},
\hspace{0.2cm}
\mathcal{T}_{5}=\| B_{m,1} \| L_{rs,5}
\end{equation}
for $i=2,\dots,4$, where the $L_{rs,i}$'s are given in \eqref{eq:prop:supGhatLs} and 
\begin{equation} \label{eq:real_Bmi}
B_{m,i}= \widetilde{R}_{m} \mathcal{A}_{N,i}^{-1} \Sigma \mathcal{A}_{N,i}^{-1} \widetilde{R}_{m}'
\end{equation}
for $i=1,\dots,4$. The $L_{rs,i}$'s in \eqref{eq:real_imTi} can indeed be represented as in \eqref{eq:prop:supGhatLs} due to \eqref{eq:real_SVDTMT} and Remark \ref{re:Th1_diag}, and since $\| T_{r}(d_{r})T_{s}(d_{s}) \| = \| T(d_{r},d_{s})\|$ and $\| T_{r}(d_{r})T_{s}(d_{s}) \|_{F} = \sqrt{2}\| T(d_{r},d_{s})\|_{F}$ and similarly with $\nabla$, where $T(d_{r},d_{s})$ is in \eqref{eq:TandboldT}. We will now discuss bounds on the Frobenius and spectral norms of $B_{m,i}$ to get \eqref{eq:real_imCI2}. 

The relation \eqref{eq:real_imCI2} is a consequence of bounding the quantities in \eqref{eq:real_imTi} as follows.
Let $c$ denote a generic constant which might differ from line to line.
Then, with the explanations given below,
\begin{equation} \label{eq:real_Bm1}
\begin{aligned}
\| B_{m,1} \| 
&=
\| \widetilde{R}_{m} \mathcal{A}_{N,1}^{-1} \Sigma \mathcal{A}_{N,1}^{-1} \widetilde{R}_{m}' \|
\leq
\| \widetilde{R}_{m} \|^2 \| \mathcal{A}_{N,1}^{-1} \Sigma \mathcal{A}_{N,1}^{-1} \| 
\\&\leq 
\| c_{r,1,N}^{-2} \Sigma_{rr} \| + \| \widetilde{c}_{s,1,N}^{\ -2} \Sigma_{ss} \|
\\&= 
N^{-\max\{2d_{0,r},0\}} \| \Sigma_{rr} \| + N^{-\max\{2d_{0,s},0\}} \| \Sigma_{ss} \|
\leq 
c \widebar{\Delta}_{N} \vertiii{G},
\end{aligned}
\end{equation}
\begin{equation} \label{eq:real_Bm2}
\begin{aligned}
\| B_{m,2} \|_{F}
&=
\| \widetilde{R}_{m} \mathcal{A}_{N,2}^{-1} \Sigma \mathcal{A}_{N,2}^{-1} \widetilde{R}_{m}' \|_{F}
\leq
\|  \widetilde{R}_{m} \|^2 \| \mathcal{A}_{N,2}^{-1} \Sigma \mathcal{A}_{N,2}^{-1} \| _{F}
\\&\leq
\| c_{r,2,N}^{-2} \Sigma_{rr} \|_{F} + \| \widetilde{c}_{s,2,N}^{\ -2} \Sigma_{ss} \|_{F}
\\&=
N^{-\max\{2d_{0,r},\frac{1}{2}\}} \| \Sigma_{rr} \|_{F} + N^{-\max\{2d_{0,s},\frac{1}{2}\}} \| \Sigma_{ss} \|_{F}
\leq 
c \widebar{\Delta}_{N} \vertiii{G},
\end{aligned}
\end{equation}
\begin{equation} \label{eq:real_Bm3}
\begin{aligned}
\| B_{m,3} \|_{F}
&=
\| \widetilde{R}_{m} \mathcal{A}_{N,3}^{-1} \Sigma \mathcal{A}_{N,3}^{-1} \widetilde{R}_{m}' \|_{F}
\\&\leq
\| R_{m}' c_{r,3,N}^{-1} \Sigma_{rr} c_{r,3,N}^{-1} R_{m} \|_{F}
+
\| R_{m}' \widetilde{c}_{s,3,N}^{\ -1} \Sigma_{ss} \widetilde{c}_{s,3,N}^{\ -1} R_{m} \|_{F}
\\&\leq
\| R_{m}' c_{r,3,N}^{-1} \Sigma_{rr} c_{r,3,N}^{-1} R_{m} \|_{F}
+
\sqrt{2m} \| R_{m}' \widetilde{c}_{s,3,N}^{\ -1} \Sigma_{ss} \widetilde{c}_{s,3,N}^{\ -1} R_{m} \|
\\&\leq
\| R_{m} \|^2 \| c_{r,3,N}^{-2} \Sigma_{rr} \|_{F}
+
\sqrt{2m} \| R_{m} \|^2 \| \widetilde{c}_{s,3,N}^{\ -2} \Sigma_{ss} \|
\\&=
N^{-\max\{2d_{0,r},\frac{1}{2}\}} \| \Sigma_{rr} \| _{F} + N^{-\max\{2d_{0,s},0\}} \| \Sigma_{ss}\|
\leq 
c \widebar{\Delta}_{N} \vertiii{G},
\end{aligned}
\end{equation}
\begin{equation} \label{eq:real_Bm4}
\begin{aligned}
\| B_{m,4} \|_{F}
&=
\| \widetilde{R}_{m} \mathcal{A}_{N,4}^{-1} \Sigma \mathcal{A}_{N,4}^{-1} \widetilde{R}_{m}' \|_{F}
\\&\leq
\sqrt{4m}\| \widetilde{R}_{m}' \mathcal{A}_{N,4}^{-1} \Sigma \mathcal{A}_{N,4}^{-1} \widetilde{R}_{m} \|
=
\sqrt{4} \| \widetilde{R}_{m}' \mathcal{A}_{N,1}^{-1} \Sigma \mathcal{A}_{N,1}^{-1} \widetilde{R}_{m} \|
\leq 
c \widebar{\Delta}_{N} \vertiii{G}.
\end{aligned}
\end{equation}
The first bounds in \eqref{eq:real_Bm1} and \eqref{eq:real_Bm2} follow from the submultiplicativity of the spectral norm and Lemma \ref{le:normineq1}, respectively. 
Then, we use Lemma \ref{le:normineq2} to eliminate the off-diagonal matrix blocks, and Lemmas \ref{le:ineqSpecSigmashort} and \ref{le:ineqSpecFrobSigma} are used to bound the spectral and Frobenius norms of $\Sigma_{rr}$. 
In \eqref{eq:real_Bm3}, we first apply Lemma \ref{le:normineq2} and then the fact that $\| A \|_{F} \leq \sqrt{\rank(A)} \| A \|$ for a matrix $A$. Note also that $\rank(AB) \leq \min\{ \rank(A), \rank(B) \}$ for two matrices $A,B$.
Similarly, we applied $\| A \|_{F} \leq \sqrt{\rank(A)} \| A \|$ in \eqref{eq:real_Bm4} to use \eqref{eq:real_Bm1}.
Note also that $\| \widetilde{R}_{m} \|^2 = \| R_{m} \|^{2} =1$, where the first equality is due to the block diagonal structure of $\widetilde{R}_{m}$ and the second equality follows by Lemma C.4 in \cite{Sun2018:LargeSpectral}.
\par
\textit{Imaginary part (off-diagonal):}
The imaginary part of the off-diagonal elements $\Im(\widehat{H}_{rs}(D))$ can be written in terms of \eqref{eq:PeriodogramMatrix} as
\begin{equation*}
\frac{1}{m}e_{r}' \sum_{j=1}^{m} t_{j}(D) \Im(I_{X}(\lambda_{j})) t_{j}(D) e_{s}
=
\frac{1}{2 \pi m} e_{r}' \sum_{j=1}^{m} t_{j}(D) \mathcal{X}'
(C_{j}S_{j}' - S_{j}C_{j}')
\mathcal{X} t_{j}(D) e_{s}.
\end{equation*}
Then,
\begin{equation} \label{eq:twoboundsimaginarypart}
\begin{aligned}
& \Prob \Big( \sup_{ D \in \Omega }  | \Im(\widehat{H}_{rs}(D) - \E \widehat{H}_{rs}(D)) | > 
\frac{1}{2} \vertiii{G} \nu \Big) \\
& \leq
\Prob \Big( \sup_{ D \in \Omega }  | \sum_{j=1}^{m} e_{r}' (t_{j}(D) \mathcal{X}'
C_{j}S_{j}'
\mathcal{X} t_{j}(D) - 
\E (t_{j}(D) \mathcal{X}'
C_{j}S_{j}'
\mathcal{X} t_{j}(D) )) e_{s} | > \frac{\pi m}{2} \vertiii{G} \nu \Big)
\\ & \hspace{1cm} + 
\Prob \Big( \sup_{ D \in \Omega }  | \sum_{j=1}^{m} e_{r}' (t_{j}(D) \mathcal{X}'
S_{j}C_{j}'
\mathcal{X} t_{j}(D) - 
\E (t_{j}(D) \mathcal{X}'
S_{j}C_{j}'
\mathcal{X} t_{j}(D) )) e_{s} | > \frac{\pi m}{2} \vertiii{G} \nu \Big).
\end{aligned}
\end{equation}
We focus on the first probability term in the bound, since the second can be dealt with analogously. 
Define the $2m \times 2N$ matrix
\begin{equation*} 
Q_{m} = 
\begin{pmatrix}
Q_{1,m} & 0_{m \times N} \\
0_{m \times N} & Q_{2,m}
\end{pmatrix}
\end{equation*}
with
\begin{equation*}
Q_{1,m}'
=
\Big[C_{1} : \dots : C_{m} \Big],
\hspace{0.2cm}
Q_{2,m}'
=
\Big[S_{1} : \dots : S_{m} \Big].
\end{equation*}
Furthermore, define the matrix $\widetilde{F}(d_{r},d_{s})=\diag(\widetilde{T}_{r}(d_{r}),\widetilde{T}_{s}(d_{s}))$ in terms of the function $\widetilde{T}_{r}(d)=\diag( t_{1,r}(d),\dots,t_{m,r}(d))$ and recall $M_{m}$ in \eqref{eq:fctT}
to write
\begin{equation*}
e_{r}' \sum_{j=1}^{m} t_{j}(D) \mathcal{X}'
C_{j}S_{j}' 
\mathcal{X} t_{j}(D) e_{s}
=
\mathcal{E}' \Sigma^{\frac{1}{2}} Q_{m}' \widetilde{F}(d_{r},d_{s}) M_{m} 
\widetilde{F}(d_{r},d_{s}) Q_{m} \Sigma^{\frac{1}{2}} \mathcal{E}
=
\mathcal{E}' Q(d_{r},d_{s}) \mathcal{E}
\end{equation*}
with
\begin{equation*}
Q(d_{r},d_{s})=\Sigma^{\frac{1}{2}} Q_{m}' \widetilde{F}(d_{r},d_{s}) M_{m} \widetilde{F}(d_{r},d_{s}) Q_{m} \Sigma^{\frac{1}{2}}
\end{equation*}
and $\Sigma^{\frac{1}{2}} \mathcal{E}$ characterized as in \eqref{eq:Sigma_rs}.
In order to apply Theorem \ref{th:Dicker_Th1}, we further write
\begin{align}
&\Prob \Big( \sup_{ D \in \Omega }   | 
\sum_{j=1}^{m} e_{r}' (t_{j}(D) \mathcal{X}'
C_{j}S_{j}' 
\mathcal{X} t_{j}(D) - 
\E (t_{j}(D) \mathcal{X}'
C_{j}S_{j}' 
\mathcal{X} t_{j}(D) )) e_{s} | > \frac{\pi m}{2} \vertiii{G} \nu \Big) \nonumber
\\&=
\Prob \Big( \sup_{(d_{r},d_{s}) \in 
[a_{r},b_{r}] \times [a_{s},b_{s}]} | \mathcal{E}' Q(d_{r},d_{s}) \mathcal{E}  - 
\E (\mathcal{E}' Q(d_{r},d_{s}) \mathcal{E} ) | > \frac{\pi m}{2} \vertiii{G} \nu \Big). \label{eq:formthc1}
\end{align}
Note that the matrix $Q(d_{r},d_{s})$ can be rewritten as
\begin{equation*} 
\begin{aligned}
Q(d_{r},d_{s})
&=
\Sigma^{\frac{1}{2}} \mathcal{A}_{N,i}^{-1} Q_{m}' \mathcal{A}_{m,i} \widetilde{F}(d_{r},d_{s}) M_{m} 
\widetilde{F}(d_{r},d_{s}) \mathcal{A}_{m,i} Q_{m} \mathcal{A}_{N,i}^{-1} \Sigma^{\frac{1}{2}}
\end{aligned}
\end{equation*}
for $i=1,\dots,4$ with $\mathcal{A}_{m,1}$ and $\mathcal{A}_{N,i}$, $i = 1, \dots, 4$ defined as in \eqref{eq:mathcalAi}. We continue to bound \eqref{eq:formthc1}. As for the real parts of the off-diagonal elements, note that $\mathcal{A}_{m,i} \widetilde{F}(d_{r},d_{s})' M_{m} \widetilde{F}(d_{r},d_{s}) \mathcal{A}_{m,i}$ is not a diagonal matrix. However, it can be rewritten as a unitary transformation of a diagonal matrix
\begin{equation} \label{eq:SVDTMT}
\begin{aligned}
&
\mathcal{A}_{m,i} \widetilde{F}(d_{r},d_{s})' M_{m} \widetilde{F}(d_{r},d_{s}) \mathcal{A}_{m,i}
\\&=
\begin{pmatrix}
I_{m} & 0_{m,m} \\
0_{m,m} & I_{m} 
\end{pmatrix}
\begin{pmatrix}
c_{r,i,N}\widetilde{c}_{s,i,N}\widetilde{T}_{r}(d_{r})\widetilde{T}_{s}(d_{s}) & 0_{m,m} \\
0_{m,m} & 0_{m,m}
\end{pmatrix}
\begin{pmatrix}
0_{m,m} & I_{m} \\
I_{m} & 0_{m,m}
\end{pmatrix},
\end{aligned}
\end{equation}
where we used the same calculations as in \eqref{eq:real_SVDTMT}. Following Remark \ref{re:Th1_diag}, Theorem \ref{th:Dicker_Th1} is applicable.
In \eqref{eq:imCI1} below, we apply Theorem \ref{th:Dicker_Th1} with $K=2$ and $R = b_{r} - a_{r} \leq 1$ to obtain
\begin{align}
&\Prob \Big( \sup_{(d_{r},d_{s}) \in 
[a_{r},b_{r}] \times [a_{s},b_{s}]} | \mathcal{E}' Q(d_{r},d_{s}) \mathcal{E}  - 
\E (\mathcal{E}' Q(d_{r},d_{s}) \mathcal{E} ) | > \frac{2 \pi m}{4} \vertiii{G} \nu \Big) \nonumber
\\&\leq
c_{1} \exp\Bigg( -c_{2} \min 
\Bigg\{ 
\frac{ \nu m \vertiii{G} }{ \gamma^2 \mathcal{T}_{1} },
\frac{ \nu^2 m^2 \vertiii{G}^2 }{ \gamma^4 \mathcal{T}^2_{i} }
\Bigg\} \Bigg) \label{eq:imCI1}
\\&\leq
c_{1} \exp\Bigg( -c_{2} \min 
\Bigg\{ 
\frac{ \nu m }{ \gamma^2 \widebar{\Delta}_{N} L_{rs,1} },
\frac{ \nu^2 m^2 }{ \gamma^4 \widebar{\Delta}_{N}^{2} L^2_{rs,i} }
\Bigg\} \Bigg) \label{eq:imCI2}
\end{align}
for $\nu^2 \geq \gamma^4 \mathcal{T}^2_{i} / ( c_{2} m^2 \vertiii{G}^2 )$ and $i=2,\dots,5$ with
\begin{equation} \label{eq:imTi}
\mathcal{T}_{1}=\| C_{m,1} \| L_{rs,1},
\hspace{0.2cm}
\mathcal{T}_{i}=\| C_{m,i} \|_{F} L_{rs,i},
\hspace{0.2cm}
\mathcal{T}_{5}=\| C_{m,1} \| L_{rs,5}
\end{equation}
for $i=2,\dots,4$, where the $L_{rs,i}$'s are given in \eqref{eq:prop:supGhatLs} and 
\begin{equation} \label{eq:im_Bmi}
C_{m,i}= Q_{m} \mathcal{A}_{N,i}^{-1} \Sigma \mathcal{A}_{N,i}^{-1} Q_{m}'
\end{equation}
for $i=1,\dots,4$. Given \eqref{eq:SVDTMT}, the $L_{rs,i}$'s in \eqref{eq:imTi} can indeed be represented as in \eqref{eq:prop:supGhatLs}.

Since the quantities \eqref{eq:im_Bmi} are equal to those in \eqref{eq:real_Bmi} by replacing $Q_{m}$ with $\widetilde{R}_{m}$, the norms 
$\| C_{m,1} \|, \| C_{m,2} \|_{F}, \| C_{m,3} \|_{F}$ and $\| C_{m,4} \|_{F}$ can be bounded as the norms of $B_{m,i}$, $i=1,\dots,4$, in \eqref{eq:real_Bm1}--\eqref{eq:real_Bm4}. In order to deal with $Q_{m}$, note that $\| Q_{m} \|^2 = \max\{ \| Q_{1,m} \|^{2}, \| Q_{2,m} \|^{2} \} \leq 1$, where the equality is due to the block diagonal structure of $Q_{m}$ and the inequality follows by Lemma C.4 in \cite{Sun2018:LargeSpectral}.
\par
In order to get the statements \eqref{eq:concreal1}--\eqref{eq:mathcalB} of the lemma, we shall get an upper bound on the probability in \eqref{eq:realim}. For the diagonal terms ($r=s$), the imaginary part is zero. As proved in \eqref{eq:realCI2}, for $r=s$, \eqref{eq:realim} can be bounded by $\mathcal{B}(r,r,i)$ for $i =2,4,5$; see \eqref{eq:Ltilde}.
For the off-diagonal elements, one needs to get a bound on the two probabilities in \eqref{eq:realim}.
For the real part of the off-diagonal elements, \eqref{eq:real_imCI2} gives the upper bound $\mathcal{B}(r,s,i)$, for $i =2,\dots,5$. For the imaginary parts of the off-diagonal elements, which was further bounded in \eqref{eq:twoboundsimaginarypart}, \eqref{eq:imCI2} provides the upper bound $\mathcal{B}(r,s,i)$, for $i =2,\dots,5$.
Combining the different bounds leads to the desired result.
\end{proof}

The next lemma gives a bound on the deterministic term in \eqref{eq:results_GhatGtilde}.
\begin{lemma} \label{prop:bias}
Suppose Assumptions \ref{ass:f0}--\ref{ass:derivative}. Then, the deterministic term in 
\eqref{eq:results_GhatGtilde} can be bounded as
\begin{equation} \label{eq:prop:biasbound}
\begin{aligned}
\sup_{D \in \Omega} 
| \E \widehat{H}_{rs}(D) - \widetilde{H}_{rs}(D) | 
&\leq 
\sup_{(d_{r},d_{s}) \in [a_{r},b_{r}] \times [a_{s},b_{s}]} \Bigg(
\bm{c}_{G,1} \frac{1}{2 \pi m} \sum_{j=1}^{m} t_{j,r}(d_{r}) t_{j,s}(d_{s}) \lambda_{j}^{2q-d_{0,r}-d_{0,s}}
\\& \hspace{1cm}+
\frac{1}{2 \pi m} \sum_{j=1}^{m} t_{j,r}(d_{r}) t_{j,s}(d_{s}) j^{-1} \lambda_{j}^{-d_{0,r}-d_{0,s}} Q_{m}\Bigg) 
\end{aligned}
\end{equation}
with
\begin{equation} \label{eq:Om}
Q_{m} = 
\vertiii{G} 
\frac{72 (\cos(\lambda_{m}/2))^{-2} }{\pi(1+2\min\{\Delta_{1},-\Delta_{2}\})}
+ \bm{c}_{G,2} 4(2+ \log(m)).
\end{equation}
\end{lemma}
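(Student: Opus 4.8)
The plan is to reduce \eqref{eq:prop:biasbound} to a pointwise bound on the bias of the periodogram at each Fourier frequency. Because $t_{j}(D)$ and $\lambda_{j}^{-D_{0}}$ are diagonal, the $(r,s)$ entries satisfy
\begin{equation*}
\E\widehat{H}_{rs}(D)-\widetilde{H}_{rs}(D)=\frac{1}{m}\sum_{j=1}^{m}t_{j,r}(d_{r})\,t_{j,s}(d_{s})\bigl(\E I_{X,rs}(\lambda_{j})-\lambda_{j}^{-d_{0,r}-d_{0,s}}G_{0,rs}\bigr).
\end{equation*}
Since $t_{j,r}(d_{r}),t_{j,s}(d_{s})\geq 0$, the triangle inequality lets us pull the absolute value inside the sum, so it suffices to bound $b_{j}:=\bigl|\E I_{X,rs}(\lambda_{j})-\lambda_{j}^{-d_{0,r}-d_{0,s}}G_{0,rs}\bigr|$ uniformly in $j$, and then carry the supremum over $(d_{r},d_{s})\in[a_{r},b_{r}]\times[a_{s},b_{s}]$ through to the very end. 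This reduction handles the diagonal ($r=s$) and off-diagonal cases uniformly.

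Next I would split $b_{j}\leq\bigl|\E I_{X,rs}(\lambda_{j})-f_{X,rs}(\lambda_{j})\bigr|+\bigl|f_{X,rs}(\lambda_{j})-\lambda_{j}^{-d_{0,r}-d_{0,s}}G_{0,rs}\bigr|$. The second summand is exactly the quantity controlled by Assumption \ref{ass:G-G}, giving the bound $c\,\lambda_{j}^{2\widetilde{q}-d_{0,r}-d_{0,s}}$; re-inserting the weight $\tfrac{1}{m}t_{j,r}(d_{r})t_{j,s}(d_{s})$ and summing over $j$ produces the first term on the right-hand side of \eqref{eq:prop:biasbound}.

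The main work is the genuine periodogram bias $\bigl|\E I_{X,rs}(\lambda_{j})-f_{X,rs}(\lambda_{j})\bigr|$. Using the classical representation $\E I_{X,rs}(\lambda)=\int_{-\pi}^{\pi}\Phi_{N}(\lambda-\mu)f_{X,rs}(\mu)\,d\mu$ with the Fej\'er kernel $\Phi_{N}$, $\int_{-\pi}^{\pi}\Phi_{N}=1$, I would write the bias as $\int_{-\pi}^{\pi}\Phi_{N}(\lambda_{j}-\mu)\bigl(f_{X,rs}(\mu)-f_{X,rs}(\lambda_{j})\bigr)d\mu$ and split the range of $\mu$ into a neighbourhood of $\lambda_{j}$ and its complement. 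On the neighbourhood, writing $f_{X,rs}(\mu)-f_{X,rs}(\lambda_{j})=\int_{\lambda_{j}}^{\mu}\tfrac{\partial}{\partial t}f_{rs}(t)\,dt$ and using the derivative bound $\bigl|\tfrac{\partial}{\partial t}f_{rs}(t)\bigr|\leq\bm{c}\,t^{-1-d_{0,r}-d_{0,s}}$ of Assumption \ref{ass:derivative} yields a factor of order $\bm{c}\,\lambda_{j}^{-1-d_{0,r}-d_{0,s}}$ times a (truncated) absolute moment of $\Phi_{N}$; on the complement, bounding $|f_{X,rs}(\mu)|\leq\vertiii{G}\,|\mu|^{-d_{0,r}-d_{0,s}}$ (the $(r,s)$ entry of $\lambda^{D_{0}}f_{X}(\lambda)\lambda^{D_{0}}=G(\lambda)$ has modulus at most $\vertiii{G}$) and using the decay $\Phi_{N}(\theta)\le(2\pi N\sin^{2}(\theta/2))^{-1}$ gives a $\vertiii{G}$-weighted term in which the $\sin^{2}(\theta/2)$ denominator generates the factor $(\cos(\lambda_{m}/2))^{-2}$ and the integrals $\int\mu^{-d_{0,r}-d_{0,s}}d\mu$ are controlled uniformly over the admissible memory parameters by $(1+2\min\{\Delta_{1},-\Delta_{2}\})^{-1}$. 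Collecting these, and using $\lambda_{j}=2\pi j/N$ so that $N^{-1}\lambda_{j}^{-1}=(2\pi j)^{-1}$, one arrives at $\bigl|\E I_{X,rs}(\lambda_{j})-f_{X,rs}(\lambda_{j})\bigr|\le\tfrac{1}{2\pi}\,j^{-1}\lambda_{j}^{-d_{0,r}-d_{0,s}}\,Q_{m}$ with $Q_{m}$ as in \eqref{eq:Om}, the $\vertiii{G}$-part of $Q_{m}$ coming from the complement and the $\bm{c}(2+\log m)$-part from the neighbourhood of $\lambda_{j}$; this step follows the arguments of \cite{robinson1995gaussian} and \cite{Sun2018:LargeSpectral}. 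Multiplying by $\tfrac{1}{m}t_{j,r}(d_{r})t_{j,s}(d_{s})$, summing over $j$, and taking $\sup_{(d_{r},d_{s})}$ gives the second term of \eqref{eq:prop:biasbound}, and combining with the first term completes the proof.

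I expect the periodogram-bias estimate of the previous paragraph to be the only genuine obstacle. In contrast to the asymptotic local Whittle literature, where differentiability of $f$ is used only in a shrinking neighbourhood of the origin, here the pointwise bias bound must hold uniformly over \emph{all} frequencies $\lambda_{j}$, $j\le m\le N/2$ — which is precisely why Assumption \ref{ass:derivative} is imposed on all of $(-\pi,\pi)\setminus\{0\}$ — and one must track the explicit constants $(\cos(\lambda_{m}/2))^{-2}$ and $(1+2\min\{\Delta_{1},-\Delta_{2}\})^{-1}$ through the kernel splitting, the latter being the point at which the antipersistent range $d_{0,r}+d_{0,s}<0$, with its less integrable behaviour near the origin, is accommodated.
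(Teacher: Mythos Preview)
Your proposal is correct and follows essentially the same route as the paper: the same entrywise reduction, the same split into a smoothness term handled by Assumption~\ref{ass:G-G} and a periodogram-bias term, and the same Fej\'er-kernel argument for the latter (derivative bound near $\lambda_{j}$, the $\vertiii{G}$ bound away from it), with the constants assembled into $Q_{m}$ exactly as you describe. The paper packages the periodogram-bias step as a separate lemma (Lemma~\ref{le:biasappB1}) and uses a four-interval decomposition $(-\pi,-\lambda_{j}/2]\cup(-\lambda_{j}/2,\lambda_{j}/2]\cup(\lambda_{j}/2,2\lambda_{j}]\cup(2\lambda_{j},\pi]$ rather than a two-interval one, but this is only a finer version of your neighbourhood/complement split.
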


\begin{proof}[Proof]
The deviation of the expected value around the population quantity can be bounded as
\begin{align}
&
| \widetilde{H}_{rs}(D) - \E \widehat{H}_{rs}(D) | \nonumber \\
& =
\Big| \frac{1}{m} \sum_{j=1}^{m} t_{j,r}(d_{r}) t_{j,s}(d_{s}) \lambda_{j}^{-d_{0,r}-d_{0,s}} G_{0,rs} - 
\E \Big(\frac{1}{m} \sum_{j=1}^{m} t_{j,r}(d_{r}) t_{j,s}(d_{s}) I_{X,rs}(\lambda_{j}) \Big) \Big| \nonumber \\
& \leq
\Big| \frac{1}{m} \sum_{j=1}^{m} t_{j,r}(d_{r}) t_{j,s}(d_{s}) \lambda_{j}^{-d_{0,r}-d_{0,s}} G_{0,rs} -
 \frac{1}{m} \sum_{j=1}^{m} t_{j,r}(d_{r}) t_{j,s}(d_{s}) \lambda_{j}^{-d_{0,r}-d_{0,s}} G_{0,rs}(\lambda_{j}) \Big| \nonumber
 \\& \hspace{1cm}+
\Big| \frac{1}{m} \sum_{j=1}^{m} t_{j,r}(d_{r}) t_{j,s}(d_{s}) \lambda_{j}^{-d_{0,r}-d_{0,s}} G_{0,rs}(\lambda_{j}) - 
\E \Big(\frac{1}{m} \sum_{j=1}^{m} t_{j,r}(d_{r}) t_{j,s}(d_{s}) I_{X,rs}(\lambda_{j}) \Big) \Big| \nonumber 
\\ & =
\Big| \frac{1}{m} \sum_{j=1}^{m} t_{j,r}(d_{r}) t_{j,s}(d_{s}) \lambda_{j}^{-d_{0,r}-d_{0,s}} (G_{0,rs} - G_{0,rs}(\lambda_{j}) ) \Big| \nonumber
 \\& \hspace{1cm}+
\Big| \frac{1}{m} \sum_{j=1}^{m} t_{j,r}(d_{r}) t_{j,s}(d_{s}) \lambda_{j}^{-d_{0,r}-d_{0,s}} G_{0,rs}(\lambda_{j}) - 
\frac{1}{m} \sum_{j=1}^{m} t_{j,r}(d_{r}) t_{j,s}(d_{s}) \E (I_{X,rs}(\lambda_{j}) ) \Big| \nonumber 
\\& \leq
\Big| 
\frac{1}{m} \sum_{j=1}^{m} t_{j,r}(d_{r}) t_{j,s}(d_{s}) \lambda_{j}^{-d_{0,r}-d_{0,s}} (G_{0,rs} - G_{0,rs}(\lambda_{j}) ) 
\Big| 
\nonumber
 \\& \hspace{1cm}+
\Big| \frac{1}{m} \sum_{j=1}^{m} t_{j,r}(d_{r}) t_{j,s}(d_{s}) 
\Big(
\lambda_{j}^{-d_{0,r}-d_{0,s}} G_{0,rs}(\lambda_{j}) 
- \E (I_{X,rs}(\lambda_{j}) ) \Big) \Big| \label{eq:propbias1}
\\& \leq
\bm{c}_{G,1}
\frac{1}{2 \pi m} \sum_{j=1}^{m} t_{j,r}(d_{r}) t_{j,s}(d_{s}) \lambda_{j}^{2q-d_{0,r}-d_{0,s}} 
+
\frac{1}{2 \pi m} \sum_{j=1}^{m} t_{j,r}(d_{r}) t_{j,s}(d_{s}) j^{-1} \lambda_{j}^{-d_{0,r}-d_{0,s}} Q_{m}, \label{eq:propbias2}
\end{align}
where $Q_{m}$ is in \eqref{eq:Om}.
We consider the two different summands in \eqref{eq:propbias1} separately to prove \eqref{eq:propbias2}.
For the first summand in \eqref{eq:propbias1}, the upper bound is a consequence of Assumption \ref{ass:G-G}.

The second summand in \eqref{eq:propbias1} can be represented and bounded as 
\begin{align}
&
\Big| \frac{1}{2 \pi m} \sum_{j=1}^{m} t_{j,r}(d_{r}) t_{j,s}(d_{s}) 
\Big(
\lambda_{j}^{-d_{0,r}-d_{0,s}} G_{0,rs}(\lambda_{j}) 
- \E (I_{X,rs}(\lambda_{j}) ) \Big) \Big| \nonumber
\\ & \leq
\frac{1}{2 \pi m} \sum_{j=1}^{m} t_{j,r}(d_{r}) t_{j,s}(d_{s}) 
\Big| 
f_{rs}(\lambda_{j}) 
- \E (I_{X,rs}(\lambda_{j}) ) \Big| \nonumber
\\ & \leq
\frac{1}{2 \pi m} \sum_{j=1}^{m} t_{j,r}(d_{r}) t_{j,s}(d_{s}) 
N^{-1} \lambda_{j}^{ -1-d_{0,r}-d_{0,s} }
Q_{m}, \label{eq:www}
\end{align}
with $Q_{m}$ in \eqref{eq:Om} and the last inequality \eqref{eq:www} is a consequence of Lemma \ref{le:biasappB1} given Assumptions \ref{ass:f0} and \ref{ass:derivative}.
\end{proof}

We next prove Propositions \ref{prop:supGhat1}--\ref{prop:supGhat4}. The proofs are all consequences of Lemmas \ref{prop:supGhat} and \ref{prop:bias}, and structured in the same way. We first choose a function $t_{j}(D)$ in \eqref{eq:crucialterms}--\eqref{eq:crucialterms1} and then apply Lemma \ref{prop:supGhat} to the respective probabilistic part and Lemma \ref{prop:bias} to the respective deterministic part in the bound \eqref{eq:results_GhatGtilde}. 

For the probabilistic parts in the proofs of Propositions \ref{prop:supGhat1}--\ref{prop:supGhat4}, we note that Lemma \ref{prop:supGhat} requires 
$ \widetilde{\nu}^2  \geq \gamma^4 L^2_{rs,i}/(m^2 c_{2} )$, $i=2,\dots,5$ with $L_{rs,i}$ as in \eqref{eq:prop:supGhatLs}. We do not verify this condition in the proofs since it is automatically satisfied by the bounds we get on the $L_{rs,i}$'s and under the assumptions in Propositions \ref{prop:supGhat1}--\ref{prop:supGhat4}. This is due to our choices of $\nu$, $\nu_{1}$, $\nu_{2}$ and $\nu_{3}$ in \eqref{eq:nuinprop1}, \eqref{eq:nu1}, \eqref{eq:nu2} and \eqref{eq:nu3}, which are always of the form $\nu = \widetilde{\nu} + \mathcal{T}$, where $\mathcal{T}$ accounts for the respective bias terms and $\widetilde{\nu}$ is always chosen as $m^{-1} L_{rs,i} \precsim \widetilde{\nu}$.

\begin{proof}[Proof of Proposition \ref{prop:supGhat1}]
To apply Lemmas \ref{prop:supGhat} and \ref{prop:bias}, we take
\begin{equation} \label{eq:Ghat1fctOmega}
t_{j}(D) = \lambda_{j}^{D}
\hspace{0.2cm}
\text{ and }
\hspace{0.2cm}
\Omega=\Omega(\varepsilon)=\{ D \in \mathcal{M}_{\diag} | \Delta_{1} I_{p} \preccurlyeq D \preccurlyeq \Delta_{2} I_{p} \text{ and } \| D - D_{0} \|_{\max} \leq \varepsilon\}
\end{equation}
in \eqref{eq:crucialterms}--\eqref{eq:crucialterms1}. 
We consider the probabilistic and deterministic parts separately as in \eqref{eq:results_GhatGtilde}.

\textit{Probabilistic part:}
We distinguish three cases depending on whether the true memory parameters $d_{0,r}, d_{0,s}$ are positive or negative. Therefore, we write
\begin{equation} \label{eq:sepProp1Ai}
\begin{aligned}
\Prob \Big( \sup_{D \in \Omega(\varepsilon) }
| \widehat{G}_{rs}(D) - \E(\widehat{G}_{rs}(D)) | > \nu \Big)
&\leq
\sum_{k=1}^{3} \Prob \Big( \sup_{D \in \Omega(\varepsilon) }
| \widehat{G}_{rs}(D) - \E(\widehat{G}_{rs}(D)) |\mathds{1}_{A_{k}} > \nu \Big),
\end{aligned}
\end{equation}
where
\begin{equation*}
\begin{aligned}
A_{1}=\{d_{0,r}\leq 0,d_{0,s} \leq 0\},
\hspace{0.2cm}
A_{2}&=\{ d_{0,r} > 0, d_{0,s} > 0\},
\hspace{0.2cm}
A_{3}=\{d_{0,r} > 0, d_{0,s} \leq 0\} 
\end{aligned}
\end{equation*}
and
\begin{equation*}
\begin{aligned}
\Omega(\varepsilon)
=
([d_{0,1}-\varepsilon,d_{0,1}+\varepsilon] \cap [\Delta_{1},\Delta_{2}]) \times \cdots \times 
([d_{0,p}-\varepsilon,d_{0,p}+\varepsilon] \cap [\Delta_{1},\Delta_{2}]).
\end{aligned}
\end{equation*}
For each case, we apply Lemma \ref{prop:supGhat} and bound the respective quantities $L_{rs,i}$, $i=1,\dots,5$ in \eqref{eq:prop:supGhatLs} that then yield the desired result. We will show that, for $k=1,2,3$,
\begin{equation} \label{eq:LA1applied_mathcalLK}
\begin{aligned}
\Prob \Big( \sup_{D \in \Omega(\varepsilon) }
| \widehat{G}_{rs}(D) - \E(\widehat{G}_{rs}(D)) |\mathds{1}_{A_{k}} > \vertiii{G} \widetilde{\nu} \Big)
\leq
c_{1} \exp\Bigg( -c_{2} \min 
\Bigg\{ 
\frac{ \widetilde{\nu} m }{ \gamma^2 \mathcal{L}_{k1} },
\frac{ \widetilde{\nu}^2 m^2 }{ \gamma^4 \mathcal{L}_{k2}^2 }
\Bigg\} \Bigg).
\end{aligned}
\end{equation}

\textit{Case $d_{0,r} \leq 0,d_{0,s} \leq 0$:}
We get
\begin{align}
L_{rs,1}
&\leq 
\max_{j=1,\dots,m}  |\lambda_{j}^{2\Delta_{1}}| 
+
\sup_{(d_{r},d_{s}) \in [\Delta_{1},\Delta_{2}]^{2}} 
\max_{j=1,\dots,m} \sqrt{2} |\log(\lambda_{j}) \lambda_{j}^{d_{r}+d_{s}}| \nonumber
\\&\leq 
c (1+\log(N)) N^{-2\Delta_{1}} 
\leq 
c \log(N) N^{-2\Delta_{1}} =: \mathcal{L}_{11} \label{eq:1L11},
\\
L_{rs,5}
&\leq
\Big( \sum_{j=1}^{m}\lambda_{j}^{4 \Delta_{1}} \Big)^{\frac{1}{2}}
+
\sup_{(d_{r},d_{s}) \in [\Delta_{1},\Delta_{2}]^{2}} 
\sqrt{2} \Big( \sum_{j=1}^{m}\log(\lambda_{j})^2 \lambda_{j}^{2d_{r}+2d_{s}} \Big)^{\frac{1}{2}} \nonumber
\\&\leq 
c (1+\log(N)) \Big( \sum_{j=1}^{m}\lambda_{j}^{4 \Delta_{1}} \Big)^{\frac{1}{2}} 
\leq 
c \log(N) \Big( \sum_{j=1}^{m}\lambda_{j}^{4 \Delta_{1}} \Big)^{\frac{1}{2}} 
=:\mathcal{L}_{12} 
\label{eq:1L21}.
\end{align}
Since $ \mathcal{L}_{11}$ and $ \mathcal{L}_{21}$ do not depend on $r,s$, applying Lemma \ref{prop:supGhat} with $i = 5$ in \eqref{eq:mathcalB} gives \eqref{eq:LA1applied_mathcalLK} with $k=1$.

\textit{Case $d_{0,r}>0,d_{0,s}>0$}: 
We get
\begin{align}
L_{rs,1}
&=
N^{d_{0,r}+d_{0,s}} \left(
\max_{j=1,\dots,m} | \lambda_{j}^{d_{0,r}+d_{0,s}-2\varepsilon}| +
\sup_{D \in \Omega(\varepsilon)} 
\max_{j=1,\dots,m} \sqrt{2} |\log(\lambda_{j}) \lambda_{j}^{d_{r}+d_{s}}| \right) \nonumber
\\&\leq 
c (1+\log(N)) N^{d_{0,r}+d_{0,s}} (
\lambda_{m}^{d_{0,r}+d_{0,s}-2\varepsilon} \mathds{1}_{\{d_{0,r}+d_{0,s}-2\varepsilon \geq 0\}} +
\lambda_{1}^{d_{0,r}+d_{0,s}-2\varepsilon} \mathds{1}_{\{d_{0,r}+d_{0,s}-2\varepsilon < 0\}} )
\label{eq:to_explain}
\\ & \leq 
c \log(N) \max\{ m^{d_{0,r}+d_{0,s}-2\varepsilon} N^{2 \varepsilon}, N^{2\varepsilon} \} \nonumber
\\ & \leq 
c \log(N) m^{2\Delta_{2} -2\varepsilon} N^{2\varepsilon} =:\mathcal{L}_{21}.
\label{eq:1L12}
\end{align}
We pause here to draw the attention to inequality \eqref{eq:to_explain}, since the argument will be used not only here but also in the proofs of Proposition \ref{prop:supGhat2}--\ref{prop:supGhat4}. It can be assumed that the frequencies satisfy $\lambda_{j} \leq 1$. Then, the function $f(d) = \lambda_{j}^{2d}$ is monotonically decreasing in $d$ for all $j=1,\dots,m$. For a non-negative exponent $d > 0$, $f$ reaches its maximum for $j=m$, for a negative exponent $d \leq 0$, for $j=1$. Due to the monotonicity of $f$, the function reaches its supremum for the smallest possible values $d$ can take.

We proceed with the case $d_{0,r}>0,d_{0,s}>0$, 
\begin{align}
L_{rs,i}
&=
c_{r,i,N} \widetilde{c}_{s,i,N}  \left(
\max_{j=1,\dots,m} | \lambda_{j}^{d_{0,r}+d_{0,s}-2\varepsilon}| +
\sup_{D \in \Omega(\varepsilon)} 
\max_{j=1,\dots,m} \sqrt{2} |\log(\lambda_{j}) \lambda_{j}^{d_{r}+d_{s}}| \right) \nonumber
\\&\leq 
c (1+\log(N)) c_{r,i,N} \widetilde{c}_{s,i,N}  (
\lambda_{m}^{d_{0,r}+d_{0,s}-2\varepsilon} \mathds{1}_{\{d_{0,r}+d_{0,s}-2\varepsilon \geq 0\}} +
\lambda_{1}^{d_{0,r}+d_{0,s}-2\varepsilon} \mathds{1}_{\{d_{0,r}+d_{0,s}-2\varepsilon < 0\}} ) \nonumber
\\&\leq 
c \log(N)
\begin{cases}
\max\{m^{d_{0,r}+d_{0,s}-2\varepsilon} N^{2\varepsilon}, N^{2\varepsilon} \},
& \hspace{0.2cm} \text{ if } d_{0,r},d_{0,s} > \frac{1}{4}, \ i=2, \\
\max\{ m^{\frac{1}{4}+d_{0,r}+d_{0,s}-2\varepsilon}N^{2\varepsilon} , m^{\frac{1}{4}}N^{2\varepsilon} \},
& \hspace{0.2cm} \text{ if } d_{0,r} > \frac{1}{4},d_{0,s} \leq \frac{1}{4}, \ i=3,  \\
m^{\frac{1}{2}} N^{d_{0,r} + d_{0,s}} \lambda_{m}^{d_{0,r}+d_{0,s}-2\varepsilon},
& \hspace{0.2cm} \text{ if } d_{0,r},d_{0,s} \leq \frac{1}{4}, d_{0,r}+d_{0,s}-2\varepsilon \geq 0, \ i=4, \\
m^{\frac{1}{2}} N^{d_{0,r} + d_{0,s}} \lambda_{1}^{d_{0,r}+d_{0,s}-2\varepsilon},
& \hspace{0.2cm} \text{ if } d_{0,r},d_{0,s} \leq \frac{1}{4}, d_{0,r}+d_{0,s}-2\varepsilon < 0, \ i=4, 
\end{cases} \nonumber
\\&\leq 
c \log(N)
\begin{cases}
m^{2\Delta_{2}-2\varepsilon} N^{2\varepsilon},
& \hspace{0.2cm} \text{ if } d_{0,r},d_{0,s} > \frac{1}{4}, \ i=2, \\
m^{\frac{1}{4}+\Delta_{2}+d_{0,s}-2\varepsilon}N^{2\varepsilon},
& \hspace{0.2cm} \text{ if } d_{0,r} > \frac{1}{4},d_{0,s} \leq \frac{1}{4}, \ i=3,  \\
m^{\frac{1}{2} + d_{0,r} + d_{0,s}-2\varepsilon} N^{2\varepsilon},
& \hspace{0.2cm} \text{ if } d_{0,r},d_{0,s} \leq \frac{1}{4}, d_{0,r}+d_{0,s}-2\varepsilon \geq 0, \ i=4, \\
m^{\frac{1}{2}} N^{2\varepsilon},
& \hspace{0.2cm} \text{ if } d_{0,r},d_{0,s} \leq \frac{1}{4}, d_{0,r}+d_{0,s}-2\varepsilon < 0, \ i=4,
\end{cases} \nonumber
\\&\leq 
c \log(N)
\max\{m^{2\widebar{\Delta}_{u}-2\varepsilon} N^{2\varepsilon} , m^{\frac{1}{2}} N^{2\varepsilon}\}
=: \mathcal{L}_{22}
\label{eq:1L22}
\end{align}
with $\widebar{\Delta}_{u}$ as in \eqref{eq:delta-u-l}.
Since $ \mathcal{L}_{21}$ and $ \mathcal{L}_{22}$ do not depend on $r,s$, applying Lemma \ref{prop:supGhat} with $i = 2,3,4$, gives
\eqref{eq:LA1applied_mathcalLK} with $k=2$.

\textit{Case $d_{0,r}>0, d_{0,s} \leq 0$:}
We get
\begin{align}
L_{rs,1}
&=
N^{d_{0,r}} \left(
\max_{j=1,\dots,m} |\lambda_{j}^{d_{0,r}-\varepsilon+\max\{d_{0,s}-\varepsilon,\Delta_{1}\}}| +
\sup_{D \in \Omega(\varepsilon)}
\max_{j=1,\dots,m} \sqrt{2} |\log(\lambda_{j}) \lambda_{j}^{d_{r}+d_{s}}| \right) \nonumber
\\&\leq 
N^{d_{0,r}} 
\max_{j=1,\dots,m} |\lambda_{j}^{d_{0,r}-\varepsilon+\max\{d_{0,s}-\varepsilon,\Delta_{1}\}}| \nonumber
\\ & \hspace{1cm} +
c\log(N) N^{d_{0,r}}
(
\lambda_{m}^{d_{0,r}-\varepsilon + \Delta_{1}} \mathds{1}_{\{d_{0,r}-\varepsilon + \Delta_{1} \geq 0\}} +
\lambda_{1}^{d_{0,r}-\varepsilon+\Delta_{1}} \mathds{1}_{\{d_{0,r}-\varepsilon + \Delta_{1} < 0\}} ) \nonumber
\\&\leq 
c (1+\log(N)) (
m^{\Delta_{2} + \Delta_{1} - \varepsilon} N^{-\Delta_{1}+\varepsilon}  \mathds{1}_{\{d_{0,r}-\varepsilon + \Delta_{1} \geq 0\}} +
N^{-\Delta_{1}+\varepsilon} \mathds{1}_{\{d_{0,r}-\varepsilon + \Delta_{1} < 0\}} ) \nonumber
\\&\leq 
c \log(N) \max\{m^{\Delta_{2} + \Delta_{1} - \varepsilon},1\}N^{-\Delta_{1}+\varepsilon} =:\mathcal{L}_{31}
\label{eq:1L13}
\end{align}
since $\varepsilon < -\Delta_{1}$, 
and
\begin{align}
L_{rs,i}
&=
c_{r,i,N} \widetilde{c}_{s,i,N} \left(
\max_{j=1,\dots,m} |\lambda_{j}^{d_{0,r}-\varepsilon+\max\{d_{0,s}-\varepsilon,\Delta_{1}\}}| +
\sup_{D \in \Omega(\varepsilon)}
\max_{j=1,\dots,m} \sqrt{2} |\log(\lambda_{j}) \lambda_{j}^{d_{r}+d_{s}}| \right) \nonumber
\\&\leq 
c (1+\log(N)) c_{r,i,N} \widetilde{c}_{s,i,N}  (
\lambda_{m}^{d_{0,r}-\varepsilon + \Delta_{1}} \mathds{1}_{\{d_{0,r}-\varepsilon + \Delta_{1} \geq 0\}} +
\lambda_{1}^{d_{0,r}-\varepsilon+\Delta_{1}} \mathds{1}_{\{d_{0,r}-\varepsilon + \Delta_{1} < 0\}} ) \nonumber
\\&\leq 
c \log(N)
\begin{cases}
m^{\frac{1}{4}} N^{d_{0,r}} \max\{ m^{d_{0,r}+\Delta_{1}-\varepsilon}, 1 \} N^{-d_{0,r}-\Delta_{1}+\varepsilon} ,
& \hspace{0.2cm} \text{ if } d_{0,r} > \frac{1}{4}, d_{0,s} \leq 0, \ i=3, \\
m^{\frac{1}{2}} N^{d_{0,r}} \max\{ m^{d_{0,r}+\Delta_{1}-\varepsilon}, 1 \} N^{-d_{0,r}-\Delta_{1}+\varepsilon},
& \hspace{0.2cm} \text{ if } \frac{1}{4} \geq d_{0,r}>0, d_{0,s} \leq 0, \ i=4, 
\end{cases} \nonumber
\\&\leq 
c \log(N)
\begin{cases}
\max\{ m^{\frac{1}{4} + \Delta_{2} + \Delta_{1}-\varepsilon}, m^{\frac{1}{4}} \} N^{-\Delta_{1}+\varepsilon} ,
& \hspace{0.2cm} \text{ if } d_{0,r} > \frac{1}{4}, d_{0,s} \leq 0, \ i=3, \\
\max\{ m^{\frac{1}{2} + d_{0,r} + \Delta_{1}-\varepsilon}, m^{\frac{1}{2}} \} N^{-\Delta_{1}+\varepsilon},
& \hspace{0.2cm} \text{ if } \frac{1}{4} \geq d_{0,r}>0, d_{0,s} \leq 0, \ i=4, 
\end{cases} \nonumber
\\&\leq 
c \log(N) \max\{ m^{\frac{1}{4} + \widebar{\Delta}_{u} + \Delta_{1}-\varepsilon}, m^{\frac{1}{2}} \} N^{-\Delta_{1}+\varepsilon}
=: \mathcal{L}_{32} .
\label{eq:1L23}
\end{align}
Applying Lemma \ref{prop:supGhat} with $i=3,4$ gives \eqref{eq:LA1applied_mathcalLK} with $k=3$.

\textit{Deterministic part:} Using  Lemma \ref{prop:bias} with \eqref{eq:Ghat1fctOmega}, we get
\begin{equation} \label{eq:ineqprop1det}
\begin{aligned}
&\sup_{ D \in \Omega(\varepsilon) } 
| \E \widehat{G}_{rs}(D) - \widetilde{G}_{rs}(D) |
\leq
\bm{c}_{G,1} \frac{1}{2\pi} \lambda_{m}^{2q-2\varepsilon}
+
c \frac{1}{2 \pi m} \Big(1 + \frac{1}{2\varepsilon} \Big)
N^{2\varepsilon} Q_{m} 
= \mathcal{T}_{1}(\varepsilon),
\end{aligned}
\end{equation}
where $\mathcal{T}_{1}(\varepsilon)$ is in Table \ref{label 2} and the inequality can be obtained by bounding the two terms in \eqref{eq:prop:biasbound} given \eqref{eq:Ghat1fctOmega} as follows. The first summand in \eqref{eq:prop:biasbound} can be bounded as
\begin{equation*}
\begin{aligned}
\sup_{ D \in \Omega(\varepsilon) } 
\frac{1}{2\pi m} \sum_{j=1}^{m} \lambda_{j}^{d_{r}+d_{s}} \lambda_{j}^{2q-d_{0,r}-d_{0,s}}
\leq
\frac{1}{2\pi m} \sum_{j=1}^{m} \lambda_{j}^{2q-2\varepsilon}
\leq
\frac{1}{2\pi} \lambda_{m}^{2q-2\varepsilon}
\end{aligned}
\end{equation*}
for $q-\varepsilon>0$, and the second summand as
\begin{equation*}
\begin{aligned}
\sup_{ D \in \Omega(\varepsilon) } 
\frac{1}{2 \pi m} \sum_{j=1}^{m} \lambda_{j}^{d_{r}+d_{s}} j^{-1} \lambda_{j}^{-d_{0,r}-d_{0,s}} 
&\leq
\frac{1}{2 \pi m} \sum_{j=1}^{m} j^{-1} \lambda_{j}^{-2\varepsilon}
\\& \leq 
\frac{1}{2 \pi m} \sum_{j=1}^{m} j^{-1-2\varepsilon} \lambda_{1}^{-2\varepsilon}
\leq 
\frac{1}{2 \pi m} \Big(1 + \frac{1}{2\varepsilon} \Big) \lambda_{1}^{-2\varepsilon}.
\end{aligned}
\end{equation*}

Finally, we combine our results on the probabilistic and deterministic terms to obtain the statement of the proposition.
With our choice of $\nu$ in \eqref{eq:nuinprop1} and for any $\bm{C} \geq 1$, observe that, with explanations given below, 
\begin{align}
&\Prob \Big( \sup_{D \in \Omega(\varepsilon) }
| \widehat{G}_{rs}(D) - \widetilde{G}_{rs}(D) | > \nu \Big) \nonumber
\\&\leq
\Prob \Big( \sup_{D \in \Omega(\varepsilon) }
| \widehat{G}_{rs}(D) - \E(\widehat{G}_{rs}(D)) | + \sup_{D \in \Omega(\varepsilon) }
| \E(\widehat{G}_{rs}(D)) - \widetilde{G}_{rs}(D) | > \bm{C} \vertiii{G}
\sqrt{\frac{\log(p)}{\mathcal{R}_{1}}} 
+
\mathcal{T}_{1}(\varepsilon) \Big) \nonumber
\\&\leq
\Prob \Big( \sup_{D \in \Omega(\varepsilon) }
| \widehat{G}_{rs}(D) - \E(\widehat{G}_{rs}(D)) | > \bm{C} \vertiii{G}\sqrt{\frac{\log(p)}{\mathcal{R}_{1}}} 
\Big) \label{eq:prop1end2}
\\&\leq
\sum_{k=1}^{3} \Prob \Big( \sup_{D \in \Omega(\varepsilon) }
| \widehat{G}_{rs}(D) - \E(\widetilde{G}_{rs}(D)) |\mathds{1}_{A_{k}} > \bm{C} \vertiii{G}\sqrt{\frac{\log(p)}{\mathcal{R}_{1}}}  \Big) \label{eq:prop1end3}
\\&\leq
\sum_{k=1}^{3} c_{1} \exp\Bigg( -c_{2} \bm{C} \min 
\Bigg\{ 
\frac{ \sqrt{\frac{\log(p)}{\log(N)^{-1} \widebar{\Delta}_{N}^{-1} \mathcal{R}_{11}}} m }{ \gamma^2 \widebar{\Delta}_{N} \mathcal{L}_{k1} },
\frac{ \frac{\log(p)}{\log(N)^{-2} \widebar{\Delta}_{N}^{-2} \mathcal{R}_{12}} m^2 }{ \gamma^4 \widebar{\Delta}_{N}^{2} \mathcal{L}_{k2}^2 }
\Bigg\} \Bigg) \label{eq:prop1end4}
\\&\leq
c_{1} \exp\Bigg( -c_{2} \bm{C} \min 
\Bigg\{ 
\frac{ \sqrt{\frac{\log(p)}{\log(N)^{-1} \widebar{\Delta}^{-1}_{N} \mathcal{R}_{11}}} m }{ \gamma^2 \widebar{\Delta}_{N} \max_{k=1,2,3}\mathcal{L}_{k1} },
\frac{ \frac{\log(p)}{\log(N)^{-2} \widebar{\Delta}_{N}^{-2} \mathcal{R}_{12}} m^2 }{ \gamma^4 \widebar{\Delta}_{N}^{2} \max_{k=1,2,3}\mathcal{L}_{k2}^2 }
\Bigg\} \Bigg) \nonumber
\\&\leq
c_{1} \exp\Bigg( -c_{2} \bm{C} \min 
\Bigg\{ \sqrt{ \frac{\log(p)}{\widebar{\Delta}_{N} \log(N)} \mathcal{R}_{11} }, \log(p) 
\Bigg\} \Bigg) \label{eq:prop1end6}
\\&\leq
c_{1} p^{ -c_{2} \bm{C} } \label{eq:prop1end7}
\end{align}
with $\mathcal{L}_{k1},\mathcal{L}_{k2}$, $k=1,2,3$ defined in \eqref{eq:1L11}, \eqref{eq:1L21}, \eqref{eq:1L12}, \eqref{eq:1L22} and \eqref{eq:1L13}, \eqref{eq:1L23}. The constants $c_{1},c_{2}$ are generic and might differ from line to line.
Indeed, the inequality \eqref{eq:prop1end2} is due to \eqref{eq:ineqprop1det} with $\mathcal{T}_{1}(\varepsilon)$ as in Table \ref{label 2}. In \eqref{eq:prop1end3}, the probabilistic part is bounded as in \eqref{eq:sepProp1Ai}. Applying \eqref{eq:LA1applied_mathcalLK} yields \eqref{eq:prop1end4}. 
For the inequality \eqref{eq:prop1end6}, note that $\mathcal{R}_{11}$ and $\mathcal{R}_{12}$ are chosen in Table \ref{label 1} such that
\begin{equation*}
\log(N)^{-1}\mathcal{R}_{11} \precsim m ( \max_{k=1,2,3} \mathcal{L}_{k1})^{-1}
\hspace{0.2cm} \text{ and } \hspace{0.2cm}
\log(N)^{-2}\mathcal{R}_{12} \precsim m^{2} ( \max_{k=1,2,3} \mathcal{L}_{k2})^{-2}.
\end{equation*}
The inequality \eqref{eq:prop1end7} follows since we work under the assumption $\mathcal{R}_{11} \succsim \widebar{\Delta}_{N} \log(N)\log(p)$; see the discussion following Proposition \ref{prop:supGhat1}.
\end{proof}

\begin{proof}[Proof of Proposition \ref{prop:supGhat2}]
To apply Lemmas \ref{prop:supGhat} and \ref{prop:bias}, we choose
\begin{equation} \label{eq:Ghat2fctOmega}
t_{j,r}(d_{0,r}) = \lambda_{j}^{d_{0,r}}
\end{equation}
in \eqref{eq:crucialterms}--\eqref{eq:crucialterms1}. Here, we do not need a uniform bound. For this reason, it is not necessary to take the supremum over all admissible estimates of $D$ in Lemmas \ref{prop:supGhat} and \ref{prop:bias}. In particular, Lemma \ref{prop:supGhat} simplifies, since the derivatives in the respective bounds $L_{rr,i}$ in \eqref{eq:prop:supGhatLs} become zero with the choice \eqref{eq:Ghat2fctOmega}. 

\textit{Probabilistic part:}
It is enough to distinguish two cases
\begin{equation} \label{eq:sepProp2Ai}
\begin{aligned}
\Prob \Big( 
| \widehat{g}_{r}(d_{0,r}) - \E(\widehat{g}_{r}(d_{0,r})) | > \vertiii{G} \widetilde{\nu}_{1} \Big)
&\leq
\sum_{k=1}^{2} \Prob \Big( 
| \widehat{g}_{r}(d_{0,r}) - \E(\widehat{g}_{r}(d_{0,r})) |\mathds{1}_{A_{k}} > \vertiii{G} \widetilde{\nu}_{1} \Big) 
\end{aligned}
\end{equation}
with $A_{1}=\{d_{0,r}\leq 0 \}$ and $A_{2}=\{d_{0,r} > 0 \}$.
For both cases, we apply Lemma \ref{prop:supGhat} and bound the respective quantities $L_{rr,i}$, $i=1,\dots,5$ in \eqref{eq:prop:supGhatLs} that then yield the desired result. We will show that, for $k=1,2$,
\begin{equation} \label{eq:2LA1applied_mathcalL1k} 
\begin{aligned}
\Prob \Big( 
| \widehat{g}_{r}(d_{0,r}) - \E(\widehat{g}_{r}(d_{0,r})) |\mathds{1}_{A_{k}} > \vertiii{G} \widetilde{\nu}_{1} \Big)
\leq
c_{1} \exp\Bigg( -c_{2} \min 
\Bigg\{ 
\frac{ \widetilde{\nu}_{1} m }{ \gamma^2 \mathcal{L}_{k1} },
\frac{ \widetilde{\nu}_{1}^2 m^2 }{ \gamma^4 \mathcal{L}_{k2}^2 }
\Bigg\} \Bigg).
\end{aligned}
\end{equation}

\textit{Case $d_{0,r} \leq 0$:} We get
\begin{equation}\label{eq:2L21}
\begin{aligned}
L_{rr,1}
&= 
N^{\max\{2d_{0,r},0\}} \max_{j=1,\dots,m} \lambda_{j}^{2 d_{0,r}}
\leq c N^{-2\Delta_{1}} =: \mathcal{L}_{11},
\\
L_{rr,5}
&= \Big( \sum_{j=1}^{m}\lambda_{j}^{4d_{0,r}} \Big)^{\frac{1}{2}}
\leq \Big( \sum_{j=1}^{m}\lambda_{j}^{4 \Delta_{1}} \Big)^{\frac{1}{2}} =: \mathcal{L}_{12}.
\end{aligned}
\end{equation}
Then, applying Lemma \ref{prop:supGhat} with $i = 5$ gives \eqref{eq:2LA1applied_mathcalL1k} with $k=1$. 

\textit{Case $d_{0,r} > 0$:} We get
\begin{align}
L_{rr,1}
&=
N^{\max\{2d_{0,r},0\}} \max_{j = 1, \dots, m} \lambda_{j}^{2d_{0,r}} 
\leq c m^{2 \Delta_{2}}=: \mathcal{L}_{21}, \label{eq:2L12}
\\
L_{rr,i}
&= 
c^{2}_{r,1,N} \lambda_{m}^{2d_{0,r}} \nonumber
\\&= 
\begin{cases}
N^{\max\{2d_{0,r},\frac{1}{2}\}} \lambda_{m}^{2d_{0,r}},
\hspace{0.2cm}& \text{ if } d_{0,r} > \frac{1}{4}, \ i=2,\\
m^{\frac{1}{2}} N^{\max\{2d_{0,r},0\}} \lambda_{m}^{2d_{0,r}},
\hspace{0.2cm}& \text{ if } d_{0,r} \leq \frac{1}{4}, \ i=4,
\end{cases} \nonumber
\\&\leq
c 
\begin{cases}
m^{2d_{0,r}},
\hspace{0.2cm}& \text{ if } d_{0,r} > \frac{1}{4}, \ i=2,\\
m^{\frac{1}{2}+2d_{0,r}},
\hspace{0.2cm}& \text{ if } d_{0,r} \leq \frac{1}{4}, \ i=4,
\end{cases} \nonumber
\\& \leq c m^{2\widebar{\Delta}_{u}}
=:
\mathcal{L}_{22}. \label{eq:2L22}
\end{align}
Then, applying Lemma \ref{prop:supGhat} with $i=2$ gives \eqref{eq:2LA1applied_mathcalL1k} with $k=2$. 

\textit{Deterministic part:} Using  Lemma \ref{prop:bias} with \eqref{eq:Ghat2fctOmega}, we get
\begin{equation} \label{eq:ineqprop2det}
\begin{aligned}
| \E (\widehat{g}_{r}(d_{0,r})) - g_{0,r} |
\leq
\bm{c}_{G,1}
\frac{1}{2 \pi}  \lambda_{m}^{q}
+
\frac{1}{\pi m} \log(m)Q_{m} = \mathcal{T}_{2},
\end{aligned}
\end{equation}
where $\mathcal{T}_{2}$ is in Table \ref{label 2} and the inequality can be obtained  by bounding the two terms in \eqref{eq:prop:biasbound} given \eqref{eq:Ghat2fctOmega} as follows. The first summand in \eqref{eq:prop:biasbound} can be bounded as
\begin{equation*}
\begin{aligned}
\frac{1}{2 \pi m} \sum_{j=1}^{m} \lambda_{j}^{2d_{0,r}} \lambda_{j}^{2q-2d_{0,r}}
&=
\frac{1}{2 \pi m} \sum_{j=1}^{m} \lambda_{j}^{2q}
&\leq
\frac{1}{2 \pi} \lambda_{m}^{2q},
\end{aligned}
\end{equation*}
the second summand as
\begin{equation*}
\begin{aligned}
\frac{1}{2 \pi m} \sum_{j=1}^{m} \lambda_{j}^{2d_{0,r}} j^{-1} \lambda_{j}^{-2d_{0,r}}
=
\frac{1}{2 \pi m} \sum_{j=1}^{m} j^{-1}
\leq
\frac{1}{2 \pi m} (1+\log(m))
\leq
\frac{1}{\pi m} \log(m).
\end{aligned}
\end{equation*}

Finally, we combine our results on the probabilistic and deterministic terms. With our choice of $\nu_{1}$ in \eqref{eq:nu1} and for any $\bm{C}\geq 1$, observe that, with explanations given below, 
\begin{align}
&\Prob \Big( 
| \widehat{g}_{r}(d_{0,r}) - g_{0,r} | > \nu_{1} \Big) \nonumber
\\&\leq
\Prob \Big(
| \widehat{g}_{r}(d_{0,r}) - \E(\widehat{g}_{r}(d_{0,r})) | + 
| \E(\widehat{g}_{r}(d_{0,r})) - g_{0,r} | > \bm{C} \vertiii{G}
\sqrt{\frac{\log(p)}{\mathcal{R}_{2}}} 
+
\mathcal{T}_{2} \Big) \nonumber
\\&\leq
\Prob \Big( 
| \widehat{g}_{r}(d_{0,r}) - \E(\widehat{g}_{r}(d_{0,r})) | > \bm{C} \vertiii{G}\sqrt{\frac{\log(p)}{\mathcal{R}_{2}}} 
\Big) \label{eq:prop2end2}
\\&\leq
\sum_{k=1}^{2} \Prob \Big( 
| \widehat{g}_{r}(d_{0,r}) - \E(\widehat{g}_{r}(d_{0,r})) |\mathds{1}_{A_{k}} > \bm{C} \vertiii{G}\sqrt{\frac{\log(p)}{\mathcal{R}_{2}}}  \Big) \label{eq:prop2end3}
\\&\leq
\sum_{k=1}^{2} c_{1} \exp\Bigg( -c_{2} \bm{C} \min 
\Bigg\{ 
\frac{ \sqrt{\frac{\log(p)}{\widebar{\Delta}_{N}^{-1} \mathcal{R}_{21} }} m }{ \gamma^2 \widebar{\Delta}_{N} \mathcal{L}_{k1} },
\frac{ \frac{\log(p)}{\widebar{\Delta}_{N}^{-2} \mathcal{R}_{22}} m^2 }{ \gamma^4 \widebar{\Delta}_{N}^{2} \mathcal{L}_{k2}^2 }
\Bigg\} \Bigg) \label{eq:prop2end4}
\\&\leq
c_{1} \exp\Bigg( -c_{2} \bm{C} \min 
\Bigg\{ 
\frac{ \sqrt{\frac{\log(p)}{\widebar{\Delta}_{N}^{-1} \mathcal{R}_{21}}} m }{ \gamma^2 \widebar{\Delta}_{N} \max_{k=1,2} \mathcal{L}_{k1} },
\frac{ \frac{\log(p)}{\widebar{\Delta}_{N}^{-2} \mathcal{R}_{22}} m^2 }{ \gamma^4 \widebar{\Delta}_{N}^{2} \max_{k=1,2} \mathcal{L}_{k2}^2 }
\Bigg\} \Bigg) \nonumber
\\&\leq
c_{1} \exp\Bigg( -c_{2} \bm{C} \min 
\Bigg\{ \sqrt{ \frac{\log(p)}{\widebar{\Delta}_{N}} \mathcal{R}_{21} }, \log(p) 
\Bigg\} \Bigg) \label{eq:prop2end7}
\\&\leq
c_{1} p^{ -c_{2} \bm{C} } \label{eq:prop2end8}
\end{align}
with $\mathcal{L}_{k1},\mathcal{L}_{k2}$, $k=1,2$ defined in \eqref{eq:2L21}, \eqref{eq:2L12} and \eqref{eq:2L22}. The constants $c_{1},c_{2}$ are generic and might differ from line to line.
Indeed, the inequality \eqref{eq:prop2end2} is due to \eqref{eq:ineqprop2det}. In \eqref{eq:prop2end3}, the probabilistic part is bounded as in \eqref{eq:sepProp2Ai}. 
Applying \eqref{eq:2LA1applied_mathcalL1k} yields \eqref{eq:prop2end4}. 
For the inequality \eqref{eq:prop2end7}, note that $\mathcal{R}_{21}$ and $\mathcal{R}_{22}$ are chosen in Table \ref{label 1} such that 
\begin{equation*}
\mathcal{R}_{21} \precsim m( \max_{k=1,2} \mathcal{L}_{k1})^{-1} 
\hspace{0.2cm}
\text{ and }
\hspace{0.2cm}
\mathcal{R}_{22} \precsim m^{2}(\max_{k=1,2} \mathcal{L}_{k2})^{-2}.
\end{equation*}
The inequality \eqref{eq:prop2end8} follows since we work under the assumption $\mathcal{R}_{21} \succsim \widebar{\Delta}_{N} \log(p)$; see the discussion following Proposition \ref{prop:supGhat2}.
\end{proof}

\begin{proof}[Proof of Proposition \ref{prop:supGhat3}]
To apply Lemma \ref{prop:supGhat}, we choose
\begin{equation} \label{eq:Ghat3fctOmega}
t_{j,r}(d) = \Big( \frac{j}{m} \Big)^{d-d_{0,r}} \lambda_{j}^{d_{0,r}} = \Big( \frac{j}{m} \Big)^{d} \lambda_{m}^{d_{0,r}}
\end{equation}
and
\begin{equation*}
\Omega=
\Theta_{1}=
\begin{cases}
\{ d ~|~ d_{0,r} - \frac{1}{2} + \Delta \leq d \leq \Delta_{2} \}, &\hspace{0.2cm} \text{ if } d_{0,r} \geq \Delta_{1} + \frac{1}{2}, \\
\{ d ~|~ \Delta_{1} \leq d \leq \Delta_{2} \}, &\hspace{0.2cm} \text{ if } d_{0,r} < \Delta_{1} + \frac{1}{2}
\end{cases}
\end{equation*}
in \eqref{eq:crucialterms}--\eqref{eq:crucialterms1}. 

\textit{Probabilistic part:}
It is enough to distinguish three cases
\begin{equation} \label{eq:sepProp3Ai}
\begin{aligned}
&\Prob \Big(\sup_{d_{r} \in \Theta_{1} } 
| \frac{1}{m} \sum_{j=1}^{m} \Big(\frac{j}{m}\Big)^{2d_{r}-2d_{0,r}} \lambda_{j}^{2d_{0,r}} (I_{X,rr}(\lambda_{j})-\E(I_{X,rr}(\lambda_{j}))) |  > \vertiii{G} \widetilde{\nu}_{2} \Big)
\\&\leq
\sum_{k=1}^{3} \Prob \Big(\sup_{d_{r} \in \Theta_{1} } 
| \frac{1}{m} \sum_{j=1}^{m} \Big(\frac{j}{m}\Big)^{2d_{r}-2d_{0,r}} \lambda_{j}^{2d_{0,r}} (I_{X,rr}(\lambda_{j})-\E(I_{X,rr}(\lambda_{j}))) | \mathds{1}_{A_{k}} > \vertiii{G} \widetilde{\nu}_{2} \Big) 
\end{aligned}
\end{equation}
with 
\begin{gather*}
A_{1}=\{d_{0,r} \geq \Delta_{1} +\frac{1}{2} , d_{0,r}-\frac{1}{2}+\Delta>0\}, \hspace{0.2cm} A_{2}=\{ d_{0,r} \geq \Delta_{1} +\frac{1}{2}, d_{0,r}-\frac{1}{2}+\Delta \leq 0 \}, 
\\
A_3=\{d_{0,r} < \Delta_{1} +\frac{1}{2} \}.
\end{gather*}
For each case, we apply Lemma \ref{prop:supGhat} and bound the respective quantities $L_{rr,i}$, $i=1,\dots,5$ in \eqref{eq:prop:supGhatLs} that then yield the desired result. We will show that, for $k=1,2,3$, 
\begin{equation} \label{eq:3LA1applied_mathcalL1k}
\begin{aligned}
&\Prob \Big(\sup_{d_{r} \in \Theta_{1} } 
| \frac{1}{m} \sum_{j=1}^{m} \Big(\frac{j}{m}\Big)^{2d_{r}-2d_{0,r}} \lambda_{j}^{2d_{0,r}} (I_{X,rr}(\lambda_{j})-\E(I_{X,rr}(\lambda_{j}))) | \mathds{1}_{A_{k}} > \vertiii{G}\widetilde{\nu}_{2} \Big)
\\&\leq
c_{1} \exp\Bigg( -c_{2} \min 
\Bigg\{ 
\frac{ \widetilde{\nu}_{2} m }{ \gamma^2 \mathcal{L}_{k1} },
\frac{ \widetilde{\nu}^2_{2} m^2 }{ \gamma^4 \mathcal{L}_{k2}^2 }
\Bigg\} \Bigg).
\end{aligned}
\end{equation}

\textit{Case $d_{0,r} \geq \Delta_{1} +\frac{1}{2}$ and $d_{0,r}-\frac{1}{2}+\Delta>0$:} We get
\begin{align}
L_{rr,1}
&= N^{2d_{0,r}} \left(
\max_{j = 1, \dots, m} \Big( \frac{j}{m} \Big)^{2d_{0,r}-1+2\Delta} \lambda_{m}^{2d_{0,r}} +
\sup_{d_{r} \in \Theta_{1}} \max_{j = 1, \dots, m} 2 |\log\Big( \frac{j}{m} \Big)| \Big( \frac{j}{m} \Big)^{2d_{r}} \lambda_{m}^{2d_{0,r}} 
\right)
\nonumber
\\&\leq c (1+\log( m)) 
\max_{j = 1, \dots, m} \Big( \frac{j}{m} \Big)^{2d_{0,r}-1+2\Delta} m^{2d_{0,r}} 
\nonumber
\\&\leq 
c \log( m) m^{2\Delta_{2}}=:\mathcal{L}_{11}, \label{eq:3L11}\\
L_{rr,i}
&= c^{2}_{r,i,N}
\left(
\max_{j = 1, \dots, m} \Big( \frac{j}{m} \Big)^{2d_{0,r}-1+2\Delta} \lambda_{m}^{2d_{0,r}} +
\sup_{d_{r} \in \Theta_{1}} \max_{j = 1, \dots, m} 2 |\log\Big( \frac{j}{m} \Big)| \Big( \frac{j}{m} \Big)^{2d_{r}} \lambda_{m}^{2d_{0,r}} 
\right)\nonumber
\\&\leq 
c (1+\log( m)) c^{2}_{r,i,N} \max_{j = 1, \dots, m} \Big( \frac{j}{m} \Big)^{2d_{0,r}-1+2\Delta} \lambda_{m}^{2d_{0,r}} \nonumber
\\&= c \log( m)
\begin{cases}
N^{\max\{2d_{0,r},\frac{1}{2}\}} \lambda_{m}^{2d_{0,r}},
\hspace{0.2cm} &\text{ if } d_{0,r} > \frac{1}{4}, \ i=2,
\\
N^{\max\{2d_{0,r},0\}} m^{\frac{1}{2}} \lambda_{m}^{2d_{0,r}},
\hspace{0.2cm} &\text{ if } d_{0,r} \leq \frac{1}{4}, \ i=4,
\end{cases}
\nonumber
\\&\leq
c \log( m) m^{2\widebar{\Delta}_{u}}  =:\mathcal{L}_{12}. \label{eq:3L21}
\end{align}
Then, applying Lemma \ref{prop:supGhat} with $i=2$ gives \eqref{eq:3LA1applied_mathcalL1k} with $k=1$.

\textit{Case $d_{0,r} \geq \Delta_{1} +\frac{1}{2}$ and $d_{0,r}-\frac{1}{2}+\Delta \leq 0$:} We get
\begin{align}
L_{rr,1}
&= N^{2d_{0,r}} \left(
\max_{j = 1, \dots, m} \Big( \frac{j}{m} \Big)^{2d_{0,r}-1+2\Delta} \lambda_{m}^{2d_{0,r}} +
\sup_{d_{r} \in \Theta_{1}} \max_{j = 1, \dots, m} 2 |\log\Big( \frac{j}{m} \Big)| \Big( \frac{j}{m} \Big)^{2d_{r}} \lambda_{m}^{2d_{0,r}} 
\right)\nonumber
\\&\leq 
c (1+\log( m)) 
\max_{j = 1, \dots, m} \Big( \frac{j}{m} \Big)^{2d_{0,r}-1+2\Delta} m^{2d_{0,r}}
\nonumber
\\&\leq 
c \log( m) m^{1-2 \Delta}=:\mathcal{L}_{21} \label{eq:3L12}
\end{align}
and 
\begin{equation}
\begin{aligned}
L_{rr,2}
&= 
\log(N)^{\frac{1}{2}} N^{\max\{2d_{0,r},\frac{1}{2}\}} 
\left(
\max_{j = 1, \dots, m} \Big( \frac{j}{m} \Big)^{2d_{0,r}-1+2\Delta} \lambda_{m}^{2d_{0,r}} +
\sup_{d_{r} \in \Theta_{1}} \max_{j = 1, \dots, m} 2 |\log\Big( \frac{j}{m} \Big)| \Big( \frac{j}{m} \Big)^{2d_{r}} \lambda_{m}^{2d_{0,r}} 
\right)
\\&\leq 
c (1+\log( m )) N^{\max\{2d_{0,r},\frac{1}{2}\}} 
\max_{j = 1, \dots, m} \Big( \frac{j}{m} \Big)^{2d_{0,r}-1+2\Delta} \lambda_{m}^{2d_{0,r}} 
\\&\leq
c \log( m ) \max\{m^{1-2\Delta}, N^{\frac{1}{2}} m^{1-2\Delta} N^{-2d_{0,r}} \}
\\&\leq
c \log( m ) \max\{m^{1-2\Delta}, N^{-\frac{1}{2}-2\Delta_{1}} m^{1-2\Delta} \}
=:\mathcal{L}_{22}.
\label{eq:3L22}
\end{aligned}
\end{equation}
Then, applying Lemma \ref{prop:supGhat} with $i=2,5$ gives \eqref{eq:3LA1applied_mathcalL1k} with $k=2$.

\textit{Case $d_{0,r} < \Delta_{1} +\frac{1}{2}$:}
In this case, $\Theta_{1}=\{d ~|~ \Delta_{1} \leq d \leq \Delta_{2} \}$.Then,
\begin{align}
L_{1,rr}
&= N^{\max\{2d_{0,r}, 0\}} 
\left(
\max_{j = 1, \dots, m} \Big( \frac{j}{m} \Big)^{2\Delta_{1}} \lambda_{m}^{2d_{0,r}} +
 \sup_{d_{r} \in \Theta_{1}} \max_{j = 1, \dots, m} 2 |\log\Big( \frac{j}{m} \Big)| \Big( \frac{j}{m} \Big)^{2d_r} \lambda_{m}^{2d_{0,r}} 
 \right)
 \nonumber
\\&\leq 
c (1 + \log( m )) 
N^{\max\{2d_{0,r}, 0\}} 
\max_{j = 1, \dots, m} \Big( \frac{j}{m} \Big)^{2\Delta_{1}} \lambda_{m}^{2d_{0,r}} 
\nonumber
\\&\leq 
c \log( m ) \max\{ m^{2d_{0,r}-2 \Delta_{1}}, m^{-2 \Delta_{1}} \lambda_{m}^{ 2 \Delta_{1}} \}
\nonumber
\\&\leq 
c \log( m ) \max\{ m^{2\widebar{\Delta}_{l,1}-2 \Delta_{1}}, N^{-2 \Delta_{1}} \}
=:\mathcal{L}_{31} \label{eq:3L13}
\end{align}
with $\widebar{\Delta}_{l,1}$ as in \eqref{eq:delta-u-l} 
and we distinguish further two cases: for $d_{0,r}>0$, 
\begin{align}
L_{rr,i}
&= 
c^{2}_{r,i,N}
\left(
\max_{j = 1, \dots, m} \Big( \frac{j}{m} \Big)^{2\Delta_{1}} \lambda_{m}^{2d_{0,r}} +
\sup_{d_{r} \in \Theta_{1}} \max_{j = 1, \dots, m} 2 |\log\Big( \frac{j}{m} \Big)| \Big( \frac{j}{m} \Big)^{2d_r} \lambda_{m}^{2d_{0,r}} 
\right) \nonumber
\\&\leq
c (1+\log( m)) 
\begin{cases}
N^{\max\{2d_{0,r},\frac{1}{2} \}} m^{-2\Delta_{1}} \lambda_{m}^{2d_{0,r}},  
& \hspace{0.2cm} \text{ if } d_{0,r} > \frac{1}{4}, \ i=2, \\
m^{\frac{1}{2}} N^{\max\{2d_{0,r}, 0 \}} m^{-2\Delta_{1}} \lambda_{m}^{2d_{0,r}},  
& \hspace{0.2cm} \text{ if } d_{0,r} \leq \frac{1}{4}, \ i=4, 
\end{cases} \nonumber
\\&\leq
c \log( m)
\begin{cases}
m^{2d_{0,r}-2 \Delta_{1}},  
& \hspace{0.2cm} \text{ if } d_{0,r} > \frac{1}{4}, \ i=2, \\
m^{\frac{1}{2}+ 2d_{0,r}-2\Delta_{1} },  
& \hspace{0.2cm} \text{ if } d_{0,r} \leq \frac{1}{4}, \ i=4, 
\end{cases} \nonumber
\\&\leq
c \log( m)
m^{2\widebar{\Delta}_{l,2}-2\Delta_{1}}
\label{eq:3L23aa}
\end{align}
with $\widebar{\Delta}_{l,2}$ as in \eqref{eq:delta-u-l} and for $d_{0,r}\leq 0$,
\begin{align}
L_{rr,5}
&= 
N^{\max\{2d_{0,r},0 \}} 
\left(
\Big( \sum_{j=1}^{m} \Big( \frac{j}{m} \Big)^{4\Delta_{1}} \lambda_{m}^{4d_{0,r}} \Big)^{\frac{1}{2}} + 
\sup_{d_{r} \in \Theta_{1}} 2 \Big( \sum_{j=1}^{m} |\log\Big( \frac{j}{m} \Big)| \Big( \frac{j}{m} \Big)^{4d_r} \lambda_{m}^{4d_{0,r}} \Big)^{\frac{1}{2}}
\right) \nonumber
\\&\leq
c (1+\log( m)) 
\Big( \sum_{j=1}^{m} \Big( \frac{j}{m} \Big)^{4\Delta_{1}} \lambda_{m}^{4d_{0,r}} \Big)^{\frac{1}{2}} \nonumber
\\&\leq
c \log( m) \lambda_{m}^{2d_{0,r}-2\Delta_{1}} \Big(\sum_{j=1}^{m} \lambda_{j}^{4\Delta_{1}} \Big)^{\frac{1}{2}} \nonumber
\\&\leq
c \log( m) \Big(\sum_{j=1}^{m} \lambda_{j}^{4\Delta_{1}} \Big)^{\frac{1}{2}}. \label{eq:3L23bb}
\end{align}
Finally, given \eqref{eq:3L23aa} and \eqref{eq:3L23bb}, we define
\begin{equation}
\mathcal{L}_{32} := 
c \log( m) \max\{
m^{2\widebar{\Delta}_{l,2}-2\Delta_{1}}, \Big(\sum_{j=1}^{m} \lambda_{j}^{4\Delta_{1}} \Big)^{\frac{1}{2}} \}.
\label{eq:3L23}
\end{equation}
Then, applying Lemma \ref{prop:supGhat} gives \eqref{eq:3LA1applied_mathcalL1k} with $k=3$.

\textit{Deterministic part:} 
Using  Lemma \ref{prop:bias} with \eqref{eq:Ghat3fctOmega}, we get
\begin{equation} \label{eq:ineqprop3det}
\begin{aligned}
&\sup_{ d_r \in \Theta_{1} } 
| \frac{1}{m} \sum_{j=1}^{m} \Big(\frac{j}{m}\Big)^{2d_r-2d_{0,r}} \lambda_{j}^{2d_{0,r}} (\E I_{X,rr}(\lambda_{j})-\lambda_{j}^{-2d_{0,r}}g_{0,r}) |
\\&\leq
\bm{c}_{G,1}
\frac{1}{2 \pi} \lambda_{m}^{2q} \frac{1}{2\widetilde{\Delta}_{r}} 
+
\frac{1}{2 \pi} m^{-2\widetilde{\Delta}_{r}} \Big(1+\frac{1}{1-2\widetilde{\Delta}_{r}} \Big) Q_{m} \leq \max_{r = 1,\dots,p} \mathcal{T}_{3}(\widetilde{\Delta}_{r}) = \mathcal{T}_{3}
\end{aligned}
\end{equation}
with $\widetilde{\Delta}_{r} = (d_{0,r}-\frac{1}{2}+\Delta) \mathds{1}_{\{d_{0,r} \geq \Delta_{1} +\frac{1}{2}\}} + \Delta_{1} \mathds{1}_{\{d_{0,r} < \Delta_{1} +\frac{1}{2}\}}$ and 
$\mathcal{T}_{3}(\widetilde{\Delta}_{r})$ is in Table \ref{label 2}. The inequality can be obtained by bounding the two terms in \eqref{eq:prop:biasbound} given \eqref{eq:Ghat3fctOmega} as follows.
The first summand in \eqref{eq:prop:biasbound} can be bounded as
\begin{equation*}
\begin{aligned}
\sup_{d_r \in \Theta_{1} } \frac{1}{2 \pi m} \sum_{j=1}^{m} \Big( \frac{j}{m} \Big)^{2d_r} \lambda_{m}^{2d_{0,r}} \lambda_{j}^{2q-2d_{0,r}}
& \leq 
\frac{1}{2 \pi m} \sum_{j=1}^{m} \Big( \frac{j}{m} \Big)^{2(\widetilde{\Delta}_{r}-d_{0,r})} \lambda_{j}^{2q}
\\& \leq
\lambda_{m}^{2q} \frac{1}{2 \pi m} \sum_{j=1}^{m} \Big( \frac{j}{m} \Big)^{2(\widetilde{\Delta}_{r}-d_{0,r})}
\\&\leq 
\lambda_{m}^{2q} \frac{1}{2 \pi} \frac{1}{2\widetilde{\Delta}_{r}-2d_{0,r}+1} 
\end{aligned}
\end{equation*}
and the second as
\begin{equation*}
\begin{aligned}
\sup_{ d_{r} \in \Theta_{1} } \frac{1}{2 \pi m} \sum_{j=1}^{m} \Big( \frac{j}{m} \Big)^{2d_{r}} \lambda_{m}^{2d_{0,r}} j^{-1} \lambda_{j}^{-2d_{0,r}}
&\leq
\frac{1}{2 \pi m} \sum_{j=1}^{m} \Big( \frac{j}{m} \Big)^{2(\widetilde{\Delta}_{r}-d_{0,r})} \lambda_{j}^{2d_{0,r}} j^{-1} \lambda_{j}^{-2d_{0,r}}
\\&=
\frac{1}{2 \pi m} \sum_{j=1}^{m} \Big( \frac{j}{m} \Big)^{2(\widetilde{\Delta}_{r}-d_{0,r})} j^{-1}
\\&\leq
\frac{1}{2 \pi} m^{-1-2\widetilde{\Delta}_{r}+2d_{0,r}} \Big(1+ \frac{1}{2d_{0,r}-2\widetilde{\Delta}}_{r}  \Big).
\end{aligned}
\end{equation*}

Finally, we combine our results on the probabilistic and deterministic terms to obtain the statement of the proposition.
With our choice of $\nu_{2}$ in \eqref{eq:nu2}, observe that, with explanations given below,
\begin{align}
&\Prob \Big( \sup_{d_r \in \Theta_{1} } 
| \frac{1}{m} \sum_{j=1}^{m} \Big(\frac{j}{m}\Big)^{2d_r-2d_{0,r}} \lambda_{j}^{2d_{0,r}} (I_{X,rr}(\lambda_{j})-\lambda_{j}^{-2d_{0,r}}g_{0,r}) | > \nu_{2} \Big) \nonumber
\\&\leq
\Prob \Big(\sup_{d_r \in \Theta_{1} } 
| \frac{1}{m} \sum_{j=1}^{m} \Big(\frac{j}{m}\Big)^{2d_r-2d_{0,r}} \lambda_{j}^{2d_{0,r}} (I_{X,rr}(\lambda_{j})-\E(I_{X,rr}(\lambda_{j}))) | \nonumber
\\&\hspace{1cm}+
\sup_{d_{r} \in \Theta_{1} } 
| \frac{1}{m} \sum_{j=1}^{m} \Big(\frac{j}{m}\Big)^{2d_r-2d_{0,r}} \lambda_{j}^{2d_{0,r}} (\E(I_{X,rr}(\lambda_{j}))-\lambda_{j}^{-2d_{0,r}}g_{0,r}) | > \bm{C} \vertiii{G}
\sqrt{\frac{\log(p)}{\mathcal{R}_{3}}} 
+
\mathcal{T}_{3} \Big) \nonumber
\\&\leq
\Prob \Big(\sup_{d_r \in \Theta_{1} } 
| \frac{1}{m} \sum_{j=1}^{m} \Big(\frac{j}{m}\Big)^{2d_r-2d_{0,r}} \lambda_{j}^{2d_{0,r}} (I_{X,rr}(\lambda_{j})-\E(I_{X,rr}(\lambda_{j}))) | 
> \bm{C} \vertiii{G}
\sqrt{\frac{\log(p)}{\mathcal{R}_{3}}} \Big) \label{eq:prop3end2}
\\&\leq
\sum_{k=1}^{3} \Prob \Big( \sup_{d_r \in \Theta_{1} } 
| \frac{1}{m} \sum_{j=1}^{m} \Big(\frac{j}{m}\Big)^{2d_r-2d_{0,r}} \lambda_{j}^{2d_{0,r}} (I_{X,rr}(\lambda_{j})-\E(I_{X,rr}(\lambda_{j}))) | \mathds{1}_{A_{k}} > \bm{C} \vertiii{G}\sqrt{\frac{\log(p)}{\mathcal{R}_{3}}}  \Big) \label{eq:prop3end3}
\\&\leq
\sum_{k=1}^{3} c_{1} \exp\Bigg( -c_{2} \bm{C} \min 
\Bigg\{ 
\frac{ \sqrt{\frac{\log(p)}{\log(m)^{-1} \widebar{\Delta}^{-1}_{N} \mathcal{R}_{31}}} m }{ \gamma^2 \widebar{\Delta}_{N} \mathcal{L}_{k1} },
\frac{ \frac{\log(p)}{\log(m)^{-2} \widebar{\Delta}^{-2}_{N} \mathcal{R}_{32}} m^2 }{ \gamma^4 \widebar{\Delta}_{N}^{2} \mathcal{L}_{k2}^2 }
\Bigg\} \Bigg) \label{eq:prop3end4}
\\&\leq
c_{1} \exp\Bigg( -c_{2} \bm{C} \min 
\Bigg\{ 
\frac{ \sqrt{\frac{\log(p)}{\log(m)^{-1} \widebar{\Delta}_{N}^{-1} \mathcal{R}_{31}}} m }{ \gamma^2 \widebar{\Delta}_{N} \max_{k=1,2,3}\mathcal{L}_{k1} },
\frac{ \frac{\log(p)}{\log(m)^{-2} \widebar{\Delta}_{N}^{-2} \mathcal{R}_{32}} m^2 }{ \gamma^4 \widebar{\Delta}_{N}^{2} \max_{k=1,2,3}\mathcal{L}_{k2}^2 }
\Bigg\} \Bigg) \nonumber
\\&\leq
c_{1} \exp\Bigg( -c_{2} \bm{C} \min 
\Bigg\{ \sqrt{\frac{\log(p) \mathcal{R}_{31}}{\log(m)\widebar{\Delta}_{N} } }, \log(p) 
\Bigg\} \Bigg) \label{eq:prop3end7}
\\&\leq
c_{1} p^{ -c_{2} \bm{C} } \label{eq:prop3end8}
\end{align}
with $\mathcal{L}_{k1},\mathcal{L}_{k2}$, $k=1,2,3$ defined in \eqref{eq:3L11}, \eqref{eq:3L21}, \eqref{eq:3L12}, \eqref{eq:3L22} and \eqref{eq:3L13}, \eqref{eq:3L23}. The constants $c_{1},c_{2}$ are generic and might differ from line to line.
The inequality \eqref{eq:prop3end2} is due to \eqref{eq:ineqprop3det}. In \eqref{eq:prop3end3}, the probabilistic part is bounded as in \eqref{eq:sepProp3Ai}. Applying  \eqref{eq:3LA1applied_mathcalL1k} yields \eqref{eq:prop3end4}. 
For the inequality \eqref{eq:prop3end7}, note that $\mathcal{R}_{31}$ and $\mathcal{R}_{32}$ are chosen in Table \ref{label 1} such that 
\begin{equation*}
\log(m)^{-1}\mathcal{R}_{31} \precsim m( \max_{k=1,2,3} \mathcal{L}_{k1})^{-1}
\hspace{0.2cm}
\text{ and }
\hspace{0.2cm}
\log(m)^{-2}\mathcal{R}_{32} \precsim m^{2}( \max_{k=1,2,3} \mathcal{L}_{k2})^{-2}.
\end{equation*}
The inequality \eqref{eq:prop3end8} follows since we work under the assumption $\mathcal{R}_{31} \succsim \widebar{\Delta}_{N} \log(m)\log(p)$; see the discussion following Proposition \ref{prop:supGhat3}.
\end{proof}

\begin{proof}[Proof of Proposition \ref{prop:supGhat4}]
To apply Lemmas \ref{prop:supGhat} and \ref{prop:bias}, we choose
\begin{equation} \label{eq:Ghat4fctOmega}
t_{j,r}(d_{0,r}) 
= | l_{j} -1|^{\frac{1}{2}} \lambda_{j}^{d_{0,r}} \mathds{1}_{\{d_{0,r} \geq \Delta_{1} + \frac{1}{2} \}}
= | l_{j} -1|^{\frac{1}{2}} \Big(\frac{j}{\ell}\Big)^{d_{0,r}} \lambda_{\ell}^{d_{0,r}} \mathds{1}_{\{d_{0,r} \geq \Delta_{1} + \frac{1}{2} \}}
\end{equation}
in \eqref{eq:crucialterms}--\eqref{eq:crucialterms1} with $\ell$ and $l_{j}$ as in \eqref{eq:lj}. Here, we do not need a uniform bound. For this reason, it is not necessary to take the supremum over all admissible estimates of $D_{0}$ in Lemmas \ref{prop:supGhat} and \ref{prop:bias}. In particular, Lemma \ref{prop:supGhat} simplifies, since the derivatives in the respective bounds $L_{rr,i}$ in \eqref{eq:prop:supGhatLs} become zero with the choice \eqref{eq:Ghat4fctOmega}. 
Throughout the proof, we assume $d_{0,r} \geq \Delta_{1} + \frac{1}{2}$. 

\textit{Probabilistic part:}
Note that
\begin{equation*}
\begin{aligned}
\max_{j = 1, \dots, m} | l_{j} -1| \Big(\frac{j}{\ell}\Big)^{2d_{0,r}}
&=
\max_{j = 1, \dots, m} 
\begin{cases}
\Big| \Big(\frac{j}{\ell}\Big)^{2(-\frac{1}{2}+\Delta)} -1\Big| \Big(\frac{j}{\ell}\Big)^{2d_{0,r}}, \hspace{0.2cm} &1 \leq j \leq \ell, \\
\Big| \Big(\frac{j}{\ell}\Big)^{2(\Delta_{1}-d_{0,r})}-1\Big| \Big(\frac{j}{\ell}\Big)^{2d_{0,r}}, \hspace{0.2cm} &\ell < j \leq m,
\end{cases}
\\&=
\max_{j = 1, \dots, m} 
\begin{cases}
\Big| \Big(\frac{j}{\ell}\Big)^{2(d_{0,r}-\frac{1}{2}+\Delta)} - \Big(\frac{j}{\ell}\Big)^{2d_{0,r}} \Big|, \hspace{0.2cm} &1 \leq j \leq \ell, \\
\Big| \Big(\frac{j}{\ell}\Big)^{2\Delta_{1}} - \Big(\frac{j}{\ell}\Big)^{2d_{0,r}} \Big|, \hspace{0.2cm} &\ell < j \leq m,
\end{cases}
\\&\leq
\max\left\{ \ell^{-\min\{2(d_{0,r}-\frac{1}{2}+\Delta),0\}}, \Big(\frac{m}{\ell}\Big)^{2d_{0,r}} \right\}.
\end{aligned}
\end{equation*}
Then, 
\begin{equation}
\begin{aligned}
L_{rr,1}
&= 
N^{\max\{2d_{0,r},0\}} \max_{j = 1, \dots, m} | \ell_{j} -1| \Big(\frac{j}{\ell}\Big)^{2d_{0,r}} \lambda_{\ell}^{2d_{0,r}} \\
&\leq 
c N^{2d_{0,r}} \max\left\{ \ell^{-\min\{2(d_{0,r}-\frac{1}{2}+\Delta),0\}}, \Big(\frac{m}{\ell}\Big)^{2d_{0,r}} \right\} \Big(\frac{\ell}{N}\Big)^{2d_{0,r}} 
\\&\leq 
c \max\{\ell^{1-2\Delta},m^{2\Delta_{2}}\} 
=: \mathcal{L}_{1} \label{eq:4L11}.
\end{aligned}
\end{equation}
Furthermore, 
\begin{align}
L_{rr,i}
&= c^{2}_{r,i,N} \max_{j = 1, \dots, m} | \ell_{j} -1| \Big(\frac{j}{\ell}\Big)^{2d_{0,r}} \lambda_{\ell}^{2d_{0,r}} \nonumber \\
&\leq c 
\begin{cases}
N^{2d_{0,r}} \max\left\{ \ell^{-\min\{2(d_{0,r}-\frac{1}{2}+\Delta),0\}}, \Big(\frac{m}{\ell}\Big)^{2d_{0,r}} \right\} \Big(\frac{\ell}{N}\Big)^{2d_{0,r}},
\hspace{0.2cm}& \text{ if } d_{0,r} > \frac{1}{4}, \ i=2,\\
m^{\frac{1}{2}} N^{2d_{0,r}} \max\left\{ \ell^{-\min\{2(d_{0,r}-\frac{1}{2}+\Delta),0\}}, \Big(\frac{m}{\ell}\Big)^{2d_{0,r}} \right\} \Big(\frac{\ell}{N}\Big)^{2d_{0,r}},
\hspace{0.2cm}& \text{ if } d_{0,r} \leq \frac{1}{4}, \ i=4,
\end{cases} \nonumber
\\&\leq 
c 
\begin{cases}
\max\{\ell^{1-2\Delta},m^{2\Delta_{2}}\},
\hspace{0.2cm}& \text{ if } d_{0,r} > \frac{1}{4}, \ i=2,\\
\max\{m^{\frac{1}{2}} \ell^{1-2\Delta},m^{\frac{1}{2} + 2d_{0,r} }\},
\hspace{0.2cm}& \text{ if } d_{0,r} \leq \frac{1}{4}, \ i=4,
\end{cases} \nonumber
\\&\leq 
c \max\{m^{\frac{1}{2}} \ell^{1-2\Delta}, m^{2\widebar{\Delta}_{u}}\} 
=: \mathcal{L}_{2}. \label{eq:4L21}
\end{align}

\textit{Deterministic part:} Using  Lemma \ref{prop:bias} with \eqref{eq:Ghat4fctOmega}, we get
\begin{equation} \label{eq:ineqprop4det}
\begin{aligned}
&| \frac{1}{m} \sum_{j=1}^{m} (l_{j}-1) \lambda_{j}^{2d_{0,r}} (\E I_{X,rr}(\lambda_{j})-\lambda_{j}^{-2d_{0,r}}g_{0,r})| 
\\&\leq
\bm{c}_{G,1} \frac{1}{2 \pi} \lambda_{m}^{2q} \Big( \frac{1}{2\Delta} + 1 \Big)
+
\frac{1}{2 \pi m} \Big( \ell^{1-2\Delta} \Big(1+ \frac{1}{1-2\Delta} \Big)
+ \log(m) \Big) Q_{m} = \mathcal{T}_{4},
\end{aligned}
\end{equation}
where $\mathcal{T}_{4}$ is in Table \ref{label 2} and the inequality can be obtained  by bounding the two terms in \eqref{eq:prop:biasbound} given \eqref{eq:Ghat4fctOmega}. as follows. For $d_{0,r} \geq \Delta_{1} + \frac{1}{2}$, the first summand in \eqref{eq:prop:biasbound} can be bounded as
\begin{equation*}
\begin{aligned}
&
\frac{1}{2 \pi m} \sum_{j=1}^{m} |l_{j}-1| \lambda_{j}^{2d_{0,r}} \lambda_{j}^{2q-2d_{0,r}}
\\&=
\frac{1}{2 \pi m} \Big( \sum_{j=1}^{\ell} | \Big(\frac{j}{\ell}\Big)^{-1+2\Delta} -1 | \lambda_{j}^{2q}
+ \sum_{j=\ell+1}^{m} | \Big(\frac{j}{\ell}\Big)^{2\Delta_{1}-2d_{0,r}} -1 | \lambda_{j}^{2q} \Big)
\\&\leq
\frac{1}{2 \pi m} \lambda_{m}^{2q} \Big( \int_{0}^{\ell} x^{-1+2\Delta} dx \ell^{1-2\Delta} + (m-\ell) \Big)
\\&\leq
\frac{1}{2 \pi m} \lambda_{m}^{2q} \Big( \frac{1}{2\Delta} \ell^{1-2\Delta}+(m-\ell) \Big)
\\&\leq
\frac{1}{2 \pi} \lambda_{m}^{2q} \Big( \frac{1}{2\Delta} + 1 \Big)
\end{aligned}
\end{equation*}
and the second as
\begin{align*}
&\frac{1}{2 \pi m} \sum_{j=1}^{m} |l_{j}-1| \lambda_{j}^{2d_{0,r}} j^{-1} \lambda_{j}^{-2d_{0,r}}
=
\frac{1}{2 \pi m} \sum_{j=1}^{m} |l_{j}-1| j^{-1}
\\&=
\frac{1}{2 \pi m} \Big( \sum_{j=1}^{\ell} j^{-1} | \Big(\frac{j}{\ell}\Big)^{-1+2\Delta} -1 | 
+ \sum_{j=\ell+1}^{m} j^{-1} | \Big(\frac{j}{\ell}\Big)^{2\Delta_{1}-2d_{0,r}} -1 | \Big)
\\&\leq
\frac{1}{2 \pi m} \Big( \sum_{j=1}^{\ell} j^{-1} \Big(\frac{j}{\ell}\Big)^{-1+2\Delta} 
+ \log(m) \Big)
\\&\leq
\frac{1}{2 \pi m} \Big( \ell^{1-2\Delta} \Big(1+ \frac{1}{1-2\Delta} \Big)
+ \log(m) \Big).
\end{align*}

Finally, we combine our results on the probabilistic and deterministic terms to obtain the statement of the proposition.
With our choice of $\nu_{3}$ in \eqref{eq:nu3} and for any $\bm{C} \geq 1$, observe that
\begin{align}
&\Prob \Big( | \frac{1}{m} \sum_{j=1}^{m} (l_{j}-1) \lambda_{j}^{2d_{0,r}} (I_{X,rr}(\lambda_{j})-\lambda_{j}^{-2d_{0,r}}g_{0,r})| \mathds{1}_{\{d_{0,r} \geq \Delta_{1} + \frac{1}{2}\}} > \nu_{3} \Big) \nonumber
\\&\leq
\Prob \Big( 
| \frac{1}{m} \sum_{j=1}^{m} (l_{j}-1) \lambda_{j}^{2d_{0,r}} (I_{X,rr}(\lambda_{j})-\E I_{X,rr}(\lambda_{j})) |
\mathds{1}_{\{d_{0,r} \geq \Delta_{1} + \frac{1}{2}\}} \nonumber
\\&\hspace{1cm}+
| \frac{1}{m} \sum_{j=1}^{m} (l_{j}-1) \lambda_{j}^{2d_{0,r}} (\E I_{X,rr}(\lambda_{j}) - \lambda_{j}^{-2d_{0,r}}g_{0,r})| \mathds{1}_{\{d_{0,r} \geq \Delta_{1} + \frac{1}{2}\}} > \bm{C} \vertiii{G}\sqrt{\frac{\log(p)}{\mathcal{R}_{4}}} + \mathcal{T}_{4} 
\Big) \nonumber
\\&\leq
\Prob \Big(
| \frac{1}{m} \sum_{j=1}^{m} (l_{j}-1) \lambda_{j}^{2d_{0,r}} (I_{X,rr}(\lambda_{j})-\lambda_{j}^{-2d_{0,r}}g_{0,r})| \mathds{1}_{\{d_{0,r} \geq \Delta_{1} + \frac{1}{2}\}} > 
\bm{C} \vertiii{G} \sqrt{\frac{\log(p)}{\mathcal{R}_{4}}} \Big) \label{eq:prop4end1}
\\&\leq
\exp\Bigg( -c_{2}\bm{C} \min 
\Bigg\{ 
\frac{ \sqrt{\frac{\log(p)}{ \widebar{\Delta}_{N}^{-1} \mathcal{R}_{41}}} m }{ \gamma^2 \widebar{\Delta}_{N} \mathcal{L}_{1} },
\frac{ \frac{\log(p)}{ \widebar{\Delta}_{N}^{-2} \mathcal{R}_{42}} m^2 }{ \gamma^4 \widebar{\Delta}_{N}^{2} \mathcal{L}_{2}^2 }
\Bigg\} \Bigg) \label{eq:prop4end4}
\\&\leq
c_{1} \exp\Bigg( -c_{2}\bm{C} \min 
\Bigg\{ \sqrt{ \frac{\log(p)}{\widebar{\Delta}_{N}} \mathcal{R}_{41} }, \log(p) 
\Bigg\} \Bigg) \label{eq:prop4end7}
\\&\leq
c_{1} p^{ -c_{2}\bm{C} } \label{eq:prop4end8}
\end{align}
with $\mathcal{L}_{1},\mathcal{L}_{2}$ defined in \eqref{eq:4L11} and \eqref{eq:4L21}. The constants $c_{1},c_{2}$ are generic and might differ from line to line.
The inequality \eqref{eq:prop4end1} is due to \eqref{eq:ineqprop4det}. Applying Lemma \ref{prop:supGhat} yields \eqref{eq:prop4end4}. 
For the inequality \eqref{eq:prop4end7}, note that $\mathcal{R}_{41}$ and $\mathcal{R}_{42}$ are chosen in Table \ref{label 1} such that $\mathcal{R}_{41} \precsim m \mathcal{L}_{1}^{-1}$ and $\mathcal{R}_{42} \precsim m^{2} \mathcal{L}_{2}^{-2}$.
The inequality \eqref{eq:prop4end8} follows since we work under the assumption $\mathcal{R}_{41} \succsim \widebar{\Delta}_{N} \log(p)$; see the discussion following Proposition \ref{prop:supGhat4}.
\end{proof}

\begin{proof}[Proof of Proposition \ref{prop:Op}]
To prove the consistency result in \eqref{eq:consFrob}, we use the inequalities on the probability of $\{ \| \widehat{G}(\widehat{D})-G_{0} \|_{\max} > \delta \}$ discussed in Section \ref{s:mainResults} and 
combine the concentration inequalities in Propositions \ref{prop:supGhat1}--\ref{prop:supGhat4}. We have
\begin{align}
&
\Prob( \| \widehat{G}(\widehat{D})-G_{0} \|_{\max} > \delta ) \nonumber
\\ & =
\Prob \bigg( 
\{ \| \widehat{G}(\widehat{D})-G_{0} \|_{\max} > \delta \} \cap
\Big(\{ \| \widehat{D}-D_{0} \|_{\max} \leq \varepsilon \} \cup \{ \| \widehat{D}-D_{0} \|_{\max} > \varepsilon \} \Big)
\bigg) \nonumber
\\ & \leq
\Prob( \{
\| \widehat{G}(\widehat{D})-\widetilde{G}(\widehat{D}) \|_{\max} > \delta/2 \} \cap \{ \| \widehat{D}-D_{0} \|_{\max} \leq \varepsilon \} ) \nonumber
\\ & \hspace{1cm} +
\Prob\bigg( \Big(\{
\| \widetilde{G}(\widehat{D})-G_{0} \|_{\max} > \delta/2 \} \cap \{ \| \widehat{D}-D_{0} \|_{\max} \leq \varepsilon \} \Big)
\cup
\{ \| \widehat{D}-D_{0} \|_{\max} > \varepsilon \} \bigg) \nonumber
\\ & \leq
\Prob( \{
\| \widehat{G}(\widehat{D})-\widetilde{G}(\widehat{D}) \|_{\max} > \delta/2 \} \cap \{ \| \widehat{D}-D_{0} \|_{\max} \leq \varepsilon \} )
+ 
\Prob( \| \widehat{D}-D_{0} \|_{\max} > \eta )
\label{eq:A1add}
\\ & \leq
\sum_{r,s=1}^{p} \Prob( \sup_{ D \in \Omega(\varepsilon) } 
| \widehat{G}_{rs}(D)-\widetilde{G}_{rs}(D) | > \delta/2 )
+ 
\Prob( \| \widehat{D}-D_{0} \|_{\max} > \eta ) \label{eq:A1add2}
\end{align}
with $\eta= \min\{ \varepsilon, \frac{\delta}{4} (\|G_{0}\| \log(N) \lambda_{m}^{-2\varepsilon} 
L(-\varepsilon))^{-1} \}$. The inequality \eqref{eq:A1add} follows by Lemma \ref{le:FrobeniusGtoD}. We proceed with bounding the second probability in \eqref{eq:A1add} before we conclude with utilizing Propositions \ref{prop:supGhat1}--\ref{prop:supGhat4} to get the statement of this proposition.

To find an upper bound on the second probability in \eqref{eq:A1add}, we follow the \cite{robinson1995gaussian} approach used to prove the classical consistency result in the univariate case. Thus, we write the set $\Theta=[\Delta_{1},\Delta_{2}]$ as a union $\Theta=\Theta_{1} \cup \Theta_{2}$ with $\Theta_{1}$ as in 
\eqref{eq:defTheta1} and $\Theta_{2}$ as in \eqref{eq:defTheta2}; see Remark \ref{re:nonuniformly}.
Denote further $S_{r}(d)=R_{r}(d)-R_{r}(d_{0,r})$ with $R_{r}$ in \eqref{eq:univLWd}. Then, with explanations given below, 
\begin{align}
&
\Prob( | \widehat{d}_{r}-d_{0,r} | > \eta ) \nonumber
\\ &\leq
\Prob( \inf_{ d \in N_{\eta}^{c} \cap \Theta_{1}} S_{r}(d) < 0)
+
\Prob( \inf_{ d \in \Theta_{2}} S_{r}(d) < 0) \label{eq:propOP3}
\\ &\leq
\Prob( | \widehat{g}_{r}(d_{0,r})-g_{0,r} | > \frac{1}{8} \eta^2 \mathcal{V}_{1}(m) g_{0,r} ) 
+
\Prob( \sup_{ d \in \Theta_{1} } | \widehat{h}_{r}(d)-\widetilde{h}_{r}(d) | > \frac{1}{4} \eta^2 \mathcal{V}_{1}(m) L(\Delta_{2}-\Delta_{1}) g_{0,r} )
\nonumber
\\& \hspace{1cm}+
\Prob( | \frac{1}{m} \sum_{j=1}^{m} (l_{j}-1) (\lambda_{j}^{2d_{0,r}}I_{X}(\lambda_{j})-g_{0,r}) | \mathds{1}_{\{d_{0,r} \geq \Delta_{1} + \frac{1}{2}\}}
> \mathcal{V}_{2}(m) g_{0,r} ),  \label{eq:propOPb}
\end{align}
where $N_{\eta} = \{ d ~|~ |d-d_{0,r}| \leq \eta \}$ and $N_{\eta}^{c}$ denotes the complement of $N_{\eta}$.
The inequality in \eqref{eq:propOP3} follows from (3.2) in \cite{robinson1995gaussian}. Lemmas \ref{le:infS1} and \ref{le:Sd2} give \eqref{eq:propOPb}.
\par
Finally, combining \eqref{eq:A1add2} and \eqref{eq:propOPb} and choosing $\delta$ and $\varepsilon$ according to \eqref{eq:deltaepsilon}, we can infer that there exist constants $c_{1},c_{2}$ such that for any $\bm{C} \geq 1$, 
\begin{align}
&
\Prob( \| \widehat{G}(\widehat{D})-G_{0} \|_{\max} > \delta ) \nonumber
\\ & \leq
\sum_{r,s=1}^{p} \Prob( \sup_{ D \in \Omega(\varepsilon) } 
| \widehat{G}_{rs}(D)-\widetilde{G}_{rs}(D) | > \delta/2 )
+ 
\sum_{r=1}^{p} \Prob( | \widehat{g}_{r}(d_{0,r})-g_{0,r} | > \frac{1}{8} \eta^2 \mathcal{V}_{1}(m) g_{0,r} ) \nonumber
\\& \hspace{1cm}+
\sum_{r=1}^{p} \Prob( \sup_{ d \in \Theta_{1} } | \widehat{h}_{r}(d)-\widetilde{h}_{r}(d) | > \frac{1}{4} \eta^2 \mathcal{V}_{1}(m) L(\Delta_{2}-\Delta_{1}) g_{0,r} ) \nonumber
\\& \hspace{2cm}+
\sum_{r=1}^{p} \Prob( | \frac{1}{m} \sum_{j=1}^{m} (l_{j}-1) (\lambda_{j}^{2d_{0,r}}I_{X}(\lambda_{j})-g_{0,r}) | \mathds{1}_{\{d_{0,r} \geq \Delta_{1} + \frac{1}{2}\}}
> \varepsilon^2 \mathcal{V}_{2}(m) g_{0,r} ),  \label{eq:epsincorporated}
\\ &\leq
\sum_{r,s=1}^{p} \Prob( \sup_{ D \in \Omega(\varepsilon) } 
| \widehat{G}_{rs}(D)-\widetilde{G}_{rs}(D) | > \nu ) 
+
\sum_{r=1}^{p} \Prob( | \widehat{g}_{r}(d_{0,r})-g_{0,r} | > \nu_{1} ) \nonumber
\\& \hspace{1cm}+
\sum_{r=1}^{p} \Prob( \sup_{ d \in \Theta_{1} } | \widehat{h}_{r}(d)-\widetilde{h}_{r}(d) | > \nu_{2} ) \nonumber
\\& \hspace{2cm}+
\sum_{r=1}^{p} \Prob( | \frac{1}{m} \sum_{j=1}^{m} (l_{j}-1) (\lambda_{j}^{2d_{0,r}}I_{X}(\lambda_{j})-g_{0,r}) | \mathds{1}_{\{d_{0,r} \geq \Delta_{1} + \frac{1}{2}\}}
> \nu_{3} ) \label{eq:afterepsincorporated}
\\ &\leq
c_{1} p^{2-c_{2} \bm{C} }, \label{eq:lastlast}
\end{align}
where \eqref{eq:epsincorporated} follows by \eqref{eq:propOPb} and since $\varepsilon \in (0, \frac{1}{2})$ is assumed. 
For \eqref{eq:afterepsincorporated}, recall our choice of $\delta=
\max \{ 
2 \nu,  \varepsilon 4 \|G_{0}\| \log(N) \lambda_{m}^{-2\varepsilon} L(-\varepsilon) \}$ in \eqref{eq:deltaepsilon}.
Then, $\delta/2$ in the first probability of \eqref{eq:epsincorporated} can be bounded from below by $\nu$. For the remaining three probabilities in \eqref{eq:epsincorporated}, note that $\eta$ satisfies
\begin{equation*}
\eta= \min\{ \varepsilon, \frac{\delta}{4} (\|G_{0}\| \log(N) \lambda_{m}^{-2\varepsilon} 
L(-\varepsilon))^{-1} \} \geq \varepsilon,
\end{equation*}
since $\delta \geq \varepsilon 4 \|G_{0}\| \log(N) \lambda_{m}^{-2\varepsilon} L(-\varepsilon)$. Note also that $\varepsilon = \max_{i=1,2,3} \eta_{i}$ in \eqref{eq:deltaepsilon} can be bounded from below by $\eta_{1}, \eta_{2}$ or $\eta_{3}$. Then, for example, the lower bound in the second probability in \eqref{eq:epsincorporated} satisfies
\begin{equation*}
\frac{1}{8} \eta^2 \mathcal{V}_{1}(m) g_{0,r} 
\geq
\frac{1}{8} \varepsilon^2 \mathcal{V}_{1}(m) g_{0,r}  
\geq
\frac{1}{8} \eta_{1}^2 \mathcal{V}_{1}(m) g_{0,r}  
\geq \nu_{1},
\end{equation*}
where the last inequality follows by our choice of $\eta_{1}$ in \eqref{eq:eta1eta2eta3}.
This implies the second term as a bound in \eqref{eq:afterepsincorporated}. The other two probabilities can be dealt with similarly. Furthermore,
\eqref{eq:lastlast} is a consequence of applying Propositions \ref{prop:supGhat1}--\ref{prop:supGhat4} to the individual probabilities in \eqref{eq:afterepsincorporated}.
\end{proof}

\begin{proof}[Proof of Proposition \ref{prop:Opspectral}]
The key in proving Proposition \ref{prop:Opspectral} is finding a concentration 
inequality on $\| \widehat{G}(\widehat{D})-G_{0} \|_{\max}$. Such a concentration inequality is provided in Proposition \ref{prop:Op}. Then, it is left to follow exactly the proof of Proposition 3.6 in \cite{Sun2018:LargeSpectral}.
\end{proof}

\begin{proof}[Proof of Proposition \ref{prop:OpPrecision}]
The proof follows the ideas used in the proof of Theorem 1 in \cite{rothman2008sparse}. However,
we need to modify their proof in order to achieve a non-asymptotic result.
With the objective function $\ell_{\rho}$ as in \eqref{eq:Prholasso}, let
\begin{equation*}
\begin{aligned}
Q(\widehat{D},P) 
&= 
\ell_{\rho}(\widehat{D},P) - \ell_{\rho}(\widehat{D},P_{0})
\\&=
-\frac{1}{m} \sum_{j=1}^{m} \log|\lambda_{j}^{\widehat{D}} P \lambda_{j}^{\widehat{D}}|
+\tr( \widehat{G}(\widehat{D})P) + \rho \| P \|_{1,off}
\\&\hspace{1cm} +
\frac{1}{m} \sum_{j=1}^{m} \log|\lambda_{j}^{\widehat{D}} P_{0} \lambda_{j}^{\widehat{D}}|
-\tr( \widehat{G}(\widehat{D})P_{0}) - \rho \| P_{0} \|_{1,off}
\\&=
\tr( \widehat{G}(\widehat{D}) (P-P_{0}) ) 
-  (\log| P | - \log| P_{0} |)
+ \rho (\| P \|_{1,off} - \| P_{0} \|_{1,off})
\end{aligned}
\end{equation*}
and set $Z(\Delta)=Q(\widehat{D},P_{0}+\Delta)$.
We further define the set $R_{\lambda}=\{ P=P^{*} ~|~ \| P-P_{0} \|_{F} > \lambda \}$. Then, 
\begin{align}
\Prob( \| \widehat{P}_{\rho}(\widehat{D})-P_{0} \|_{F} > \lambda )
&=
\Prob( \widehat{P}_{\rho}(\widehat{D}) \in  R_{\lambda})
\leq
\Prob( \inf_{P \in R_{\lambda}} \ell_{\rho}(\widehat{D},P) \leq \inf_{P \in R^{c}_{\lambda}} \ell_{\rho}(\widehat{D},P) ) \nonumber
\\&=
\Prob( \inf_{P \in R_{\lambda}} \ell_{\rho}(\widehat{D},P) - \ell_{\rho}(\widehat{D},P_{0}) \leq \inf_{P \in R^{c}_{\lambda}} \ell_{\rho}(\widehat{D},P) - \ell_{\rho}(\widehat{D},P_{0}) ) \nonumber
\\&=
\Prob( \inf_{P \in R_{\lambda}} Q(\widehat{D},P) \leq \inf_{P \in R^{c}_{\lambda}} Q(\widehat{D},P) ) \nonumber
\\&\leq
\Prob( \inf_{P \in R_{\lambda}} Q(\widehat{D},P) \leq 0 ) \label{eq:infsmaller}
\\& \leq
\Prob( \inf_{\Delta \in \Theta_{N}} Z(\Delta) \leq 0 ), \label{eq:convexity}
\end{align}
where \eqref{eq:infsmaller} follows since $P_{0} \in R^{c}_{\lambda}$ and \eqref{eq:convexity} with
\begin{equation} \label{eq:thetalambda}
\Theta_{N}=\{ \Delta=\Delta^{*} ~|~ \| \Delta \|_{F} = \lambda \} \hspace{0.2cm} \text{ and } \lambda=\frac{16}{\underline{k}^2} \sqrt{p+\operatorname{s}} \ \delta  
\end{equation}
follows since $Z(\cdot)$ is a convex function. 
Then, it remains to prove that $\inf_{\Delta \in \Theta_{N}} Z(\Delta) > 0$
with high probability.

From the objective function in \eqref{eq:Prholasso} recall that the $l_{1}$-norm $\| \cdot \|_{1,off}$ excludes the diagonal elements and introduce its 
counterpart $\| \cdot \|_{1,on}$. Analogously, we write $\| \cdot \|_{F,off}$ and $\| \cdot \|_{F,on}$ for the Frobenius norm of the off and on diagonal elements of a matrix, respectively. Recall the definition of the index set $\operatorname{S}$ in \eqref{as:cardianlityS} and define further $M_{\operatorname{S}} = (m_{rs} \mathds{1}_{\{(r,s) \in \operatorname{S}\}})_{r,s=1,\dots,p}$ for a matrix $M=(m_{rs})_{r,s=1,\dots,p}$ and an index set $\operatorname{S}$.
Then, with further explanations given below, 
\begin{align}
Z(\Delta)
&= 
\tr( \widehat{G}(\widehat{D}) \Delta ) 
-  (\log| \Delta+P_{0} | - \log| P_{0} |)
+ \rho (\| \Delta+P_{0} \|_{1,off} - \| P_{0} \|_{1,off}) \nonumber
\\&= 
\tr( (\widehat{G}(\widehat{D})-G_{0}) \Delta ) 
+  
\vecop(\Delta)' \int_{0}^{1} (1-v) (P_{0}+v\Delta)^{-1} \otimes (P_{0}+v\Delta)^{-1} dv \vecop(\Delta) \nonumber
\\&\hspace{1cm}
+ 
\rho (\| \Delta+P_{0} \|_{1,off} - \| P_{0} \|_{1,off}) 
\label{al:line1}
\\&\geq
\tr( (\widehat{G}(\widehat{D})-G_{0}) \Delta ) 
+  
\| \Delta \|_{F}^2 \frac{1}{8} \underline{k}^2
+ 
\rho (\| \Delta+P_{0} \|_{1,off} - \| P_{0} \|_{1,off}) 
\label{al:line2}
\\&\geq
\| \Delta \|_{F}^2 \frac{1}{8} \underline{k}^2
-
\delta \| \Delta \|_{1}
+ 
\rho (\| \Delta+P_{0} \|_{1,off} - \| P_{0} \|_{1,off}) 
\label{al:line3}
\\&\geq
\| \Delta \|_{F}^2 \frac{1}{8} \underline{k}^2
-
\delta \| \Delta \|_{1}
+
\rho (
- \| \Delta_{\operatorname{S}} \|_{1,off}  + \| \Delta_{\operatorname{S}^{c}} \|_{1,off})
\label{al:line4}
\\&\geq
\| \Delta \|_{F}^2 \frac{1}{8} \underline{k}^2
-
\delta (\| \Delta \|_{1,on}+\| \Delta_{\operatorname{S}} \|_{1,off}+\| \Delta_{\operatorname{S}^{c}} \|_{1,off}) \nonumber
\\&\hspace{1cm}
+ \rho ( -\| \Delta_{\operatorname{S}} \|_{1,off} + \| \Delta_{\operatorname{S}^{c}} \|_{1,off}) 
\nonumber
\\&\geq
\| \Delta \|_{F,on}^2 \frac{1}{8} \underline{k}^2
-
\delta \| \Delta \|_{1,on}
+
\| \Delta \|_{F,off}^2 \frac{1}{8} \underline{k}^2
-
2 \delta \| \Delta_{\operatorname{S}} \|_{1,off} 
\label{al:line6}
\\&\geq
\| \Delta \|_{F,on}^2 (
\frac{1}{8} \underline{k}^2
-
\delta \sqrt{p} \| \Delta \|_{F,on}^{-1})
+
\| \Delta \|_{F,off}^2 (\frac{1}{8} \underline{k}^2
-
2 \delta \sqrt{\operatorname{s}}\| \Delta \|_{F,off}^{-1}) 
\label{al:line7}
\\&\geq
\| \Delta \|_{F,on}^2 (
\frac{1}{8} \underline{k}^2
-
2 \delta \sqrt{p + \operatorname{s}} \| \Delta \|_{F,on}^{-1})
+
\| \Delta \|_{F,off}^2 (\frac{1}{8} \underline{k}^2
-
2 \delta \sqrt{p + \operatorname{s}}\| \Delta \|_{F,off}^{-1}) \nonumber
\\&\geq
\| \Delta \|_{F}^2 (
\frac{1}{8} \underline{k}^2
-
\sqrt{2} \delta \sqrt{p + \operatorname{s}} \| \Delta \|_{F}^{-1})
>0. \label{al:line8}
\end{align}
The equality \eqref{al:line1} follows by the Taylor expansion of the function $f(t)=\log|P_{0}+t\Delta|$, which gives
\begin{equation*}
\begin{aligned}
&
\log| \Delta+P_{0} | - \log| P_{0} |
\\&=
\tr( G_{0} \Delta ) -
\vecop(\Delta)' \int_{0}^{1} (1-v) (P_{0}+v\Delta)^{-1} \otimes (P_{0}+v\Delta)^{-1} dv \vecop(\Delta).
\end{aligned}
\end{equation*}
In \eqref{al:line2}, following \cite{rothman2008sparse}, we further bound the integral part from below by its minimal eigenvalue
\begin{equation*}
\begin{aligned}
&\lambda_{\min} \Big(\int_{0}^{1} (1-v) (P_{0}+v\Delta)^{-1} \otimes (P_{0}+v\Delta)^{-1} dv\Big)
\\&\geq
\frac{1}{2} \min_{0\leq v \leq 1} \lambda_{\min}^{2} ( P_{0}+v\Delta)^{-1}
\geq
\frac{1}{2} \min_{\|\Delta\|_{F} \leq \lambda} \lambda_{\min}^{2} ( P_{0}+\Delta)^{-1}
\geq 
\frac{1}{8} \underline{k}^2,
\end{aligned}
\end{equation*}
since, for $\|\Delta\|_{F} \leq \lambda$,
\begin{equation*}
\begin{aligned}
\lambda_{\min}^{2} ( P_{0}+\Delta)^{-1}
&=
\lambda_{\max}^{-2} ( P_{0}+\Delta)
\geq
(\| P_{0} \| + \| \Delta \|)^{-2}
\geq
(\| P_{0} \| + \| \Delta \|_{F} )^{-2} 
\\&\geq
(\| P_{0} \| + \lambda )^{-2} 
\geq
(2 \| P_{0} \| )^{-2} 
=
\frac{1}{4} \lambda_{\min}^{2}(G_{0}) 
\geq
\frac{1}{4} \underline{k}^2,
\end{aligned}
\end{equation*}
since $\lambda=\frac{16}{\underline{k}^2} \sqrt{p+\operatorname{s}} \leq \| P_{0} \|$ by the assumption of Proposition \ref{prop:OpPrecision}.
Applying Proposition \ref{prop:Op} gives inequality \eqref{al:line3} with probability at least $1-c_{1} p^{2-c_{2} \bm{C}}$.
The inequality \eqref{al:line4} follows since 
\begin{equation*}
\begin{aligned}
\| \Delta+P_{0} \|_{1,off} - \| P_{0} \|_{1,off}
&=
\| \Delta_{\operatorname{S}}+P_{0,\operatorname{S}} \|_{1,off} + \| \Delta_{\operatorname{S}^{c}} \|_{1,off} - \| P_{0} \|_{1,off}
\\&\geq
-\| \Delta_{\operatorname{S}} \|_{1,off}  + \| \Delta_{\operatorname{S}^{c}} \|_{1,off}.
\end{aligned}
\end{equation*}
Line \eqref{al:line6} follows since $\rho=\delta$. 
In \eqref{al:line7}, we used the Cauchy-Schwarz inequality to get $\| \Delta \|_{1,on} \leq \sqrt{p}\| \Delta \|_{F,on}$ and $\| \Delta_{\operatorname{S}} \|_{1,off} \leq \sqrt{\operatorname{s}}\| \Delta \|_{F,off}$. Then, in \eqref{al:line8} it remains to verify that the expression is positive, which is a consequence of the choice of $\lambda$ in \eqref{eq:thetalambda}.
\end{proof}

\begin{proof}[Proof of Lemma \ref{col:OperatorPrecisionmodified}]
Lemma \ref{col:OperatorPrecisionmodified} is the analogue of Corollary 1 in \cite{rothman2008sparse}. 
The proof follows similar arguments as the proof of Proposition \ref{prop:OpPrecision}, aiming to use Proposition \ref{prop:Op}. 

Recall that $\Gamma_{0} = W^{-1}_{0} G_{0} W^{-1}_{0}$ and $\widehat{\Gamma}(\widehat{D}) = \widehat{W}^{-1}(\widehat{D}) \widehat{G}(\widehat{D}) \widehat{W}^{-1}(\widehat{D})$. The first step is to prove an analogue of Proposition \ref{prop:Op} in terms of the coherence matrix $\Gamma_{0}$, that is, 
\begin{equation} \label{eq:statementcoherence}
\Prob( \| \widehat{\Gamma}(\widehat{D}) - \Gamma_{0} \|_{\max} >  \widetilde{\delta} )
\leq
c_{1} p^{2-c_{2}\bm{C}}
\end{equation}
with $\widetilde{\delta} = 3 \delta \max\left\{1, \frac{1}{\underline{k}^2} \right\}$ and $\delta$ as in Proposition \ref{prop:Op}.
In order to prove \eqref{eq:statementcoherence}, note that
\begin{align}
\| \widehat{\Gamma}(\widehat{D}) - \Gamma_{0} \|_{\max}
&=
\max_{r,s=1,\dots, p} \left| \frac{ \widehat{G}_{rs}(\widehat{D})}{\sqrt{\widehat{G}_{rr}(\widehat{D})\widehat{G}_{ss}(\widehat{D})}} - \frac{G_{0,rs}}{\sqrt{G_{0,rr}G_{0,ss}}} \right|
\nonumber
\\&\leq
\max_{r,s=1,\dots,p} \left| \frac{ \widehat{G}_{rs}(\widehat{D})}{\sqrt{\widehat{G}_{rr}(\widehat{D})\widehat{G}_{ss}(\widehat{D})}} - \frac{ \widehat{G}_{rs}(\widehat{D}) }{ \sqrt{G_{0,rr}G_{0,ss}}} \right|
+
\max_{r,s=1,\dots,p} \left| \frac{\widehat{G}_{rs}(\widehat{D}) - G_{0,rs}}{\sqrt{G_{0,rr}G_{0,ss}}} \right|
\nonumber
\\&\leq
\max_{r,s=1,\dots,p}
\left| 1 - \left( \frac{ \widehat{G}_{rr}(\widehat{D})\widehat{G}_{ss}(\widehat{D}) }{ G_{0,rr}G_{0,ss}} \right)^{\frac{1}{2}} \right|
+
\max_{r,s=1,\dots,p} \left| \frac{\widehat{G}_{rs}(\widehat{D}) - G_{0,rs}}{\sqrt{G_{0,rr}G_{0,ss}}} \right|
\nonumber
\\&\leq
\max_{r,s=1,\dots,p}
\left| 1 - \frac{ \widehat{G}_{rr}(\widehat{D})\widehat{G}_{ss}(\widehat{D}) }{ G_{0,rr}G_{0,ss}} \right|
+
\delta \frac{1}{ \underline{k} }
\label{al:pppppp1}
\\&=
\max_{r,s=1,\dots,p} (G_{0,rr}G_{0,ss})^{-1}
\left| \widehat{G}_{rr}(\widehat{D})\widehat{G}_{ss}(\widehat{D}) - G_{0,rr}G_{0,ss} \right|
+
\delta \frac{1}{ \underline{k} }
\nonumber
\\&\leq
\max_{r,s=1,\dots,p} \frac{1}{ \underline{k}^2 } 
\left| (\widehat{G}_{rr}(\widehat{D}) - G_{0,rr} + G_{0,rr})(\widehat{G}_{ss}(\widehat{D}) - G_{0,ss} + G_{0,ss}) - G_{0,rr}G_{0,ss} \right|
+
\delta \frac{1}{ \underline{k} }
\nonumber
\\&\leq
\frac{1}{ \underline{k}^2 } (
\| \widehat{G}(\widehat{D}) -G_{0} \|^2_{\max} + 2 \max_{r=1,\dots,p} G_{0,rr} \| \widehat{G}(\widehat{D}) -G_{0} \|_{\max} )
+
\delta \frac{1}{ \underline{k} }
\nonumber
\\&\leq
\delta^2 \frac{1}{ \underline{k}^2 } 
+
2\delta \frac{1}{ \underline{k} }
\leq
3 \delta \max\left\{1, \frac{1}{\underline{k}^2} \right\},
\label{al:pppppp3}
\end{align}
where \eqref{al:pppppp1} follows since
\begin{align}
\max_{r,s=1,\dots,p} \left| \frac{\widehat{G}_{rs}(\widehat{D}) - G_{0,rs}}{\sqrt{G_{0,rr}G_{0,ss}}} \right|
\leq \| W_{0}^{-1} \|^2 \| \widehat{G}(\widehat{D}) - G_{0} \|_{\max} 
\leq \delta \frac{1}{ \underline{k} }
\end{align}
with probability at least $1-c_{1} p^{2-c_{2}\bm{C}}$ by Proposition \ref{prop:Op} and since $\| W_{0}^{-1} \| \leq 1/ \lambda^{\frac{1}{2}}_{\min}(G_{0}) \leq 1/ \underline{k}^{\frac{1}{2}}$ by Assumption \ref{ass:f0}. The inequality \eqref{al:pppppp3} is also due to Proposition \ref{prop:Op}. 
This concludes the proof of \eqref{eq:statementcoherence}.

Moving on to proving the actual statement of Lemma \ref{col:OperatorPrecisionmodified} we follow \cite{rothman2008sparse} and \cite{shu2019estimation}. The proof requires a slight modification of the proof of Proposition \ref{prop:OpPrecision}. We conduct the same steps but use the objective function $\ell^{\Gamma}_{\rho}$ in \eqref{eq:onjfunctionGamma}; set $\Delta = \widehat{K} - K_{0}$ and replace \eqref{eq:thetalambda} by
\begin{equation} \label{eq:thetalambdaCOR}
\Theta_{N}=\{ \Delta=\Delta^{*} ~|~ \| \Delta \|_{F} = \lambda \} \hspace{0.2cm} \text{ and } \lambda=48 \max\left\{1, \frac{1}{\underline{k}^4} \right\} \sqrt{\operatorname{s}} \ \delta.
\end{equation}
We omit all steps up to \eqref{al:line2}. Then,
\begin{align}
Z(\Delta)
&\geq
\tr( (\widehat{\Gamma}(\widehat{D})-\Gamma_{0}) \Delta ) 
+  
\| \Delta \|_{F}^2 \frac{1}{8} \underline{k}^2
+ 
\rho (\| \Delta+K_{0} \|_{1,off} - \| K_{0} \|_{1,off}) 
\nonumber
\\&\geq
\| \Delta \|_{F}^2 \frac{1}{8} \underline{k}^2
-
\widetilde{\delta} \| \Delta \|_{1}
+ 
\rho (\| \Delta+K_{0} \|_{1,off} - \| K_{0} \|_{1,off}) 
\label{al:line3COR}
\\&\geq
\| \Delta \|_{F}^2 \frac{1}{8} \underline{k}^2
-
\widetilde{\delta} \| \Delta \|_{1}
+
\rho (
- \| \Delta_{\operatorname{S}} \|_{1,off}  + \| \Delta_{\operatorname{S}^{c}} \|_{1,off})
\label{al:line4COR}
\\&\geq
\| \Delta \|_{F}^2 \frac{1}{8} \underline{k}^2
-
\widetilde{\delta} (\| \Delta \|_{1,on}+\| \Delta_{\operatorname{S}} \|_{1,off}+\| \Delta_{\operatorname{S}^{c}} \|_{1,off}) \nonumber
\\&\hspace{1cm}
+ \rho ( -\| \Delta_{\operatorname{S}} \|_{1,off} + \| \Delta_{\operatorname{S}^{c}} \|_{1,off}) 
\nonumber
\\&\geq
\| \Delta \|_{F,on}^2 \frac{1}{8} \underline{k}^2
-
\widetilde{\delta} \| \Delta \|_{1,on}
+
\| \Delta \|_{F,off}^2 \frac{1}{8} \underline{k}^2
-
2 \widetilde{\delta} \| \Delta_{\operatorname{S}} \|_{1,off} 
\label{al:line6COR}
\\&\geq
\| \Delta \|_{F,on}^2 (
\frac{1}{8} \underline{k}^2
-
\widetilde{\delta} \sqrt{s} \| \Delta \|_{F,on}^{-1})
+
\| \Delta \|_{F,off}^2 (\frac{1}{8} \underline{k}^2
-
2 \widetilde{\delta} \sqrt{\operatorname{s}}\| \Delta \|_{F,off}^{-1}) 
\label{al:line7COR}
\\&\geq
\| \Delta \|_{F,on}^2 (
\frac{1}{8} \underline{k}^2
-
2 \widetilde{\delta} \sqrt{\operatorname{s}} \| \Delta \|_{F,on}^{-1})
+
\| \Delta \|_{F,off}^2 (\frac{1}{8} \underline{k}^2
-
2 \widetilde{\delta} \sqrt{\operatorname{s}}\| \Delta \|_{F,off}^{-1}) 
\nonumber
\\&\geq
\| \Delta \|_{F}^2 (
\frac{1}{8} \underline{k}^2
-
\sqrt{2} \widetilde{\delta} \sqrt{\operatorname{s}} \| \Delta \|_{F}^{-1})
>0.
\label{al:line8COR}
\end{align}
Applying \eqref{eq:statementcoherence} gives inequality \eqref{al:line3COR} with probability at least $1-c_{1} p^{2-c_{2} \bm{C}}$, and \eqref{al:line4COR} follows by the same arguments as \eqref{al:line4}.
Line \eqref{al:line6COR} follows since $\rho=\widetilde{\delta} = 3 \delta \max\left\{1, \frac{1}{\underline{k}^2} \right\}$. 
In \eqref{al:line7COR}, we used the Cauchy-Schwarz inequality to get $\| \Delta \|_{1,on} \leq \sqrt{\operatorname{s}} \| \Delta \|_{F,on}$ and $\| \Delta_{\operatorname{S}} \|_{1,off} \leq \sqrt{\operatorname{s}}\| \Delta \|_{F,off}$. Then, in \eqref{al:line8COR} it remains to verify that the expression is positive, which is a consequence of the choice of $\lambda$ in \eqref{eq:thetalambdaCOR}.
\end{proof}

\begin{proof}[Proof of Proposition \ref{prop:OperatorPrecisionmodified}]
The modified graphical local Whittle estimator \eqref{eq:modGLW} requires a consistency result for the estimator of $W_{0}^{-1}$ which can be derived, with explanations given below, as follows
\begin{align}
&
\| \widehat{W}(\widehat{D})^{-1} - W_{0}^{-1} \|
=
\max_{r=1,\dots,p} \left| \widehat{G}^{-\frac{1}{2}}_{rr}(\widehat{D}) - G^{-\frac{1}{2}}_{0,rr} \right|
\nonumber
\\&=
\max_{r=1,\dots,p} \left| \widehat{G}^{-\frac{1}{2}}_{rr}(\widehat{D}) - G^{-\frac{1}{2}}_{0,rr} \right|
\mathds{1}_{ \{ | \widehat{G}^{-\frac{1}{2}}_{rr}(\widehat{D}) - G^{-\frac{1}{2}}_{0,rr} | \geq 1 \} }
+
\max_{r=1,\dots,p} \left| \widehat{G}^{-\frac{1}{2}}_{rr}(\widehat{D}) - G^{-\frac{1}{2}}_{0,rr} \right|
\mathds{1}_{ \{ | \widehat{G}^{-\frac{1}{2}}_{rr}(\widehat{D}) - G^{-\frac{1}{2}}_{0,rr} | < 1 \} }
\nonumber
\\&\leq
\max_{r=1,\dots,p} \left| \widehat{G}^{-\frac{1}{2}}_{rr}(\widehat{D}) - G^{-\frac{1}{2}}_{0,rr} \right|^2
+
\max_{r=1,\dots,p} \left| \widehat{G}^{-\frac{1}{2}}_{rr}(\widehat{D}) - G^{-\frac{1}{2}}_{0,rr} \right|
\mathds{1}_{ \{ | \widehat{G}^{-\frac{1}{2}}_{rr}(\widehat{D}) - G^{-\frac{1}{2}}_{0,rr} | < 1 \} }
\nonumber
\\&\leq
\max_{r =1,\dots,p} \left| \frac{\widehat{G}_{rr}(\widehat{D}) - G_{0,rr}}{ G_{0,rr} } \right|^2
\left| \widehat{G}^{-\frac{1}{2}}_{rr}(\widehat{D}) \right|^2
\nonumber
\\&\hspace{1cm}+
\max_{r =1,\dots,p} \left| \frac{\widehat{G}_{rr}(\widehat{D}) - G_{0,rr}}{ G_{0,rr} } \right|
\left| \widehat{G}^{-\frac{1}{2}}_{rr}(\widehat{D}) \right|
\mathds{1}_{ \{ | \widehat{G}^{-\frac{1}{2}}_{rr}(\widehat{D}) - G^{-\frac{1}{2}}_{0,rr} | < 1 \} }
\label{eq:lplplplplplp0}
\\&=
\max_{r =1,\dots,p} \left| \frac{\widehat{G}_{rr}(\widehat{D}) - G_{0,rr}}{ G_{0,rr} } \right|^2
\left| \widehat{G}_{rr}(\widehat{D}) -G_{0,rr} + G_{0,rr}\right|^{-1}
\nonumber
\\&\hspace{1cm}+
\max_{r =1,\dots,p} \left| \frac{\widehat{G}_{rr}(\widehat{D}) - G_{0,rr}}{ G_{0,rr} } \right|
\left| \widehat{G}^{-\frac{1}{2}}_{rr}(\widehat{D}) \right|
\mathds{1}_{ \{ | \widehat{G}^{-\frac{1}{2}}_{rr}(\widehat{D}) - G^{-\frac{1}{2}}_{0,rr} | < 1 \} }
\nonumber
\\&\leq
\max_{r =1,\dots,p} \left| \frac{\widehat{G}_{rr}(\widehat{D}) - G_{0,rr}}{ G_{0,rr} } \right|^2
\left| \widehat{G}_{rr}(\widehat{D}) -G_{0,rr} \right|^{-1} + 
\max_{r =1,\dots,p} \left| \frac{\widehat{G}_{rr}(\widehat{D}) - G_{0,rr}}{ G^{\frac{3}{2}}_{0,rr} } \right|^2
\nonumber
\\&\hspace{1cm}+
\max_{r =1,\dots,p} \left| \frac{\widehat{G}_{rr}(\widehat{D}) - G_{0,rr}}{ G_{0,rr} } \right|
\left| G^{-\frac{1}{2}}_{0,rr} + 1 \right|
\nonumber
\\&\leq
\left( \frac{1}{ \underline{k}^2 } + \frac{1}{ \underline{k}^3 } \right)
\max_{r =1,\dots,p} \left| \widehat{G}_{rr}(\widehat{D}) - G_{0,rr} \right|^2
+ 
\left( \frac{1}{ \underline{k}^{ \frac{3}{2} } } + \frac{1}{ \underline{k} } \right)
\max_{r =1,\dots,p} \left| \widehat{G}_{rr}(\widehat{D}) - G_{0,rr} \right| 
\label{eq:lplplplplplp01}
\\&\leq
\delta 4 \max\left\{ \frac{1}{ \underline{k}^3 }, 1 \right\},
\label{eq:lplplplplplp1}
\end{align}
where \eqref{eq:lplplplplplp0} is due to
\begin{align}
\left| \widehat{G}^{-\frac{1}{2}}_{rr}(\widehat{D}) - G^{-\frac{1}{2}}_{0,rr} \right|
=
\left| 1 - \left( \frac{G_{0,rr}}{\widehat{G}_{rr}} \right)^{-\frac{1}{2}} \right|
\left| \widehat{G}^{-\frac{1}{2}}_{rr}(\widehat{D}) \right|
&\leq
\left| 1 - \left( \frac{G_{0,rr}}{\widehat{G}_{rr}} \right)^{-\frac{1}{2}} \right|
\left| 1 + \left( \frac{G_{0,rr}}{\widehat{G}_{rr}} \right)^{-\frac{1}{2}} \right|
\left| \widehat{G}^{-\frac{1}{2}}_{rr}(\widehat{D}) \right|
\nonumber
\\&=
\left| \frac{\widehat{G}_{rr}(\widehat{D}) - G_{0,rr}}{ G_{0,rr} } \right| 
\left| \widehat{G}^{-\frac{1}{2}}_{rr}(\widehat{D}) \right|,
\nonumber
\end{align}
and \eqref{eq:lplplplplplp01} follows since $\lambda_{\min}(G_{0}) \geq \underline{k}$ by Assumption \ref{ass:f0}.
Finally, \eqref{eq:lplplplplplp1} is an application of Proposition \ref{prop:Op}.
We can further infer
\begin{align} \label{eq:lplplplplp}
\| \widehat{W}(\widehat{D})^{-1} \|
\leq
\| \widehat{W}(\widehat{D})^{-1} - W_{0}^{-1} \| + \| W_{0}^{-1} \|
\leq
\delta 4 \max\left\{ \frac{1}{ \underline{k}^3 }, 1 \right\}
+
\frac{1}{\underline{k}^{\frac{1}{2}}}
\leq 
5 \max\left\{ \frac{1}{\underline{k}^{3}}, 1 \right\}.
\end{align}
Finally, with further details given below,
\begin{align}
&
\| \widehat{P}_{\rho}^{M}(\widehat{D}) - P_{0} \| \nonumber
\\&\leq
\| \widehat{K}(\widehat{D}) - K_{0} \|_{F} \big( \| \widehat{W}(\widehat{D})^{-1} - W_{0}^{-1} \|^2 + \| W_{0}^{-1} \| \| \widehat{W}(\widehat{D})^{-1} \| \big)
\nonumber
\\&\hspace{1cm}+
\| \widehat{W}(\widehat{D})^{-1} - W_{0}^{-1} \| \left( \big( \| \widehat{K}(\widehat{D}) - K_{0} \|_{F} + \| K_{0} \| \big) \| W_{0}^{-1} \| + \| K_{0} \| \| \widehat{W}(\widehat{D})^{-1} \| \right) \label{eq:sksksksksksk1}
\\&\leq 
48 \max\left\{\frac{1}{\underline{k}^3},1 \right\} \sqrt{\operatorname{s}} \ \delta \left( 16 \max\left\{ \frac{1}{ \underline{k}^6 }, 1 \right\} + 
5\max\left\{ \frac{1}{\underline{k}^4}, 1 \right\} \right)
\nonumber
\\&\hspace{1cm}+
4 \delta \max\left\{ \frac{1}{ \underline{k}^3 }, 1 \right\} \left( \big(48 \max\left\{ \frac{1}{\underline{k}^3}, 1 \right\} \sqrt{\operatorname{s}} \ \delta + \| K_{0} \| \big) \frac{1}{\underline{k}^{\frac{1}{2}}} +  \| K_{0} \|  5\max\left\{ \frac{1}{\underline{k}^3}, 1 \right\}
\right) \label{eq:sksksksksksk2}
\\&\leq 30 * 48 \max\left\{ \frac{1}{\underline{k}^9}, 1 \right\} \max\{1, \| K_{0} \| \} \sqrt{\operatorname{s}} \ \delta \nonumber
\end{align}
with probability at least $1-c_{1} p^{2-c_{2} \bm{C}}$. The relation \eqref{eq:sksksksksksk1} is due to (S.52) in \cite{shu2019estimation}. 
Then, combining Lemma \ref{col:OperatorPrecisionmodified}, \eqref{eq:lplplplplplp1} and \eqref{eq:lplplplplp}, 
we get \eqref{eq:sksksksksksk2}.
\end{proof}

\begin{proof}[Proof of Proposition \ref{prop:OperatorCLIME}]
We follow the proof of Theorem 6 in \cite{cai2011constrained}. Note that Theorem 6 in \cite{cai2011constrained} only requires that the maximum norm of the difference between covariance matrix and its estimator is controlled. 
Given that our Proposition \ref{prop:Op} provides an analogue result in the spectral domain, namely
\begin{equation}
\| \widehat{G}(\widehat{D})-G_{0} \|_{\max} \leq \delta
\end{equation}
with probability at least $1-c_{1} p^{2-c_{2}\bm{C}}$, we will only provide some of the key steps in the proof and otherwise refer to Theorem 6 in \cite{cai2011constrained}.
Following the proof of Theorem 6 in \cite{cai2011constrained} up to the point where equation (13) is established, we get
\begin{equation} \label{eq:njebcjecbejcbejc}
\| \widehat{\Theta}_{\rho} -P_{0} \|_{\max} \leq 4 \delta \| P_{0} \|_{1}
\end{equation}
by setting $\Sigma_{0} = G_{0}$ and $\Omega_{0} = P_{0}$ in \cite{cai2011constrained}. Then, one can further infer that
\begin{align}
\| \widehat{P}^{C}_{\rho}(\widehat{D}) -P_{0} \| 
&\leq (1 + 2^{1-a} + 3^{1-a}) \| \widehat{\Theta}_{\rho} -P_{0} \|_{\max}^{1-a} \| P_{0} \|_{a}^{a} \label{al:popopopop1}
\\&\leq 6 (4 \delta \| P_{0} \|_{1})^{1-a}  \| P_{0} \|_{a}^{a}, \label{al:popopopop2}
\end{align}
where \eqref{al:popopopop1} follows by (27) in \cite{cai2011constrained} and \eqref{al:popopopop2} by applying \eqref{eq:njebcjecbejcbejc}.
\end{proof}

\begin{proof}[Proof of Proposition \ref{prop:revoverythreshold}]
We prove \eqref{eq:GMSC:TH1} and \eqref{eq:GMSC:TH2} separately.

\textit{Proof of \eqref{eq:GMSC:TH1}:}
By the definition of the thresholding operator \eqref{eq:thresholding}, we get
\begin{align*}
\{ (r,s) ~|~ T_{\rho}(\widehat{G}_{rs}(\widehat{D})) \neq 0 , G_{0,rs} = 0 \} 
&= 
\{ (r,s) ~|~ | \widehat{G}_{rs}(\widehat{D}) | \geq \rho , G_{0,rs} = 0 \}
\\&\subseteq
\{ (r,s) ~|~ | \widehat{G}_{rs}(\widehat{D}) - G_{0,rs} | \geq \rho \}.
\end{align*}
Therefore, 
\begin{align}
\Prob \left( \sum_{r,s =1}^{d} \mathds{1}_{\{ T_{\rho}(\widehat{G}_{rs}(\widehat{D})) \neq 0 , G_{0,rs} = 0 \} } >0 \right)
\leq
\Prob\left( \max_{r,s =1,\dots, d} | \widehat{G}_{rs}(\widehat{D}) - G_{0,rs} | > \rho \right)
\leq
c_{1} p^{2-c_{2}\bm{C}}
\end{align}
by Proposition \ref{prop:Op}.

\textit{Proof of \eqref{eq:GMSC:TH2}:} By the definition of the thresholding operator \eqref{eq:thresholding}, we get
\begin{align*}
&
\{ (r,s) ~|~ T_{\rho}(\widehat{G}_{rs}(\widehat{D})) \leq 0 , G_{0,rs} > 0 \text{ or } T_{\rho}(\widehat{G}_{rs}(\widehat{D})) \geq 0 , G_{0,rs} < 0 \} 
\\&\subseteq
\{ (r,s) ~|~ -\widehat{G}_{rs}(\widehat{D}) > \rho , G_{0,rs} > \tau \text{ or } \widehat{G}_{rs}(\widehat{D}) > \rho , - G_{0,rs} > \tau \} 
\\&\subseteq
\{ (r,s) ~|~ | \widehat{G}_{rs}(\widehat{D}) - G_{0,rs} | > \tau + \rho \}.
\end{align*}
Therefore, 
\begin{align*}
&\Prob \left( \sum_{r,s =1}^{d} \mathds{1}_{\{ T_{\rho}(\widehat{G}_{rs}(\widehat{D})) \leq 0 , G_{0,rs} > 0 \text{ or } T_{\rho}(\widehat{G}_{rs}(\widehat{D})) \geq 0 , G_{0,rs} < 0 \} } >0 \right)
\\&\leq
\Prob\left( \max_{r,s =1,\dots, d} | \widehat{G}_{rs}(\widehat{D}) - G_{0,rs} | > \tau + \rho \right)
\leq
c_{1} p^{2-c_{2}\bm{C}}
\end{align*}
by Proposition \ref{prop:Op}.
\end{proof}

\begin{proof}[Proof of Proposition \ref{prop:revoveryCLIME}]
We omit the proof and as it is similar to the proof of Proposition \ref{prop:revoverythreshold}. 
\end{proof}

\section{Some technical results and their proofs}
\label{s:proofsB}

Section \ref{se:B1} provides probabilistic bounds which are used to show that Lemmas \ref{prop:supGhat} and \ref{prop:bias} are sufficient to prove a concentration inequality on the deviation between $\widehat{G}(\widehat{D})$ and the true $G_{0}$.
Section \ref{se:B2} gives some non-asymptotic results on the bias of the periodogram used to prove Lemma \ref{prop:bias}.
Finally, Section \ref{se:B3} concerns results on the matrix norms of the covariance matrix used to prove Lemma \ref{prop:supGhat}.

\subsection{Probabilistic bounds} \label{se:B1}

We prove here some probabilistic bounds used to show that the proof of Proposition \ref{prop:Op} can be reduced to proving Propositions \ref{prop:supGhat1}--\ref{prop:supGhat4}. We continue using the notation of the proof of Proposition \ref{prop:Op}.

\begin{lemma} \label{le:infS1}
For $\eta \in (0, \frac{1}{2})$, let $N_{\eta} = \{ d ~|~ |d-d_{0,r}| \leq \eta \}$ and $\Theta_{1}$ as in \eqref{eq:defTheta1}. Then, 
\begin{equation*}
\begin{aligned}
\Prob( \inf_{ d \in N_{\eta}^{c} \cap \Theta_{1}} S_{r}(d) < 0)
& \leq
\Prob \Big( \frac{1}{2} L(\Delta_{2}-\Delta_{1}) g_{0,r} \eta^2 \mathcal{V}_{1}(m) < \sup_{d \in \Theta_{1}} |\widehat{h}_{r}(d) - \widetilde{h}_{r}(d) | \Big) \\
&\hspace{1cm}+
\Prob \Big( \frac{1}{2} g_{0,r} \eta^2 \mathcal{V}_{1}(m) < |\widehat{g}_{r}(d_{0,r}) - g_{0,r} | \Big),
\end{aligned}
\end{equation*}
where $S_{r}(d)=R_{r}(d)-R_{r}(d_{0,r})$ with $R_{r}$ in \eqref{eq:univLWd}, $L(\cdot)$ in \eqref{eq:Tl}, $\mathcal{V}_{1}(m)$ in \eqref{eq:nu1nu2} and 
\begin{equation} \label{eq:hrandhrtilde}
\widehat{h}_{r}(d)=
\frac{1}{m} \sum_{j=1}^{m} \Big(\frac{j}{m}\Big)^{2d-2d_{0,r}} \lambda_{j}^{2d_{0,r}} I_{X,rr}(\lambda_{j}),
\hspace{0.2cm}
\widetilde{h}_{r}(d)=
\frac{1}{m} \sum_{j=1}^{m} \Big(\frac{j}{m}\Big)^{2d-2d_{0,r}} g_{0,r}.
\end{equation}
\end{lemma}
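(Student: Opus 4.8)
The plan is to follow Robinson's classical argument for consistency of the univariate local Whittle estimator, as sketched at inequality (3.2) in \cite{robinson1995gaussian}, but carried out non-asymptotically. First I would recall the structure of $S_r(d) = R_r(d) - R_r(d_{0,r})$. Writing $U_r(d) = \widehat{h}_r(d)/\widehat{g}_r(d_{0,r})$ (or the analogous ratio appearing in Robinson's decomposition), one has the standard identity
\begin{equation*}
S_r(d) = \log\!\Big( \frac{1}{m}\sum_{j=1}^{m} \Big(\tfrac{j}{m}\Big)^{2(d-d_{0,r})} \lambda_j^{2d_{0,r}} I_{X,rr}(\lambda_j) \Big) - \frac{2(d-d_{0,r})}{m}\sum_{j=1}^{m}\log j - \log\widehat{g}_r(d_{0,r}),
\end{equation*}
up to additive terms not depending on $d$ that cancel in the difference. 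On $N_\eta^c \cap \Theta_1$, i.e. for $|d-d_{0,r}| \ge \eta$ with $d$ ranging over the ``good'' piece $\Theta_1$ of the parameter set, Robinson's (3.2) lower-bounds $S_r(d)$ from below by a deterministic quantity of order $\eta^2$ minus stochastic fluctuations; concretely the population version of the bracketed log-sum, expanded around $d = d_{0,r}$, has a strictly positive curvature captured by $\mathcal{V}_1(m) = \tfrac{1}{3}\tfrac{1}{m^4}\sum_{i,j}(i-j)^2$, the discrete variance of $\{j/m\}$, and the leading constant of the population quadratic term is $g_{0,r}$ times a factor $L(\Delta_2-\Delta_1)$ accounting for the width of $\Theta_1$ (the minimum of $\tfrac1m\sum (j/m)^{2(d-d_{0,r})}$ over $d\in\Theta_1$).

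Second, I would make the event $\{\inf_{d\in N_\eta^c\cap\Theta_1} S_r(d) < 0\}$ imply that at least one of the two fluctuation terms — the deviation $\sup_{d\in\Theta_1}|\widehat h_r(d)-\widetilde h_r(d)|$ of the numerator from its population analogue, or the deviation $|\widehat g_r(d_{0,r}) - g_{0,r}|$ of the denominator — exceeds the corresponding half of the deterministic lower bound. This is the usual ``if the infimum dips below $0$ then the noise must have been large'' step: from
\begin{equation*}
S_r(d) \ge \tfrac12 g_{0,r}\,\eta^2\,\mathcal{V}_1(m) \cdot \big(\text{curvature factor}\big) - \big(\text{terms controlled by } \sup_{d}|\widehat h_r - \widetilde h_r| \text{ and } |\widehat g_r(d_{0,r}) - g_{0,r}|\big),
\end{equation*}
one obtains $S_r(d) \ge 0$ on the complement of the union of the two stated high-deviation events, with the factor $L(\Delta_2-\Delta_1)$ appearing in the $\widehat h_r$ branch because the $\widehat h_r$-fluctuation is divided by $\inf_{d\in\Theta_1}\widetilde h_r(d)/g_{0,r} = L(\Delta_2-\Delta_1)$ (roughly), while the $\widehat g_r(d_{0,r})$ branch is divided by $1$. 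A union bound over the two events then yields the displayed inequality. Care is needed with the elementary inequalities $|\log(1+x) - x| \le$ (something) and $|\log a - \log b| \le |a-b|/\min(a,b)$ to convert the log-scale bound on $S_r$ into bounds on the arithmetic-scale deviations; one should also use that $\widehat g_r(d_{0,r})$ is bounded below (which is where $g_{0,r} \ge \underline k > 0$ from \eqref{as:eigenvalueG0} enters, on the complement of the second event).

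\textbf{Main obstacle.} The delicate part is the behaviour of $S_r(d)$ near the ``nonuniformity point'' $d = d_{0,r} - \tfrac12$, which is precisely why $\Theta$ is split into $\Theta_1 \cup \Theta_2$ (Remark \ref{re:nonuniformly}). Restricting to $\Theta_1$ removes the pathology — on $\Theta_1$ the exponent $2(d-d_{0,r})$ stays bounded away from $-1$, so $\tfrac1m\sum_j (j/m)^{2(d-d_{0,r})}$ converges to $L(\cdot)$ uniformly and is bounded below — but one still has to produce a clean, uniform-in-$d$ lower bound on the deterministic part of $S_r(d)$ that is genuinely quadratic in $\eta$ with the stated constant $\tfrac12 g_{0,r}\mathcal V_1(m)$ (resp. $\tfrac12 L(\Delta_2-\Delta_1)g_{0,r}\mathcal V_1(m)$). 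This amounts to a second-order Taylor expansion of $d\mapsto \log\big(\tfrac1m\sum_j(j/m)^{2(d-d_{0,r})}\lambda_j^{2d_{0,r}}g_{0,r}\big)$ at $d=d_{0,r}$ whose Hessian is (a constant times) the empirical variance $\mathcal V_1(m)$ of $\{\log(j/m)\}$ — here I would lean on the corresponding computation in \cite{robinson1995gaussian} below its equation (3.22) rather than redo it. Everything else (the union bound, the passage from log to arithmetic scale, the factor $L(\Delta_2-\Delta_1)$) is routine bookkeeping.
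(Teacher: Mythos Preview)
Your overall architecture is the same as the paper's: introduce the population contrast $\widetilde S_r(d)=\widetilde R_r(d)-\widetilde R_r(d_{0,r})$, write $\inf S_r \ge \inf\widetilde S_r - \sup|\widetilde S_r - S_r|$, lower-bound the deterministic piece by $\eta^2\mathcal V_1(m)$, and then convert $\sup|\widetilde S_r - S_r|$ on the log scale into the two arithmetic-scale deviations $|\widehat h_r-\widetilde h_r|$ and $|\widehat g_r(d_{0,r})-g_{0,r}|$ via the mean value theorem and the lower bound $\widetilde h_r(d)\ge L(\Delta_2-\Delta_1)g_{0,r}$. Your treatment of the stochastic half and of where $L(\Delta_2-\Delta_1)$ enters is correct and matches Lemma~\ref{le:subS}.

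The gap is in the deterministic lower bound, and it is not just bookkeeping. The Hessian of $d\mapsto f_m(d-d_{0,r}):=\log\big(\tfrac1m\sum_j j^{2(d-d_{0,r})}\big)-\tfrac{2(d-d_{0,r})}{m}\sum_j\log j$ at $d=d_{0,r}$ is $4\,\mathrm{Var}(\log j)$, \emph{not} a constant times $\mathcal V_1(m)=\tfrac{1}{3m^4}\sum_{i,j}(i-j)^2$, which is proportional to $\mathrm{Var}(j/m)$; these are different quantities, so a second-order Taylor expansion at $d_{0,r}$ cannot produce the stated constant. Moreover, a local Taylor expansion does not by itself yield a global lower bound on all of $N_\eta^c\cap\Theta_1$; you would still need uniform control of $f_m''$ away from $0$. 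The paper does not use Taylor at all here: Lemma~\ref{le:infS} first reduces to $\min\{f_m(\eta),f_m(-\eta)\}$ via convexity (Lemma~\ref{le:fmineq}), and then applies Mercer's generalized mean value theorem \eqref{eq:Mercer} with $g_1=\log$ and $g_2(x)=x^{\pm 2}$ to convert $f_m(\pm\eta)$ into $\tfrac{1}{2m^{4\eta}}\big(\tfrac1m\sum j^{4\eta}-(\tfrac1m\sum j^{2\eta})^2\big)$, after which a second application of the ordinary mean value theorem to $j\mapsto j^{2\eta}$ turns the variance of $j^{2\eta}$ into $\sum_{i,j}(i-j)^2$, i.e.\ $\mathcal V_1(m)$. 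Robinson's (1995) argument below his (3.22) is asymptotic and concerns the $\Theta_2$ piece, so ``leaning on Robinson'' will not supply this step; you need the Mercer device (or an equivalent non-asymptotic inequality) to recover the specific constant $\mathcal V_1(m)$.
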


\begin{proof}
Let $\widetilde{S}_{r}(d)=\widetilde{R}_{r}(d)-\widetilde{R}_{r}(d_{0,r})$
with $\widetilde{R}_{r}(d)=\frac{1}{m} \sum_{j=1}^{m} \log (\lambda_{j}^{-2d} \widetilde{g}_{r}(d))$ and $\widetilde{g}_{r}(d)$ defined below \eqref{eq:Gtilde}. Then,
\begin{align}
\Prob( \inf_{ d \in N_{\eta}^{c} \cap \Theta_{1}} S_{r}(d) < 0) \nonumber
& \leq
\Prob(  \inf_{ N_{\eta}^{c} \cap \Theta_{1}} \widetilde{S}_{r}(d) < \sup_{d \in \Theta_{1}} | \widetilde{S}_{r}(d) - S_{r}(d) | ) \nonumber
\\ & \leq
\Prob( \eta^2 \mathcal{V}_{1}(m) < \sup_{d \in \Theta_{1}} | \widetilde{S}_{r}(d) - S_{r}(d) | ) \label{eq:b1}
\\ & \leq
\Prob \Big( \frac{1}{2} L(\Delta_{2}-\Delta_{1}) g_{0,r} \eta^2 \mathcal{V}_{1}(m) < \sup_{d \in \Theta_{1}} |\widehat{h}_{r}(d) - \widetilde{h}_{r}(d) | \Big) \nonumber
\\&\hspace{1cm}+
\Prob \Big( \frac{1}{2} g_{0,r} \eta^2 \mathcal{V}_{1}(m) < |\widehat{g}_{r}(d_{0,r}) - g_{0,r} | \Big), \label{eq:b2}
\end{align}
where \eqref{eq:b1} is proved in Lemma \ref{le:infS} and \eqref{eq:b2} in Lemma \ref{le:subS} which is applicable since $\eta^2 \mathcal{V}_{1}(m) < \frac{1}{4} \mathcal{V}_{1}(m) = \frac{1}{4} \frac{1}{18} ( 1-\frac{1}{m^2}) < 1$.
\end{proof}

\begin{lemma} \label{le:infS}
For $d \in N_{\eta}^{c} \cap \Theta_{1}$, $\eta \in (0, \frac{1}{2})$, the quantity $\widetilde{S}_{r}(d)=\widetilde{R}_{r}(d)-\widetilde{R}_{r}(d_{0,r})$ can be bounded from below as
\begin{equation*}
\widetilde{S}_{r}(d) \geq \eta^2 \mathcal{V}_{1}(m)
\end{equation*}
with
\begin{equation*}
\mathcal{V}_{1}(m)=\frac{1}{3} \frac{1}{m^4} \sum_{i,j=1}^{m} (i-j)^2
\end{equation*}
satisfying $\mathcal{V}_{1}(m) = \frac{1}{18} ( 1-\frac{1}{m^2}) \sim \frac{1}{18} $ as $m \to \infty$.
\end{lemma}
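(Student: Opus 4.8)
The plan is to establish a sharp lower bound on the ``population'' discrepancy $\widetilde S_r(d)$ uniformly over $d \in N_\eta^c \cap \Theta_1$, exploiting the fact that $\widetilde S_r$ is a smooth, convex-like function of $d$ that vanishes at $d_{0,r}$ and has a quantified curvature. First I would write out $\widetilde S_r(d)$ explicitly. Since $\widetilde g_r(d) = \widetilde G_{rr}(D) = \frac{1}{m}\sum_{j=1}^m \lambda_j^{2d-2d_{0,r}} g_{0,r} = g_{0,r} \lambda_m^{2d-2d_{0,r}} \frac1m \sum_{j=1}^m (j/m)^{2d-2d_{0,r}}$, and $\widetilde R_r(d) = \frac1m \sum_{j=1}^m \log(\lambda_j^{-2d}\widetilde g_r(d))$, the $g_{0,r}$ and the pure-power prefactors separate additively under the logarithm. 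A short computation should give
\begin{equation*}
\widetilde S_r(d) = \widetilde R_r(d) - \widetilde R_r(d_{0,r}) = -2(d-d_{0,r}) \cdot \frac1m\sum_{j=1}^m \log\lambda_j \;+\; \log\!\Big(\tfrac1m\sum_{j=1}^m (j/m)^{2(d-d_{0,r})}\Big) \;-\; \log\!\Big(\tfrac1m\sum_{j=1}^m 1\Big),
\end{equation*}
but the first term must combine with the $\log\lambda_j = \log(2\pi j/N)$ inside the power-mean so that everything is expressed through the normalized frequencies $j/m$ only; writing $u = d - d_{0,r}$ one lands on a function of the single variable $u$ of the form $F(u) := \log\big(\frac1m\sum_{j=1}^m (j/m)^{2u}\big) - 2u\,\frac1m\sum_{j=1}^m \log(j/m)$, with $F(0) = 0$.

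The heart of the argument is a Taylor/convexity bound on $F(u)$. I would note that $F(0)=0$, $F'(0)=0$ (the linear term cancels exactly by construction), and $F''(u) = 4\,\mathrm{Var}_{w_u}(\log(j/m))$ where $w_u$ is the probability weighting $w_u(j) \propto (j/m)^{2u}$ on $\{1,\dots,m\}$ --- so $F$ is convex. To get a clean quadratic lower bound I would instead work with $G(u) := \log\big(\frac1m\sum_j (j/m)^{2u}\big)$, whose second derivative is $4\,\mathrm{Var}_{w_u}$, and lower-bound the variance by its value at the ``worst'' weighting in $[d_{0,r}-\frac12+\Delta, \Delta_2] - d_{0,r}$ or over all admissible $u$; alternatively, and more robustly, use Jensen directly: $\frac1m\sum_j (j/m)^{2u} \geq \exp\big(2u \cdot \frac1m\sum_j \log(j/m)\big)$ gives $F(u) \geq 0$, and a refined second-order Jensen (the Legendre-type inequality $\log \mathbb{E}[e^{X}] - \mathbb{E}[X] \geq \frac12 \mathrm{something} \cdot \mathrm{Var}$ for bounded $X$) yields $F(u) \geq 2u^2 \cdot \kappa_m$ with $\kappa_m$ the variance of $\log(j/m)$ under the \emph{uniform} weighting, up to a universal constant. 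One then checks that $\frac1{m^2}\sum_{i,j}(i-j)^2 / m^2$ is, up to constants, exactly $2\,\mathrm{Var}_{\mathrm{unif}}(j/m)$, and a comparison between $\mathrm{Var}(\log(j/m))$ and $\mathrm{Var}(j/m)$ (both $\asymp$ a constant as $m\to\infty$, with the explicit $\frac1m\sum_{i,j}(i-j)^2/m^4 = \frac13(1-1/m^2)\cdot\frac12$ type identity) produces the stated $\mathcal V_1(m) = \frac13\frac1{m^4}\sum_{i,j}(i-j)^2 = \frac1{18}(1-1/m^2)$. Finally, for $d \in N_\eta^c$ one has $|u| = |d-d_{0,r}| \geq \eta$, so $F(u) \geq 2u^2\kappa_m \geq$ (after matching constants) $\eta^2 \mathcal V_1(m)$; one must also confirm the bound persists on the truncated set $\Theta_1$ when $d_{0,r} \geq \Delta_1 + \frac12$, where $\Theta_1$ starts at $d_{0,r}-\frac12+\Delta$ rather than $\Delta_1$, but this only restricts the range of $u$ and does not weaken the curvature lower bound.

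I expect the main obstacle to be pinning down the correct universal constant so that the quadratic bound reads \emph{exactly} $\eta^2 \mathcal V_1(m)$ rather than $c\,\eta^2\,\mathrm{Var}_{\mathrm{unif}}$ for an unspecified $c$ --- this requires choosing the right second-order Jensen inequality (e.g.\ $\log(\frac1m\sum e^{x_j}) - \bar x \geq \frac1{2}\min_j\! \text{-type}$ vs.\ a bound valid for all $u$ in the admissible interval) and then doing the elementary but fiddly identity $\sum_{j=1}^m (j/m - \bar\cdot)^2$ and its relation to $\sum_{i,j}(i-j)^2$. A secondary care point is the restriction $\eta \in (0,\frac12)$, which guarantees $d$ stays inside $[\Delta_1,\Delta_2]$-type bounds and keeps all exponents $2u$ in a range where the power means and their logarithms are well-behaved (no blow-up from $j=1$); this is exactly what makes $\mathcal V_1(m)$ bounded away from $0$. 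Everything else --- the explicit derivative computations, the $F(0)=F'(0)=0$ cancellation, and the asymptotic $\mathcal V_1(m)\to \frac1{18}$ --- is routine and I would present it compactly, relegating the arithmetic of $\frac13\frac1{m^4}\sum_{i,j}(i-j)^2 = \frac1{18}(1-m^{-2})$ to a one-line computation using $\sum_{i,j}(i-j)^2 = 2m\sum_j j^2 - 2(\sum_j j)^2 = \frac{m^2(m^2-1)}{6}$.
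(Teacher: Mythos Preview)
Your reduction of $\widetilde S_r(d)$ to the function $F(u)=\log\big(\frac1m\sum_j j^{2u}\big)-2u\cdot\frac1m\sum_j\log j$ with $F(0)=F'(0)=0$ and $F''(u)=4\,\mathrm{Var}_{w_u}(\log j)\geq 0$ is correct, and the convexity argument reducing the problem to bounding $F(\pm\eta)$ matches the paper's Lemma~\ref{le:fmineq}. The closed form $\mathcal V_1(m)=\frac1{18}(1-m^{-2})$ is also fine.

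The gap is in passing from $F(\pm\eta)$ to $\eta^2\mathcal V_1(m)$. Taylor's theorem gives $F(u)=2u^2\,\mathrm{Var}_{w_\xi}(\log j)$ for an unknown $\xi$ between $0$ and $u$: this is a variance of $\log j$ under a \emph{tilted} weighting, whereas $\mathcal V_1(m)=\frac13\frac1{m^4}\sum_{i,j}(i-j)^2$ is the variance of $j$ itself under the \emph{uniform} weighting. Your proposed ``comparison between $\mathrm{Var}(\log(j/m))$ and $\mathrm{Var}(j/m)$'' and unspecified ``refined second-order Jensen'' do not produce this specific constant; you would need a non-asymptotic lower bound on $\mathrm{Var}_{w_\xi}(\log j)$ uniform over $\xi\in(-\frac12,\frac12)$, which you have not supplied.

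The paper avoids this by a different device. Rather than Taylor-expanding in $u$, it applies a Cauchy-type mean value identity of Mercer to the data $x_j=j^{\pm 2\eta}$: for suitable $g_1,g_2$,
\[
g_1(\bar x)-\overline{g_1(x)}=\frac{g_1''(\xi)}{g_2''(\xi)}\big(g_2(\bar x)-\overline{g_2(x)}\big),\qquad \xi\in[\min x_j,\max x_j].
\]
Choosing $g_1=\log$ and $g_2(x)=x^{\pm 2}$ gives
\[
f_m(\pm\eta)\ \geq\ \frac{c}{m^{4\eta}}\Big(\frac1m\sum_j j^{4\eta}-\Big(\frac1m\sum_j j^{2\eta}\Big)^2\Big)=\frac{c}{2m^{4\eta}m^2}\sum_{i,j}(i^{2\eta}-j^{2\eta})^2,
\]
i.e.\ the variance of $j^{2\eta}$, not of $\log j$. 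Now the ordinary mean value theorem on $i^{2\eta}-j^{2\eta}=2\eta(i-j)x_{ij}^{2\eta-1}$ with $x_{ij}\in[1,m]$ and $2\eta-1<0$ (this is where $\eta<\frac12$ is actually used) yields $(i^{2\eta}-j^{2\eta})^2\geq 4\eta^2(i-j)^2 m^{4\eta-2}$, and the $m^{4\eta}$ factors cancel to leave exactly $\eta^2\cdot\frac{c}{m^4}\sum_{i,j}(i-j)^2$ with explicit $c$. So the paper's route naturally produces $\sum_{i,j}(i-j)^2$; your route naturally produces a variance of logarithms, and bridging the two with the stated constant is the step you have not carried out.
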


\begin{proof}
The expression $\widetilde{S}_{r}(d)$ can be rewritten as
\begin{equation*}
\begin{aligned}
\widetilde{S}_{r}(d)
& =
\frac{1}{m} \sum_{j=1}^{m} \log (\lambda_{j}^{-2d}) + \log(\widetilde{g}_{r}(d)) -
\frac{1}{m} \sum_{j=1}^{m} \log (\lambda_{j}^{-2d_{0,r}}) - \log(g_{0,r})
\\ & =
\log \Big( \frac{1}{m} \sum_{j=1}^{m} \lambda_{j}^{2d-2d_{0,r}} \Big) - 
\frac{1}{m} \sum_{j=1}^{m} \log (\lambda_{j}^{2d-2d_{0,r}})
\\ & =
\log \Big( \frac{1}{m} \sum_{j=1}^{m} j^{2d-2d_{0,r}} \Big) - 
\frac{1}{m} \sum_{j=1}^{m} \log (j^{2d-2d_{0,r}}) =: f_{m}(d-d_{0,r}).
\end{aligned}
\end{equation*}
By Lemma \ref{le:fmineq} below, we know that
\begin{equation*}
f_{m}(d-d_{0,r}) \geq \min\{ f_{m}(-\eta), f_{m}(\eta)\}.
\end{equation*}
\par
We further prove lower bounds for $f_{m}(\eta)$ and $f_{m}(-\eta)$.
The proof is based on a result due to \cite{mercer1999some}.
For the readers' convenience, we shortly repeat this result:
Let $J(x)$ be the smallest closed interval that contains some $x_{j}$, $j=1,\dots,m$ and $g_{1},g_{2}$ be two twice differentiable functions on $J(x)$ with continuous second derivatives $g_{1}'',g_{2}''$. Then,
\begin{equation} \label{eq:Mercer}
\frac{g_{1}\Big( \frac{1}{m}\sum_{j=1}^{m} x_{j}  \Big) - \frac{1}{m} \sum_{j=1}^{m}g_{1}(x_{i}) }{g_{2}\Big( \frac{1}{m}\sum_{j=1}^{m} x_{j}  \Big) - \frac{1}{m} \sum_{j=1}^{m}g_{2}(x_{i})}
= \frac{g_{1}''(\xi)}{g_{2}''(\xi)} \text{ for some } \xi \in J(x), 
\end{equation}
given $g_{2}''(\xi) \neq 0$; see p.\ 678 in \cite{mercer1999some}\footnote{The relation \eqref{eq:Mercer} is also akin to Cauchy's mean value theorem.}.
From here on, we consider $f_{m}(\eta)$ and $f_{m}(-\eta)$ separately in order to find lower bounds.

\textit{Bounding $f_{m}(\eta)$:}
In \eqref{al:applyMercer1} below, we apply \eqref{eq:Mercer} with $(x_{1},\dots,x_{m})=(1,2^{2\eta},\dots,m^{2\eta})$ and $g_{1}(x)=\log(x)$, $g_{2}(x)=x^2$. The second derivatives are respectively $g_{1}''(x)=-x^{-2}$ and $g_{2}''(x)=2$ such that $\frac{g_{1}''(x)}{g_{2}''(x)}=-\frac{x^{-2}}{2}$. Then, there is a $\xi \in [1,m^{2\eta}]$ such that
\begin{align}
f_{m}(\eta)
&=
\log \Big(\frac{1}{m} \sum_{j=1}^{m} j^{2\eta} \Big)
-
\frac{1}{m} \sum_{i=1}^{m} \log (i^{2\eta}) \nonumber
\\&=
-\frac{1}{2\xi^2} \Big( \Big(\frac{1}{m} \sum_{j=1}^{m} j^{2\eta} \Big)^2
-
\frac{1}{m} \sum_{j=1}^{m} j^{4\eta} 
\Big) \label{al:applyMercer1}
\\&\geq
\frac{1}{2m^{4\eta}} \Big(
\frac{1}{m} \sum_{j=1}^{m} j^{4\eta} -
\Big(\frac{1}{m} \sum_{j=1}^{m} j^{2\eta} \Big)^2
\Big)
\nonumber
\\&\geq
\frac{1}{m^4} \sum_{i,j=1}^{m} (i-j)^2 \eta^2 \label{al:last1} ,
\end{align}
where the last inequality \eqref{al:last1} is due to \eqref{al:mvt} and proved below.

\textit{Bounding $f_{m}(-\eta)$:}
In \eqref{al:applyMercer2} below, we apply \eqref{eq:Mercer} with $(x_{1},\dots,x_{m})=(1,2^{-2\eta},\dots,m^{-2\eta})$ and $g_{1}(x)=\log(x)$, $g_{2}(x)=x^{-2}$. The second derivatives are respectively $g_{1}''(x)=-x^{-2}$ and $g_{2}''(x)=6x^{-4}$ such that $\frac{g_{1}''(x)}{g_{2}''(x)}=-\frac{x^{2}}{6}$. Then, there is a $\xi \in [m^{-2\eta},1]$ such that
\begin{align}
f_{m}(-\eta)
&=
\log \Big(\frac{1}{m} \sum_{j=1}^{m} j^{-2\eta} \Big)
-
\frac{1}{m} \sum_{i=1}^{m} \log (i^{-2\eta}) \nonumber
\\&=
-\frac{\xi^2}{6} \Big( \Big(\frac{1}{m} \sum_{j=1}^{m} j^{-2\eta} \Big)^{-2}
-
\frac{1}{m} \sum_{j=1}^{m} j^{4\eta} 
\Big) \label{al:applyMercer2}
\\&\geq
\frac{m^{-4\eta}}{6} \Big( 
\frac{1}{m} \sum_{j=1}^{m} j^{4\eta} -
\Big(\frac{1}{m} \sum_{j=1}^{m} j^{-2\eta} \Big)^{-2} 
\Big)
\nonumber
\\&\geq
\frac{1}{6m^{4\eta}} \Big( 
\frac{1}{m} \sum_{j=1}^{m} j^{4\eta} -
\Big(\frac{1}{m} \sum_{j=1}^{m} j^{2\eta} \Big)^{2} 
\Big) \label{al:Jensen}
\\&\geq
\frac{1}{3} \frac{1}{m^4} \sum_{i,j=1}^{m} (i-j)^2 \eta^2 \label{al:last2} ,
\end{align}
where \eqref{al:Jensen} is due to Jensen's inequality and \eqref{al:last2} is proved in \eqref{al:mvt} below.
\par
For the inequality in \eqref{al:last1} and \eqref{al:last2}, note that for $d \in N_{\eta}^{c} \cap \Theta_{1}$ with $\eta \in (0,\frac{1}{2})$,
\begin{align}
\frac{1}{m^{4\eta}} \Big(\frac{1}{m} \sum_{j=1}^{m} j^{4\eta} -
\Big(\frac{1}{m} \sum_{j=1}^{m} j^{2\eta} \Big)^{2} \Big)
& = 
\frac{1}{2m^{4\eta}} \frac{1}{m^2} \sum_{i,j=1}^{m} (i^{2\eta} - j^{2\eta})^2 \nonumber
\\ & = 
\frac{1}{2m^{4\eta}} \frac{1}{m^2} \sum_{i,j=1}^{m} ((i - j) (2\eta) x_{ij}^{2\eta-1})^2 \label{al:mvt}
\\ & \geq 
2\frac{1}{m^4} \sum_{i,j=1}^{m} (i-j)^2 \eta^2 \nonumber ,
\end{align}
where $x_{ij} \in (i, j)$ in \eqref{al:mvt} after applying the mean value theorem. 
\end{proof}

\begin{lemma} \label{le:fmineq}
For $d \in N_{\eta}^{c} \cap \Theta_{1}$, the following relation holds, 
\begin{equation*} 
f_{m}(d-d_{0,r}) \geq \min\{ f_{m}(-\eta), f_{m}(\eta)\}
\end{equation*}
with
\begin{equation*}
f_{m}(x)=
\log \Big(\frac{1}{m} \sum_{j=1}^{m} j^{2x} \Big)
-
\frac{1}{m} \sum_{j=1}^{m} \log (j^{2x}).
\end{equation*}
\end{lemma}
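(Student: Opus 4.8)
The plan is to show that $f_m$ is unimodal in the strong sense that it is non-increasing on $(-\infty,0]$ and non-decreasing on $[0,\infty)$; the lemma is then immediate. Indeed, for $d\in N_\eta^c\cap\Theta_1$ I would set $x:=d-d_{0,r}$, so that $|x|>\eta$ by the definition of $N_\eta^c$. If $x>\eta$, the monotonicity of $f_m$ on $[0,\infty)$ gives $f_m(x)\ge f_m(\eta)$; if $x<-\eta$, the monotonicity on $(-\infty,0]$ gives $f_m(x)\ge f_m(-\eta)$; in either case $f_m(d-d_{0,r})\ge\min\{f_m(-\eta),f_m(\eta)\}$, which is the assertion.

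To prove the monotonicity I would first rewrite $f_m$ by splitting off its linear part. With
\begin{equation*}
S(x):=\sum_{j=1}^{m}j^{2x}=\sum_{j=1}^{m}e^{2x\log j},\qquad c_m:=\frac{2}{m}\sum_{j=1}^{m}\log j,
\end{equation*}
one has $f_m(x)=\log S(x)-\log m-c_m x$. The key structural fact is that $x\mapsto\log S(x)$ is convex, since $S$ is a positive linear combination of the log-convex functions $x\mapsto e^{2x\log j}$; equivalently, $\log S$ is, up to an additive constant, the cumulant generating function of the distribution assigning mass $1/m$ to each value $\log j$. Concretely, introducing the probability weights $p_x(j):=j^{2x}/S(x)$ on $\{1,\dots,m\}$, one computes
\begin{equation*}
\frac{d}{dx}\log S(x)=2\sum_{j=1}^{m}p_x(j)\log j,\qquad
\frac{d^{2}}{dx^{2}}\log S(x)=4\,\mathrm{Var}_{p_x}(\log j)\ge 0.
\end{equation*}

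Hence $f_m'(x)=\frac{d}{dx}\log S(x)-c_m$ is non-decreasing in $x$. Evaluating at $x=0$, the weights $p_0$ are uniform, so $\frac{d}{dx}\log S(x)$ equals $c_m$ there and $f_m'(0)=0$; combined with $f_m'$ non-decreasing, this yields $f_m'\le 0$ on $(-\infty,0]$ and $f_m'\ge 0$ on $[0,\infty)$, which is exactly the claimed unimodality.

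I expect the only substantive step to be the convexity of $\log S$, i.e.\ the second-derivative (variance) identity above — and that is entirely routine — while everything else is bookkeeping. The one place warranting a moment's care is the final case split, where one must invoke that $d\in N_\eta^c$ forces $|d-d_{0,r}|>\eta$, so that $x$ lies strictly inside one of the two half-lines on which $f_m$ has just been shown to be monotone.
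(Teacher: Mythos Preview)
Your proof is correct and follows essentially the same approach as the paper: both arguments establish that $f_m$ is convex with a critical point at $x=0$, deduce the monotonicity on each half-line, and conclude via the case split on the sign of $d-d_{0,r}$. The only cosmetic differences are that the paper obtains $f_m''>0$ via Cauchy--Schwarz (equivalent to your variance identity) and identifies $x=0$ as the minimizer by combining $f_m(0)=0$ with Jensen's inequality $f_m\ge 0$, whereas you compute $f_m'(0)=0$ directly; your route is marginally more direct but not substantively different.
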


\begin{proof}
The first and second derivatives of the function $f_{m}$ are given by
\begin{equation*}
\begin{aligned}
\frac{\partial }{\partial x} f_{m}(x)
&= \Big( \frac{1}{m} \sum_{j=1}^{m} j^{2x} \Big)^{-1} 2 \frac{1}{m} \sum_{j=1}^{m} \log(j) j^{2x} - 2 \frac{1}{m} \sum_{j=1}^{m} \log(j),
\\
\frac{\partial^2}{\partial x^2} f_{m}(x)
&= -\Big( \frac{1}{m} \sum_{j=1}^{m} j^{2x} \Big)^{-2}  \Big(2 \frac{1}{m} \sum_{j=1}^{m} \log(j) j^{2x} \Big)^2 
+ 
\Big( \frac{1}{m} \sum_{j=1}^{m} j^{2x} \Big)^{-1} 4 \frac{1}{m} \sum_{j=1}^{m} (\log(j))^2 j^{2x}.
\end{aligned}
\end{equation*}
Furthermore, the second derivative satisfies $\frac{\partial^2}{\partial x^2} f_{m}(x)>0$ for all $x$ since
\begin{equation} \label{eq:CSineqfm}
\begin{aligned}
\Big( \frac{1}{m} \sum_{j=1}^{m} \log(j) j^{2x} \Big)^2 
<
\frac{1}{m} \sum_{j=1}^{m} (\log(j))^2 j^{2x} \frac{1}{m} \sum_{j=1}^{m} j^{2x}
\end{aligned}
\end{equation}
using Cauchy-Schwarz inequality. The two sides in \eqref{eq:CSineqfm} cannot be equal since the vectors\\
$(0, \log(2) 2^{x}, \dots, \log(m) m^{x})$ and $(1, 2^{x}, \dots, m^{x})$ are linearly independent. The facts that $f_{m}$ is non-negative (by Jensen's inequality), is zero at $x =0$ and has a positive second derivative prove our claim.
\end{proof}

\begin{lemma} \label{le:subS}
For $d \in \Theta_{1}$ and all $\nu \in (0,1)$, the inclusion
\begin{equation*}
\begin{aligned}
&\{ | \widetilde{S}_{r}(d) - S_{r}(d) | > \nu \} 
\\&\subseteq
\Big\{ |\widehat{h}_{r}(d) - \widetilde{h}_{r}(d) |  > \nu \frac{1}{2} L(\Delta_{2}-\Delta_{1}) g_{0,r}  \Big\} 
\cup
\Big\{ |\widehat{g}_{r}(d_{0,r}) - g_{0,r} |  > \nu \frac{1}{2} g_{0,r} \Big\}
\end{aligned}
\end{equation*}
is satisfied with $\widehat{h}_{r}(d)$, $\widetilde{h}_{r}(d)$ in \eqref{eq:hrandhrtilde} and $L(\cdot)$ in \eqref{eq:Tl}.
\end{lemma}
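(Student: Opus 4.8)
The plan is to establish the equivalent contrapositive statement: if $|\widehat{h}_{r}(d)-\widetilde{h}_{r}(d)|\le\nu\,\tfrac12 L(\Delta_2-\Delta_1)g_{0,r}$ and $|\widehat{g}_{r}(d_{0,r})-g_{0,r}|\le\nu\,\tfrac12 g_{0,r}$, then $|\widetilde{S}_{r}(d)-S_{r}(d)|\le\nu$. The first move is purely algebraic. Writing $R_{r}(d)=-2d\,\tfrac1m\sum_{j=1}^{m}\log\lambda_{j}+\log\widehat{g}_{r}(d)$ and analogously $\widetilde{R}_{r}(d)=-2d\,\tfrac1m\sum_{j=1}^{m}\log\lambda_{j}+\log\widetilde{g}_{r}(d)$, the additive sums $\tfrac1m\sum_{j}\log\lambda_{j}$ drop out of the differences $S_{r}(d)=R_{r}(d)-R_{r}(d_{0,r})$ and $\widetilde{S}_{r}(d)=\widetilde{R}_{r}(d)-\widetilde{R}_{r}(d_{0,r})$, so that $\widetilde{S}_{r}(d)-S_{r}(d)=\log(\widetilde{g}_{r}(d)/\widetilde{g}_{r}(d_{0,r}))-\log(\widehat{g}_{r}(d)/\widehat{g}_{r}(d_{0,r}))$. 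Next, since $(j/m)^{2(d-d_{0,r})}=(\lambda_{j}/\lambda_{m})^{2(d-d_{0,r})}$, one has $\widehat{g}_{r}(d)=\lambda_{m}^{2(d-d_{0,r})}\widehat{h}_{r}(d)$ and $\widetilde{g}_{r}(d)=\lambda_{m}^{2(d-d_{0,r})}\widetilde{h}_{r}(d)$, while $\widehat{g}_{r}(d_{0,r})=\widehat{h}_{r}(d_{0,r})$ and $\widetilde{g}_{r}(d_{0,r})=g_{0,r}=\widetilde{h}_{r}(d_{0,r})$; the powers of $\lambda_{m}$ cancel inside each logarithm, leaving the clean decomposition $\widetilde{S}_{r}(d)-S_{r}(d)=\log(\widetilde{h}_{r}(d)/\widehat{h}_{r}(d))+\log(\widehat{g}_{r}(d_{0,r})/g_{0,r})$.

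The second ingredient is a uniform lower bound for $\widetilde{h}_{r}(d)$. For $d\in\Theta_{1}$ and $d_{0,r}\in[\Delta_1,\Delta_2]$ one has $d-d_{0,r}\le\Delta_2-\Delta_1$, hence $(j/m)^{2(d-d_{0,r})}\ge(j/m)^{2(\Delta_2-\Delta_1)}$ since $j/m\le1$; comparing the right Riemann sum of the increasing function $x\mapsto x^{2(\Delta_2-\Delta_1)}$ with its integral then yields
\[
\widetilde{h}_{r}(d)=g_{0,r}\frac1m\sum_{j=1}^{m}\Big(\frac{j}{m}\Big)^{2(d-d_{0,r})}\ \ge\ g_{0,r}\int_{0}^{1}x^{2(\Delta_2-\Delta_1)}\,dx=g_{0,r}\,L(\Delta_2-\Delta_1).
\]

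With these in place, the conclusion follows by bounding the two logarithms in the decomposition. Under the assumed bounds, and using $\widetilde{h}_{r}(d)\ge g_{0,r}L(\Delta_2-\Delta_1)$, the ratio $\widehat{h}_{r}(d)/\widetilde{h}_{r}(d)$ lies within $\nu/2<\tfrac12$ of $1$ (in particular $\widehat{h}_{r}(d)\ge g_{0,r}L(\Delta_2-\Delta_1)(1-\nu/2)>0$), and $\widehat{g}_{r}(d_{0,r})/g_{0,r}$ lies within $\nu/2$ of $1$; an elementary estimate of the form $|\log(a/b)|\le|a-b|/\min(a,b)$ applied to each term, together with the triangle inequality, then gives $|\widetilde{S}_{r}(d)-S_{r}(d)|\le\nu$, which is the desired inclusion. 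The main obstacle is precisely this last step of constant bookkeeping: each of the two thresholds carries a factor $\tfrac12$, so one must argue with some care — exploiting $\nu<1$ and the lower bound on the denominator $\widehat{h}_{r}(d)$, rather than the crudest logarithm inequality — to make the two contributions combine to exactly $\nu$. The algebraic reduction of the first step, in particular the disappearance of all the $\log\lambda_{j}$ and $\log\lambda_{m}$ terms, is the conceptual point that makes the whole estimate elementary.
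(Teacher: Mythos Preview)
Your approach is essentially the same as the paper's: the algebraic reduction to
\[
\widetilde{S}_{r}(d)-S_{r}(d)=\log\!\big(\widetilde{h}_{r}(d)/\widehat{h}_{r}(d)\big)+\log\!\big(\widehat{g}_{r}(d_{0,r})/g_{0,r}\big)
\]
is identical (the paper phrases it via $\widehat{g}_{r},\widetilde{g}_{r}$ and then passes to $\widehat{h}_{r},\widetilde{h}_{r}$ by exactly the $\lambda_{m}^{2(d-d_{0,r})}$ cancellation you use), and the lower bound $\widetilde{h}_{r}(d)\ge L(\Delta_2-\Delta_1)g_{0,r}$ via the Riemann-sum comparison is the same. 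The only difference is cosmetic: you argue the contrapositive with the inequality $|\log(a/b)|\le|a-b|/\min(a,b)$, whereas the paper works directly on the events, splitting $\{\log(\widehat{h}_{r}/\widetilde{h}_{r})>\nu\}$ and $\{\log(\widetilde{h}_{r}/\widehat{h}_{r})>\nu\}$ and, for the latter, further splitting on whether $\widehat{h}_{r}(d)\ge\tfrac12 L(\Delta_2-\Delta_1)g_{0,r}$.

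Your caution about the final constant is well placed. With your bounds one gets $|\widetilde{S}_{r}-S_{r}|\le\nu/(1-\nu/2)$, which exceeds $\nu$; and in fact the paper's own argument, read literally, first bounds $|\widetilde{S}_{r}-S_{r}|$ by the \emph{sum} of the two log terms and then treats each at threshold $\nu$, which leaves the passage from $\{A+B>\nu\}$ to $\{A>\nu\}\cup\{B>\nu\}$ unaccounted for. A clean version of either argument yields the inclusion with $\tfrac14$ in place of $\tfrac12$ (split $\{A+B>\nu\}\subseteq\{A>\nu/2\}\cup\{B>\nu/2\}$ and then run the paper's case analysis, or your log bound, at level $\nu/2$). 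Since the lemma is only used with $\nu=\eta^{2}\mathcal{V}_{1}(m)$ and the constants are absorbed into $\eta_{1},\eta_{2}$ in \eqref{eq:eta1eta2eta3}, this has no effect downstream; but as written, the exact factor $\tfrac12$ cannot be recovered from either route without an additional idea.
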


\begin{proof}
The distance between $\widetilde{S}_{r}(d)$ and $S_{r}(d)$ can be written as
\begin{align}
| \widetilde{S}_{r}(d) - S_{r}(d) |
& =
\Big| 
\frac{1}{m} \sum_{j=1}^{m} \log (\lambda_{j}^{-2d}) + \log(\widetilde{g}_{r}(d)) -
\frac{1}{m} \sum_{j=1}^{m} \log (\lambda_{j}^{-2d_{0,r}}) - \log(g_{0,r}) \nonumber
\\& \hspace{1cm}-
\Big(\frac{1}{m} \sum_{j=1}^{m} \log (\lambda_{j}^{-2d}) + \log(\widehat{g}_{r}(d)) -
\frac{1}{m} \sum_{j=1}^{m} \log (\lambda_{j}^{-2d_{0.r}}) -  \log(\widehat{g}_{r}(d_{0,r})) \Big)
\Big| \nonumber
\\ & =
| \log(\widetilde{g}_{r}(d)) - \log(g_{0,r}) - \log(\widehat{g}_{r}(d)) + \log(\widehat{g}_{r}(d_{0,r}))| \nonumber
\\ & \leq
| \log( \widehat{g}_{r}(d) (\widetilde{g}_{r}(d))^{-1} )| + |\log( \widehat{g}_{r}(d_{0,r}) g_{0,r}^{-1} ) ) |. \label{eq:logloggh}
\end{align}
We consider only $| \log( \widehat{g}_{r}(d) (\widetilde{g}_{r}(d))^{-1} )|$ in \eqref{eq:logloggh}, since $|\log( \widehat{g}_{r}(d_{0,r}) g_{0,r}^{-1} ) |$ can be treated analogously.
Then, with explanation given below, 
\begin{align}
&
\{ |\log( \widehat{g}_{r}(d) (\widetilde{g}_{r}(d))^{-1}) | > \nu \} \nonumber
\\&=
\{ | \log (\widehat{g}_{r}(d)) - \log (\widetilde{g}_{r}(d) )  | > \nu \} \nonumber
\\&=
\{ \log (\widehat{g}_{r}(d)) - \log (\widetilde{g}_{r}(d) )   > \nu \} 
\cup
\{ \log (\widetilde{g}_{r}(d)) - \log (\widehat{g}_{r}(d))   > \nu \} \nonumber
\\ & \subseteq
\Bigg\{ \frac{ \widehat{g}_{r}(d) - \widetilde{g}_{r}(d)  }{ \widetilde{g}_{r}(d) }  > \nu \Bigg\} 
\cup
\Bigg\{ \frac{ \widetilde{g}_{r}(d) - \widehat{g}_{r}(d) }{ \widehat{g}_{r}(d) }  > \nu \Bigg\} \label{eq:log(gg)1}
\\ & =
\Bigg\{ \frac{ \widehat{h}_{r}(d) - \widetilde{h}_{r}(d)  }{ \widetilde{h}_{r}(d) }  > \nu \Bigg\} 
\cup
\Bigg\{ \frac{ \widetilde{h}_{r}(d) - \widehat{h}_{r}(d) }{ \widehat{h}_{r}(d) }  > \nu \Bigg\} \label{eq:log(hh)}
\\ & \subseteq
\{ \widehat{h}_{r}(d) - \widetilde{h}_{r}(d)  > \nu L(\Delta_{2}-\Delta_{1}) g_{0,r} \}
\cup
\Big( \{ \widetilde{h}_{r}(d) - \widehat{h}_{r}(d)   > \nu \widehat{h}_{r}(d) \} \nonumber
\\ & \hspace{1cm} \cap
\Big(
\{ \widehat{h}_{r}(d) \geq  \frac{1}{2} L(\Delta_{2}-\Delta_{1}) g_{0,r} \}
\cup
\{ \widehat{h}_{r}(d) < \frac{1}{2} L(\Delta_{2}-\Delta_{1}) g_{0,r} \} \label{eq:log(gg)2}
\Big) \Big)
\\ & =
\{ \widehat{h}_{r}(d) - \widetilde{h}_{r}(d)  > \nu L(\Delta_{2}-\Delta_{1}) g_{0,r} \}
\cup
\{ \widetilde{h}_{r}(d) - \widehat{h}_{r}(d)   > \frac{1}{2} \nu L(\Delta_{2}-\Delta_{1}) g_{0,r} \} \nonumber
\\ & \hspace{1cm} \cup
\Big( 
\{ \widetilde{h}_{r}(d) - \widehat{h}_{r}(d) \geq \nu \widehat{h}_{r}(d) \}
\cap
\{ \widehat{h}_{r}(d) < \frac{1}{2} L(\Delta_{2}-\Delta_{1}) g_{0,r} \} \nonumber
\Big) \Big)
\\ & \subseteq
\{ \widehat{h}_{r}(d) - \widetilde{h}_{r}(d) > \nu L(\Delta_{2}-\Delta_{1}) g_{0,r} \}
\cup
\{ \widetilde{h}_{r}(d) - \widehat{h}_{r}(d)   > \frac{1}{2} \nu L(\Delta_{2}-\Delta_{1}) g_{0,r} \} \nonumber
\\ & \hspace{1cm} \cup
\{ \widetilde{h}_{r}(d) < \frac{1}{2} L(\Delta_{2}-\Delta_{1}) g_{0,r} + \widetilde{h}_{r}(d) - \widehat{h}_{r}(d) \}
\nonumber
\\ & \subseteq
\{ |\widehat{h}_{r}(d) - \widetilde{h}_{r}(d) |  > \nu \frac{1}{2} L(\Delta_{2}-\Delta_{1}) g_{0,r}  \} 
\cup
\{ |\widehat{h}_{r}(d) - \widetilde{h}_{r}(d) |  > \frac{1}{2} L(\Delta_{2}-\Delta_{1}) g_{0,r}  \}
\nonumber
\\ & \subseteq
\{ |\widehat{h}_{r}(d) - \widetilde{h}_{r}(d) |  > \nu \frac{1}{2} L(\Delta_{2}-\Delta_{1}) g_{0,r}  \},\nonumber
\end{align}
where \eqref{eq:log(gg)1} follows by the mean value theorem. The equality in \eqref{eq:log(hh)} can be seen by the definitions of $\widehat{h}_{r}$ and $\widetilde{h}_{r}$ in \eqref{eq:hrandhrtilde} and noting that $\lambda_{m}^{2d-2d_{0,r}} \widehat{h}_{r}(d) = \widehat{g}_{r}(d)$ and $\lambda_{m}^{2d-2d_{0,r}} \widetilde{h}_{r}(d) = \widetilde{g}_{r}(d)$, The relation \eqref{eq:log(gg)2} is a consequence of the lower bound
\begin{equation*}
\widetilde{h}_{r}(d)
=
\frac{1}{m} \sum_{j=1}^{m} \Big(\frac{j}{m}\Big)^{2d-2d_{0,r}} g_{0.r}
\geq
\frac{1}{m} \sum_{j=1}^{m} \Big(\frac{j}{m}\Big)^{2(\Delta_{2}-\Delta_{1})} g_{0,r}
\geq
\int_{0}^{1} x^{2(\Delta_{2}-\Delta_{1})} dx g_{0,r} =: L(\Delta_{2}-\Delta_{1}) g_{0,r}
\end{equation*}
since $x^{2d}$ is monotonically increasing.
\end{proof}

\begin{lemma} \label{le:Sd2}
The second probability in \eqref{eq:propOP3} can be bounded as
\begin{equation} \label{eq:RobSd2}
\Prob(\inf_{d \in \Theta_{2}} S_{r}(d) < 0)
\leq
\Prob\Big( \Big| \frac{1}{m} \sum_{j=1}^{m} (l_{j}-1) \Big( \Big(\frac{j}{\ell}\Big)^{2d_{0,r}}I_{X,rr}(\lambda_{j})-g_{0,r}\Big) \Big| \mathds{1}_{\{d_{0,r} \geq \Delta_{1} + \frac{1}{2}\}}
> g_{0,r} \mathcal{V}_{2}(m) \Big)
\end{equation}
with $l_{j}$ and $\ell$ as in \eqref{eq:lj} and 
\begin{equation*}
\mathcal{V}_{2}(m)=\frac{1}{m} \sum_{j=1}^{m} (l_{j}-1),
\end{equation*}
satisfying $\mathcal{V}_{2}(m) > 0$.
\end{lemma}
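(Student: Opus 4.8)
The plan is to adapt Robinson's treatment of the ``boundary'' region (the argument surrounding (3.8)--(3.22) in \cite{robinson1995gaussian}), arranging things so that the supremum over $d$ is eliminated \emph{before} any probabilistic estimate is invoked. First I would dispose of the degenerate case $d_{0,r}<\Delta_{1}+\frac12$: by \eqref{eq:defTheta2} one has $\Theta_{2}=\emptyset$, so $\inf_{d\in\Theta_{2}}S_{r}(d)=+\infty$ and the left-hand side of \eqref{eq:RobSd2} is $0$, while the indicator on the right-hand side also vanishes; the inequality is then trivial. Hence assume $d_{0,r}\ge\Delta_{1}+\frac12$, so $\mathds{1}_{\{d_{0,r}\ge\Delta_{1}+1/2\}}=1$ and $\Theta_{2}=[\Delta_{1},d_{0,r}-\frac12+\Delta)$, which is nonempty since $\Delta>0$.

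The key algebraic step is a scale-invariant rewriting of $S_{r}(d)=R_{r}(d)-R_{r}(d_{0,r})$ with $R_{r}$ as in \eqref{eq:univLWd}. Writing $\frac1m\sum_{j=1}^{m}\log\lambda_{j}=\log(2\pi\ell/N)=:\log\lambda_{\ell}$ with $\ell$ as in \eqref{eq:lj}, one has $R_{r}(d)=\log\widehat g_{r}(d)-2d\log\lambda_{\ell}$, and since $\lambda_{j}/\lambda_{\ell}=j/\ell$ the powers of $\lambda_{\ell}$ cancel in the difference, giving
\begin{equation*}
S_{r}(d)=\log\!\left(\frac{\frac1m\sum_{j=1}^{m}(j/\ell)^{2(d-d_{0,r})}\,b_{j}}{\frac1m\sum_{j=1}^{m}b_{j}}\right),\qquad b_{j}:=(j/\ell)^{2d_{0,r}}I_{X,rr}(\lambda_{j})\ge 0 .
\end{equation*}
Now I would observe that $l_{j}$ in \eqref{eq:lj} is precisely a uniform lower envelope for the weights $w_{j}(d):=(j/\ell)^{2(d-d_{0,r})}$ over $d\in\Theta_{2}$: for $j\le\ell$ one has $j/\ell\le 1$ and $x\mapsto(j/\ell)^{x}$ decreasing, so $w_{j}(d)\ge(j/\ell)^{2(-1/2+\Delta)}=l_{j}$ because $d-d_{0,r}<-\frac12+\Delta$; for $j>\ell$ the map is increasing, so $w_{j}(d)\ge(j/\ell)^{2(\Delta_{1}-d_{0,r})}=l_{j}$ because $d\ge\Delta_{1}$. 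Thus $0<l_{j}\le w_{j}(d)$ for every $j$ and every $d\in\Theta_{2}$.

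Since $b_{j}\ge0$, this yields the $d$-free lower bound $\inf_{d\in\Theta_{2}}S_{r}(d)\ge\log\big(\tfrac1m\sum_{j}l_{j}b_{j}\big/\tfrac1m\sum_{j}b_{j}\big)$, whence $\{\inf_{d\in\Theta_{2}}S_{r}(d)<0\}\subseteq\{\frac1m\sum_{j}(l_{j}-1)b_{j}<0\}$ (the logarithm is bounded above by a negative number, hence negative, hence $\frac1m\sum_j l_j b_j<\frac1m\sum_j b_j$). Finally I would split $\frac1m\sum_{j}(l_{j}-1)b_{j}=\frac1m\sum_{j}(l_{j}-1)(b_{j}-g_{0,r})+g_{0,r}\mathcal{V}_{2}(m)$ with $\mathcal{V}_{2}(m)=\frac1m\sum_{j}(l_{j}-1)$ as in \eqref{eq:nu1nu2}; using $\mathcal{V}_{2}(m)>0$ (Robinson, below (3.22)), the event above forces $|\frac1m\sum_{j}(l_{j}-1)(b_{j}-g_{0,r})|>g_{0,r}\mathcal{V}_{2}(m)$, which is exactly the event on the right of \eqref{eq:RobSd2} once one recalls $b_{j}=(j/\ell)^{2d_{0,r}}I_{X,rr}(\lambda_{j})$. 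The limit $\mathcal{V}_{2}(m)\sim\frac{1}{2\exp(1)\Delta}-1$ then follows from Stirling's formula $\ell=(m!)^{1/m}\sim m/\exp(1)$ and a Riemann-sum approximation of $\frac1m\sum_{j}l_{j}$, consistent with Robinson. I expect the envelope property $l_{j}\le w_{j}(d)$ to be the one genuinely nontrivial point: it is what decouples $\sup_{d\in\Theta_{2}}$ from the probability bound; once the representation of $S_{r}(d)$ and this envelope are established, the rest is bookkeeping.
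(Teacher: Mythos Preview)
Your proposal is correct and follows essentially the same route as the paper, which simply cites \cite{robinson1995gaussian} (equation (3.21)) for the key envelope step $l_{j}\le w_{j}(d)$ that you have spelled out in full. (Incidentally, the paper's displayed intermediate event $\{\tfrac1m\sum_j(l_j-1)j^{2d_{0,r}}I_{X,rr}(\lambda_j)>0\}$ appears to carry a sign typo---your ``$<0$'' is what makes the subsequent centering step go through.)
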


\begin{proof}
The proof of relation \eqref{eq:RobSd2} is part of the proof of Theorem 1 in \cite{robinson1995gaussian}. More precisely, one can infer the inequality \eqref{eq:RobSd2} from equation (3.21) in \cite{robinson1995gaussian}. For completeness, we sketch the proof.
Recall the definitions of $\ell$ and $l_{j}$ given in \eqref{eq:lj}. Then, 
\begin{align}
&
\Prob(\inf_{d \in \Theta_{2}} S_{r}(d) < 0) \nonumber
\\ & \leq
\Prob\Big( \frac{1}{m} \sum_{j=1}^{m} (l_{j}-1) j^{2d_{0,r}}I_{X,rr}(\lambda_{j}) \mathds{1}_{\{d_{0,r} \geq \Delta_{1} + \frac{1}{2}\}} > 0 \Big) \label{eq:robrep}
\\ & \leq
\Prob\Big( \Big| \frac{1}{m} \sum_{j=1}^{m} (l_{j}-1) \Big(\Big(\frac{j}{\ell}\Big)^{2d_{0,r}}I_{X,rr}(\lambda_{j})-g_{0,r}\Big) \Big| \mathds{1}_{\{d_{0,r} \geq \Delta_{1} + \frac{1}{2}\}}
> \frac{1}{m} \sum_{j=1}^{m} (l_{j}-1) g_{0,r} \Big), \label{eq:robrep1}
\end{align}
where \eqref{eq:robrep} is due to (3.21) in \cite{robinson1995gaussian} and \eqref{eq:robrep1} results from division with $\ell^{2d_{0,r}}$ and subtracting $\frac{1}{m} \sum_{j=1}^{m} (l_{j}-1) g_{0,r} $ on both sides in \eqref{eq:robrep}. 
Finally, $\mathcal{V}_{2}(m) > 0$ is proved in Lemma \ref{le:V2positive}.
\end{proof}

\begin{lemma} \label{le:FrobeniusGtoD}
Given $\| \widehat{D}-D_{0} \|_{\max} \leq \varepsilon$, the population quantity \eqref{eq:Gtilde} evaluated at $\widehat{D}$ can be bounded in terms of $\widehat{D}$ by
\begin{equation*}
\| \widetilde{G}(\widehat{D})-G_{0} \|_{\max}
\leq
2 \| \widehat{D}-D_{0} \|_{\max} \|G_{0}\| \log(N) \lambda_{m}^{-2\varepsilon} 
L(-\varepsilon) ,
\end{equation*}
where $L(\cdot)$ is defined in \eqref{eq:Tl}.
\end{lemma}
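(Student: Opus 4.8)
\textbf{Proof proposal for Lemma \ref{le:FrobeniusGtoD}.}

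The plan is to work entrywise. Recall from \eqref{eq:Gtilde} that $\widetilde{G}(\widehat{D}) = \frac{1}{m}\sum_{j=1}^{m} \lambda_j^{\widehat{D}-D_0} G_0 \lambda_j^{\widehat{D}-D_0}$, so the $(r,s)$th entry is $\widetilde{G}_{rs}(\widehat{D}) = \frac{1}{m}\sum_{j=1}^{m} \lambda_j^{(\widehat{d}_r - d_{0,r}) + (\widehat{d}_s - d_{0,s})} G_{0,rs}$, while $G_{0,rs} = \frac{1}{m}\sum_{j=1}^{m} G_{0,rs}$. Hence
\[
|\widetilde{G}_{rs}(\widehat{D}) - G_{0,rs}| \leq |G_{0,rs}| \cdot \frac{1}{m}\sum_{j=1}^{m} \big| \lambda_j^{(\widehat{d}_r - d_{0,r}) + (\widehat{d}_s - d_{0,s})} - 1 \big|.
\]
Since $|G_{0,rs}| \leq \|G_0\|$ (an off-diagonal or diagonal entry is bounded by the spectral norm), it suffices to bound $\frac{1}{m}\sum_{j=1}^m |\lambda_j^{\theta_j} - 1|$ where $\theta := (\widehat{d}_r - d_{0,r}) + (\widehat{d}_s - d_{0,s})$ satisfies $|\theta| \leq 2\varepsilon$ on the event $\|\widehat{D}-D_0\|_{\max} \leq \varepsilon$.

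Next I would bound $|\lambda_j^{\theta} - 1|$ for each fixed $j$ using the elementary inequality $|x^{\theta} - 1| \leq |\theta| \, |\log x| \, \max\{x^{\theta}, 1\} \leq |\theta| \, |\log x| \, x^{-|\theta|}$, which follows from the mean value theorem applied to $t \mapsto x^t$ (the derivative is $x^t \log x$, and on $[-|\theta|,|\theta|]$ one has $x^t \leq x^{-|\theta|}$ since $x = \lambda_j \leq 1$). Since $|\theta| \leq 2\varepsilon$, this gives $|\lambda_j^{\theta} - 1| \leq 2\varepsilon \, |\log \lambda_j| \, \lambda_j^{-2\varepsilon}$. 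Now $|\log \lambda_j| = \log(1/\lambda_j) \leq \log(1/\lambda_m) \leq \log N$ for $j \leq m$ (using $\lambda_m = 2\pi m/N$ and, after absorbing the constant $2\pi$, $\lambda_m^{-1} \leq N$; this is where the $\log(N)$ factor enters), and $\lambda_j^{-2\varepsilon} \leq \lambda_m^{-2\varepsilon}$ is \emph{not} quite right since $\lambda_j \leq \lambda_m$ would make $\lambda_j^{-2\varepsilon} \geq \lambda_m^{-2\varepsilon}$; instead I keep $\lambda_j^{-2\varepsilon}$ inside the average and write $\frac{1}{m}\sum_{j=1}^m \lambda_j^{-2\varepsilon} \leq N^{2\varepsilon} \frac{1}{m}\sum_{j=1}^m (2\pi j)^{-2\varepsilon} \cdot (2\pi)^{2\varepsilon}$, then compare the sum to the integral $\int_0^1 x^{-2\varepsilon}dx = \frac{1}{1-2\varepsilon} = L(-\varepsilon)$ after the substitution $x = j/m$, giving $\frac{1}{m}\sum_{j=1}^m (j/m)^{-2\varepsilon} \leq L(-\varepsilon)$ (valid since $\varepsilon < 1/2$), and $m^{-2\varepsilon} N^{2\varepsilon} = \lambda_m^{-2\varepsilon}$ up to the $2\pi$ constant. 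Combining, $\frac{1}{m}\sum_{j=1}^m |\lambda_j^\theta - 1| \leq 2\varepsilon \log(N) \lambda_m^{-2\varepsilon} L(-\varepsilon)$, and multiplying by $|G_{0,rs}| \leq \|G_0\|$ and taking the maximum over $r,s$ yields the claimed bound $2\|\widehat D - D_0\|_{\max}\, \|G_0\|\, \log(N)\, \lambda_m^{-2\varepsilon}\, L(-\varepsilon)$, after replacing $\varepsilon$ by the sharper $\|\widehat D - D_0\|_{\max}$ in the first factor (the monotonicity of all the bounds in $|\theta|$ allows this substitution since $\lambda_m^{-2\varepsilon}$ and $L(-\varepsilon)$ are evaluated at the uniform bound $\varepsilon$, which is the statement as written).

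The main obstacle, and the only place requiring care, is the bookkeeping of the constants $2\pi$ coming from $\lambda_j = 2\pi j/N$: one must check that the bound $|\log\lambda_j| \leq \log N$ and the identity $m^{-2\varepsilon}N^{2\varepsilon} = \lambda_m^{-2\varepsilon}$ hold in the form stated (they hold up to absolute constants, which are presumably absorbed following the paper's convention $a \succsim b$, or else the statement should carry such a constant). Everything else — the mean value theorem estimate, the integral comparison for $\sum (j/m)^{-2\varepsilon}$, and the bound $|G_{0,rs}|\leq \|G_0\|$ — is routine. I would also note that the hypothesis $\varepsilon < 1/2$ (implicit in the ambient setting, cf. the requirement $\varepsilon \in (0,1/2)$ in Proposition \ref{prop:Op}) is essential for $L(-\varepsilon) = \frac{1}{1-2\varepsilon}$ to be finite and for the integral comparison to be valid.
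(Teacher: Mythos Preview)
Your approach is essentially identical to the paper's: entrywise decomposition, mean value theorem on $t\mapsto\lambda_j^t$, the bound $|\log\lambda_j|\leq\log N$, the exact factorization $\lambda_j^{-2\varepsilon}=\lambda_m^{-2\varepsilon}(j/m)^{-2\varepsilon}$, and the integral comparison $\frac{1}{m}\sum_{j=1}^m(j/m)^{-2\varepsilon}\leq\int_0^1 x^{-2\varepsilon}\,dx=L(-\varepsilon)$.

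Two small remarks. First, your chain ``$|\log\lambda_j|=\log(1/\lambda_j)\leq\log(1/\lambda_m)$'' has the inequality backwards, since $\lambda_j\leq\lambda_m$ gives $\log(1/\lambda_j)\geq\log(1/\lambda_m)$; the correct route is $\log(1/\lambda_j)\leq\log(1/\lambda_1)=\log(N/(2\pi))\leq\log N$, which still yields the bound you use. Second, your worry about the $2\pi$ bookkeeping is unnecessary: because $\lambda_j=(2\pi j)/N=\lambda_m\cdot(j/m)$ exactly, the identity $\lambda_j^{-2\varepsilon}=\lambda_m^{-2\varepsilon}(j/m)^{-2\varepsilon}$ holds on the nose, and no constants need to be absorbed.
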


\begin{proof}
Note that $\widehat{d}_{r}-d_{0,r} \in [-\varepsilon,\varepsilon]$ for all $r =1, \dots, p$. Then, 
\begin{align}
\| \widetilde{G}(\widehat{D})-G_{0} \|_{\max}
& =
\max_{r,s=1,\dots,p} \Big| \frac{1}{m} \sum_{j=1}^{m} (\lambda_{j}^{(\widehat{d}_{r}-d_{0,r})+(\widehat{d}_{s}-d_{0,s})}-1) G_{0,rs} \Big|
\nonumber
\\ & \leq
2 \max_{r=1,\dots,p} | \widehat{d}_{r}-d_{0,r}| \|G_{0}\| \frac{1}{m} \sum_{j=1}^{m} |\log(\lambda_{j})| \lambda_{j}^{-2\varepsilon} 
\label{eq:A1mono}
\\ & \leq
2 \| \widehat{D}-D_{0} \|_{\max} \|G_{0}\| \log(N) \lambda_{m}^{-2\varepsilon} 
\frac{1}{m} \sum_{j=1}^{m} \Big( \frac{j}{m} \Big)^{-2\varepsilon} \nonumber
\\ & \leq
2 \| \widehat{D}-D_{0} \|_{\max} \|G_{0}\| \log(N) \lambda_{m}^{-2\varepsilon} 
\int_{0}^{1} x^{-2\varepsilon} dx.
\label{eq:A3mono}
\end{align}
The relation \eqref{eq:A1mono} is a consequence of applying the mean value theorem while \eqref{eq:A3mono} follows since $x^{-2\varepsilon}$ is monotonically decreasing.
\end{proof}

\begin{lemma} \label{le:V2positive}
The quantity $\mathcal{V}_{2}(m)$ in \eqref{eq:nu1nu2} satisfies $\mathcal{V}_{2}(m) > 0$.
\end{lemma}

\begin{proof}
Recall $\mathcal{V}_{2}(m)$ in \eqref{eq:nu1nu2} and note that, with $l_{j}$ as in \eqref{eq:lj}, it can be written as 
\begin{align} \label{eq:V2twosummands}
\mathcal{V}_{2}(m)
=
\frac{1}{m} \sum_{j=1}^{m} (l_{j}-1)
=
\frac{1}{m} \sum_{j=1}^{\ell} \Big( \Big(\frac{j}{\ell}\Big)^{2(-\frac{1}{2}+\Delta)} - 1 \Big) 
+
\frac{1}{m} \sum_{j=\ell + 1}^{m} \Big( \Big(\frac{j}{\ell}\Big)^{2(\Delta_{1}-d_{0,r})} - 1 \Big).
\end{align}
We will prove that both summands are positive. The first one satisfies
\begin{align*}
\frac{1}{m} \sum_{j=1}^{\ell} \Big( \Big(\frac{j}{\ell}\Big)^{2(-\frac{1}{2}+\Delta)} - 1 \Big) > 0
\hspace{0.2cm}
\text{ if }
\hspace{0.2cm}
\frac{1}{\ell} \sum_{j=1}^{\ell} j^{2(-\frac{1}{2}+\Delta)} - \ell^{2(-\frac{1}{2}+\Delta)} > 0.
\end{align*}
Set $\alpha = 2(-\frac{1}{2}+\Delta)$ and note that $\alpha < 0$. Then, 
\begin{align}
\frac{1}{\ell} \sum_{j=1}^{\ell} j^{\alpha} - \ell^{\alpha}
&\geq 
\frac{1}{\ell} \sum_{j=1}^{\ell} j^{\alpha} - \Big( \frac{1}{\ell} \sum_{j=1}^{\ell} j \Big)^{\alpha} \label{al:pup11}
\\&
= \frac{\alpha(\alpha-1) \xi^{\alpha-2}}{2}
\Big( \frac{1}{\ell} \sum_{j=1}^{\ell} j^{2} - \Big( \frac{1}{\ell} \sum_{j=1}^{\ell} j \Big)^{2} \Big) \label{al:pup12}
\\&
\geq \frac{\alpha(\alpha-1) \ell^{\alpha-2}}{4}
\frac{1}{\ell^{2}} \sum_{i,j=1}^{\ell} (i-j)^{2}
> 0, \notag
\end{align}
where \eqref{al:pup11} follows since $\ell \geq \frac{\ell +1}{2}$ and \eqref{al:pup12} is due to the result stated in \eqref{eq:Mercer} with $g_{1}(x)=x^{\alpha}$, $g_{2}(x)=x^{2}$ and $\xi \in [1,\ell]$.
The second summand in \eqref{eq:V2twosummands} satisfies
\begin{align*}
\frac{1}{m} \sum_{j=\ell + 1}^{m} \Big( \Big(\frac{j}{\ell}\Big)^{2(\Delta_{1}-d_{0,r})} - 1 \Big) > 0
\hspace{0.2cm}
\text{ if }
\hspace{0.2cm}
\frac{1}{m-\ell} \sum_{j=\ell + 1}^{m} j^{2(\Delta_{1}-d_{0,r})} - \ell^{2(\Delta_{1}-d_{0,r})} > 0.
\end{align*}
Set $\beta = 2(\Delta_{1}-d_{0,r})$ and note that $\beta <0$. Then, 
\begin{align}
\frac{1}{m-\ell} \sum_{j=\ell + 1}^{m} j^{\beta} - \ell^{\beta}
&\geq 
\frac{1}{m-\ell} \sum_{j=\ell + 1}^{m} j^{\beta} - \Big( \frac{1}{m-\ell} \sum_{j=\ell + 1}^{m} j \Big)^{\beta} \label{al:pup21}
\\&
= \frac{\alpha(\beta-1) \xi^{\beta-2}}{2}
\Big( \frac{1}{m-\ell} \sum_{j=\ell + 1}^{m} j^{2} - \Big( \frac{1}{m-\ell} \sum_{j=\ell + 1}^{m} j \Big)^{2} \Big) \label{al:pup22}
\\&
\geq \frac{\beta(\beta-1) m^{\beta-2}}{4}
\frac{1}{(m-\ell)^2} \sum_{j=\ell + 1}^{m} (i-j)^{2}
> 0, \notag
\end{align}
where \eqref{al:pup21} follows since $3\ell > m+1$ which can be shown by induction principal. The inequality \eqref{al:pup22} follows by applying \eqref{eq:Mercer} with $g_{1}(x)=x^{\beta}$, $g_{2}(x)=x^{2}$ and $\xi \in [\ell+1,m]$.
\end{proof}

\subsection{Bound for periodogram bias} \label{se:B2}

We prove here the results used in the proof of Lemma \ref{prop:bias}.
\begin{lemma} \label{le:biasappB1}
Suppose Assumptions \ref{ass:f0} and \ref{ass:derivative}. Then, 
\begin{equation*} 
\Big| 
f_{rs}(\lambda_{j}) 
- \E (I_{X,rs}(\lambda_{j}) ) \Big|
\leq 
N^{-1} \lambda_{j}^{ -1-d_{0,r}-d_{0,s} }
\left(
\vertiii{G} 
\frac{72 (\cos(\lambda_{m}/2))^{-2} }{\pi(1+2\min\{\Delta_{1},-\Delta_{2}\})}
+ \bm{c}_{G,2} 4(2+ \log(m)) \right).
\end{equation*}
\end{lemma}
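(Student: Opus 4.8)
The plan is to bound the periodogram bias by splitting it into the discretization error of the discrete Fourier transform against the true spectral density and a remainder controlled by the smoothness assumption, following the classical approach (e.g.\ the arguments behind Theorem 2 in \cite{robinson1995gaussian} and the periodogram bias lemmas in \cite{Sun2018:LargeSpectral}), but keeping every constant explicit so that the stated non-asymptotic bound emerges. First I would write $\E(I_{X,rs}(\lambda_j))$ as the Ces\`aro-type average $\E(I_{X,rs}(\lambda_j)) = \int_{-\pi}^{\pi} \mathcal{F}_N(\lambda_j-\mu)\, f_{X,rs}(\mu)\, d\mu$, where $\mathcal{F}_N$ is the Fej\'er kernel (normalized so it integrates to one), using the representation \eqref{eq:PeriodogramMatrix} of the periodogram together with the spectral representation $\Gamma_X(k)=\int_{-\pi}^{\pi} e^{ik\mu} f_X(\mu)\,d\mu$. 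Then $f_{rs}(\lambda_j) - \E(I_{X,rs}(\lambda_j)) = \int_{-\pi}^{\pi} \mathcal{F}_N(\lambda_j-\mu)\,(f_{rs}(\lambda_j) - f_{rs}(\mu))\,d\mu$.

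Next I would split the integral over $\mu$ into a neighbourhood of $\lambda_j$, say $|\mu - \lambda_j| \le \lambda_j/2$, and its complement. On the near region, Assumption \ref{ass:derivative} gives $|f_{rs}(\lambda_j) - f_{rs}(\mu)| \le \bm{c}\, |\lambda_j - \mu| \sup_{\xi \in [\mu,\lambda_j]} |\xi|^{-1-d_{0,r}-d_{0,s}} \le \bm{c}\, |\lambda_j-\mu|\, (\lambda_j/2)^{-1-d_{0,r}-d_{0,s}}$ (since $|\xi| \ge \lambda_j/2$ on that region), and one uses the standard first-moment bound $\int |\lambda_j - \mu| \mathcal{F}_N(\lambda_j-\mu)\,d\mu \precsim N^{-1}(1+\log N)$, producing a term of order $N^{-1}(2+\log m)\lambda_j^{-1-d_{0,r}-d_{0,s}}$, which is absorbed into the $\bm{c}\,4(2+\log m)$ summand (with the factor $2^{1+d_{0,r}+d_{0,s}} \le 4$ bounded crudely). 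On the far region, $|\mu - \lambda_j| > \lambda_j/2$, I would bound $|f_{rs}(\lambda_j) - f_{rs}(\mu)|$ by $|f_{rs}(\lambda_j)| + |f_{rs}(\mu)|$; using $|f_{X,rs}(\lambda)| \le \vertiii{G}\, \lambda^{-d_{0,r}-d_{0,s}}$ (which follows since $\lambda^{D_0} f_X(\lambda) \lambda^{D_0}$ has spectral norm at most $\vertiii{G}$, and $|A_{rs}| \le \|A\|$) together with the decay of the Fej\'er kernel away from the origin, $\mathcal{F}_N(\theta) \le c\, (\cos(\theta/2))^{-2} N^{-1} \theta^{-2}$, and bounding the resulting integral $\int_{|\mu-\lambda_j|>\lambda_j/2} (\cos((\lambda_j-\mu)/2))^{-2} N^{-1}(\lambda_j-\mu)^{-2} \mu^{-d_{0,r}-d_{0,s}}\,d\mu$ by $c\, (\cos(\lambda_m/2))^{-2} N^{-1} \lambda_j^{-1-d_{0,r}-d_{0,s}} (1+2\min\{\Delta_1,-\Delta_2\})^{-1}$ — here $\cos(\lambda_m/2) \le \cos((\lambda_j-\mu)/2)$ is used to pull the kernel bound outside, and the exponent constraint $d_{0,r}+d_{0,s} \in (2\Delta_1, 2\Delta_2)$ controls the integrability at $\mu \to 0$ and $\mu \to \pi$ yielding the $\pi(1+2\min\{\Delta_1,-\Delta_2\})$ in the denominator. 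Collecting the two contributions gives exactly the claimed bound with $Q_m$ as in \eqref{eq:Om}.

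The main obstacle I anticipate is making the far-region estimate genuinely uniform in $j=1,\dots,m$ while keeping the constant of the advertised shape: one has to be careful that the singularity of $f_{rs}(\mu)$ at $\mu = 0$ is not too close to $\lambda_j$ (it is not, since $|\mu|>\lambda_j/2$ there so $\mu^{-d_{0,r}-d_{0,s}} \le 2^{|d_{0,r}+d_{0,s}|}\lambda_j^{-d_{0,r}-d_{0,s}}$ when $d_{0,r}+d_{0,s}>0$, while the case $d_{0,r}+d_{0,s}\le 0$ is handled by integrating $\mu^{-d_{0,r}-d_{0,s}}$ against $(\lambda_j-\mu)^{-2}$ and using $\mu \le \pi$), and that the Fej\'er kernel tail bound is applied with the worst-case $\cos(\lambda_m/2)$ factor pulled out uniformly. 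The near-region bound is routine once one has the first-absolute-moment estimate for the Fej\'er kernel, which is itself classical. I would therefore spend most of the write-up on the far-region integral and on verifying that replacing $(\cos((\lambda_j-\mu)/2))^{-2}$ by $(\cos(\lambda_m/2))^{-2}$ and the exponent by $\min\{\Delta_1,-\Delta_2\}$ gives a valid upper bound for every admissible pair $(d_{0,r},d_{0,s})$ and every $j \le m$.
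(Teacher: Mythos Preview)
Your Fej\'er-kernel starting point and near/far split are the right framework, but two concrete steps in the far-region argument break down.

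First, the claim that $|\mu|>\lambda_j/2$ on your far region is false: your far region is $|\mu-\lambda_j|>\lambda_j/2$, i.e.\ $\mu<\lambda_j/2$ or $\mu>3\lambda_j/2$, and this \emph{contains} a full neighbourhood of $\mu=0$. So you cannot bound $|\mu|^{-d_{0,r}-d_{0,s}}$ by $c\,\lambda_j^{-d_{0,r}-d_{0,s}}$ there when $d_{0,r}+d_{0,s}>0$; the singularity of $f_{rs}$ at the origin must be integrated, not bounded pointwise. The paper isolates this by carving out the separate interval $(-\lambda_j/2,\lambda_j/2)$ around the origin, bounding the Fej\'er kernel there by its maximum $\le c N^{-1}\lambda_j^{-2}$, and then integrating $|\mu|^{-d_{0,r}-d_{0,s}}$ over that interval, which is what produces the $(1-2\Delta_2)^{-1}$ contribution inside the final constant.

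Second, the extraction of $(\cos(\lambda_m/2))^{-2}$ via ``$\cos(\lambda_m/2)\le\cos((\lambda_j-\mu)/2)$'' is not valid: on the far region $|\lambda_j-\mu|$ ranges up to roughly $\pi+\lambda_j$, so $\cos((\lambda_j-\mu)/2)$ can vanish and is certainly not bounded below by $\cos(\lambda_m/2)$. In the paper this factor has a different origin. For the piece $\mu\in(-\pi,-\lambda_j/2)$ one reflects $\mu\mapsto-\mu$ and is left with $K_N(\lambda+\lambda_j)$ for $\lambda\in(\lambda_j/2,\pi)$. The key identity is that $N\lambda_j/2=\pi j$, so $|\sin(N(\lambda+\lambda_j)/2)|=|\sin(N\lambda/2)|$ and hence $|D_N(\lambda+\lambda_j)|/|D_N(\lambda)|=|\sin(\lambda/2)|/|\sin((\lambda+\lambda_j)/2)|$; this ratio is monotone in $\lambda$ on $(\lambda_j/2,\pi)$ and attains its maximum $1/|\sin((\pi+\lambda_j)/2)|=(\cos(\lambda_j/2))^{-1}\le(\cos(\lambda_m/2))^{-1}$ at $\lambda=\pi$. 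That is the source of the cosine factor, and it is specific to the shifted-kernel piece.

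In short, the paper's four-interval decomposition $(-\pi,-\lambda_j/2]\cup(-\lambda_j/2,\lambda_j/2]\cup(\lambda_j/2,2\lambda_j]\cup(2\lambda_j,\pi]$ is not cosmetic: it separates the singularity of $f_{rs}$ at $0$, the peak of $K_N$ at $\lambda_j$ (where the derivative bound and the $\log m$ estimate for $\int|D_N|$ are used), and the two tail pieces where the Dirichlet-ratio trick applies. Your two-region split conflates the first two of these, and your cosine argument misidentifies where the $(\cos(\lambda_m/2))^{-2}$ factor comes from.
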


\begin{proof}
In order to bound the bias of the periodogram, we follow the ideas in the proof of Theorem 2 in \cite{robinson1995Log}. 
We adapt its arguments of asymptotic nature to find non-asymptotic bounds which still ensure that the bias is negligible.

The bias term of the periodogram can be written as
\begin{align}
\Big| 
f_{rs}(\lambda_{j}) 
- \E (I_{X,rs}(\lambda_{j}) ) \Big| 
=
\Big| 
\int_{-\pi}^{\pi} \Big( f_{rs}(\lambda) - f_{rs}(\lambda_{j}) \Big) K_{N}(\lambda - \lambda_{j}) d\lambda 
\Big|, \label{eq:det_integral_rep}
\end{align}
where
\begin{equation*}
K_{N}(\lambda) = \frac{1}{2\pi N} \left( \frac{ \sin(N \lambda / 2) }{\sin(\lambda/2)} \right)^2
\end{equation*}
is the Fej\'er kernel (see equation (4.3) in \cite{robinson1995Log}). The Fej\'er kernel can be expressed in terms of the Dirichlet kernel as
\begin{equation} \label{eq:def:KandD}
K_{N}(\lambda) = \frac{1}{2\pi N} | D_{N}(\lambda) |^{2}
\hspace{0.2cm}
\text{ with }
\hspace{0.2cm}
D_{N}(\lambda) = \sum_{n=1}^{N} e^{i n \lambda} = \frac{ \sin(N \lambda /2) }{\sin(\lambda/2)},
\end{equation}
which satisfies
\begin{equation} \label{eq:Dirichlet_inequality}
|D_{N}(\lambda)| \leq 2 |\lambda|^{-1}
\hspace{0.2cm}
\text{ for }
\hspace{0.2cm}
\lambda \in (-\pi,\pi)\backslash \{0\};
\end{equation}
see equation (4.7) in \cite{robinson1995Log}.

In the following analysis, we focus on the integral in \eqref{eq:det_integral_rep}
and separate the integration range as follows
\begin{equation} \label{eq:det_intervals}
\int_{-\pi}^{\pi} = 
\int_{-\pi}^{-\frac{\lambda_{j}}{2}} +
\int_{-\frac{\lambda_{j}}{2}}^{\frac{\lambda_{j}}{2}} +
\int_{\frac{\lambda_{j}}{2}}^{2\lambda_{j}} + 
\int_{2\lambda_{j}}^{\pi}.
\end{equation}
Note that the positive range of $(-\pi,\pi)$ is separated into one more interval than the negative range. The additional interval $(\frac{\lambda_{j}}{2}, 2\lambda_{j}]$ takes care of a potentially zero argument in $K_{N}$.   
Handling the intervals on the right-hand side of \eqref{eq:det_intervals} from left to right, we get
\begin{align}
&
\Big| 
\int_{-\pi}^{-\frac{\lambda_{j}}{2}} \Big( f_{rs}(\lambda) - f_{rs}(\lambda_{j}) \Big) K_{N}(\lambda - \lambda_{j}) d\lambda 
\Big| \nonumber
\\& \leq
\int_{\frac{\lambda_{j}}{2}}^{\pi} \Big( |f_{rs}(\lambda)| + |f_{rs}(\lambda_{j})| \Big) K_{N}(\lambda + \lambda_{j}) d\lambda  
 \nonumber
\\& \leq \vertiii{G}
\int_{\frac{\lambda_{j}}{2}}^{\pi} \Big( \lambda^{-d_{0,r}-d_{0,s}} + \lambda_{j}^{-d_{0,r}-d_{0,s}} \Big) K_{N}(\lambda + \lambda_{j}) d\lambda  
 \nonumber
\\& = \vertiii{G} \frac{1}{2\pi N} \left(
\int_{\frac{\lambda_{j}}{2}}^{\pi} \lambda^{-d_{0,r}-d_{0,s}} |D_{N}(\lambda + \lambda_{j})|^2 d\lambda  +
\lambda_{j}^{-d_{0,r}-d_{0,s}} \int_{\frac{\lambda_{j}}{2}}^{\pi} |D_{N}(\lambda + \lambda_{j})|^2 d\lambda  \right) \nonumber
\\& \leq \frac{1}{2\pi} \vertiii{G} N^{-1} \lambda_{j}^{ -1-d_{0,r}-d_{0,s} } (\cos(\lambda_{m}/2))^{-2} \left( 16 \frac{1}{1+2\Delta_{1}} + 8 \right), \label{eq:det_int11}
\end{align}
where \eqref{eq:det_int11} follows by Lemmas \ref{le:int_lambda_Dirichlet} and \ref{le:int_lambda_Dirichlet_02}.

The integral centered around zero can be handled as
\begin{align}
&
\Big| 
\int_{-\frac{\lambda_{j}}{2}}^{\frac{\lambda_{j}}{2}} \Big( f_{rs}(\lambda) - f_{rs}(\lambda_{j}) \Big) K_{N}(\lambda - \lambda_{j}) d\lambda 
\Big| \nonumber
\\&\leq
\max_{ |\lambda | \leq \lambda_{j}/2}
K_{N}(\lambda - \lambda_{j})
\int_{-\frac{\lambda_{j}}{2}}^{\frac{\lambda_{j}}{2}} \Big| f_{rs}(\lambda) - f_{rs}(\lambda_{j}) \Big| d\lambda \nonumber
\\&\leq
\max_{ |\lambda | \leq \lambda_{j}/2}
\frac{1}{2\pi N} 4 | \lambda - \lambda_{j} |^{-2}
\int_{-\frac{\lambda_{j}}{2}}^{\frac{\lambda_{j}}{2}} \Big( | f_{rs}(\lambda) | + | f_{rs}(\lambda_{j}) | \Big) d\lambda \label{eq:det_int12} 
\\&\leq
\frac{16}{2 \pi N} | \lambda_{j} |^{-2} \vertiii{G} \Big(
\int_{-\frac{\lambda_{j}}{2}}^{\frac{\lambda_{j}}{2}} | \lambda|^{-d_{0,r}-d_{0,s}} d\lambda  + \lambda_{j}^{1-d_{0,r}-d_{0,s}} \Big) \nonumber
\\&\leq
\frac{16}{2\pi} \vertiii{G} N^{-1} \lambda_{j}^{-1-d_{0,r}-d_{0,s}} \left( 4 \frac{1}{1-2\Delta_{2}} + 1\right) , \label{eq:qqqq}
\end{align}
where \eqref{eq:det_int12} follows by applying \eqref{eq:def:KandD}--\eqref{eq:Dirichlet_inequality}.

For the next integral, applying the mean value theorem gives
\begin{align}
&
\Big| 
\int_{\frac{\lambda_{j}}{2}}^{2\lambda_{j}} \Big( f_{rs}(\lambda) - f_{rs}(\lambda_{j}) \Big) K_{N}(\lambda - \lambda_{j}) d\lambda 
\Big| \nonumber
\\& \leq
\max_{ \frac{\lambda_{j}}{2} < \lambda \leq 2\lambda_{j} } \left| \frac{\partial}{\partial \lambda} f_{rs}(\lambda) \right|
\int_{\frac{\lambda_{j}}{2}}^{2\lambda_{j}} | \lambda - \lambda_{j} | K_{N}(\lambda - \lambda_{j}) d\lambda \nonumber
\\& \leq
\bm{c}_{G,2} \lambda_{j}^{-1-d_{0,r}-d_{0,s}}
\int_{\frac{\lambda_{j}}{2}}^{2\lambda_{j}} | \lambda - \lambda_{j} | \frac{1}{2\pi N} | D_{N}(\lambda - \lambda_{j}) |^2 d\lambda \label{eq:det_dom_applydervative}
\\& \leq
\bm{c}_{G,2} \lambda_{j}^{-1-d_{0,r}-d_{0,s}}
\frac{1}{\pi N} \int_{\frac{\lambda_{j}}{2}}^{2\lambda_{j}} | D_{N}(\lambda - \lambda_{j}) | d\lambda \nonumber
\\& \leq
\bm{c}_{G,2} \lambda_{j}^{-1-d_{0,r}-d_{0,s}}
\frac{1}{\pi N} (4 (\pi +2)
+ 4 \log(j)) \label{eq:det_dom_eqlog}
\\& \leq
\bm{c}_{G,2} \lambda_{j}^{-1-d_{0,r}-d_{0,s}} N^{-1} 4(2+ \log(j)),  \label{eq:qqqqq}
\end{align}
where \eqref{eq:det_dom_applydervative} is a consequence of Assumption \ref{ass:derivative} and \eqref{eq:det_dom_eqlog} follows by Lemma \ref{eq:integral_Dirichlet}. 

The remaining integral can be bounded as
\begin{align}
&
\Big| 
\int_{2\lambda_{j}}^{\pi} \Big( f_{rs}(\lambda) - f_{rs}(\lambda_{j}) \Big) K_{N}(\lambda - \lambda_{j}) d\lambda 
\Big| \nonumber
\\& \leq
\int_{2\lambda_{j}}^{\pi} \Big( |f_{rs}(\lambda)| + |f_{rs}(\lambda_{j})| \Big) K_{N}(\lambda - \lambda_{j}) d\lambda  \nonumber
\\& \leq 
\vertiii{G}
\int_{2\lambda_{j}}^{\pi} \Big( \lambda^{-d_{0,r}-d_{0,s}} + \lambda_{j}^{-d_{0,r}-d_{0,s}} \Big) K_{N}(\lambda - \lambda_{j}) d\lambda  \nonumber
\\& =
\frac{1}{2\pi N} \vertiii{G} \left(
\int_{2\lambda_{j}}^{\pi} \lambda^{-d_{0,r}-d_{0,s}} |D_{N}(\lambda - \lambda_{j})|^2 d\lambda  +
\lambda_{j}^{-d_{0,r}-d_{0,s}} \int_{2\lambda_{j}}^{\pi} |D_{N}(\lambda - \lambda_{j})|^2 d\lambda  \right) \nonumber
\\& \leq 
\frac{1}{2\pi} \vertiii{G} N^{-1} \lambda_{j}^{-1-d_{0,r}-d_{0,s}} \left(16 \frac{1}{1+2\Delta_{1}} + 8\right), \label{eq:det_int14}
\end{align}
where \eqref{eq:det_int14} follows by Lemmas \ref{le:int_lambda_Dirichlet} and \ref{le:int_lambda_Dirichlet_02}.

Finally, using the integral representation \eqref{eq:det_integral_rep} and combining the inequalities \eqref{eq:det_int11}, \eqref{eq:qqqq}, \eqref{eq:qqqqq} and \eqref{eq:det_int14} for the individual integrals in \eqref{eq:det_intervals} gives
\begin{align*}
&
\Big| 
\int_{-\pi}^{\pi} \Big( f_{rs}(\lambda) - f_{rs}(\lambda_{j}) \Big) K_{N}(\lambda - \lambda_{j}) d\lambda 
\Big| 
\\&\leq
\frac{1}{2\pi} \vertiii{G} N^{-1} \lambda_{j}^{ -1-d_{0,r}-d_{0,s} } (\cos(\lambda_{m}/2))^{-2} \left( 16 \frac{1}{1+2\Delta_{1}} + 8 \right)
\\ &\hspace{1cm} +
\frac{16}{2\pi} \vertiii{G} N^{-1} \lambda_{j}^{-1-d_{0,r}-d_{0,s}} \left( 4 \frac{1}{1-2\Delta_{2}} + 1\right)
\\ &\hspace{2cm} +
\bm{c}_{G,2} N^{-1} \lambda_{j}^{-1-d_{0,r}-d_{0,s}} 4(2+ \log(j))
+
\frac{1}{2\pi} \vertiii{G} N^{-1} \lambda_{j}^{-1-d_{0,r}-d_{0,s}} \left(16 \frac{1}{1+2\Delta_{1}} + 8\right)
\\&\leq
\frac{1}{\pi} \vertiii{G} N^{-1} \lambda_{j}^{ -1-d_{0,r}-d_{0,s} } \left( ((\cos(\lambda_{m}/2))^{-2} + 1)
\left( 8 \frac{1}{1+2 \Delta_{1}} + 4 \right) + 4 \left( 8 \frac{1}{1-2\Delta_{2}} + 2 \right) \right)
\\ &\hspace{1cm} +
\bm{c}_{G,2} N^{-1} \lambda_{j}^{-1-d_{0,r}-d_{0,s}} 4(2+ \log(j))
\\&\leq
\frac{1}{\pi} \vertiii{G} N^{-1} \lambda_{j}^{ -1-d_{0,r}-d_{0,s} } \left( (\cos(\lambda_{m}/2))^{-2}
24 \frac{1}{1+2 \Delta_{1}} + 48 \frac{1}{1-2\Delta_{2}} \right)
\\ &\hspace{1cm} +
\bm{c}_{G,2} N^{-1} \lambda_{j}^{-1-d_{0,r}-d_{0,s}} 4(2+ \log(j))
\\&\leq
\frac{1}{\pi} \vertiii{G} N^{-1} \lambda_{j}^{ -1-d_{0,r}-d_{0,s} } ((\cos(\lambda_{m}/2))^{-2} + 2)
24 \frac{1}{1+2\min\{\Delta_{1},-\Delta_{2}\}} 
\\ &\hspace{1cm} +
\bm{c}_{G,2} N^{-1} \lambda_{j}^{-1-d_{0,r}-d_{0,s}} 4(2+ \log(j))
\\&\leq
N^{-1} \lambda_{j}^{ -1-d_{0,r}-d_{0,s} }
\left(
\vertiii{G} 
\frac{72 (\cos(\lambda_{m}/2))^{-2} }{\pi(1+2\min\{\Delta_{1},-\Delta_{2}\})}
+ \bm{c}_{G,2} 4(2+ \log(m)) \right).
\end{align*}
\end{proof}

\begin{lemma} \label{le:int_lambda_Dirichlet}
With the Dirichlet kernel defined in \eqref{eq:def:KandD}, 
\begin{align}
\int_{\frac{\lambda_{j}}{2}}^{\pi} \lambda^{-d_{0,r}-d_{0,s}} |D_{N}(\lambda + \lambda_{j})|^2 d\lambda
&\leq 16 \frac{1}{1+2\Delta_{1}} \lambda_{j}^{-1-d_{0,r}-d_{0,s}} (\cos(\lambda_{m}/2))^{-2}, \label{le:int_lambda_Dirichlet1}
\\
\int_{2\lambda_{j}}^{\pi} \lambda^{-d_{0,r}-d_{0,s}} |D_{N}(\lambda - \lambda_{j})|^2 d\lambda
&\leq 16 \frac{1}{1+2\Delta_{1}} \lambda_{j}^{-1-d_{0,r}-d_{0,s}}. \label{le:int_lambda_Dirichlet2}
\end{align}
\end{lemma}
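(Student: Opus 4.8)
The plan is to reduce each integral to an elementary integral of a power of $\lambda$ by controlling the factor $|D_N(\cdot)|^2$, and then to exploit $d_{0,r}+d_{0,s}\in(2\Delta_1,2\Delta_2)\subset(-1,1)$ so that the power integral converges with the stated constant. Throughout I would use $|D_N(\theta)|^2=\sin^2(N\theta/2)/\sin^2(\theta/2)\le\sin^{-2}(\theta/2)$, the bound \eqref{eq:Dirichlet_inequality}, i.e.\ $|D_N(\theta)|\le 2|\theta|^{-1}$ for $\theta\in(-\pi,\pi)\setminus\{0\}$, and the standing assumption $m\le\tfrac N2-1$, which gives $\lambda_m<\pi$.

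For \eqref{le:int_lambda_Dirichlet2}: on $[2\lambda_j,\pi]$ the shifted argument $\lambda-\lambda_j$ lies in $[\lambda_j,\pi-\lambda_j]\subset(0,\pi)$, so \eqref{eq:Dirichlet_inequality} applies and gives $|D_N(\lambda-\lambda_j)|^2\le 4(\lambda-\lambda_j)^{-2}$; since $\lambda\ge 2\lambda_j$ forces $\lambda-\lambda_j\ge\lambda/2$, this is at most $16\lambda^{-2}$. Hence the integral is bounded by $16\int_{2\lambda_j}^{\infty}\lambda^{-d_{0,r}-d_{0,s}-2}\,d\lambda=\tfrac{16}{d_{0,r}+d_{0,s}+1}(2\lambda_j)^{-d_{0,r}-d_{0,s}-1}$, and the claimed bound follows from $2^{-d_{0,r}-d_{0,s}-1}\le 1$ and $d_{0,r}+d_{0,s}+1\ge 1+2\Delta_1$ (both a consequence of $d_{0,r},d_{0,s}\in(\Delta_1,\Delta_2)$).

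For \eqref{le:int_lambda_Dirichlet1}: now the shifted argument $\lambda+\lambda_j$ ranges over $[\tfrac32\lambda_j,\pi+\lambda_j]$ as $\lambda$ runs over $[\lambda_j/2,\pi]$, so it may exceed $\pi$ — but, because $\lambda_j\le\lambda_m<\pi$, it stays strictly below $2\pi$. I would split the integral at $\lambda=\pi-\lambda_j$. On $[\lambda_j/2,\pi-\lambda_j]$ the argument is $\le\pi$, so \eqref{eq:Dirichlet_inequality} gives $|D_N(\lambda+\lambda_j)|^2\le 4(\lambda+\lambda_j)^{-2}\le 4\lambda^{-2}$ and this piece is estimated exactly as in the previous paragraph, contributing at most $\tfrac{16}{1+2\Delta_1}\lambda_j^{-d_{0,r}-d_{0,s}-1}$. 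On $(\pi-\lambda_j,\pi]$ the argument lies in $(\pi,\pi+\lambda_m)$; using $\sin\big(\tfrac{\lambda+\lambda_j}{2}\big)=\sin\big(\pi-\tfrac{\lambda+\lambda_j}{2}\big)\ge\sin\big(\tfrac{\pi-\lambda_m}{2}\big)=\cos(\lambda_m/2)$ (monotonicity of $\sin$ on $[0,\pi/2]$, valid since $\tfrac{\lambda+\lambda_j}{2}\le\tfrac{\pi+\lambda_m}{2}$) gives $|D_N(\lambda+\lambda_j)|^2\le(\cos(\lambda_m/2))^{-2}$; since this interval has length $\le\lambda_j$ and $\lambda\ge\pi-\lambda_j\ge\pi-\lambda_m$ on it, $\int_{\pi-\lambda_j}^{\pi}\lambda^{-d_{0,r}-d_{0,s}}\,d\lambda$ is bounded by a constant multiple of $\lambda_j^{-d_{0,r}-d_{0,s}-1}$ after comparing powers (using $-1<d_{0,r}+d_{0,s}<1$ and $\lambda_j\le\lambda_m<\pi$). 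Adding the two pieces and absorbing the constants into the factor $(\cos(\lambda_m/2))^{-2}\ge1$ yields \eqref{le:int_lambda_Dirichlet1}.

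The main obstacle is \eqref{le:int_lambda_Dirichlet1}: the shift $\lambda+\lambda_j$ pushes the argument past $\pi$, so the convenient Dirichlet bound \eqref{eq:Dirichlet_inequality} fails on part of the interval, forcing one into the regime near $\pi$ — which is precisely where the factor $(\cos(\lambda_m/2))^{-2}$ (of size $(\pi-\lambda_m)^{-2}$ up to constants) must enter, and keeping the Fej\'er kernel away from its second singularity at $2\pi$ is what requires $\lambda_m<\pi$. The remaining work — choosing the split point so that the power integrals come out with the displayed constants $16$ and $1/(1+2\Delta_1)$ — is routine bookkeeping with the constraints $d_{0,r},d_{0,s}\in(\Delta_1,\Delta_2)\subset(-1/2,1/2)$.
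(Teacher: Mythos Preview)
Your argument for \eqref{le:int_lambda_Dirichlet2} is correct and in fact essentially the same as the paper's: both routes land on $|D_N(\lambda-\lambda_j)|^2\le 16\lambda^{-2}$ on $[2\lambda_j,\pi]$ and then integrate the power. The paper reaches this via the ratio $|D_N(\lambda-\lambda_j)|/|D_N(\lambda)|=|\sin(\lambda/2)|/|\sin((\lambda-\lambda_j)/2)|\le 2$ (Lemma~\ref{eq:sup_Dirichlet}) followed by $|D_N(\lambda)|\le 2|\lambda|^{-1}$, which is equivalent to your direct step $|D_N(\lambda-\lambda_j)|\le 2|\lambda-\lambda_j|^{-1}\le 4|\lambda|^{-1}$.

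For \eqref{le:int_lambda_Dirichlet1} the paper takes a genuinely different route. Instead of splitting at $\pi-\lambda_j$, it multiplies and divides by $|D_N(\lambda)|^2$ and exploits the identity $|\sin(N(\lambda+\lambda_j)/2)|=|\sin(N\lambda/2)|$ (because $N\lambda_j/2=\pi j$), so that
\[
\left(\frac{|D_N(\lambda+\lambda_j)|}{|D_N(\lambda)|}\right)^2=\frac{\sin^2(\lambda/2)}{\sin^2((\lambda+\lambda_j)/2)}\le(\cos(\lambda_m/2))^{-2}
\]
uniformly on $[\lambda_j/2,\pi]$ (Lemma~\ref{eq:sup_Dirichlet}). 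Then $|D_N(\lambda)|^2\le 4\lambda^{-2}$ finishes the job in one stroke and delivers the displayed constant $16/(1+2\Delta_1)$ exactly. The advantage of the ratio trick is that it never needs to worry about $\lambda+\lambda_j>\pi$: the troublesome shift is absorbed into a multiplicative factor rather than isolated on a sub-interval.

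Your splitting argument is structurally sound and yields a bound of the same form, but the closing claim that ``choosing the split point'' recovers the displayed constant $16$ is not right. The piece on $(\pi-\lambda_j,\pi]$ contributes an additive term of order $(\cos(\lambda_m/2))^{-2}\lambda_j^{-1-d_{0,r}-d_{0,s}}$ with its own constant (roughly $\pi^2$ from $\int_{\pi-\lambda_j}^{\pi}\lambda^{-d_{0,r}-d_{0,s}}d\lambda\le\lambda_j\cdot\pi^{|d_{0,r}+d_{0,s}|}\le\pi^2\lambda_j^{-1-d_{0,r}-d_{0,s}}$), and this adds to, rather than being absorbed by, the $16/(1+2\Delta_1)$ from the first piece. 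No choice of split point makes that extra term vanish. So your approach proves the lemma up to a larger absolute constant, which is enough for every downstream use in the paper (the constant only enters $Q_m$), but not \eqref{le:int_lambda_Dirichlet1} exactly as stated.
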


\begin{proof}
We prove the inequalities \eqref{le:int_lambda_Dirichlet1} and \eqref{le:int_lambda_Dirichlet2} separately. For \eqref{le:int_lambda_Dirichlet1},
\begin{align}
\int_{\frac{\lambda_{j}}{2}}^{\pi} \lambda^{-d_{0,r}-d_{0,s}} |D_{N}(\lambda + \lambda_{j})|^2 d\lambda  
&= 
\int_{\frac{\lambda_{j}}{2}}^{\pi} \lambda^{-d_{0,r}-d_{0,s}} |D_{N}(\lambda)|^2 \left( \frac{|D_{N}(\lambda + \lambda_{j})|}{|D_{N}(\lambda)|}\right)^2 d\lambda \nonumber
\\&= \sup_{\frac{\lambda_{j}}{2} \leq \lambda \leq \pi}  \left( \frac{|D_{N}(\lambda + \lambda_{j})|}{|D_{N}(\lambda)|}\right)^2
\int_{\frac{\lambda_{j}}{2}}^{\pi} \lambda^{-d_{0,r}-d_{0,s}} |D_{N}(\lambda)|^2 d\lambda \nonumber  
\\&\leq (\cos(\lambda_{m}/2))^{-2} 
\int_{\frac{\lambda_{j}}{2}}^{\pi} \lambda^{-d_{0,r}-d_{0,s}} |D_{N}(\lambda)|^2 d\lambda \label{eq:aaaa}  
\\&\leq 4 (\cos(\lambda_{m}/2))^{-2}
\int_{\frac{\lambda_{j}}{2}}^{\infty} \lambda^{-2-d_{0,r}-d_{0,s}} d\lambda \label{eq:bbbb}   
\\&\leq 4 (\cos(\lambda_{m}/2))^{-2} \frac{1}{1+2\Delta_{1}}
2^{1+d_{0,r}+d_{0,s}} \lambda_{j}^{-1-d_{0,r}-d_{0,s}} \nonumber 
\\&\leq16 (\cos(\lambda_{m}/2))^{-2} \frac{1}{1+2\Delta_{1}} \lambda_{j}^{-1-d_{0,r}-d_{0,s}}, \nonumber 
\end{align}
where \eqref{eq:aaaa} is proved in Lemma \ref{eq:sup_Dirichlet} and \eqref{eq:bbbb} follows by \eqref{eq:Dirichlet_inequality}. Similarly, 
\begin{align}
\int_{2\lambda_{j}}^{\pi} \lambda^{-d_{0,r}-d_{0,s}} |D_{N}(\lambda - \lambda_{j})|^2 d\lambda
&=
\int_{2\lambda_{j}}^{\pi} \lambda^{-d_{0,r}-d_{0,s}} |D_{N}(\lambda)|^2 \left( \frac{|D_{N}(\lambda - \lambda_{j})|}{|D_{N}(\lambda)|}\right)^2 d\lambda \nonumber
\\&\leq 
\sup_{2\lambda_{j} \leq \lambda \leq \pi} \left( \frac{|D_{N}(\lambda - \lambda_{j})|}{|D_{N}(\lambda)|}\right)^2
\int_{2\lambda_{j}}^{\pi} \lambda^{-d_{0,r}-d_{0,s}} |D_{N}(\lambda)|^2 d\lambda \nonumber
\\&\leq 
4 \int_{2\lambda_{j}}^{\pi} \lambda^{-d_{0,r}-d_{0,s}} |D_{N}(\lambda)|^2 d\lambda \label{eq:aaaa1111}
\\&\leq 
16 \int_{2\lambda_{j}}^{\infty} \lambda^{-2-d_{0,r}-d_{0,s}} d\lambda \label{eq:bbbb1111}
\\&\leq
16 \frac{1}{1+2\Delta_{1}} 2^{-1-d_{0,r}-d_{0,s}} \lambda_{j}^{-1-d_{0,r}-d_{0,s}} \nonumber
\\&\leq
16 \frac{1}{1+2\Delta_{1}} \lambda_{j}^{-1-d_{0,r}-d_{0,s}}, \nonumber
\end{align}
where \eqref{eq:aaaa1111} is proved in Lemma \ref{eq:sup_Dirichlet} and \eqref{eq:bbbb1111} follows by \eqref{eq:Dirichlet_inequality}. 
\end{proof}

\begin{lemma} \label{le:int_lambda_Dirichlet_02}
With the Dirichlet kernel defined in \eqref{eq:def:KandD}, 
\begin{align}
\int_{\frac{\lambda_{j}}{2}}^{\pi} |D_{N}(\lambda + \lambda_{j})|^2 d\lambda 
\leq 8 \lambda_{j}^{-1} (\cos(\lambda_{m}/2))^{-2}
\hspace{0.2cm} \text{ and } \hspace{0.2cm} 
\int_{2\lambda_{j}}^{\pi} |D_{N}(\lambda - \lambda_{j})|^2 d\lambda  
\leq 
8 \lambda_{j}^{-1}. \label{le:int_lambda_Dirichlet_02_1}
\end{align}
\end{lemma}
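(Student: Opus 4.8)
The plan is to bound both integrals by the same strategy used in the proof of Lemma~\ref{le:int_lambda_Dirichlet}: substitute $\mu = \lambda + \lambda_j$ (resp.\ $\mu = \lambda - \lambda_j$) to center the Dirichlet kernel, and then exploit the pointwise bound \eqref{eq:Dirichlet_inequality}, $|D_N(\mu)| \leq 2|\mu|^{-1}$, away from the origin. For the first integral, after the shift the integration range becomes $[\tfrac{3\lambda_j}{2}, \pi + \lambda_j]$, so $|\mu|$ is bounded below by $\tfrac{3\lambda_j}{2} \geq \lambda_j$; hence $|D_N(\mu)|^2 \leq 4\mu^{-2}$ and $\int |D_N(\mu)|^2 d\mu \leq 4\int_{3\lambda_j/2}^{\infty} \mu^{-2} d\mu = \tfrac{8}{3}\lambda_j^{-1} \leq 8\lambda_j^{-1}$. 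For the second integral, the shift gives range $[\lambda_j, \pi - \lambda_j]$, again with $|\mu| \geq \lambda_j$, yielding $\int |D_N(\mu)|^2 d\mu \leq 4\int_{\lambda_j}^{\infty}\mu^{-2}d\mu = 4\lambda_j^{-1} \leq 8\lambda_j^{-1}$.

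The only subtlety is the extra factor $(\cos(\lambda_m/2))^{-2}$ appearing in the first bound but not the second. This mirrors exactly the mechanism in Lemma~\ref{le:int_lambda_Dirichlet}: rather than shifting variables directly, I would instead write $|D_N(\lambda + \lambda_j)|^2 = |D_N(\lambda)|^2\bigl(|D_N(\lambda+\lambda_j)|/|D_N(\lambda)|\bigr)^2$ on $[\tfrac{\lambda_j}{2},\pi]$, pull out the supremum of the ratio, and invoke Lemma~\ref{eq:sup_Dirichlet} to bound that supremum by $(\cos(\lambda_m/2))^{-2}$ (resp.\ by $4$ in the second case, where the denominator argument is bounded away from the problematic region). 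Concretely: $\sup_{\lambda_j/2 \leq \lambda \leq \pi}(|D_N(\lambda+\lambda_j)|/|D_N(\lambda)|)^2 \leq (\cos(\lambda_m/2))^{-2}$ and $\sup_{2\lambda_j \leq \lambda \leq \pi}(|D_N(\lambda-\lambda_j)|/|D_N(\lambda)|)^2 \leq 4$, both from Lemma~\ref{eq:sup_Dirichlet}. Then $\int_{\lambda_j/2}^{\pi}|D_N(\lambda)|^2 d\lambda \leq 4\int_{\lambda_j/2}^{\infty}\lambda^{-2}d\lambda = 8\lambda_j^{-1}$ and $\int_{2\lambda_j}^{\pi}|D_N(\lambda)|^2 d\lambda \leq 4\int_{2\lambda_j}^{\infty}\lambda^{-2}d\lambda = 2\lambda_j^{-1} \leq 8\lambda_j^{-1}$, using \eqref{eq:Dirichlet_inequality} in each case. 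Multiplying the supremum bounds by these integral bounds gives exactly \eqref{le:int_lambda_Dirichlet_02_1}: $8\lambda_j^{-1}(\cos(\lambda_m/2))^{-2}$ for the first and $4 \cdot 2\lambda_j^{-1} = 8\lambda_j^{-1}$ for the second.

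I do not anticipate a serious obstacle here; the proof is essentially a simpler version of Lemma~\ref{le:int_lambda_Dirichlet} with the weight $\lambda^{-d_{0,r}-d_{0,s}}$ removed (so no $\tfrac{1}{1+2\Delta_1}$ factor and cleaner constants). The one point requiring minor care is ensuring the constant $8$ is genuinely an upper bound in both lines despite the slightly different integration limits and the different supremum constants ($(\cos(\lambda_m/2))^{-2}$ versus $4$); a quick check as above shows $8$ works uniformly. One should also recall the standing assumption $m \leq N/2 - 1$ which guarantees $\lambda_m \leq \pi$ and keeps $(\cos(\lambda_m/2))^{-2}$ finite, so the bound is meaningful.
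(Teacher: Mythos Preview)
Your proposal is correct and, in its second paragraph, follows exactly the same route as the paper: factor out the ratio $|D_N(\lambda\pm\lambda_j)|/|D_N(\lambda)|$, bound its supremum via Lemma~\ref{eq:sup_Dirichlet} by $(\cos(\lambda_m/2))^{-2}$ and $4$ respectively, and then bound $\int |D_N(\lambda)|^2\,d\lambda$ by $4\int \lambda^{-2}\,d\lambda$ using \eqref{eq:Dirichlet_inequality}. The constants you obtain match the paper's line by line.
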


\begin{proof}
The first integral in \eqref{le:int_lambda_Dirichlet_02_1} can be bounded as
\begin{align}
\int_{\frac{\lambda_{j}}{2}}^{\pi} |D_{N}(\lambda + \lambda_{j})|^2 d\lambda 
&=
\int_{\frac{\lambda_{j}}{2}}^{\pi} |D_{N}(\lambda)|^2 \left( \frac{|D_{N}(\lambda + \lambda_{j})|}{|D_{N}(\lambda)|}\right)^2 d\lambda \nonumber
\\&\leq
\sup_{\frac{\lambda_{j}}{2} \leq \lambda \leq \pi} \left( \frac{|D_{N}(\lambda + \lambda_{j})|}{|D_{N}(\lambda)|}\right)^2 \int_{\frac{\lambda_{j}}{2}}^{\pi} |D_{N}(\lambda)|^2 d\lambda \nonumber  
\\&\leq (\cos(\lambda_{m}/2))^{-2}
\int_{\frac{\lambda_{j}}{2}}^{\pi} |D_{N}(\lambda)|^2 d\lambda \label{eq:aaaa1}   
\\&\leq 
4 (\cos(\lambda_{m}/2))^{-2} \int_{\frac{\lambda_{j}}{2}}^{\infty} |\lambda|^{-2} d\lambda 
\leq 
8 (\cos(\lambda_{m}/2))^{-2} |\lambda_{j}|^{-1}, \label{eq:bbbb1}     
\end{align}
where \eqref{eq:aaaa1} is proved in Lemma \ref{eq:sup_Dirichlet} and the inequality \eqref{eq:bbbb1} follows by \eqref{eq:Dirichlet_inequality}. 
Similarly, 
\begin{align}
\int_{2\lambda_{j}}^{\pi} |D_{N}(\lambda - \lambda_{j})|^2 d\lambda  
&=
\int_{2\lambda_{j}}^{\pi} |D_{N}(\lambda)|^2 \left( \frac{|D_{N}(\lambda - \lambda_{j})|}{|D_{N}(\lambda)|}\right)^2 d\lambda \nonumber
\\&\leq
\sup_{2\lambda_{j} \leq \lambda \leq \pi} \left( \frac{|D_{N}(\lambda - \lambda_{j})|}{|D_{N}(\lambda)|}\right)^2 
\int_{2\lambda_{j}}^{\pi} |D_{N}(\lambda)|^2 d\lambda \label{eq:aaaa2222} 
\\&\leq 
4 \int_{2\lambda_{j}}^{\pi} |D_{N}(\lambda)|^2 d\lambda  \nonumber
\\&\leq 
16 \int_{2\lambda_{j}}^{\infty} |\lambda|^{-2} d\lambda  
\leq 
8 |\lambda_{j}|^{-1}, \label{eq:bbbb2222}
\end{align}
where \eqref{eq:aaaa2222} is proved in Lemma \ref{eq:sup_Dirichlet} and the inequality \eqref{eq:bbbb2222} follows by \eqref{eq:Dirichlet_inequality}.
\end{proof}

\subsection{Bounds on covariance matrices norms} \label{se:B3}

This section collects our results with bounds on different matrix norms of the covariance matrix and their proofs.

\begin{lemma} \label{le: boundsforallSigma}
Let $\Sigma_{rr} = (\Sigma_{rr}(n-k))_{n,k=1,\dots,N} = \E (\mathcal{X} e_{r} (\mathcal{X} e_{r})')$ and $d_{0,r} \in [\Delta_{1},\Delta_{2}]$ in \eqref{eq:f}. Then, there exist constants $c_{1}, c_{2}$ such that the spectral and Frobenius norms of $\Sigma_{rr}$ can be bounded as
\begin{align*} 
\Vert \Sigma_{rr} \Vert &\leq c_{1} \vertiii{G} N^{\max\{2d_{0,r}, 0\}} \widebar{\Delta}_{N} 	, \\
\Vert \Sigma_{rr} \Vert_{F} &\leq c_{2} \vertiii{G} N^{\max\{2d_{0,r}, \frac{1}{2}\}}	 \widebar{\Delta}_{N}
\end{align*}
with $\widebar{\Delta}_{N}$ as in \eqref{eq:delta-0-1/4}.
\end{lemma}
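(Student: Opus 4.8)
The plan is to bound the two matrix norms of $\Sigma_{rr}$ by comparing $\Sigma_{rr}$ with a suitably scaled Toeplitz matrix generated by the spectral density $f_{X,rr}$ and using the asymptotics of its entries. First I would recall that $\Sigma_{rr}(n-k) = \int_{-\pi}^{\pi} e^{i(n-k)\lambda} f_{X,rr}(\lambda)\, d\lambda$, and that by \eqref{eq:f} together with Assumption \ref{ass:f0} we have $f_{X,rr}(\lambda) \leq \vertiii{G}\, \lambda^{-2d_{0,r}}$ near the origin, with $\vertiii{G}$ controlling the magnitude uniformly. The key elementary estimate is then on the decay of the autocovariances: $|\Sigma_{rr}(h)| \precsim \vertiii{G}\, |h|^{2d_{0,r}-1}$ for $d_{0,r} \neq 0$ and $|\Sigma_{rr}(h)| \precsim \vertiii{G}\,\widebar\Delta_N$-type logarithmic behavior in the boundary cases, which one obtains by splitting the defining integral at $|\lambda| = 1/|h|$ and integrating by parts (using Assumption \ref{ass:derivative} to control $f'_{X,rr}$ on the complementary range). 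The factor $\widebar\Delta_N$ appears precisely to absorb the $\log N$ coming from the case $d_{0,r} > 0$ (where the autocovariances are only summable up to a log when $d_{0,r}$ is small, or blow up polynomially when $d_{0,r}$ is larger).

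For the spectral norm, I would invoke the standard bound $\|\Sigma_{rr}\| \leq 2\pi\, \esssup_{\lambda} |f_{X,rr,N}(\lambda)|$, where $f_{X,rr,N}$ is the truncated/Cesàro-smoothed density corresponding to the finite Toeplitz block; more directly, one can use $\|\Sigma_{rr}\| \leq \max_n \sum_k |\Sigma_{rr}(n-k)| \leq \sum_{h=-(N-1)}^{N-1} |\Sigma_{rr}(h)|$ and sum the decay estimate. For $d_{0,r} \leq 0$ the sum is $O(\vertiii{G})$; for $d_{0,r} > 0$ it is $O(\vertiii{G}\, N^{2d_{0,r}})$ up to a logarithmic factor, and in all cases the bound $c_1 \vertiii{G}\, N^{\max\{2d_{0,r},0\}}\,\widebar\Delta_N$ follows by checking the three regimes ($d_{0,r}<0$, $d_{0,r}=0$, $d_{0,r}>0$) against the definition of $\widebar\Delta_N$ in \eqref{eq:delta-0-1/4}. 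For the Frobenius norm, I would write $\|\Sigma_{rr}\|_F^2 = \sum_{n,k=1}^{N} |\Sigma_{rr}(n-k)|^2 = \sum_{h=-(N-1)}^{N-1} (N-|h|)\,|\Sigma_{rr}(h)|^2 \leq N \sum_h |\Sigma_{rr}(h)|^2$, and again sum the squared decay estimate over $|h| \leq N$: for $d_{0,r} \leq 1/4$ the series $\sum_h |h|^{2(2d_{0,r}-1)}$ converges, giving $\|\Sigma_{rr}\|_F \precsim \vertiii{G}\, N^{1/2}\,\widebar\Delta_N$, while for $d_{0,r} > 1/4$ the partial sum grows like $N^{4d_{0,r}-1}$, giving $\|\Sigma_{rr}\|_F \precsim \vertiii{G}\, N^{2d_{0,r}}\,\widebar\Delta_N$; combining, $\|\Sigma_{rr}\|_F \precsim \vertiii{G}\, N^{\max\{2d_{0,r},1/2\}}\,\widebar\Delta_N$, as claimed.

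The main obstacle is making the autocovariance decay estimate genuinely non-asymptotic and uniform in $d_{0,r} \in [\Delta_1,\Delta_2]$, including the boundary behavior at $d_{0,r} = 0$ and the careful bookkeeping of the $\log N$ terms so that everything is neatly packaged into $\widebar\Delta_N$. In particular, the integration-by-parts step requires Assumption \ref{ass:derivative} to dominate $|\partial_\lambda f_{X,rr}(\lambda)| \precsim \lambda^{-1-2d_{0,r}}$, and one must be careful that the constant arising does not depend on $d_{0,r}$ beyond what $1+2\min\{\Delta_1,-\Delta_2\}$ and $\vertiii{G}$ already capture — this mirrors the constants appearing in $Q_m$ in \eqref{eq:Om}. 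Once this uniform bound on $|\Sigma_{rr}(h)|$ is in hand, the passage to the two matrix norms is routine summation. I would also note that the factor $\widebar\Delta_N$ on the right-hand side is slightly wasteful in the $d_{0,r} \leq 0$ regime (where $\widebar\Delta_N = 1$ anyway), so no separate argument is needed there; the statement is uniform by construction.
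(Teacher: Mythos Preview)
Your approach is correct but takes a genuinely different route from the paper. The paper splits into the cases $d_{0,r}\le 0$ and $d_{0,r}>0$ via two sub-lemmas. For $d_{0,r}\le 0$ it simply quotes the classical Toeplitz bound $\|\Sigma_{rr}\|\le 2\pi\,\esssup|f_{X,rr}|$ (which is finite since $f_{X,rr}$ is bounded in this regime) and then $\|\Sigma_{rr}\|_F\le\sqrt{N}\|\Sigma_{rr}\|$. For $d_{0,r}>0$ the paper does \emph{not} work with $f_{X,rr}$ directly: it first uses positive semidefiniteness of $(e^{i(n-k)\omega})$ together with $|G_{rr}(\omega)|\le\vertiii{G}$ to reduce to the Toeplitz matrix with the canonical symbol $|e^{i\omega}-1|^{-2d_{0,r}}$; then it identifies that matrix with $N$ times an integral operator on $L^2(0,1)$ (after B\"ottcher et al.) and bounds the operator and Hilbert--Schmidt norms using the \emph{exact} Gamma-function formula for the Fourier coefficients of $|e^{i\omega}-1|^{-2d_{0,r}}$, together with a Schur-type estimate. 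The $\log N$ enters exactly as you anticipate, via $h^{2d-1}\le h^{-1}N^{2d}$ when summing.

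Your route---prove $|\Sigma_{rr}(h)|\precsim\vertiii{G}\,|h|^{2d_{0,r}-1}$ directly for the actual density $f_{X,rr}$ by splitting at $|\lambda|=1/|h|$ and integrating by parts, then feed this into the row-sum bound and the explicit Frobenius formula---is more elementary and avoids the integral-operator machinery entirely. The trade-off is that your integration-by-parts step genuinely needs Assumption~\ref{ass:derivative} (the derivative bound $|\partial_\lambda f_{rr}|\le\bm{c}\,\lambda^{-1-2d_{0,r}}$), whereas the paper's reduction to the canonical symbol lets it prove this particular lemma from Assumption~\ref{ass:f0} alone; the exact Gamma identity replaces the need for smoothness of $f$. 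Since Assumption~\ref{ass:derivative} is standing throughout the paper this is harmless here, but it is worth flagging: the paper's version is formally a statement about any spectral density satisfying the model~\eqref{eq:f}, while yours additionally uses the differentiability hypothesis.
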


\begin{proof}
The bounds follow from Lemmas \ref{le:ineqSpecSigmashort} and \ref{le:ineqSpecFrobSigma} below.
\end{proof}

\begin{lemma} \label{le:ineqSpecSigmashort}
Let $\Sigma_{rr} = (\Sigma_{rr}(n-k))_{n,k=1,\dots,N} = \E (\mathcal{X} e_{r} (\mathcal{X} e_{r})')$ and $d_{0,r}\leq 0$ in \eqref{eq:f}.
Then, there exists a constant $c$ such that the spectral and Frobenius norms of $\Sigma_{rr}$ can be bounded as
\begin{align} 
\Vert \Sigma_{rr} \Vert &\leq c \vertiii{G}	\label{eq:specnormSigmashort}, \\
\Vert \Sigma_{rr} \Vert_{F} &\leq c \vertiii{G} N^{\frac{1}{2}}	\label{eq:frobnormSigmashort}.
\end{align}
\end{lemma}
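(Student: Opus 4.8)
The plan is to control $\Sigma_{rr}$ through its defining spectral representation $\Sigma_{rr}(h) = \int_{-\pi}^{\pi} e^{i h \lambda} f_{X,rr}(\lambda) \, d\lambda$ and the fact that, by \eqref{eq:f} together with the short-range/antipersistent regime $d_{0,r} \le 0$, the scalar spectral density $f_{X,rr}(\lambda) = \lambda^{-2d_{0,r}} g_r(\lambda)$ is bounded on $(-\pi,\pi)$ when $d_{0,r} \le 0$ (since then the exponent $-2d_{0,r} \ge 0$ keeps $\lambda^{-2d_{0,r}}$ bounded near the origin), with an $\esssup$ controlled by $\vertiii{G}$. For the spectral norm \eqref{eq:specnormSigmashort}, I would use the standard bound that a Toeplitz covariance matrix generated by a spectral density has operator norm at most $2\pi$ times the essential supremum of that density: $\Vert \Sigma_{rr} \Vert \le 2\pi \, \esssup_{\lambda} f_{X,rr}(\lambda)$. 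First I would write $f_{X,rr}(\lambda) = \lambda^{-2d_{0,r}} G_{rr}(\lambda)$ and, recalling $\vertiii{G} = \esssup_{\lambda} \Vert \lambda^{D_0} f_X(\lambda) \lambda^{D_0} \Vert$, bound the $(r,r)$ entry $|G_{rr}(\lambda)| = |e_r' \lambda^{D_0} f_X(\lambda) \lambda^{D_0} e_r| \le \Vert \lambda^{D_0} f_X(\lambda) \lambda^{D_0} \Vert \le \vertiii{G}$; since $d_{0,r} \le 0$ we have $|\lambda|^{-2d_{0,r}} \le \pi^{-2d_{0,r}} \le \max\{1,\pi\}$, a constant. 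Hence $\esssup_\lambda |f_{X,rr}(\lambda)| \le c \vertiii{G}$, and the Toeplitz operator-norm inequality gives \eqref{eq:specnormSigmashort}.

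For the Frobenius norm \eqref{eq:frobnormSigmashort}, I would exploit that $\Sigma_{rr}$ is $N \times N$ and use $\Vert \Sigma_{rr} \Vert_F \le \sqrt{\operatorname{rk}(\Sigma_{rr})} \, \Vert \Sigma_{rr} \Vert \le \sqrt{N} \, \Vert \Sigma_{rr} \Vert$, then apply \eqref{eq:specnormSigmashort}. This is the quickest route and matches the constants claimed (with $\widebar{\Delta}_N = 1$ in this regime since $d_{0,r} \le 0$). An alternative, slightly sharper approach would be to compute $\Vert \Sigma_{rr} \Vert_F^2 = \sum_{n,k=1}^N |\Sigma_{rr}(n-k)|^2 = \sum_{h=-(N-1)}^{N-1} (N - |h|) |\Sigma_{rr}(h)|^2 \le N \sum_{h \in \ZZ} |\Sigma_{rr}(h)|^2 = 2\pi N \int_{-\pi}^\pi |f_{X,rr}(\lambda)|^2 d\lambda$ by Parseval, and the integral is bounded by $2\pi (\esssup f_{X,rr})^2 \le c \vertiii{G}^2$; this also yields \eqref{eq:frobnormSigmashort} and has the advantage of not passing through the rank bound, but either suffices.

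The one point requiring a little care — and the main (mild) obstacle — is justifying the Toeplitz operator-norm inequality $\Vert \Sigma_{rr} \Vert \le 2\pi \esssup_\lambda f_{X,rr}(\lambda)$ in our normalization: for any unit vector $u = (u_1,\dots,u_N) \in \CC^N$, $u^* \Sigma_{rr} u = \int_{-\pi}^\pi \big| \sum_{n=1}^N u_n e^{in\lambda} \big|^2 f_{X,rr}(\lambda) \, d\lambda \le \esssup_\lambda f_{X,rr}(\lambda) \int_{-\pi}^\pi \big| \sum_{n=1}^N u_n e^{in\lambda} \big|^2 d\lambda = 2\pi \, \esssup_\lambda f_{X,rr}(\lambda) \cdot \Vert u \Vert^2$, where the last equality is orthogonality of $\{e^{in\lambda}\}$ on $(-\pi,\pi)$ with the convention $\Gamma_X(k) = \int_{-\pi}^\pi e^{ik\lambda} f_X(\lambda) d\lambda$ used in \eqref{eq:f} (note the absence of a $1/2\pi$ prefactor there). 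Since $f_{X,rr} \ge 0$ the supremum bound is valid, and taking the supremum over $\Vert u \Vert = 1$ gives the claim. Everything else is a matter of absorbing dimension-independent constants, so I would fold all numerical factors into the generic $c$ and state the two displays.
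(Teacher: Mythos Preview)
Your proposal is correct and follows essentially the same route as the paper. The paper defers the spectral norm bound \eqref{eq:specnormSigmashort} to Lemma~C.5 in \cite{Sun2018:LargeSpectral}, whose content is precisely the Toeplitz operator-norm inequality $\|\Sigma_{rr}\|\le 2\pi\,\esssup_\lambda f_{X,rr}(\lambda)$ that you spell out directly; and for \eqref{eq:frobnormSigmashort} the paper uses exactly your primary argument $\|\Sigma_{rr}\|_F\le\sqrt{\rank(\Sigma_{rr})}\,\|\Sigma_{rr}\|\le\sqrt{N}\,\|\Sigma_{rr}\|$.
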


\begin{proof}
See Lemma C.5. in \cite{Sun2018:LargeSpectral} for the inequality \eqref{eq:specnormSigmashort}. The second inequality \eqref{eq:frobnormSigmashort} is a simple consequence of using the fact that $\| A \|_{F} \leq \sqrt{\rank(A)} \| A \|$ for a matrix $A$ and applying \eqref{eq:specnormSigmashort}.
\end{proof}

\begin{lemma} \label{le:ineqSpecFrobSigma}
Let $\Sigma_{rr} = (\Sigma_{rr}(n-k))_{n,k=1,\dots,N} = \E (\mathcal{X} e_{r} (\mathcal{X} e_{r})')$ and $d_{0,r}>0$ in \eqref{eq:f}.
Then, there exist constants $c_{1}, c_{2}$ such that the spectral and Frobenius norms of $\Sigma_{rr}$ can be bounded as
\begin{align} 
\Vert \Sigma_{rr} \Vert &\leq c_{1} \vertiii{G} N^{2d_{0,r}} \log(N),	\label{eq:specnormSigma}
\\
\Vert \Sigma_{rr} \Vert_{F} &\leq 
c_{2} \vertiii{G}
\begin{cases}
N^{\frac{1}{2}} \log(N)^{\frac{1}{2}}, \hspace{0.2cm}  &\text{ if } d_{0,r} \leq \frac{1}{4}, \\
N^{2 d_{0,r}} \log(N)^{\frac{1}{2}}, \hspace{0.2cm}  &\text{ if } d_{0,r} > \frac{1}{4}. \\
\end{cases}
\label{eq:frobnormSigma}
\end{align}
\end{lemma}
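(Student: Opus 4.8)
The plan is to exploit the Toeplitz structure of $\Sigma_{rr}=(\Sigma_{rr}(n-k))_{n,k=1,\dots,N}$, where $\Sigma_{rr}(h)=\int_{-\pi}^{\pi}e^{ih\lambda}f_{X,rr}(\lambda)\,d\lambda$ is real and even in $h$, together with a pointwise decay bound on the autocovariances. The single ingredient I would isolate first, either as an auxiliary lemma here or by reference to Appendix~\ref{s:appSTR}, is the estimate
\[
|\Sigma_{rr}(h)|\le c\,\vertiii{G}\,(1+|h|)^{2d_{0,r}-1},\qquad h\in\ZZ,
\]
with $c$ depending only on the fixed model constants, in particular on $\bm{c}$ from Assumption~\ref{ass:derivative}. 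For $h=0$ this is immediate from $|f_{X,rr}(\lambda)|\le\vertiii{G}\,|\lambda|^{-2d_{0,r}}$ (which holds for $\lambda\in(-\pi,\pi)\setminus\{0\}$ since $|\lambda|^{2d_{0,r}}|f_{X,rr}(\lambda)|\le\|\lambda^{D_{0}}f_{X}(\lambda)\lambda^{D_{0}}\|\le\vertiii{G}$) and $\int_{-\pi}^{\pi}|\lambda|^{-2d_{0,r}}d\lambda<\infty$ because $2d_{0,r}<1$. For $|h|\ge1$ I would split the integral at $|\lambda|=1/|h|$: on $|\lambda|\le1/|h|$ bound the integrand by $\vertiii{G}\,|\lambda|^{-2d_{0,r}}$, producing a term of order $\vertiii{G}\,|h|^{2d_{0,r}-1}$; on $1/|h|\le|\lambda|\le\pi$ integrate by parts and use $|\partial_{\lambda}f_{X,rr}(\lambda)|\le\bm{c}\,|\lambda|^{-1-2d_{0,r}}$ from Assumption~\ref{ass:derivative}, which again gives a term of order $(\vertiii{G}+\bm{c})\,|h|^{2d_{0,r}-1}$. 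This is the same device already used in the proof of Lemma~\ref{le:biasappB1}.

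Given this bound, the spectral norm follows from the fact that the spectral norm of a real symmetric matrix is at most its maximal absolute row sum:
\[
\|\Sigma_{rr}\|\le\max_{1\le n\le N}\sum_{k=1}^{N}|\Sigma_{rr}(n-k)|\le|\Sigma_{rr}(0)|+2\sum_{h=1}^{N-1}|\Sigma_{rr}(h)|.
\]
Inserting the decay bound and writing $h^{2d_{0,r}-1}=h^{-1}h^{2d_{0,r}}\le N^{2d_{0,r}}h^{-1}$ for $1\le h\le N-1$ yields $\sum_{h=1}^{N-1}|\Sigma_{rr}(h)|\le c\,\vertiii{G}\,N^{2d_{0,r}}\sum_{h=1}^{N-1}h^{-1}\le c\,\vertiii{G}\,N^{2d_{0,r}}(\log N+1)$, which is \eqref{eq:specnormSigma} after $\log N+1\le2\log N$. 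The slightly lossy step $h^{2d_{0,r}-1}\le N^{2d_{0,r}}h^{-1}$ is exactly what generates the $\log N$; it is the price of a constant that stays bounded as $d_{0,r}\downarrow0$.

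For the Frobenius norm, count diagonals:
\[
\|\Sigma_{rr}\|_{F}^{2}=\sum_{n,k=1}^{N}|\Sigma_{rr}(n-k)|^{2}=\sum_{h=-(N-1)}^{N-1}(N-|h|)\,\Sigma_{rr}(h)^{2}\le N\Big(\Sigma_{rr}(0)^{2}+2\sum_{h=1}^{N-1}\Sigma_{rr}(h)^{2}\Big).
\]
With $\Sigma_{rr}(h)^{2}\le c\,\vertiii{G}^{2}\,h^{4d_{0,r}-2}$ I would split on the sign of $4d_{0,r}-1$. If $d_{0,r}\le\frac14$ then $h^{4d_{0,r}-2}\le h^{-1}$, so $\sum_{h=1}^{N-1}\Sigma_{rr}(h)^{2}\le c\,\vertiii{G}^{2}(\log N+1)$ and $\|\Sigma_{rr}\|_{F}^{2}\le c\,\vertiii{G}^{2}N\log N$. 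If $d_{0,r}>\frac14$ then $h^{4d_{0,r}-2}\le N^{4d_{0,r}-1}h^{-1}$ for $h\le N$, so $\sum_{h=1}^{N-1}\Sigma_{rr}(h)^{2}\le c\,\vertiii{G}^{2}N^{4d_{0,r}-1}\log N$ and $\|\Sigma_{rr}\|_{F}^{2}\le c\,\vertiii{G}^{2}N^{4d_{0,r}}\log N$. Taking square roots gives exactly \eqref{eq:frobnormSigma}.

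The main obstacle is the autocovariance estimate: a pointwise bound on $f_{X,rr}$ by itself does not control the decay of its Fourier coefficients, so the differentiability away from the origin supplied by Assumption~\ref{ass:derivative} is genuinely needed, and the split-and-integrate-by-parts must be arranged carefully enough that the implied constant blows up neither as $d_{0,r}\downarrow0$ nor as $2d_{0,r}\uparrow1$. Once that lemma is in place, everything after it is routine Toeplitz-norm bookkeeping, and the logarithmic factors in the statement arise only because we insist on constants that are uniform over $d_{0,r}\in(0,\Delta_{2}]$.
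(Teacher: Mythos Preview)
Your route---bound the autocovariances first, then do Toeplitz row-sum/diagonal-count---is more elementary than the paper's, and the bookkeeping after the autocovariance bound is correct. The gap is in the autocovariance bound itself: the split-and-IBP you sketch does not give $|\Sigma_{rr}(h)|\le c\,\vertiii{G}\,(1+|h|)^{2d_{0,r}-1}$ with $c$ independent of $d_{0,r}$. After integrating by parts on $1/|h|\le|\lambda|\le\pi$ the remaining integral term is
\[
\frac{\bm c}{|h|}\int_{1/|h|}^{\pi}\lambda^{-1-2d_{0,r}}\,d\lambda
=\bm c\,|h|^{2d_{0,r}-1}\cdot\frac{1-(\pi|h|)^{-2d_{0,r}}}{2d_{0,r}},
\]
and the last factor is bounded uniformly in $d_{0,r}>0$ only by $\log(\pi|h|)$, not by a constant. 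So your bound carries either a $1/d_{0,r}$ or an extra $\log|h|$; feeding either through the row-sum yields $N^{2d_{0,r}}(\log N)^{2}$ (or $\frac{1}{d_{0,r}}N^{2d_{0,r}}\log N$) in place of the stated $N^{2d_{0,r}}\log N$, and similarly an extra $\log N$ in the Frobenius bounds. The Remark following the lemma makes clear the single $\log N$ is there precisely to keep the constants uniform in $d_{0,r}$, so this is not cosmetic. More generally, Assumption~\ref{ass:derivative} alone is too weak for a uniform autocovariance constant: as $d_{0,r}\downarrow 0$ the hypothesis degenerates to $|f'|\le\bm c|\lambda|^{-1}$, which is not even integrable.

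The paper sidesteps this by never bounding $\Sigma_{rr}(h)$ for the true spectral density. It uses positive semidefiniteness of the rank-one matrix $(e^{i(n-k)\omega})_{n,k}$ together with $f_{X,rr}(\omega)\le\vertiii{G}\,|e^{i\omega}-1|^{-2d_{0,r}}$ to dominate $\Sigma_{rr}$ (in both the spectral and Frobenius senses) by $\vertiii{G}$ times the Toeplitz matrix with the FARIMA$(0,d_{0,r},0)$ symbol, and then handles that matrix via an integral-operator reduction (Lemmas~\ref{le:boundspecnormintegralop}--\ref{le:approxL2}) where the autocovariances are computed explicitly through gamma functions; the resulting constant, essentially $\Gamma(1-2d_{0,r})\sin(\pi d_{0,r})$, stays bounded as $d_{0,r}\downarrow 0$. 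Your summation trick $h^{2d_{0,r}-1}\le N^{2d_{0,r}}h^{-1}$ is in fact the same device the paper uses inside those lemmas; what you are missing is the reduction that buys the uniform autocovariance constant.
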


\begin{remark} \label{re:discussionlog}
We pause here to comment on Lemma \ref{le:ineqSpecFrobSigma}. In the light of asymptotic results on the autocovariances under long-range dependence, it might be surprising to the reader to find the logarithm in all our bounds opposed to only in the case $d_{0,r} = \frac{1}{4}$. This could certainly be avoided by bounding certain sums in our proofs by integrals depending on the memory parameter $d_{0,r}$. Then, one can replace the logarithm respectively by $\frac{1}{d_{0,r}}$ for the bound in \eqref{eq:specnormSigma} and $\frac{1}{\sqrt{4d_{0,r}-1}}\mathds{1}_{\{d_{0,r} \neq \frac{1}{4}\}}$ in \eqref{eq:frobnormSigma}. However, the memory parameter $d_{0,r}$ approaching either zero or $\frac{1}{4}$ results in potentially high constants. In contrast, the logarithm can be controlled in our high probability bounds.
\end{remark}

\begin{proof}[Proof of Lemma \ref{le:ineqSpecFrobSigma}]
The autocovariance matrix $\Sigma_{rr}=(\Sigma_{rr}(n-k))_{n,k=1,\dots,N}$ of the component series $\{X_{r,n}\}_{n \in \ZZ}$ can be represented in terms of the spectral density $f_{rr}(\omega)$ of the respective component series as
\begin{equation} \label{eq:specfrob1}
\begin{aligned}
\Sigma_{rr}
&=
\frac{1}{2 \pi} \int^{\pi}_{-\pi} f_{rr}(\omega) (e^{ i (n-k) \omega } )_{n,k=1,\dots,N} d \omega \\
&=
\frac{1}{2 \pi} \int^{\pi}_{-\pi} |\omega|^{-2d_{0,r}} G_{rr}(\omega) ( e^{ i (n-k) \omega } )_{n,k=1,\dots,N} d \omega.
\end{aligned}
\end{equation}
In the following, we prove the statements for the spectral and the Frobenius norm separately. 

\textit{Spectral norm:}
Using \eqref{eq:specfrob1}, we can bound the spectral norm of $\Sigma_{rr}$ as
\begin{align}
\Vert \Sigma_{rr} \Vert
&\leq
\esssup_{ \omega \in (-\pi,\pi)} |G_{rr}(\omega)|
\left\lVert
\frac{1}{2 \pi} \int^{\pi}_{-\pi} |\omega|^{-2d_{0,r}} (e^{ i (n-k) \omega })_{n,k=1,\dots,N} d \omega \right\rVert \label{eq:Sigmaeq1} \\
&\leq
\vertiii{G}
\left\lVert
\frac{1}{2 \pi} \int^{\pi}_{-\pi} | \exp( i \omega ) -1|^{-2d_{0,r}} (e^{ i (n-k) \omega })_{n,k=1,\dots,N} d \omega \right\rVert
\label{eq:Sigmaeq3},
\end{align}
where \eqref{eq:Sigmaeq1} follows since $(e^{ i (n-k) \omega })_{n,k=1,\dots,N}$ is positive semidefinite and because of $|e^{ix}-e^{iy}| \leq |x-y|$. 
In order to bound the spectral norm in \eqref{eq:Sigmaeq3}, we replace the matrix by an integral operator with piecewise constant kernel.
More specifically, define the integral operator $K_{k}: L^{2}(0,1) \to L^{2}(0,1)$ as
\begin{equation} \label{eq:integralop}
(K_{k}g)(x)=\int_{0}^{1} g(y) k_{N}(x,y) dy
\end{equation}
with kernel function 
\begin{equation*}
k_{N}(x,y) = \frac{1}{2 \pi} \int^{\pi}_{-\pi} | e^{ i \omega } -1|^{-2d_{0,r}} e^{ i ([Nx]-[Ny]) \omega } d \omega.
\end{equation*}
Then,
\begin{equation} \label{eq:offdiagnorm}
\left\lVert
\frac{1}{2 \pi} \int^{\pi}_{-\pi} | e^{ i \omega } -1|^{-2d_{0,r}} ( e^{ i (n-k) \omega } )_{n,k=1,\dots,N} d \omega \right\rVert
= 
N \Vert K_{k} \Vert_{op} 
\leq
c N^{2d_{0,r}} \log(N);
\end{equation}
see Lemma 4.1 in \cite{bottcher2010:weighted} for the equality in \eqref{eq:offdiagnorm}. The inequality stated in \eqref{eq:offdiagnorm} is proved in Lemma \ref{le:boundspecnormintegralop}.
Combining \eqref{eq:Sigmaeq3} and \eqref{eq:offdiagnorm} yields \eqref{eq:specnormSigma}.
\par
\textit{Frobenius norm:}
We deal with the Frobenius norm similarly as with the spectral norm as
\begin{align}
\Vert \Sigma_{rr} \Vert_{F}
&=
\Big(\frac{1}{(2 \pi)^2} \int^{\pi}_{-\pi} \int^{\pi}_{-\pi} G_{rr}(\omega_{1}) G_{rr}(\omega_{2}) |\omega_{1}\omega_{2}|^{-2d_{0,r}} 
\sum_{n,k =1}^{N} e^{ i (n-k) (\omega_{1}-\omega_{2}) } d \omega_{1} d\omega_{2} \Big)^{\frac{1}{2}} \nonumber \\
&=
\Big(\frac{1}{(2 \pi)^2} \int^{\pi}_{-\pi} \int^{\pi}_{-\pi} G_{rr}(\omega_{1}) G_{rr}(\omega_{2}) |\omega_{1}\omega_{2}|^{-2d_{0,r}} 
| \sum_{n=1}^{N} e^{ i n (\omega_{1}-\omega_{2}) } |^2 d \omega_{1} d\omega_{2}\Big)^{\frac{1}{2}} \nonumber \\
&\leq
\esssup_{ \omega \in (-\pi,\pi)} |G_{rr}(\omega)|
\Big(\frac{1}{(2 \pi)^2} \int^{\pi}_{-\pi} \int^{\pi}_{-\pi} |\omega_{1}\omega_{2}|^{-2d_{0,r}} 
| \sum_{n=1}^{N} e^{ i n (\omega_{1}-\omega_{2}) } |^2 d \omega_{1} d\omega_{2}\Big)^{\frac{1}{2}} \nonumber \\
&\leq
\vertiii{G}
\Big(\frac{1}{(2 \pi)^2} \int^{\pi}_{-\pi} \int^{\pi}_{-\pi} | \exp( i \omega_{1} ) -1|^{-2d_{0,r}} | \exp( i \omega_{2} ) -1|^{-2d_{0,r}}
| \sum_{n=1}^{N} e^{ i n (\omega_{1}-\omega_{2}) } |^2 d \omega_{1} d\omega_{2}\Big)^{\frac{1}{2}} \label{eq:specSigmaeq2} \\
&=
\vertiii{G}
\left\lVert
\frac{1}{2 \pi} \int^{\pi}_{-\pi} | \exp( i \omega ) -1|^{-2d_{0,r}} ( e^{ i (n-k) \omega } )_{n,k=1,\dots,N} d \omega \right\rVert_{F}
\label{eq:specSigmaeq3},
\end{align}
where \eqref{eq:specSigmaeq2} follows since $|e^{ix}-e^{iy}| \leq |x-y|$. Again, we apply Lemma 4.1 in \cite{bottcher2010:weighted} to replace the matrix in \eqref{eq:specSigmaeq3} by an integral operator
\begin{equation} \label{eq:frobnormkernelapprox}
\left\lVert
\frac{1}{2 \pi} \int^{\pi}_{-\pi} | \exp( i \omega ) -1|^{-2d_{0,r}} (\exp( i (n-k) \omega ))_{n,k=1,\dots,N} d \omega \right\rVert_{F}
= 
N \Vert K_{k} \Vert_{2}
\end{equation}
with integral operator $K_{k}$ as in \eqref{eq:integralop}. The equality follows by Lemma 4.1 in \cite{bottcher2010:weighted}.
Combining \eqref{eq:specSigmaeq3} and \eqref{eq:frobnormkernelapprox} with Lemma \ref{le:boundL2normintegralop} below gives
\begin{equation*}
\begin{aligned}
\Vert \Sigma_{rr} \Vert_{F} 
\leq
c
\begin{cases}
N^{\frac{1}{2}} \log(N)^{\frac{1}{2}}, \hspace{0.2cm}  &\text{ if } d_{0,r} \leq \frac{1}{4}, \\
N^{2 d_{0,r}} \log(N)^{\frac{1}{2}}, \hspace{0.2cm}  &\text{ if } d_{0,r} > \frac{1}{4}. \\
\end{cases}
\end{aligned}
\end{equation*}
\end{proof}

\begin{lemma} \label{le:boundspecnormintegralop}
For $d_{0,r}>0$, the operator norm of the integral operator $K_{k}$ defined in \eqref{eq:integralop} can be bounded as
\begin{equation}
N \Vert K_{k} \Vert_{op} 
\leq
c N^{2d_{0,r}} \log(N).
\end{equation}
\end{lemma}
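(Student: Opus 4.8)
The plan is to reduce the operator norm of $K_{k}$ to the $\ell^{1}$-norm of the Fourier-coefficient sequence of the symbol $a(\omega)=|e^{i\omega}-1|^{-2d_{0,r}}$, and then estimate those coefficients one by one. Write $\sigma(m)=\tfrac{1}{2\pi}\int_{-\pi}^{\pi}|e^{i\omega}-1|^{-2d_{0,r}}e^{im\omega}\,d\omega$, which is real and even (and finite, since $d_{0,r}<\Delta_{2}<\tfrac12$), so that the kernel in \eqref{eq:integralop} is the piecewise constant function $k_{N}(x,y)=\sigma([Nx]-[Ny])$. Since $k_{N}$ is symmetric, Schur's test with weight $1$ gives $\|K_{k}\|_{op}\le \sup_{x\in(0,1)}\int_{0}^{1}|k_{N}(x,y)|\,dy$; as $y$ runs over $(0,1)$ the index $[Ny]$ takes each value in $\{0,\dots,N-1\}$ on an interval of length $1/N$, so $\int_{0}^{1}|k_{N}(x,y)|\,dy=\tfrac1N\sum_{l=0}^{N-1}|\sigma([Nx]-l)|\le \tfrac1N\sum_{|m|\le N-1}|\sigma(m)|$. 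Hence $N\|K_{k}\|_{op}\le \sum_{|m|\le N-1}|\sigma(m)|$. (Equivalently, one may invoke the identification $N\|K_{k}\|_{op}=\|(\sigma(n-k))_{n,k=1}^{N}\|$ already used in \eqref{eq:offdiagnorm}, together with $\|A\|\le\max_{n}\sum_{k}|A_{nk}|$ for real symmetric $A$.)

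The next step is to bound $\sigma(m)$ with a constant that does not deteriorate as $d_{0,r}\downarrow 0$. Using $\tfrac{2}{\pi}|\omega|\le|e^{i\omega}-1|\le|\omega|$ on $[-\pi,\pi]$ one has, on $(0,\pi]$, $g(\omega):=|e^{i\omega}-1|^{-2d_{0,r}}\le(\tfrac{\pi}{2})^{2\Delta_{2}}\omega^{-2d_{0,r}}$ and $|g'(\omega)|\le 2d_{0,r}(\tfrac{\pi}{2})^{2\Delta_{2}+1}\omega^{-2d_{0,r}-1}$. For $m=0$ this gives $\sigma(0)\le c$ with $c$ depending only on $\Delta_{2}$. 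For $m\ge1$, writing $\sigma(m)=\tfrac1\pi\int_{0}^{\pi}g(\omega)\cos(m\omega)\,d\omega$, I would split the integral at $\omega=1/m$: on $[0,1/m]$ bound $|\cos(m\omega)|\le1$ and integrate $g$ to get a term $\le c\,m^{2d_{0,r}-1}$; on $[1/m,\pi]$ integrate by parts, so the boundary contribution (the endpoint $\pi$ vanishes) is $\le c\,m^{2d_{0,r}-1}$, and the remaining integral is $\tfrac1m\int_{1/m}^{\pi}|g'(\omega)|\,d\omega\le c\,\tfrac{2d_{0,r}}{m}\int_{1/m}^{\pi}\omega^{-2d_{0,r}-1}\,d\omega=\tfrac{c}{m}\bigl(m^{2d_{0,r}}-\pi^{-2d_{0,r}}\bigr)\le c\,m^{2d_{0,r}-1}$. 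The key observation is that the factor $2d_{0,r}$ from $g'$ exactly cancels the $1/(2d_{0,r})$ produced by integrating $\omega^{-2d_{0,r}-1}$, leaving a constant $c$ that depends only on $\Delta_{2}$. Thus $|\sigma(m)|\le c\,m^{2d_{0,r}-1}$ for all $m\ge1$, with $c$ universal.

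The final step is the summation. Combining the two previous steps,
$$N\|K_{k}\|_{op}\le \sum_{|m|\le N-1}|\sigma(m)|\le c+2c\sum_{m=1}^{N-1}m^{2d_{0,r}-1}\le c+2c\,N^{2d_{0,r}}\sum_{m=1}^{N-1}m^{-1}\le c\,N^{2d_{0,r}}\log N,$$
where I used $m^{2d_{0,r}-1}=m^{2d_{0,r}}m^{-1}\le N^{2d_{0,r}}m^{-1}$ for $1\le m\le N$, and $\sum_{m=1}^{N-1}m^{-1}\le 1+\log N\le 2\log N$ together with the standing convention $N>2$ (so $\log N\ge1$). This is exactly the claimed bound. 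I expect the summation step to be the main point of substance: insisting on a $d_{0,r}$-free constant forces the estimate $\sum_{m=1}^{N-1}m^{2d_{0,r}-1}\le N^{2d_{0,r}}\log N$ rather than the sharper $\frac{1}{2d_{0,r}}N^{2d_{0,r}}$, which is precisely the trade-off explained in the remark following Lemma \ref{le:ineqSpecFrobSigma}; the other delicate point is securing the $d_{0,r}$-independent constant in $|\sigma(m)|\le c\,m^{2d_{0,r}-1}$ via the cancellation in the integration by parts.
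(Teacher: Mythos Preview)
Your proof is correct and follows the same overall architecture as the paper: both arguments apply Schur's test to reduce $N\|K_{k}\|_{op}$ to $\sum_{|m|\le N-1}|\sigma(m)|$, establish the pointwise bound $|\sigma(m)|\le c\,m^{2d_{0,r}-1}$ with a constant independent of $d_{0,r}$, and then sum via $m^{2d_{0,r}-1}\le N^{2d_{0,r}}m^{-1}$ to produce the $\log N$ factor (exactly the trade-off discussed after Lemma~\ref{le:ineqSpecFrobSigma}).

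The genuine difference lies in how the bound on $\sigma(m)$ is obtained. The paper (Lemma~\ref{le:approxL2}) invokes the closed-form Gamma representation of B\"ottcher--Virtanen,
\[
\sigma(m)=\Gamma(1-2d_{0,r})\,\frac{\sin(\pi d_{0,r})}{\pi(|m|+d_{0,r})}\,\frac{\Gamma(|m|+1+d_{0,r})}{\Gamma(|m|+1-d_{0,r})},
\]
and then applies two Gamma-ratio inequalities of Qi to extract $c\,|m|^{2d_{0,r}-1}$. Your route is more elementary and self-contained: split the oscillatory integral at $\omega=1/m$ and integrate by parts on $[1/m,\pi]$, exploiting the fact that $|g'(\omega)|$ carries a factor $2d_{0,r}$ which cancels the $1/(2d_{0,r})$ from integrating $\omega^{-2d_{0,r}-1}$, yielding a $d_{0,r}$-free constant. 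This avoids the external references and makes the uniformity in $d_{0,r}\downarrow 0$ transparent, at the cost of losing the positivity $k_{N}(x,y)>0$ that the Gamma representation reveals for free. Either approach is adequate here, since only the magnitude bound is needed.
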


\begin{proof}
In view of the definition of the integral operator norm (see the end of Section \ref{se:intro}), 
\begin{align}
\int_{0}^{1} \left| \int_{0}^{1} g(y) k_{N}(x,y) dy \right|^{2} dx \nonumber
&\leq
\int_{0}^{1} \left( \int_{0}^{1} |g(y)| |k_{N}(x,y)|^{\frac{1}{2}} |k_{N}(x,y)|^{\frac{1}{2}} dy \right)^{2} dx \nonumber
\\&\leq
\int_{0}^{1} \int_{0}^{1} |g(y)|^{2} |k_{N}(x,y)| dy \int_{0}^{1} |k_{N}(x,y)| dy dx \nonumber
\\&\leq
\sup_{x \in (0,1)}  \int_{0}^{1} |k_{N}(x,y)| dy \int_{0}^{1} \int_{0}^{1} |g(y)|^{2} |k_{N}(x,y)| dy dx \nonumber
\\&\leq
c N^{2d_{0,r}-1} \log(N) \sup_{y \in (0,1)}  \int_{0}^{1} |k_{N}(x,y)| dx
\int_{0}^{1} |g(y)|^{2} dy \label{eq:fff1}
\\&\leq
c (N^{2d_{0,r}-1} \log(N))^{2} \int_{0}^{1} |g(y)|^{2} dy \label{eq:fff},
\end{align}
where \eqref{eq:fff1} and \eqref{eq:fff} follow since, with explanations given below,
\begin{equation} \label{eq:wwwww}
\sup_{x \in (0,1)} \int_{0}^{1} |k_{N}(x,y)| dy
=
\sup_{y \in (0,1)} \int_{0}^{1} |k_{N}(x,y)| dx
\leq c N^{2d_{0,r}-1} (\log(N) + 1).
\end{equation}
In order to prove \eqref{eq:wwwww}, we consider $z_{N} \neq 0$ and $z_{N} = 0$ separately, with $z_{N} = [Nx]-[Ny]$. By Lemma \ref{le:approxL2}, we get 
\begin{align*}
\int_{0}^{1} | k_{N}(x,y) | \mathds{1}_{\{z_{N} \neq 0\}} dx
&\leq c
\int_{0}^{1} |[Nx]-[Ny]|^{2d_{0,r}-1} \mathds{1}_{\{[Nx]-[Ny] \neq 0\}} dx
\\&= c
\sum_{i=1}^{N} \int_{\frac{i-1}{N}}^{\frac{i}{N}} |i-1-[Ny]|^{2d_{0,r}-1} \mathds{1}_{\{i-1-[Ny] \neq 0\}} dx
\\&= c
\frac{1}{N} \sum_{i=1}^{N} |i-1-[Ny]|^{2d_{0,r}-1} \mathds{1}_{\{i-1-[Ny] \neq 0\}} 
\\&\leq c
\frac{1}{N} \sum_{i=1}^{N} \left| i-1-\Big[\frac{N-1}{2}\Big] \right|^{2d_{0,r}-1} \mathds{1}_{\{i-1-[\frac{N-1}{2}] \neq 0\}} 
\\& = c
\frac{1}{N} \sum_{i=-[\frac{N-1}{2}]}^{N-1-[\frac{N-1}{2}]} \left| i \right|^{2d_{0,r}-1} \mathds{1}_{\{i \neq 0\}} 
\\&\leq c
N^{2d_{0,r}-1} \frac{1}{N} \sum_{i=-[\frac{N-1}{2}]}^{N-1-[\frac{N-1}{2}]} \left| \frac{i}{N} \right|^{2d_{0,r}-1} \mathds{1}_{\{i \neq 0\}} 
\\&\leq c
N^{2d_{0,r}-1} \sum_{i=-[\frac{N-1}{2}]}^{N-1-[\frac{N-1}{2}]} |i|^{-1} \mathds{1}_{\{i \neq 0\}} 
\\&\leq c
N^{2d_{0,r}-1} \sum_{i=1}^{N} i^{-1} 
\leq
c N^{2d_{0,r}-1} (\log(N) + 1).
\end{align*}
Using \eqref{eq:ineqx=y} in Lemma \ref{le:approxL2}, for $z_{N}=0$, 
\begin{equation} \label{eq:int_equal}
\begin{aligned}
\int_{0}^{1} | k_{N}(x,y) |^{2} \mathds{1}_{\{z_{N} = 0\}} dx
&\leq
\left( \frac{\Gamma(1-2\Delta_{2})}{\Gamma^{2}(1-\Delta_{2})} \right)^{2}  \int_{0}^{1} \mathds{1}_{\{[Nx]-[Ny] = 0\}} dx
\\&=
\left( \frac{\Gamma(1-2\Delta_{2})}{\Gamma^{2}(1-\Delta_{2})} \right)^{2}  \int_{\frac{[Ny]}{N}}^{\frac{[Ny]+1}{N}} dx
\\&=
\left( \frac{\Gamma(1-2\Delta_{2})}{\Gamma^{2}(1-\Delta_{2})} \right)^{2}  \frac{1}{N}.
\end{aligned}
\end{equation}
\end{proof}

\begin{lemma} \label{le:boundL2normintegralop}
For $d_{0,r} > 0$, the $L^{2}$ norm of the integral operator $K_{k}$ defined in \eqref{eq:integralop} can be bounded as
\begin{equation}
N \Vert K_{k} \Vert_{2} 
\leq
c
\begin{cases}
N^{\frac{1}{2}} \log(N)^{\frac{1}{2}}, \hspace{0.2cm}  &\text{ if } d_{0,r} \leq \frac{1}{4}, \\
N^{2 d_{0,r}} \log(N)^{\frac{1}{2}}, \hspace{0.2cm}  &\text{ if } d_{0,r} > \frac{1}{4}. \\
\end{cases}
\end{equation}
\end{lemma}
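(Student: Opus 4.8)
The plan is to evaluate the Hilbert--Schmidt norm directly from its definition \eqref{eq:HSnorm}, writing
\begin{equation*}
(N\|K_{k}\|_{2})^{2}=N^{2}\int_{0}^{1}\int_{0}^{1}|k_{N}(x,y)|^{2}\,dx\,dy,
\end{equation*}
and splitting the domain according to whether $z_{N}:=[Nx]-[Ny]$ vanishes, exactly as in the proof of Lemma \ref{le:boundspecnormintegralop}. On the diagonal set $\{z_{N}=0\}$ I would use the bound on $|k_{N}(x,y)|$ for a zero argument coming from Lemma \ref{le:approxL2} (the estimate \eqref{eq:ineqx=y} already invoked in \eqref{eq:int_equal}) together with $\int_{0}^{1}\int_{0}^{1}\mathds{1}_{\{z_{N}=0\}}\,dx\,dy=1/N$; this contributes a term of order $N^{2}\cdot N^{-1}=N$ to $(N\|K_{k}\|_{2})^{2}$, hence of order $N^{1/2}$ to $N\|K_{k}\|_{2}$, which is dominated by the claimed bounds in both regimes.

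Off the diagonal I would use the other estimate of Lemma \ref{le:approxL2}, namely $|k_{N}(x,y)|\le c\,|[Nx]-[Ny]|^{2d_{0,r}-1}$ on $\{z_{N}\neq 0\}$, and discretise cellwise as in \eqref{eq:int_equal}: on the square $[\tfrac{i-1}{N},\tfrac{i}{N})\times[\tfrac{j-1}{N},\tfrac{j}{N})$ one has $[Nx]=i-1$, $[Ny]=j-1$ and the cell has area $N^{-2}$, so that
\begin{equation*}
N^{2}\int_{0}^{1}\int_{0}^{1}|k_{N}(x,y)|^{2}\mathds{1}_{\{z_{N}\neq0\}}\,dx\,dy
\le c\sum_{i,j=1}^{N}|i-j|^{4d_{0,r}-2}\mathds{1}_{\{i\neq j\}}
\le cN\sum_{k=1}^{N-1}k^{4d_{0,r}-2},
\end{equation*}
the last step using that the number of pairs $(i,j)$ with $|i-j|=k$ is $2(N-k)\le 2N$.

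It then remains to estimate $\sum_{k=1}^{N-1}k^{4d_{0,r}-2}$, and here I would deliberately use crude majorants so that a single logarithmic factor appears uniformly in $d_{0,r}$, in accordance with the remark preceding the lemma. If $d_{0,r}\le\tfrac14$ then $4d_{0,r}-2\le-1$, hence $k^{4d_{0,r}-2}\le k^{-1}$ and $\sum_{k=1}^{N-1}k^{4d_{0,r}-2}\le c(\log N+1)$, which gives $(N\|K_{k}\|_{2})^{2}\le cN\log N$ and thus $N\|K_{k}\|_{2}\le cN^{1/2}\log(N)^{1/2}$. If $d_{0,r}>\tfrac14$ then $4d_{0,r}-1>0$, so $k^{4d_{0,r}-2}=k^{4d_{0,r}-1}k^{-1}\le N^{4d_{0,r}-1}k^{-1}$ and $\sum_{k=1}^{N-1}k^{4d_{0,r}-2}\le cN^{4d_{0,r}-1}(\log N+1)$, which gives $(N\|K_{k}\|_{2})^{2}\le cN^{4d_{0,r}}\log N$ and thus $N\|K_{k}\|_{2}\le cN^{2d_{0,r}}\log(N)^{1/2}$. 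Adding the diagonal contribution $O(N^{1/2})$, which is dominated in both cases (since $N^{1/2}\le N^{1/2}\log(N)^{1/2}$ and $N^{1/2}\le N^{2d_{0,r}}$ for $d_{0,r}>\tfrac14$), finishes the argument.

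I expect the only delicate point to be the bookkeeping: verifying that the kernel bounds of Lemma \ref{le:approxL2} are applied on exactly the grid used in Lemma \ref{le:boundspecnormintegralop}, and in particular choosing to majorise $k^{4d_{0,r}-2}$ by $N^{4d_{0,r}-1}k^{-1}$ rather than integrating the power $x^{4d_{0,r}-2}$ directly, so as to avoid constants of the form $(4d_{0,r}-1)^{-1}$ that would blow up as $d_{0,r}$ approaches $\tfrac14$ from above. The summation estimates themselves are elementary, and no new probabilistic input is needed beyond what is already used for the spectral-norm bound.
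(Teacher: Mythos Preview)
Your proposal is correct and follows essentially the same approach as the paper: the same diagonal/off-diagonal split based on $z_{N}=[Nx]-[Ny]$, the same cellwise discretisation using Lemma~\ref{le:approxL2}, and the same crude majorants $k^{4d_{0,r}-2}\le k^{-1}$ (for $d_{0,r}\le\tfrac14$) and $k^{4d_{0,r}-2}\le N^{4d_{0,r}-1}k^{-1}$ (for $d_{0,r}>\tfrac14$) to produce a uniform logarithmic factor. The only cosmetic differences are that the paper works with $\|K_{k}\|_{2}^{2}$ and multiplies by $N$ at the end, and it retains the factor $(N-k)$ a step longer before applying the same bound you reach via $2(N-k)\le 2N$.
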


\begin{proof}
For $z_{N} = [Nx]-[Ny]$, we consider $z_{N} \neq 0$ and $z_{N} = 0$ separately. By Lemma \ref{le:approxL2}, we get 
\begin{align*}
&
\int_{0}^{1} \int_{0}^{1} | k_{N}(x,y) |^{2} \mathds{1}_{\{z_{N} \neq 0\}} dx dy
\\&\leq c
\int_{0}^{1} \int_{0}^{1} |[Nx]-[Ny]|^{4d_{0,r}-2} \mathds{1}_{\{[Nx]-[Ny] \neq 0\}} dx dy
\\&= c
\sum_{i=1}^{N} \sum_{j=1}^{N} \int_{\frac{i-1}{N}}^{\frac{i}{N}} \int_{\frac{j-1}{N}}^{\frac{j}{N}} |[Nx]-[Ny]|^{4d_{0,r}-2} \mathds{1}_{\{[Nx]-[Ny] \neq 0\}} dx dy
\\&= c
\frac{1}{N^{2}}\sum_{i=1}^{N} \sum_{j=1}^{N} | i-j|^{4d_{0,r}-2} \mathds{1}_{\{ i \neq j\}}
\\&=
c\frac{1}{N^{2}} \sum_{k=1}^{N} (N-k) k^{4d_{0,r}-2}
\\&\leq
c
\begin{cases}
 \frac{1}{N} \sum_{k=1}^{N} \left(1-\frac{k}{N}\right) k^{-1}, \hspace{0.2cm}  &\text{ if } d_{0,r} \leq \frac{1}{4}, \\
N^{4 d_{0,r}-2} \left(\frac{1}{N} \sum_{k=1}^{N} \left(\frac{k}{N}\right)^{4 d_{0,r}-2} - \frac{1}{N} \sum_{k=1}^{N} \left(\frac{k}{N}\right)^{4 d_{0,r}-1}\right) , \hspace{0.2cm}  &\text{ if } d_{0,r} > \frac{1}{4},
\end{cases}
\\&\leq
c
\begin{cases}
\frac{1}{N} \sum_{k=1}^{N} k^{-1}, \hspace{0.2cm}  &\text{ if } d_{0,r} \leq \frac{1}{4}, \\
N^{4 d_{0,r}-2} \sum_{k=1}^{N} k^{-1} , \hspace{0.2cm}  &\text{ if } d_{0,r} > \frac{1}{4},
\end{cases}
\\&\leq
c
\begin{cases}
\frac{1}{N} (\log(N)+1), \hspace{0.2cm}  &\text{ if } d_{0,r} \leq \frac{1}{4}, \\
N^{4 d_{0,r}-2} (\log(N)+1), \hspace{0.2cm}  &\text{ if } d_{0,r} > \frac{1}{4}.
\end{cases}
\end{align*}
Using \eqref{eq:ineqx=y} in Lemma \ref{le:approxL2} and the same arguments for $z_{N}=0$, 
\begin{align*}
\int_{0}^{1} \int_{0}^{1} | k_{N}(x,y) |^{2} \mathds{1}_{\{z_{N} = 0\}} dx dy
&\leq
\left( \frac{\Gamma(1-2\Delta_{2})}{\Gamma^{2}(1-\Delta_{2})} \right)^{2} \int_{0}^{1} \int_{0}^{1} \mathds{1}_{\{[Nx]-[Ny] = 0\}} dx dy
\\&=
\left( \frac{\Gamma(1-2\Delta_{2})}{\Gamma^{2}(1-\Delta_{2})} \right)^{2} \frac{1}{N}
\end{align*}
following \eqref{eq:int_equal}.
\end{proof}

\begin{lemma} \label{le:approxL2}
For $d_{0,r} > 0$, the function 
\begin{equation} \label{eq:kernelfct}
k_{N}(x,y) = \frac{1}{2 \pi} \int^{\pi}_{-\pi} | e^{ i \omega } -1|^{-2d_{0,r}} e^{ i ([Nx]-[Ny]) \omega } d \omega
\end{equation}
can be bounded as
\begin{equation} \label{eq:kernelappr}
0 < k_{N}(x,y) \leq c |z_{N}|^{2d_{0,r}-1} 
\hspace{0.2cm}
\text{ with }
\hspace{0.2cm}
z_{N}=[Nx]-[Ny].
\end{equation}
Furthermore,
\begin{equation} \label{eq:ineqx=y}
\frac{1}{2 \pi} \int^{\pi}_{-\pi} | e^{ i \omega } -1|^{-2d_{0,r}} d \omega
\leq
\frac{\Gamma(1-2\Delta_{2})}{\Gamma^{2}(1-\Delta_{2})}.
\end{equation}
\end{lemma}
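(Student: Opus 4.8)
The plan is to recognize $k_N(x,y)$ as a Fourier coefficient of the integrable (since $2d_{0,r}\le 2\Delta_2<1$) even function $|e^{i\omega}-1|^{-2d_{0,r}}=(2\sin(|\omega|/2))^{-2d_{0,r}}$, which is, up to the factor $2\pi$, exactly the spectral density of the scalar ARFIMA$(0,d_{0,r},0)$ process with unit innovation variance. Writing $z_N=[Nx]-[Ny]$, this identifies $k_N(x,y)=\gamma_{d_{0,r}}(z_N)$, the lag-$z_N$ autocovariance of that process, for which there is the classical closed form (see, e.g., \cite{beran2013long,PipirasTaqqu})
\begin{equation*}
\gamma_{d}(k)=\frac{\Gamma(1-2d)}{\Gamma(d)\Gamma(1-d)}\cdot\frac{\Gamma(|k|+d)}{\Gamma(|k|+1-d)},\qquad k\in\ZZ,
\end{equation*}
valid for $d\in(0,1/2)$; here $d:=d_{0,r}$ satisfies $0<d\le\Delta_2<1/2$ under the hypothesis of the lemma. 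Positivity of $k_N(x,y)$ is then immediate: for $0<d<1/2$ and any $k\in\ZZ$ each of the arguments $1-2d$, $d$, $1-d$, $|k|+d$, $|k|+1-d$ is strictly positive, so every Gamma value above is positive and $\gamma_d(k)>0$.

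For the upper bound in \eqref{eq:kernelappr} (needed only when $z_N\neq 0$, i.e.\ $|z_N|\ge 1$) I would invoke the elementary ratio estimate $\Gamma(n+a)/\Gamma(n+b)\le C(a,b)(n+1)^{a-b}$ for $n\ge 0$, where $C(a,b)$ is finite and can be taken uniform over $(a,b)$ in a compact subset of $(0,\infty)^2$ — this follows from Stirling's asymptotics $\Gamma(n+a)/\Gamma(n+b)\sim n^{a-b}$ together with continuity in $(a,b)$. Applying it with $a=d$, $b=1-d$, $n=|z_N|\ge 1$, and using $(n+1)^{2d-1}\le n^{2d-1}$ since $2d-1<0$, gives
\begin{equation*}
k_N(x,y)=\gamma_d(z_N)\le \frac{\Gamma(1-2d)}{\Gamma(d)\Gamma(1-d)}\,C(d,1-d)\,|z_N|^{2d-1}.
\end{equation*}
It then remains to note that the scalar prefactor $\frac{\Gamma(1-2d)}{\Gamma(d)\Gamma(1-d)}C(d,1-d)$ is bounded by an absolute constant $c$ over the admissible range $d\in(0,\Delta_2]$: it is continuous on $(0,\Delta_2]$, tends to $0$ as $d\to 0^+$ (because $\Gamma(d)\to\infty$), and $\Gamma(1-2d)$ stays bounded precisely because $\Delta_2<1/2$. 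This yields \eqref{eq:kernelappr}.

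For \eqref{eq:ineqx=y} I would compute the zeroth coefficient explicitly. Writing $|e^{i\omega}-1|^{-2d}=(2\sin(\omega/2))^{-2d}$ and substituting $\omega=2u$,
\begin{equation*}
\frac{1}{2\pi}\int_{-\pi}^{\pi}|e^{i\omega}-1|^{-2d}\,d\omega
=\frac{2}{\pi}\int_0^{\pi/2}(2\sin u)^{-2d}\,du
=\frac{\Gamma(1-2d)}{\Gamma(1-d)^2},
\end{equation*}
where the last equality uses the Beta integral $\int_0^{\pi/2}(\sin u)^{s}\,du=\tfrac{\sqrt{\pi}\,\Gamma((s+1)/2)}{2\Gamma(s/2+1)}$ with $s=-2d$ and Legendre's duplication formula (this also matches $\gamma_d(0)$ from the closed form above). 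To finish, it suffices to show $d\mapsto \Gamma(1-2d)/\Gamma(1-d)^2$ is nondecreasing on $[0,1/2)$: its logarithmic derivative is $2\bigl(\psi(1-d)-\psi(1-2d)\bigr)\ge 0$ because the digamma function $\psi$ is increasing on $(0,\infty)$ and $1-d\ge 1-2d$ for $d\ge 0$. Since $d_{0,r}\le\Delta_2$, we conclude $\frac{1}{2\pi}\int_{-\pi}^{\pi}|e^{i\omega}-1|^{-2d_{0,r}}\,d\omega\le \Gamma(1-2\Delta_2)/\Gamma(1-\Delta_2)^2$.

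The main obstacle is essentially bookkeeping rather than a genuine difficulty: guaranteeing that the constant $c$ in \eqref{eq:kernelappr} is truly uniform across the whole admissible range of $d_{0,r}$, i.e.\ checking that nothing diverges at either endpoint $d\to 0^+$ or $d\to\Delta_2$ — the only dangerous factor is $\Gamma(1-2d)$, and it is tamed precisely by the standing assumption $\Delta_2<1/2$. (As an alternative to the closed-form route for the upper bound, one could integrate $(2\sin(\omega/2))^{-2d}e^{iz_N\omega}$ by parts or split the frequency range at $|\omega|\asymp |z_N|^{-1}$; but the ARFIMA autocovariance formula makes both the sign and the exact rate transparent and is therefore the cleaner path.)
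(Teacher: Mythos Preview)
Your proposal is correct and follows essentially the same route as the paper: both identify $k_N(x,y)$ with the closed-form ARFIMA$(0,d_{0,r},0)$ autocovariance in terms of Gamma functions (the paper cites B\"ottcher (2007) for the same identity, written in the equivalent form via the reflection formula), read off positivity, bound the Gamma ratio to get the $|z_N|^{2d_{0,r}-1}$ rate, and use monotonicity of $d\mapsto\Gamma(1-2d)/\Gamma(1-d)^2$ for \eqref{eq:ineqx=y}. The only differences are cosmetic --- the paper bounds the Gamma ratio via two explicit inequalities of Qi (2012) rather than Stirling plus compactness, and merely asserts the monotonicity that you verify with the digamma argument.
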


\begin{proof}
Set $z_{N}=[Nx]-[Ny]$. Then, the kernel \eqref{eq:kernelfct} satisfies
\begin{align}
k_{N}(x,y) 
&= 
\frac{1}{2 \pi} \int^{\pi}_{-\pi} | e^{ i \omega } -1|^{-2d_{0,r}} e^{ i ([Nx]-[Ny]) \omega } d \omega \nonumber \\
&=
\frac{1}{2 \pi} \int^{\pi}_{-\pi} | \exp( i \omega ) -1|^{-2d_{0,r}} \exp( i z_{N} \omega ) d \omega \nonumber \\
&=
(-1)^{|z_{N}|} \frac{\Gamma(1-2d_{0,r})}{\Gamma(1-|z_{N}|-d_{0,r})\Gamma(1+|z_{N}|-d_{0,r})} \label{eq:kernelappr1}\\
&=
\Gamma(1-2d_{0,r}) \frac{\sin(\pi d_{0,r})}{\pi(|z_{N}|+d_{0,r})} \frac{\Gamma(|z_{N}|+1+d_{0,r})}{\Gamma(|z_{N}|+1-d_{0,r})}  \label{eq:kernelappr2}.
\end{align}
See p. 665 in \cite{bottcher2007norms} for equations \eqref{eq:kernelappr1} and \eqref{eq:kernelappr2}. The last relation shows $k_{N}(x,y)  > 0$. We further have
\begin{align}
k_{N}(x,y) 
&<
\Gamma(1-2d_{0,r}) \frac{\sin(\pi d_{0,r})}{\pi(|z_{N}|+d_{0,r})}  \frac{(|z_{N}|+d_{0,r})^{2d_{0,r}}}{d_{0,r}^{2d_{0,r}}} \frac{\Gamma(d_{0,r}+1)}{\Gamma(d_{0,r}+1-2d_{0,r})} \label{eq:kernelappr3} \\
&\leq
c (|z_{N}|+d_{0,r})^{2d_{0,r}-1} \Big(\frac{d_{0,r}+1-2d_{0,r}}{d_{0,r}}\Big)^{2d_{0,r}}  \label{eq:kernelappr4} \\
&\leq
c |z_{N}|^{2d_{0,r}-1} 2.
\end{align}
The inequality \eqref{eq:kernelappr3} follows by using
\begin{equation} \label{eq:Qi1}
\frac{\Gamma(x+a)}{\Gamma(x+b)} < \frac{x^{a-b}}{x_{0}^{a-b}} \frac{\Gamma(x_{0}+a)}{\Gamma(x_{0}+b)}
\end{equation}
for $a>b \geq 0$ and $a+b \geq 1$ on $[x_{0},\infty)$ for any $x_{0}>0$; see (3.75) in \cite{Qi2012:Bounds2}. The inequality \eqref{eq:Qi1} is applied with $x=|z_{N}|+d_{0,r}$, $x_{0}=d_{0,r}$, $a=1$, $b=1-d_{0,r}$.
Furthermore, \eqref{eq:kernelappr4} follows from the inequality  
\begin{equation} \label{eq:Qi2}
\frac{\Gamma(x+a)}{x^{a}\Gamma(x)} \leq 1
\end{equation}
for $x>0$ and $a \in (0,1)$; see (2.2) in \cite{Qi2012:Bounds2}. The inequality \eqref{eq:Qi2} is applied with $x = 1-d_{0,r}$ and $a = 2d_{0,r}$.

For the integral in \eqref{eq:ineqx=y}, we get
\begin{equation*}
\frac{1}{2 \pi} \int^{\pi}_{-\pi} | \exp( i \omega ) -1|^{-2d_{0,r}} d \omega
=
\frac{\Gamma(1-2d_{0,r})}{\Gamma^{2}(1-d_{0,r})}
=: g(d_{0.r})
\leq
\frac{\Gamma(1-2\Delta_{2})}{\Gamma^{2}(1-\Delta_{2})},
\end{equation*}
where the equality follows by p. 665 in \cite{bottcher2007norms} as in \eqref{eq:kernelappr1} and the inequality since the function $g$ can be checked to be monotonically increasing in $d_{0,r}$.
\end{proof}

\section{Some additional technical results}
\label{s:appSTR}

This appendix concerns three different kinds of technical results, required to prove our main results. Section \ref{s:uniformCI} presents a uniform concentration inequality.
The two remaining Sections \ref{s:matrixnormine} and \ref{se:Dirichlet} give some inequalities on matrix norms and the Dirichlet kernel, respectively.

\subsection{Uniform concentration inequality}
\label{s:uniformCI}

For completeness, we give a slightly modified version of Theorem 1 in \cite{dicker2017} and explain the differences from the original formulation. 
\begin{theorem} \label{th:Dicker_Th1}
Let $0<R<\infty$ and $t_{1}(u),\dots , t_{m}(u)$ be real-valued on $[0,R]^{K} \subseteq \RR^{K}$ and differentiable on $(0,R)^{K} \subseteq \RR^{K}$ with bounded derivative.
Define $T(u)=\diag(t_{1}(u),\dots,t_{m}(u))$ and $Q(u)=VT(u)V'$, where $V$ is a $p \times m$ matrix. Let 
$\varepsilon=(\varepsilon_{1}, \dots , \varepsilon_{N})'$, where $\varepsilon_{1}, \dots , \varepsilon_{N}$ are independent mean $0$ sub-Gaussian random variables satisfying
\begin{equation} \label{eq:ACsubGgamma}
\max_{i=1,\dots,N} \| \varepsilon_{i} \|_{\phi} \leq \gamma
\end{equation}
for some constant $\gamma \in (0, \infty)$. Then, there exists a constant $C \in (0, \infty)$ such that
\begin{equation} \label{eq:thDicker_concineq}
\Prob \Big( \sup_{u \in [0,R]^{K}} | \varepsilon' Q(u) \varepsilon - \E (\varepsilon' Q(u) \varepsilon)  | > \nu \Big)
\leq
C \exp\Bigg( - \frac{1}{C} \min 
\Bigg\{ 
\frac{ \nu }{ \gamma^2 \mathcal{T} },
\frac{ \nu^2 }{ \gamma^4 \mathcal{T}_{i}^2 }
\Bigg\} \Bigg), 
\end{equation} 
for $\nu^2 \geq C \gamma^4 \mathcal{T}^2_{i} K^2$, $i=1,2$, where
\begin{equation*}
\mathcal{T}=\| V'V \| L_{1},
\hspace{0.2cm}
\mathcal{T}_{1}=\| V'V \| L_{2},
\hspace{0.2cm}
\mathcal{T}_{2}=\| V'V \|_{F} L_{1}
\end{equation*}
with
\begin{equation} \label{eq:L1L2_in_thC1}
L_{1} = \| T(0) \|+RK^{\frac{1}{2}} \sup_{u \in [0,R]^{K}} \| \bm{T}(u) \|
\hspace{0.2cm}
\text{ and }
\hspace{0.2cm}
L_{2} = \| T(0) \|_{F}+RK^{\frac{1}{2}} \sup_{u \in [0,R]^{K}} \| \bm{T}(u) \|_{F},
\end{equation}
where $\bm{T}(u) = \diag( \| \nabla t_{1}(u) \|_{F}, \dots, \| \nabla t_{m}(u) \|_{F}) $.
\end{theorem}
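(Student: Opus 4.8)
\textbf{Proof proposal for Theorem \ref{th:Dicker_Th1}.}

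The plan is to reduce the statement to the original Theorem 1 in \cite{dicker2017} by a careful inspection of the differences between the two formulations, so that only the modifications need to be justified. First I would note that the original result in \cite{dicker2017} is stated with $V$ a square symmetric matrix (or with $Q(u)$ acting directly on the $\varepsilon$-vector) and with the index cube $[0,R]^K$; here $V$ is rectangular $p\times m$ and enters only through $V'V$, which is symmetric positive semidefinite and governs the quadratic form $\varepsilon'Q(u)\varepsilon = \varepsilon' V T(u) V' \varepsilon$. Writing $W=V'\varepsilon$ does not help directly because $W$ is no longer sub-Gaussian with independent coordinates; instead the point is that all the quantities controlling the tail — the operator and Frobenius norms of the relevant matrices and the gradient bounds on $t_j$ — depend on $V$ only via $\|V'V\|$ and $\|V'V\|_F$. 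I would therefore trace through the proof of \cite{dicker2017}, which proceeds by (i) a chaining/net argument over the compact set $[0,R]^K$ using the Lipschitz property of $u\mapsto T(u)$ encoded in $\bm{T}(u)$, reducing the supremum to a pointwise bound plus an increment bound, and (ii) the Hanson–Wright inequality for quadratic forms of sub-Gaussian vectors applied at fixed $u$ and to increments. At step (ii), the Hanson–Wright bound for $\varepsilon' M \varepsilon - \E(\varepsilon'M\varepsilon)$ involves $\|M\|$ and $\|M\|_F$; here with $M = V T(u) V'$ one has $\|M\|\le \|V'V\|\,\|T(u)\|$ and $\|M\|_F \le \|V'V\|\,\|T(u)\|_F$ (and also $\|M\|_F\le \|V'V\|_F\,\|T(u)\|$), which is exactly how the $\mathcal{T}$, $\mathcal{T}_1$, $\mathcal{T}_2$ of the statement arise.

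The key steps in order would be: (1) set up the $\varepsilon$-net on $[0,R]^K$ at scale decreasing geometrically, with covering numbers $\lesssim (R/\delta)^K$, which is where the factor $K$ and the threshold condition $\nu^2 \ge C\gamma^4 \mathcal{T}_i^2 K^2$ enter; (2) bound the increment $\varepsilon'(Q(u)-Q(u'))\varepsilon - \E(\cdot)$ using $\|Q(u)-Q(u')\| \le \|V'V\| \|T(u)-T(u')\|$ and the mean-value bound $\|T(u)-T(u')\| \le \|u-u'\|\sup_u\|\bm T(u)\|$ (similarly in Frobenius norm), feeding into $L_1,L_2$ in \eqref{eq:L1L2_in_thC1}; (3) apply Hanson–Wright at the center point $u=0$ with the matrix $VT(0)V'$, producing the $\|T(0)\|$, $\|T(0)\|_F$ terms; (4) combine the net bound, increment bounds, and center bound by a union bound and a telescoping sum, optimizing over the net scale to get the stated two-regime (linear/quadratic in $\nu$) exponent. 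I would also remark explicitly, as the paper does in Remark \ref{re:Th1_diag}, that the argument is unaffected if $T(u)$ is replaced by a matrix that is a fixed unitary conjugate of a diagonal matrix, since operator and Frobenius norms and the gradient structure are unitarily invariant; this is the form in which the theorem is actually invoked in Lemma \ref{prop:supGhat}.

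The main obstacle I expect is bookkeeping rather than a genuinely new idea: one must verify that replacing the square $V$ of the original statement by a rectangular $V$ really does not require independence of $V'\varepsilon$, and that every place where the original proof used $\|V\|^2 = \|V'V\|$ or the Frobenius norm of the square matrix can be matched by $\|V'V\|$ or $\|V'V\|_F$ together with the corresponding norm of $T(u)$ or $\bm T(u)$; in particular one needs the two complementary submultiplicative bounds on $\|VT(u)V'\|_F$ (via $\|V'V\|_F\|T(u)\|$ and via $\|V'V\|\|T(u)\|_F$) to recover both $\mathcal{T}_1$ and $\mathcal{T}_2$, and one must check that the constant $C$ can be chosen uniformly. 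Since all of this is a transparent modification of \cite{dicker2017}, the proof in the appendix can legitimately be short: state the correspondence of quantities, invoke Hanson–Wright and the chaining lemma from \cite{dicker2017}, and note the unitary-invariance remark for the diagonalizable-up-to-unitary case.
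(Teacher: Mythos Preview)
Your proposal is correct and matches the paper's approach: the paper does not give a self-contained proof either, but simply records that the statement is a slight modification of Theorem~1 in \cite{dicker2017}, noting exactly the two points you identify --- the differentiability assumption in place of the Lipschitz condition, and the additional bound $\|VT(u)V'\|_F \le \|V'V\|_F\,\|T(u)\|$ (Lemma~\ref{le:normineq1}) yielding the alternative $\mathcal{T}_2$ --- and then defers to the chaining plus Hanson--Wright argument of \cite{dicker2017}; the unitary-invariance extension is likewise relegated to Remark~\ref{re:Th1_diag}. Your write-up is in fact more detailed than the paper's own discussion, and your caution about the rectangular $V$ is harmless since, as you note, Hanson--Wright is applied directly to $\varepsilon' V T(u) V' \varepsilon$ and only $\|V'V\|$ and $\|V'V\|_F$ enter.
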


The statement in Theorem \ref{th:Dicker_Th1} differs in two points from the original Theorem 1 stated in \cite{dicker2017}.
First, the condition that the functions $t_{i}(u), i=1 \dots, m$, are differentiable with bounded derivatives replaces a Lipschitz condition on the functions $t_{i}(u), i=1 \dots, m$; see (1) in \cite{dicker2017}. This assumption makes it more convenient to write our results.
Second, \cite{dicker2017} get $\mathcal{T}_{1}$ in the bound, which uses the fact that $\| Q(u) \|_{F} = \| VT(u)V' \|_{F} \leq \| VV' \| \| T(u) \|_{F}$; see Lemma \ref{le:normineq1}. In some situations, it turns out to be helpful to consider $\mathcal{T}_{2}$, which is a consequence of the inequality
\begin{equation*}
\| Q(u) \|_{F} = \| VT(u)V' \|_{F} \leq \| VV' \|_{F} \| T(u) \|;
\end{equation*}
see again Lemma \ref{le:normineq1}.
Following the proof of Theorem 1 in \cite{dicker2017}, those changes yield the concentration inequality in \eqref{eq:thDicker_concineq} with $i=2$.

We conclude with a remark which comments on a possibility to slightly generalize the results in Theorem \ref{th:Dicker_Th1}.

\begin{remark} \label{re:Th1_diag}
Theorem \ref{th:Dicker_Th1} is stated in terms of the matrix $Q(u)=VT(u)V'$ under the assumption that $T(u)$ is a diagonal matrix. The proof of Theorem \ref{th:Dicker_Th1} relies on a chaining technique with a subsequent application of the Hanson-Wright inequality \cite[Theorem 1.1]{rudelson2013hanson}. The Hanson-Wright inequality is applicable for arbitrary matrices. In particular, it does not require symmetricity or diagonality. However, in order to bound the spectral and Frobenius norms of $T(u) -T(u')$, $u,u' \in [0,R]^K$ in the proof of Theorem \ref{th:Dicker_Th1} in terms of the quantities in \eqref{eq:L1L2_in_thC1}, we need to impose some structural assumptions. As stated in Theorem \ref{th:Dicker_Th1}, \cite{dicker2017} required $T(u)$ to be diagonal. This assumption can be slightly relaxed by supposing that there exists a representation $T(u) = A' \widetilde{T}(u) B$ with $A,B \in \RR^{m \times m}$ unitary and independent of $u$ and $\widetilde{T}(u)$ diagonal since the Frobenius norm and the spectral norm are invariant under unitary transformation.
\end{remark}

\subsection{Matrix norm inequalities}
\label{s:matrixnormine}

For completeness, we collect here some results on matrix norms.

\begin{lemma} \label{le:normineq1}
Let $B$ be a positive semidefinite matrix. Then,
\begin{equation*}
\| ABA'\|_{F}
\leq \| B \| \| A'A\|_{F}
\hspace{0.2cm}
\text{ and }
\hspace{0.2cm}
\| ABA'\|_{F}
\leq \| A'A \| \| B \|_{F}.
\end{equation*}
\end{lemma}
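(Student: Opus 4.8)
\textbf{Proof plan for Lemma \ref{le:normineq1}.} The plan is to treat the two inequalities separately, both via the singular value decomposition of $A$ combined with unitary invariance of the Frobenius norm. Write $A = U \Sigma V^{*}$ with $U, V$ unitary and $\Sigma$ diagonal with nonnegative entries $\sigma_{1}, \dots, \sigma_{n}$. For the first inequality, observe that $\| ABA' \|_{F} = \| U \Sigma V^{*} B \overline{V} \Sigma U^{*} \|_{F} = \| \Sigma (V^{*} B \overline{V}) \Sigma \|_{F}$ by unitary invariance. Writing $C = V^{*} B \overline{V}$, which is positive semidefinite with $\| C \| = \| B \|$, the $(r,s)$ entry of $\Sigma C \Sigma$ is $\sigma_{r} C_{rs} \sigma_{s}$, so $\| \Sigma C \Sigma \|_{F}^{2} = \sum_{r,s} \sigma_{r}^{2} \sigma_{s}^{2} |C_{rs}|^{2}$. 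The key step is then to bound $|C_{rs}|^{2}$: since $C \succcurlyeq 0$, one has $|C_{rs}| \leq \sqrt{C_{rr} C_{ss}} \leq \| C \|$ componentwise, but more usefully $\sum_{r,s} \sigma_{r}^{2}\sigma_{s}^{2}|C_{rs}|^{2}$ should be compared to $\| C \|^{2} \| \Sigma^{2} \|_{F}^{2} = \| C \|^{2} \sum_{r,s}\sigma_{r}^{2}\sigma_{s}^{2}$.

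The cleaner route for the first inequality is to use submultiplicativity-type bounds at the operator level: $\| ABA' \|_{F} \leq \| B \| \, \| A \overline{A'} \|_{F}$? That is not immediate, so instead I would argue $\| ABA' \|_{F} = \| \Sigma C \Sigma \|_{F} \leq \| C \| \, \| \Sigma^{2} \|_{F} = \| B \| \, \| \Sigma \Sigma^{*} \|_{F}$, and since $\| A'A \|_{F} = \| \overline{V} \Sigma^{2} V^{T} \|_{F} = \| \Sigma^{2} \|_{F}$ (unitary invariance again, noting $A'A = \overline{A^{*}} A' $ has the same singular values as $\Sigma^{2}$), this yields $\| ABA' \|_{F} \leq \| B \| \, \| A'A \|_{F}$. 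The intermediate inequality $\| \Sigma C \Sigma \|_{F} \leq \| C \| \, \| \Sigma^{2} \|_{F}$ follows from the fact that for any matrix $M$ and positive semidefinite diagonal $\Sigma$, $\| \Sigma M \Sigma \|_{F}^{2} = \sum_{r,s} \sigma_{r}^{2}\sigma_{s}^{2}|M_{rs}|^{2} \leq \| M \|^{2} \sum_{r,s}\sigma_{r}^{2}\sigma_{s}^{2}$; here one uses $|M_{rs}| \leq \| M \|$ which holds for any matrix, or more carefully the Schur-type bound for the weighted sum.

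For the second inequality, the same SVD reduction gives $\| ABA' \|_{F} = \| \Sigma C \Sigma \|_{F}$ with $C = V^{*}B\overline{V}$, $\| C \|_{F} = \| B \|_{F}$. Now bound $\| \Sigma C \Sigma \|_{F} \leq \| \Sigma \|^{2} \| C \|_{F}$? That would give $\| A \|^{2} \| B \|_{F}$, which is too weak. Instead one wants $\| A'A \| = \| \Sigma^{2}\|$ (not $\| \Sigma \|^{2}$, though these are equal) — wait, $\| A'A \| = \sigma_{\max}^{2} = \| A \|^{2}$, so actually $\| ABA' \|_{F} \leq \| A \|^{2} \| B \|_{F} = \| A'A \| \, \| B \|_{F}$ directly, using the elementary bound $\| \Sigma C \Sigma \|_{F} \leq \sigma_{\max}(\Sigma)^{2} \| C \|_{F}$ which is just $\| \Sigma M \|_{F} \leq \| \Sigma \| \, \| M \|_{F}$ applied twice. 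This is standard submultiplicativity of the mixed spectral-Frobenius norm and needs no positive-semidefiniteness at all for the second inequality.

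\textbf{Main obstacle.} The only genuinely delicate point is the first inequality, specifically justifying $\| \Sigma C \Sigma \|_{F} \leq \| C \| \, \| \Sigma^{2} \|_{F}$, i.e.\ that the spectral norm of the ``inner'' matrix controls the Frobenius norm after conjugation by $\Sigma$, with the $\Sigma$-contribution measured in Frobenius norm. This is where positive semidefiniteness of $B$ (hence of $C$) is actually used, or alternatively one uses the general bound $|C_{rs}| \le \|C\|$ valid for all matrices; I would spell out $\| \Sigma C \Sigma \|_{F}^{2} = \sum_{r,s} \sigma_{r}^{2} \sigma_{s}^{2} |C_{rs}|^{2} \le \|C\|^{2} \sum_{r,s} \sigma_{r}^{2}\sigma_{s}^{2} = \|C\|^{2} \|\Sigma^2\|_{F}^{2}$, and then identify $\|\Sigma^2\|_{F} = \|A'A\|_{F}$ and $\|C\| = \|B\|$ via unitary invariance. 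Everything else is routine bookkeeping with the SVD and unitary invariance of both norms.
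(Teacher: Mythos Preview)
Your argument for the second inequality is fine: $\|ABA'\|_{F}\le\|A\|\,\|BA'\|_{F}\le\|A\|\,\|A'\|\,\|B\|_{F}=\|A\|^{2}\|B\|_{F}=\|A'A\|\,\|B\|_{F}$ is standard submultiplicativity and, as you note, does not need $B\succcurlyeq0$. The paper instead uses a trace inequality twice, but your route is cleaner.

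Your argument for the first inequality has a genuine error. After the SVD reduction you write
\[
\|\Sigma C\Sigma\|_{F}^{2}=\sum_{r,s}\sigma_{r}^{2}\sigma_{s}^{2}|C_{rs}|^{2}
\le \|C\|^{2}\sum_{r,s}\sigma_{r}^{2}\sigma_{s}^{2}
= \|C\|^{2}\,\|\Sigma^{2}\|_{F}^{2}.
\]
The last equality is false: $\sum_{r,s}\sigma_{r}^{2}\sigma_{s}^{2}=\bigl(\sum_{r}\sigma_{r}^{2}\bigr)^{2}=\|A\|_{F}^{4}$, whereas $\|\Sigma^{2}\|_{F}^{2}=\sum_{r}\sigma_{r}^{4}$. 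In general $\|A\|_{F}^{2}\ge\|A'A\|_{F}$ (strictly, unless all singular values coincide), so your entrywise bound only delivers the weaker $\|ABA'\|_{F}\le\|B\|\,\|A\|_{F}^{2}$, not the claimed $\|B\|\,\|A'A\|_{F}$. The crude bound $|C_{rs}|\le\|C\|$ is not sharp enough here, and this is precisely the place where positive semidefiniteness of $B$ must enter.

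The paper's proof avoids the SVD and works directly with traces: writing $\|ABA'\|_{F}^{2}=\tr(BA'ABA'A)$ and applying the inequality $\tr(XY)\le\lambda_{\max}(X)\tr(Y)$ for $X,Y\succcurlyeq0$ twice (with $X=B$) yields $\|ABA'\|_{F}^{2}\le\lambda_{\max}(B)^{2}\tr((A'A)^{2})=\|B\|^{2}\|A'A\|_{F}^{2}$. If you want to salvage the SVD route, note that $\|\Sigma C\Sigma\|_{F}^{2}=\tr\bigl((\Sigma^{2})^{1/2}C(\Sigma^{2})^{1/2}\bigr)^{2}$ and use $D^{1/2}CD^{1/2}\preccurlyeq\|C\|\,D$ with $D=\Sigma^{2}$ to get $\lambda_{i}(D^{1/2}CD^{1/2})\le\|C\|\lambda_{i}(D)$, hence $\|\Sigma C\Sigma\|_{F}^{2}\le\|C\|^{2}\tr(D^{2})=\|C\|^{2}\|\Sigma^{2}\|_{F}^{2}$; but this is essentially the same eigenvalue--trace argument in disguise.
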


\begin{proof}
The Frobenius norm can be rewritten as
\begin{equation*}
\begin{aligned}
\| ABA'\|_{F}^2
&= \tr( (ABA')' ABA')
= \tr( BA' ABA' A)
\\&
\leq \lambda_{\max}(B) \tr( BA' A A' A)
\\&
\leq \lambda_{\max}(B)^2 \| A'A \|_{F}^2
= \| B \|^2 \| A'A \|^{2}_{F};
\end{aligned}
\end{equation*}
see Theorem 1 in \cite{Fang} for the eigenvalue-trace inequality in the second line.
Similarly,
\begin{equation*}
\begin{aligned}
\| ABA'\|_{F}^2
&= \tr( (ABA')' ABA')
= \tr( A' ABA' A B)
\\&
\leq \lambda_{\max}(A' A) \tr( A' A B^2)
\\&
\leq \lambda_{\max}(A' A)^2 \| B \|_{F}^2
= \| A'A \|^2 \| B\|_{F}^2.
\end{aligned}
\end{equation*}
\end{proof}

\begin{lemma} \label{le:normineq2}
Let $M$ be a positive semidefinite block-matrix
\begin{equation*}
M=
\begin{pmatrix}
A & X\\
X' & B
\end{pmatrix}.
\end{equation*}
Then,
\begin{equation*}
\| M \| \leq \| A \| + \| B \|
\hspace{0.2cm}
\text{ and }
\hspace{0.2cm}
\| M \|_{F} \leq \| A \|_{F} + \| B \|_{F}.
\end{equation*}
\end{lemma}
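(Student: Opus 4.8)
The plan is to exploit a factorization of the positive semidefinite matrix $M$. Since $M \succcurlyeq 0$, one can write $M = CC'$ for some matrix $C$ (for instance $C = M^{1/2}$), and partition $C = \begin{pmatrix} C_{1} \\ C_{2} \end{pmatrix}$ conformably with the $2\times 2$ block structure of $M$, so that $A = C_{1}C_{1}'$, $B = C_{2}C_{2}'$ and $X = C_{1}C_{2}'$. I would also record at the outset the elementary facts that, for any matrix $P$, the products $P'P$ and $PP'$ share the same nonzero eigenvalues and singular values, hence $\|P'P\| = \|PP'\|$ and $\|P'P\|_{F} = \|PP'\|_{F}$.

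For the spectral norm, the key observation is that $\|M\| = \|CC'\| = \|C\|^{2} = \|C'C\|$ together with $C'C = C_{1}'C_{1} + C_{2}'C_{2}$. Then the triangle inequality for the spectral norm gives $\|M\| \leq \|C_{1}'C_{1}\| + \|C_{2}'C_{2}\| = \|C_{1}C_{1}'\| + \|C_{2}C_{2}'\| = \|A\| + \|B\|$, which is the first claim.

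For the Frobenius norm, I would first expand the block structure into $\|M\|_{F}^{2} = \|A\|_{F}^{2} + 2\|X\|_{F}^{2} + \|B\|_{F}^{2}$, which reduces the claim to the cross-term bound $\|X\|_{F}^{2} \leq \|A\|_{F}\,\|B\|_{F}$; once this is in hand, $\|M\|_{F}^{2} \leq (\|A\|_{F} + \|B\|_{F})^{2}$ follows. To get the cross-term bound, write $P = C_{1}'C_{1}$ and $Q = C_{2}'C_{2}$, both positive semidefinite, and note $\|X\|_{F}^{2} = \tr(C_{1}C_{2}'C_{2}C_{1}') = \tr(C_{1}'C_{1}C_{2}'C_{2}) = \tr(PQ)$ by cyclicity of the trace. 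Cauchy--Schwarz for the Frobenius inner product, $\tr(PQ) = \langle P, Q\rangle_{F} \leq \|P\|_{F}\|Q\|_{F}$, together with $\|P\|_{F} = \|C_{1}'C_{1}\|_{F} = \|C_{1}C_{1}'\|_{F} = \|A\|_{F}$ and the analogous identity for $Q$, then completes the argument.

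The argument is essentially routine, so there is no serious obstacle; the one place to be careful is that the blocks $C_{1}, C_{2}$ are in general rectangular, so one should argue through the spectrum/singular-value identities for $P'P$ versus $PP'$ rather than manipulating $A$, $B$, $X$ directly. Conceptually, the only place where positive semidefiniteness of $M$ is genuinely used is in producing the factorization that links $X$ to $A$ and $B$, which is exactly what makes the cross-term bound $\|X\|_{F}^{2} \leq \|A\|_{F}\|B\|_{F}$ available; without such a link the Frobenius statement would fail.
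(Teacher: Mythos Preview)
Your argument is correct. The factorization $M=CC'$ with the partition $C=\begin{pmatrix}C_{1}\\ C_{2}\end{pmatrix}$ cleanly yields $\|M\|=\|C'C\|=\|C_{1}'C_{1}+C_{2}'C_{2}\|\le\|A\|+\|B\|$, and for the Frobenius norm the cross-term bound $\|X\|_{F}^{2}=\tr(PQ)\le\|P\|_{F}\|Q\|_{F}=\|A\|_{F}\|B\|_{F}$ is valid since $P,Q$ are symmetric and the identity $\|C_{i}'C_{i}\|_{F}=\|C_{i}C_{i}'\|_{F}$ holds because the two products share nonzero singular values.

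The paper takes a different route: it invokes Lemma~1.1 of Bourin and Lee (2012), which asserts that for a positive semidefinite block matrix $M$ there exist unitaries $U,V$ with
\[
M \;=\; U\begin{pmatrix}A&0\\0&0\end{pmatrix}U^{*} \;+\; V\begin{pmatrix}0&0\\0&B\end{pmatrix}V^{*},
\]
and then both inequalities follow in one stroke from the triangle inequality and unitary invariance. The advantage of the paper's approach is scope: the decomposition immediately yields $\|M\|\le\|A\|+\|B\|$ for \emph{every} unitarily invariant norm, not just the spectral and Frobenius cases. Your approach, by contrast, is self-contained and uses nothing beyond Cauchy--Schwarz and the spectral identity $\|P'P\|=\|PP'\|$, so it is arguably more transparent for the two specific norms needed here, at the cost of treating them separately.
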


\begin{proof}
The inequalities are consequences of Lemma 1.1 in \cite{BOURIN20121906} and the unitary invariance of
the spectral and the Frobenius norm.
\end{proof}

\subsection{Dirichlet kernel} \label{se:Dirichlet}
In this section, we present some results regarding the Dirichlet kernel in \eqref{eq:def:KandD}.
\begin{lemma} \label{eq:integral_Dirichlet}
The Dirichlet kernel in \eqref{eq:def:KandD} satisfies
\begin{equation*}
\int_{\frac{\lambda_{j}}{2}}^{2\lambda_{j}} | D_{N}(\lambda - \lambda_{j}) | d\lambda
\leq
4 (\pi +2)
+ 4 \log(j).
\end{equation*}
\end{lemma}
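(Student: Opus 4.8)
The goal is to bound $\int_{\lambda_j/2}^{2\lambda_j} |D_N(\lambda-\lambda_j)|\,d\lambda$. The plan is to substitute $u = \lambda - \lambda_j$, which turns the integral into $\int_{-\lambda_j/2}^{\lambda_j} |D_N(u)|\,du$. Recall $\lambda_j = 2\pi j/N$, so $\lambda_j/2 = \pi j/N$. By symmetry of $|D_N|$, this is bounded by $2\int_0^{\lambda_j} |D_N(u)|\,du$. Then I would split the range of integration at the point $u = 2\pi/N = \lambda_1$ (the first Fourier frequency, which is comparable to $1/N$), treating the small-argument regime $u \le \lambda_1$ and the regime $\lambda_1 \le u \le \lambda_j$ separately.

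\textbf{Small-argument regime.} For $|u| \le \lambda_1 = 2\pi/N$, use the trivial bound $|D_N(u)| = \left|\sum_{n=1}^N e^{inu}\right| \le N$. Hence $\int_0^{\lambda_1} |D_N(u)|\,du \le N \cdot \lambda_1 = N \cdot \frac{2\pi}{N} = 2\pi$, which contributes at most $2 \cdot 2\pi = 4\pi$ after accounting for the factor $2$ from symmetry.

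\textbf{Large-argument regime.} For $\lambda_1 \le u \le \lambda_j$, use the inequality \eqref{eq:Dirichlet_inequality}, namely $|D_N(u)| \le 2|u|^{-1}$ on $(-\pi,\pi)\setminus\{0\}$ (valid since $\lambda_j \le \pi$ for $j \le N/2$). Then $\int_{\lambda_1}^{\lambda_j} |D_N(u)|\,du \le \int_{\lambda_1}^{\lambda_j} 2u^{-1}\,du = 2\log(\lambda_j/\lambda_1) = 2\log(j)$. After the symmetry factor, this contributes at most $4\log(j)$.

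\textbf{Assembling the pieces.} Combining, $\int_{\lambda_j/2}^{2\lambda_j} |D_N(\lambda-\lambda_j)|\,d\lambda \le 2\int_0^{\lambda_j}|D_N(u)|\,du \le 2\big(2\pi + 2\log j\big) = 4\pi + 4\log j$. This is slightly tighter than, and hence implies, the claimed bound $4(\pi+2) + 4\log(j)$; one can simply note $4\pi + 4\log j \le 4(\pi+2) + 4\log j$. I do not anticipate a serious obstacle here; the only mild care needed is in bookkeeping the substitution and the symmetry reflection of $|D_N|$ correctly (the interval $[-\lambda_j/2,\lambda_j]$ after the shift is not symmetric, but it is contained in $[-\lambda_j,\lambda_j]$, so bounding by $2\int_0^{\lambda_j}$ is legitimate), and in making sure $\lambda_j \le \pi$ so that \eqref{eq:Dirichlet_inequality} applies — this holds because only $m \le N/2$ frequencies are used, so $j \le m \le N/2$ and $\lambda_j = 2\pi j/N \le \pi$.
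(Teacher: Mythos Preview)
Your proof is correct and in fact more elementary than the paper's. Both arguments start identically: substitute $u=\lambda-\lambda_j$, use evenness of $|D_N|$ to bound the asymmetric interval by $2\int_0^{\lambda_j}|D_N(u)|\,du$. From there the approaches diverge. The paper multiplies and divides by $u$, bounds $u/\sin(u/2)\le\pi$, then splits $[0,\lambda_j]$ into the $j$ subintervals $[\lambda_i,\lambda_{i+1}]$, exploits the $\pi$-periodicity of $|\sin(Nu/2)|$ to shift each piece back to $[0,\lambda_1]$, and finally harvests the harmonic sum $\sum_{i=1}^{j-1}1/\lambda_i$. You instead split only once at $\lambda_1$, use the trivial bound $|D_N|\le N$ on $[0,\lambda_1]$ and the paper's own inequality \eqref{eq:Dirichlet_inequality} on $[\lambda_1,\lambda_j]$, obtaining $4\pi+4\log j$ directly. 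Your route is shorter, avoids the periodicity and harmonic-sum manipulations, and yields a constant $4\pi$ that is strictly smaller than the paper's $4(\pi+2)$; the paper's more elaborate decomposition does not buy any improvement here. Your care about the asymmetric interval and about $\lambda_j\le\pi$ (so that \eqref{eq:Dirichlet_inequality} applies) is appropriate; note that the paper's standing assumption $m\le N/2-1$ ensures $\lambda_j<\pi$ strictly.
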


\begin{proof}
Using the representation \eqref{eq:def:KandD} of the Dirichlet kernel, a series of inequalities lead\ to
\begin{align*}
\int_{\frac{\lambda_{j}}{2}}^{2\lambda_{j}} | D_{N}(\lambda - \lambda_{j}) | d\lambda
&=
\int_{\frac{\lambda_{j}}{2}}^{2\lambda_{j}} \left| \frac{ \sin(N(\lambda - \lambda_{j})/2) }{\sin((\lambda - \lambda_{j})/2)} \right| d\lambda
=
\int_{-\frac{\lambda_{j}}{2}}^{\lambda_{j}} \left| \frac{ \sin(N \lambda /2) }{\sin(\lambda/2)} \right| d\lambda
\\&=
2\int_{0}^{\lambda_{j}} \left| \frac{ \lambda }{\sin(\lambda/2)} \frac{ \sin(N \lambda /2) }{\lambda} \right| d\lambda
\leq
2 \frac{ \lambda_{j} }{\sin(\lambda_{j}/2)} \int_{0}^{\lambda_{j}} \left| \frac{ \sin(N \lambda /2) }{\lambda} \right| d\lambda
\\&=
2 \frac{ \lambda_{j} }{\sin(\lambda_{j}/2)} 
\sum_{i=0}^{j-1} \int_{\lambda_{i}}^{\lambda_{i+1}} \left| \frac{ \sin(N \lambda /2) }{\lambda} \right| d\lambda
\leq
2 \pi
\sum_{i=0}^{j-1} \int_{0}^{\lambda_{1}} \left| \frac{ \sin(N (\lambda + \lambda_{i}) /2) }{\lambda+\lambda_{i}} \right| d\lambda
\\&=
2 \pi
\Big( \int_{0}^{\lambda_{1}} \left| \frac{ \sin(N \lambda /2) }{\lambda} \right| d\lambda
+
\sum_{i=1}^{j-1} \int_{0}^{\lambda_{1}} \left| \frac{ \sin(N (\lambda + \lambda_{i}) /2) }{\lambda+\lambda_{i}} \right| d\lambda \Big)
\\&=
2 \pi
\Big( \int_{0}^{\pi} \left| \frac{ \sin(\lambda) }{\lambda} \right| d\lambda
+
\int_{0}^{\lambda_{1}} \sin(N \lambda /2) \sum_{i=1}^{j-1} \frac{ 1 }{\lambda+\lambda_{i}} d\lambda \Big)
\\&\leq
4 \pi
+ 2\pi
\int_{0}^{\lambda_{1}} \sin(N \lambda /2) d\lambda \sum_{i=1}^{j-1} \frac{ 1 }{\lambda_{i}} 
=
4 \pi
+ 2\pi \frac{4}{N} \sum_{i=1}^{j-1} \frac{ 1 }{\lambda_{i}} 
\\&\leq
4 (\pi +1)
+ 4 \sum_{i=2}^{j} \frac{ 1 }{i}
\leq
4 (\pi +1)
+ 4 \int_{1}^{j} \frac{ 1 }{x} dx
\\&\leq
4 (\pi +1)
+ 4 (\log(j) +1 )
\\&=
4 (\pi +2)
+ 4 \log(j).
\end{align*}
\end{proof}

\begin{lemma} \label{eq:sup_Dirichlet}
The Dirichlet kernel \eqref{eq:def:KandD} satisfies
\begin{equation} \label{eq:Dirichlet_relations}
\sup_{\frac{\lambda_{j}}{2} \leq \lambda \leq \pi} \left( \frac{|D_{N}(\lambda + \lambda_{j})|}{|D_{N}(\lambda)|}\right)^2 \leq (\cos(\lambda_{m}/2))^{-2}
\hspace{0.2cm}
\text{ and }
\hspace{0.2cm}
\sup_{2\lambda_{j} \leq \lambda \leq \pi} \left( \frac{|D_{N}(\lambda - \lambda_{j})|}{|D_{N}(\lambda)|}\right)^2 \leq 4.
\end{equation}
\end{lemma}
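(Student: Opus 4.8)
The plan is to reduce both ratios to ratios of sines of the \emph{denominators} of the Dirichlet kernel, using that the Fourier frequencies $\lambda_j=2\pi j/N$ make the numerator $|\sin(N\lambda/2)|$ invariant under the shift $\lambda\mapsto\lambda\pm\lambda_j$. Concretely, first I would note that $N\lambda_j/2=\pi j\in\pi\ZZ$, so $\sin(N(\lambda\pm\lambda_j)/2)=\sin(N\lambda/2\pm\pi j)=(-1)^j\sin(N\lambda/2)$, whence $|\sin(N(\lambda\pm\lambda_j)/2)|=|\sin(N\lambda/2)|$. Combining this with the representation \eqref{eq:def:KandD} of $D_N$, for every $\lambda$ with $D_N(\lambda)\neq0$ one gets
\[
\frac{|D_N(\lambda\pm\lambda_j)|}{|D_N(\lambda)|}=\frac{|\sin(\lambda/2)|}{|\sin((\lambda\pm\lambda_j)/2)|},
\]
and it remains to bound the latter expression on the respective intervals, where (one checks, using $\lambda_j\le\lambda_m<\pi$ since $m\le N/2-1$) numerator and denominator are strictly positive.

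For the first inequality, on $\lambda\in[\lambda_j/2,\pi]$ I would set $u=\lambda/2\in(0,\pi/2]$ and $v=\lambda_j/2$ and use $\sin(u+v)=\sin u\cos v+\cos u\sin v$ to write $\sin u/\sin(u+v)=(\cos v+\cot u\,\sin v)^{-1}$. Since $\cot u\ge0$ on $(0,\pi/2]$ and $\sin v>0$, this is at most $1/\cos v$; and because $v=\lambda_j/2\le\lambda_m/2<\pi/2$ and $\cos$ is decreasing on $[0,\pi/2]$, we have $\cos v\ge\cos(\lambda_m/2)$. Squaring gives the first bound in \eqref{eq:Dirichlet_relations}. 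For the second inequality, on $\lambda\in[2\lambda_j,\pi]$ I would set $u=\lambda/2\in[\lambda_j,\pi/2]$ and $v=\lambda_j/2$, so $u-v\in(0,\pi/2)$ and $\sin(u-v)>0$, and write $\sin u/\sin(u-v)=(\cos v-\cot u\,\sin v)^{-1}$. The map $u\mapsto\cos v-\cot u\,\sin v$ is increasing (as $\cot$ is decreasing and $\sin v>0$), so its minimum over $[\lambda_j,\pi/2]$ is attained at $u=\lambda_j=2v$, where a double-angle computation gives $\cos v-\cot(2v)\sin v=(2\cos^2v-\cos 2v)/(2\cos v)=1/(2\cos v)\ge1/2$. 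Hence $\sin u/\sin(u-v)\le2$, and squaring gives the bound $4$.

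There is no genuine obstacle here beyond bookkeeping: the one subtlety is that $D_N$ vanishes at $\lambda=\lambda_k=2\pi k/N$, so the ratios in the statement are formally $0/0$ at finitely many points of the supremum range. This is harmless because the lemma is only invoked inside integrals (Lemmas \ref{le:int_lambda_Dirichlet} and \ref{le:int_lambda_Dirichlet_02}), where this null set is irrelevant; accordingly I would phrase the reduction step so that the sine identity — and hence the bound — holds for all $\lambda$ off this finite set (or, equivalently, extend the ratio by its continuous limit).
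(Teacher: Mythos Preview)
Your proof is correct and follows essentially the same route as the paper's: both use the key observation that $N\lambda_j/2\in\pi\ZZ$ makes $|\sin(N(\lambda\pm\lambda_j)/2)|=|\sin(N\lambda/2)|$, reducing each ratio to $|\sin(\lambda/2)|/|\sin((\lambda\pm\lambda_j)/2)|$, and then bound this sine ratio on the given interval. The only cosmetic difference is that the paper argues monotonicity of the resulting function and evaluates at the appropriate endpoint ($\lambda=\pi$ for the first, $\lambda=2\lambda_j$ for the second), whereas you use the angle-addition formula to bound the ratio directly; both arrive at exactly $1/\cos(\lambda_j/2)\le1/\cos(\lambda_m/2)$ and $2\cos(\lambda_j/2)\le2$ respectively.
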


\begin{proof}
Though very similar, we prove the two inequalities in \eqref{eq:Dirichlet_relations} separately. Using the representation \eqref{eq:def:KandD} of the Dirichlet kernel leads to
\begin{align}
\sup_{\frac{\lambda_{j}}{2} \leq \lambda \leq \pi} \left( \frac{|D_{N}(\lambda + \lambda_{j})|}{|D_{N}(\lambda)|}\right)^2
&=
\sup_{\frac{\lambda_{j}}{2} \leq \lambda \leq \pi} \left( \frac{| \sin(N(\lambda + \lambda_{j})/2) \sin(\lambda/2) |}{|\sin(N\lambda/2)\sin((\lambda + \lambda_{j})/2)|}\right)^2 \nonumber
\\&=
\sup_{\frac{\lambda_{j}}{2} \leq \lambda \leq \pi} \left( \frac{| \sin(N \lambda/2) \sin(\lambda/2) |}{|\sin(N\lambda/2)\sin((\lambda + \lambda_{j})/2)|}\right)^2 \label{eq:ccc}
\\&=
\sup_{\frac{\lambda_{j}}{2}\leq \lambda \leq \pi} \left( \frac{| \sin(\lambda/2) |}{|\sin((\lambda + \lambda_{j})/2)|}\right)^2 \nonumber
=:
\sup_{\frac{\lambda_{j}}{2}\leq \lambda \leq \pi} f_{1}(\lambda) \nonumber
\\&=
\left( \frac{1}{|\sin(\pi/2+\lambda_{j}/2)|}\right)^2 \label{eq:aaa}
\leq 
\left( \frac{1}{|\sin((\pi+\lambda_{m})/2)|}\right)^2 \nonumber
\\&=
(\cos(\lambda_{m}/2))^{-2}, 
\end{align}
where \eqref{eq:ccc} follows since $|\sin(\lambda)|$ is $\pi$-periodic. The inequality \eqref{eq:aaa} follows since the the function $f_{1}(\lambda)$ can be checked to be monotonically increasing on the interval $(\frac{\lambda_{j}}{2}, \pi)$

Similarly, 
\begin{align}
\sup_{2\lambda_{j} \leq \lambda \leq \pi} \left( \frac{|D_{N}(\lambda - \lambda_{j})|}{|D_{N}(\lambda)|}\right)^2
&=
\sup_{2\lambda_{j} \leq \lambda \leq \pi} \left( \frac{| \sin(N(\lambda - \lambda_{j})/2) \sin(\lambda/2) |}{|\sin(N\lambda/2)\sin((\lambda - \lambda_{j})/2)|}\right)^2 \nonumber
\\&=
\sup_{2\lambda_{j} \leq \lambda \leq \pi} \left( \frac{| \sin(N \lambda/2) \sin(\lambda/2) |}{|\sin(N\lambda/2)\sin((\lambda - \lambda_{j})/2)|}\right)^2 \nonumber
\\&=
\sup_{2\lambda_{j} \leq \lambda \leq \pi} \left( \frac{| \sin(\lambda/2) |}{|\sin((\lambda - \lambda_{j})/2)|}\right)^2 \nonumber
=:
\sup_{2\lambda_{j} \leq \lambda \leq \pi} f_{2}(\lambda) \nonumber
\\&=
\left( \frac{| \sin(\lambda_{j}) |}{|\sin(\lambda_{j}/2)|}\right)^2 \leq 4, \label{eq:aaaaa} 
\end{align}
where the inequality \eqref{eq:aaaaa} follows since the function $f_{2}(\lambda)$ 
is monotonically decreasing on the interval $(2\lambda_{j}, \pi)$.
\end{proof}

\section{Results and proofs for linear processes}
\label{se:linearprocesses}

In this section, we provide an extension of our results for linear processes $X_{n} = \sum_{j \in \ZZ} \VPsi_{j}\varepsilon_{n-j}$ with $\sum_{j\in \ZZ} \| \VPsi_{j} \|_{F}^{2} < \infty$ and sub-Gaussian innovations. 

\begin{remark} \label{re:different_innovations}
It is certainly conceivable to consider other innovations such as sub-exponential or with finite fourth moments. 
Concentration inequalities for i.i.d.\ random vectors which are either sub-Gaussian, sub-exponential or have finite fourth moments are respectively available in Section 5.2.3 in \cite{rudelson2013hanson}, Lemma 8.3 in \cite{erdHos2012bulk} and Lemma 4.1 in  \cite{Sun2018:LargeSpectral}. 
However, our proofs require a uniform concentration inequality stated in Theorem \ref{th:Dicker_Th1}.
The original proof of Theorem \ref{th:Dicker_Th1} in \cite{dicker2017} involves chaining techniques and subsequent application of the Hanson-Wright inequality for sub-Gaussian processes. Replacing the Hanson-Wright inequality by the respective inequalities for sub-exponential variables or data with finite fourth moments, would lead to uniform concentration inequalities as well.
\end{remark}

For completeness, we restate Lemma \ref{prop:supGhat} here for linear processes. The result only differs from the one for Gaussian time series by a general constant $\gamma$ which is determined by the sub-Gaussian innovations of the linear process in \eqref{eq:ACsubGgamma}.
\begin{lemma} \label{prop:supGhat_linear}
Suppose $\{X_{n}\}_{n \in \ZZ}$ can be represented as a $p$-dimensional linear process with spectral density $f_{X}$ as in \eqref{eq:f}.
Then, there are positive constants $c_{1},c_{2}$ such that 
\begin{equation} \label{eq:concreal1}
\begin{aligned}
&\Prob \Big( \sup_{D \in \Omega } 
| \widehat{H}_{rs}(D) - \E \widehat{H}_{rs}(D) | > 
\vertiii{G} \nu \Big) \leq 
\mathcal{B}(r,s,i),
\hspace{0.2cm} i =2,\dots,5,
\end{aligned}
\end{equation}
for $\nu^2 \geq \gamma^4 L^2_{rs,i}/ (m^2 c_{2} )$, where
\begin{equation} \label{eq:mathcalB}
\mathcal{B}(r,s,i)=
c_{1} \exp\Bigg( -c_{2} \min 
\Bigg\{ 
\frac{ \nu m }{ \gamma^2 \widebar{\Delta}_{N} L_{rs,1} },
\frac{ \nu^2 m^2 }{ \gamma^4 \widebar{\Delta}_{N}^{2} L_{rs,i}^2 }
\Bigg\} \Bigg) 
\end{equation}
with $r \neq s$ if $i=3$ and $\Omega$ is given in \eqref{eq:OmegaAB}. The constant $\gamma$ bounds the sub-Gaussian norm as in \eqref{eq:ACsubGgamma}, $\widebar{\Delta}_{N}$ is defined in \eqref{eq:delta-0-1/4} and $L_{rs,i}$'s as in \eqref{eq:ACsubGgamma}.
\end{lemma}

\begin{proof}
The main idea is to consider a truncated version of the linear process and to rewrite it as a linear mapping of an i.i.d.\ vector. The truncated version allows us then to write the local Whittle estimator as a quadratic form in sub-Gaussian random variables. 

Introduce a truncated version of the linear process as
$X^{L}_{n} = \sum_{j = -L}^{L} \VPsi_{j}\varepsilon_{n-j}$. 
The truncated version can be expressed as a linear transformation of an i.i.d.\ vector. Let $\psi_{\cdot r,j}$ and $\psi_{r \cdot,j}$ denote the $r$th column and row of $\VPsi_{j}$ and write the data matrix of the truncated process as $\mathcal{X}_{L} = [X^{L}_{1} : \cdots : X^{L}_{N} ]$ with $X^{L}_{n} = (X^{L}_{1,n},\dots, X^{L}_{p,n})'$. Then, 
\begin{align} 
e_{r}' \mathcal{X}_{L}'
=
(X_{r,1}^{L}, \dots, X_{r,N}^{L})
&= 
\left( e_{r}' \sum_{j = -L}^{L} \VPsi_{j}\varepsilon_{1-j}, \dots, e_{r}' \sum_{j = -L}^{L} \VPsi_{j}\varepsilon_{N-j} \right) \nonumber
\\&= 
\left( \sum_{j = -L}^{L} \psi_{r \cdot,j} \varepsilon_{1-j}, \dots, \sum_{j = -L}^{L} \psi_{r \cdot,j} \varepsilon_{N-j} \right) \nonumber
\\&=
\begin{pmatrix}
\varepsilon_{N+L}', \dots, \varepsilon_{1}', \dots, \varepsilon_{1-L}'
\end{pmatrix}
\begin{pmatrix}
0 			& 0 			& \cdots 	& \psi'_{r\cdot,-L} \\
\vdots		& \vdots		& \iddots	& \vdots \\
0 			& \psi'_{r\cdot,-L}& 		& \vdots \\
\psi'_{r\cdot,-L}	& \vdots		&		& \psi'_{r\cdot,L} \\
\vdots		& \vdots		& \iddots	& 0 \\
\vdots		& \psi'_{r\cdot,L}&		& \vdots \\
\psi'_{r\cdot,L} 	& 0			& \cdots & 0
\end{pmatrix}
=: \mathcal{E}_{L}' A_{L,r}', \label{eq:defALr}
\end{align}
where $A_{L,r}'$ is a $(N+2L)p \times N$ matrix.
As in \eqref{eq:realim} in the proof for Gaussian random variables in Lemma \ref{prop:supGhat}, we separate $| \widehat{H}_{rs}(D) - \E \widehat{H}_{rs}(D) | $ into diagonal elements and real and imaginary parts of the off-diagonal elements.
In order to avoid replicating all steps from the proof of Lemma \ref{prop:supGhat}, we focus here on the real part of the off-diagonal terms. The proofs for the diagonal elements and the imaginary part of the off-diagonal elements can be adapted analogously and are omitted.

\textit{Real part (off-diagonal):}
For the truncated linear process, the real part of the off-diagonal elements $\Re(\widehat{H}_{rs}(D))$ can be written in terms of \eqref{eq:PeriodogramMatrix} evaluated at the periodogram of the truncated process $I_{X_{L}}(\lambda_{j})$ as
\begin{equation} \label{eq:Loff_diag_rew}
\begin{aligned}
\frac{1}{m}e_{r}' \sum_{j=1}^{m} t_{j}(D) \Re(I_{X_{L}}(\lambda_{j})) t_{j}(D) e_{s}
&=
\frac{1}{2 \pi m} e_{r}' \sum_{j=1}^{m} t_{j}(D) \mathcal{X}_{L}'
(C_{j}C_{j}' + S_{j}S_{j}')
\mathcal{X}_{L} t_{j}(D) e_{s}.
\end{aligned}
\end{equation}
As in \eqref{eq:Hrr_rewritten}, we write \eqref{eq:Loff_diag_rew} as a quadratic form but now using
$(e_{r}'\mathcal{X}_{L}' ~ e_{s}'\mathcal{X}_{L}' )=\mathcal{E}'_{L} A_{L}'$, where $A'_{L} = (A_{L,r}' ~ A_{L,s}')$ and set
\begin{align} \label{eq:SIGMAL}
\Sigma_{L} = 
A_{L}A'_{L} = 
\begin{pmatrix}
A_{L,r} \\ 
A_{L,s}
\end{pmatrix}
(A_{L,r}' ~ A_{L,s}').
\end{align}
Due to Lemma \ref{le:relationsSigmaA}, $\Sigma_{L}$ can also be written as
\begin{equation*} 
\Sigma_{L} = 
\begin{pmatrix}
\Sigma_{L,rr} & \Sigma_{L,rs} \\
\Sigma_{L,sr} & \Sigma_{L,ss} 
\end{pmatrix}
\hspace{0.2cm}
\text{ with }
\hspace{0.2cm}
\Sigma_{L,rs} = (\Sigma_{L,rs}(n-k))_{n,k=1,\dots,N} = (\E X^{L}_{r,k} X^{L}_{s,n})_{n,k=1,\dots,N}.
\end{equation*}
We further recall from Lemma \ref{prop:supGhat} the $4m \times 2N$ matrix
\begin{equation*} 
\label{eq:Rtildem}
\widetilde{R}_{m} = 
\begin{pmatrix}
R_{m} & 0_{2m \times N} \\
0_{2m \times N} & R_{m}
\end{pmatrix}
\end{equation*}
with $R_{m}$ as in \eqref{eq:Rm} and the matrix $F(d_{r},d_{s})=\diag(T_{r}(d_{r}),T_{s}(d_{s}))$ with $T_{r}$ as in \eqref{eq:fctT2} and 
\begin{equation} \label{eq:fctT}
M_{m} =
\begin{pmatrix}
0_{m,m} &  I_{m} \\
0_{m,m} & 0_{m,m}
\end{pmatrix}.
\end{equation}
Write
\begin{equation} \label{eq:Lzzzzz}
\begin{aligned}
e_{r}' \mathcal{X}'_{L}
\sum_{j=1}^{m} t_{j,r}(d_{r}) 
(C_{j}C_{j}' + S_{j}S_{j}')
t_{j,r}(d_{s}) 
\mathcal{X}_{L} e_{s}
&=
(e_{r}'\mathcal{X}_{L}' ~ e_{s}'\mathcal{X}_{L}' )
\widetilde{R}_{m}' F(d_{r},d_{s}) M_{2m} 
F(d_{r},d_{s}) \widetilde{R}_{m}
\begin{pmatrix}
\mathcal{X}_{L} e_{r} \\ \mathcal{X}_{L} e_{s}
\end{pmatrix}
\\&= 
(e_{r}'\mathcal{X}_{L}' ~ e_{s}'\mathcal{X}_{L}' )
\widetilde{F}(d_{r},d_{s}) 
\begin{pmatrix}
\mathcal{X}_{L} e_{r} \\ \mathcal{X}_{L} e_{s}
\end{pmatrix}
\\&= 
\mathcal{E}_{L}' A_{L}' \widetilde{F}(d_{r},d_{s}) A_{L} \mathcal{E}_{L} 
\\&=
\mathcal{E}_{L}' R_{L}(d_{r},d_{s}) \mathcal{E}_{L}
\end{aligned}
\end{equation}
with
\begin{equation*}
R_{L}(d_{r},d_{s})=A_{L}' \widetilde{F}(d_{r},d_{s}) A_{L}
\hspace{0.2cm}
\text{ and }
\hspace{0.2cm}
\widetilde{F}(d_{r},d_{s}) = \widetilde{R}_{m}' F(d_{r},d_{s}) M_{2m} 
F(d_{r},d_{s}) \widetilde{R}_{m}.
\end{equation*}
In order to apply Theorem \ref{th:Dicker_Th1}, we further write
\begin{align}
&\Prob \Big( \sup_{ D \in \Omega }   | 
\sum_{j=1}^{m} e_{r}' (t_{j}(D) \mathcal{X}'_{L}
(C_{j}C_{j}' + S_{j}S_{j}')
\mathcal{X}_{L} t_{j}(D) - 
\E (t_{j}(D) \mathcal{X}'_{L}
(C_{j}C_{j}' + S_{j}S_{j}')
\mathcal{X}_{L} t_{j}(D) ) ) e_{s} | > \pi m \vertiii{G} \nu \Big) \nonumber
\\&=
\Prob \Big( \sup_{(d_{r},d_{s}) \in 
[a_{r},b_{r}] \times [a_{s},b_{s}]} | \mathcal{E}_{L}' R_{L}(d_{r},d_{s}) \mathcal{E}_{L}  - 
\E (\mathcal{E}_{L}' R_{L}(d_{r},d_{s}) \mathcal{E}_{L} ) | > \pi m \vertiii{G} \nu \Big). \label{eq:Lreal_formthc1}
\end{align}
Note that the matrix $R_{L}(d_{r},d_{s})$ can be rewritten as
\begin{equation} \label{eq:LRrewrtitten}
\begin{aligned}
R_{L}(d_{r},d_{s})
&=
A_{L}' \mathcal{A}_{N,i}^{-1} \widetilde{R}_{m}' \mathcal{A}_{2m,i} F(d_{r},d_{s}) M_{2m} F(d_{r},d_{s})  \mathcal{A}_{2m,i} \widetilde{R}_{m} \mathcal{A}_{N,i}^{-1} A_{L}
\end{aligned}
\end{equation}
for $i=1,\dots,4$ with
\begin{equation} \label{eq:LmathcalAi}
\begin{aligned}
\mathcal{A}_{m,1}
&=
\diag(c_{r,1,N}I_{m},\widetilde{c}_{s,1,N}I_{m}),
\hspace{0.2cm}
\mathcal{A}_{m,2}
=
\diag(c_{r,2,N}I_{m},\widetilde{c}_{s,2,N}I_{m}),
\\
\mathcal{A}_{m,3}
&=
\diag(c_{r,3,N}I_{m},\widetilde{c}_{s,3,N}I_{m}),
\hspace{0.2cm}
\mathcal{A}_{m,4}
=
m^{\frac{1}{4}} \mathcal{A}_{m,1}.
\end{aligned}
\end{equation}
The matrices $\mathcal{A}_{N,i}$ in \eqref{eq:LRrewrtitten} are defined by replacing $I_{m}$'s in \eqref{eq:LmathcalAi} by $I_{N}$. 
\par
We continue to bound \eqref{eq:Lreal_formthc1} by applying Theorem \ref{th:Dicker_Th1}. In order to verify the applicability of Theorem \ref{th:Dicker_Th1}, note that the matrix $\mathcal{A}_{2m,i} F(d_{r},d_{s}) M_{2m} F(d_{r},d_{s})  \mathcal{A}_{2m,i} $ in \eqref{eq:LRrewrtitten}  is not diagonal but can be represented as a unitary transformation of a diagonal matrix as described in \eqref{eq:real_SVDTMT}.
For this reason, Theorem \ref{th:Dicker_Th1} remains applicable due to Remark \ref{re:Th1_diag}.
In \eqref{eq:Lreal_imCI1} below, we apply Theorem \ref{th:Dicker_Th1} with $K=2$ and $R = b_{r} - a_{r} \leq 1$ to obtain
\begin{equation} \label{eq:Lreal_imCI1}
\begin{aligned}
&\Prob \Big( \sup_{(d_{r},d_{s}) \in 
[a_{r},b_{r}] \times [a_{s},b_{s}]} | \mathcal{E}'_{L} R_{L}(d_{r},d_{s}) \mathcal{E}_{L}  - 
\E (\mathcal{E}'_{L} R_{L}(d_{r},d_{s}) \mathcal{E}_{L} ) | > \pi m \vertiii{G} \nu \Big)
\\&\leq
c_{1} \exp\Bigg( -c_{2} \min 
\Bigg\{ 
\frac{ \nu m \vertiii{G} }{ \gamma^2 \mathcal{T}^{L}_{1} },
\frac{ \nu^2 m^2 \vertiii{G}^2 }{ \gamma^4 (\mathcal{T}^{L}_{i})^{2} }
\Bigg\} \Bigg) 
\end{aligned}
\end{equation}
for $ \nu^2 \geq \gamma^4 (\mathcal{T}^{L}_{i})^{2}/(c_{2} m^2 \vertiii{G}^2 )$ and $i=2,\dots,5$ with
\begin{equation} \label{eq:Lreal_imTi}
\mathcal{T}^{L}_{1}=\| B^{L}_{m,1} \| L_{rs,1},
\hspace{0.2cm}
\mathcal{T}^{L}_{i}=\| B^{L}_{m,i} \|_{F} L_{rs,i},
\hspace{0.2cm}
\mathcal{T}^{L}_{5}=\| B^{L}_{m,1} \| L_{rs,5}
\end{equation}
for $i=2,\dots,4$, where the $L_{rs,i}$'s are given in \eqref{eq:prop:supGhatLs} and 
\begin{equation} \label{eq:Lreal_Bmi}
B^{L}_{m,i}= \widetilde{R}_{m} \mathcal{A}_{N,i}^{-1} \Sigma_{L} \mathcal{A}_{N,i}^{-1} \widetilde{R}_{m}'
\end{equation}
for $i=1,\dots,4$, which is due to the relation \eqref{eq:SIGMAL}. 
Based on \eqref{eq:Lzzzzz}, an equivalent formulation of \eqref{eq:Lreal_imCI1} can be given as
\begin{equation*} 
\begin{aligned}
&\Prob \Big( \sup_{(d_{r},d_{s}) \in 
[a_{r},b_{r}] \times [a_{s},b_{s}]} | e_{r}' \mathcal{X}'_{L} \widetilde{F}(d_{r},d_{s}) \mathcal{X}_{L} e_{s}  - 
\E (e_{r}' \mathcal{X}'_{L} \widetilde{F}(d_{r},d_{s}) \mathcal{X}_{L} e_{s} ) | > \pi m \vertiii{G} \nu \Big)
\\&\leq
c_{1} \exp\Bigg( -c_{2} \min 
\Bigg\{ 
\frac{ \nu m \vertiii{G} }{ \gamma^2 \mathcal{T}^{L}_{1} },
\frac{ \nu^2 m^2 \vertiii{G}^2 }{ \gamma^4 (\mathcal{T}^{L}_{i})^{2} }
\Bigg\} \Bigg).
\end{aligned}
\end{equation*}
Then, letting $L$ go to infinity on both sides, Lemmas \ref{le:LconvD} and \ref{le:convLofB} lead to
\begin{equation*} 
\begin{aligned}
&\Prob \Big( \sup_{(d_{r},d_{s}) \in 
[a_{r},b_{r}] \times [a_{s},b_{s}]} | e_{r}' \mathcal{X}' \widetilde{F}(d_{r},d_{s}) \mathcal{X} e_{s}  - 
\E (e_{r}' \mathcal{X}' \widetilde{F}(d_{r},d_{s}) \mathcal{X} e_{s} ) | > \pi m \vertiii{G} \nu \Big)
\\&\leq
c_{1} \exp\Bigg( -c_{2} \min 
\Bigg\{ 
\frac{ \nu m \vertiii{G} }{ \gamma^2 \mathcal{T}_{1} },
\frac{ \nu^2 m^2 \vertiii{G}^2 }{ \gamma^4 \mathcal{T}_{i}^{2} }
\Bigg\} \Bigg),
\end{aligned}
\end{equation*}
which coincides with \eqref{eq:real_imCI1}. Then, one can use the same arguments as in Lemma \ref{prop:supGhat} following \eqref{eq:real_imCI1}.
\end{proof}

The following lemma gathers some relationships between the matrices $A_{L,r}$ in \eqref{eq:defALr} and the autocovariance matrices of the truncated linear process. For completeness, we also state the representation of the autocovariance function for the original linear process.

\begin{lemma} \label{le:relationsSigmaA}
The autocovariance matrix and the coefficient matrices of the linear process relate as follows. For $k \leq n$,
\begin{equation} \label{eq:bcejhbcjhbdjhce}
\begin{gathered}
\Sigma_{L,rs}(n-k)  = \sum_{j=-L}^{L-(n-k)} \psi_{r \cdot, j+(n-k)} \psi_{s \cdot, j }',
\hspace{0.2cm}
\Sigma_{rs}(n-k)  = \sum_{j \in \ZZ} \psi_{r \cdot, j+(n-k)} \psi_{s \cdot, j }',
\\
A_{L,s}A'_{L,r}
= 
\Sigma_{L,rs}(k-n).
\end{gathered}
\end{equation}
Furthermore, $\Sigma_{L,rs}(k-n) = (\Sigma_{L,sr}(n-k))'$ and $\Sigma_{rs}(k-n) = (\Sigma_{sr}(n-k))'$.
\end{lemma}

\begin{proof}
Recall $ X^{L}_{r} = (X_{r,1}^{L}, \dots, X_{r,N}^{L})'$ and suppose $k \leq n$. Then, 
\begin{align*}
\Sigma_{L,rs}(k-n) 
=
\E X^{L}_{r,k} X^{L'}_{s,n} 
&= 
\E \left( 
\sum_{j = -L}^{L} \psi_{r \cdot,j} \varepsilon_{n-j} 
\sum_{j = -L}^{L} \psi_{s \cdot,j} \varepsilon_{k-j} \right)
\\&= 
\sum_{j_{1} =n-L}^{n+L} \sum_{j_{2} = k-L}^{k+L} 
\psi_{r \cdot, n-j_{1}}
\E(\varepsilon_{j_{1}} \varepsilon_{j_{2}}')
\psi_{s \cdot, k-j_{2} }'
\\&= 
\sum_{j = \max\{k,n\} -L}^{\min\{k,n\}+L} 
\psi_{r \cdot, n-j}
\psi_{s \cdot, k-j}'
\\&= 
\sum_{j=-L}^{L-(n-k)} \psi_{r \cdot, j+(n-k)} \psi_{s \cdot, j }'.
\end{align*}
Analogously, for the non-truncated version of the linear process leading to the second relation in \eqref{eq:bcejhbcjhbdjhce}.
Finally, the $kn$th element of $A_{L,r}A'_{L,s}$ satisfies
\begin{align*}
(A_{L,r}A'_{L,s})_{kn}
= 
\sum_{j=-L}^{L-(n-k)} \psi_{r \cdot, j+(n-k)} \psi_{s \cdot, j }'
= \Sigma_{L,rs}(n-k) .
\end{align*}
\end{proof}
The next three lemmas follow the ideas in \cite{Sun2018:LargeSpectral}, but also allow for long-range dependence and address that we need a uniform concentration inequality.
\begin{lemma} \label{le:convLofB}
Recall $B^{L}_{m,i}= \widetilde{R}_{m} \mathcal{A}_{N,i}^{-1} \Sigma_{L} \mathcal{A}_{N,i}^{-1} \widetilde{R}_{m}'$ in \eqref{eq:Lreal_Bmi} with $\Sigma_{L}$ as in \eqref{eq:SIGMAL}. Then, 
\begin{equation}
\| B^{L}_{m,i} \| \to \| B_{m,i} \|
\hspace{0.2cm}
\text{ and }
\hspace{0.2cm}
\| B^{L}_{m,i} \|_{F} \to \| B_{m,i} \|_{F},
\hspace{0.2cm}
\text{ as }
\hspace{0.2cm}
L \to \infty,
\end{equation}
with $B_{m,i}= \widetilde{R}_{m} \mathcal{A}_{N,i}^{-1} \Sigma \mathcal{A}_{N,i}^{-1} \widetilde{R}_{m}'$ as in \eqref{eq:real_Bmi}.
\end{lemma}

\begin{proof}
It is sufficient to prove $\| B^{L}_{m,i} - B_{m,i} \|_{F} \to 0$ as $L \to \infty$ since by triangle inequality
\begin{equation*}
\begin{gathered}
| \| B^{L}_{m,i} \|_{F} - \| B_{m,i} \|_{F} |
\leq
\| B^{L}_{m,i} - B_{m,i} \|_{F},
\\
| \| B^{L}_{m,i} \| - \| B_{m,i} \| |
\leq
\| B^{L}_{m,i} - B_{m,i} \|
\leq
\| B^{L}_{m,i} - B_{m,i} \|_{F}.
\end{gathered}
\end{equation*}
We can further reduce the problem as follows:
\begin{align}
\| B^{L}_{m,i} - B_{m,i} \|_{F}
&=
\| \widetilde{R}_{m} \mathcal{A}_{N,i}^{-1} (\Sigma_{L} - \Sigma) \mathcal{A}_{N,i}^{-1} \widetilde{R}_{m}' \|_{F} \label{al:wwww1}
\leq
\| \mathcal{A}_{N,i}^{-1} (\Sigma_{L} - \Sigma) \mathcal{A}_{N,i}^{-1} \|_{F}
\\&\leq
\| \mathcal{A}_{N,i}^{-2} \| \| \Sigma_{L} - \Sigma \|_{F}
\leq
\| \mathcal{A}_{N,i}^{-2} \| (\| \Sigma_{L,rr} - \Sigma_{rr} \|_{F} + \| \Sigma_{L,ss} - \Sigma_{ss} \|_{F}). \label{al:wwww2}
\end{align}
We used Lemma \ref{le:normineq1} and $\| \widetilde{R}_{m} \|^2 = 1$ in \eqref{al:wwww1}. The first inequality in \eqref{al:wwww2} also follows by Lemma \ref{le:normineq1}. Note that $\mathcal{A}_{N,i}$ are diagonal matrices which do not depend on $L$. The second inequality in \eqref{al:wwww2} is due to Lemma \ref{le:normineq2}. Finally, 
\begin{align}
&\| \Sigma_{L,rr} - \Sigma_{rr} \|^{2}_{F} \nonumber
\\&=
\| \left( \sum_{j=L-(n-k) + 1}^{\infty} \psi_{r \cdot, j+(n-k)} \psi_{r \cdot, j }' +
\sum_{j=-\infty}^{-L-1} \psi_{r \cdot, j+(n-k)} \psi_{r \cdot, j }'
 \right)_{k,n =1,\dots,N} \|^{2}_{F} \nonumber
\\&\leq N^{2}
\max_{k,n=1,\dots,N} | \sum_{i=L + 1}^{\infty} \psi_{r \cdot, i} \psi_{r \cdot, i-(n-k) }' |^2
+
N^{2}
\max_{k,n=1,\dots,N} | \sum_{i=L + 1}^{\infty} \psi_{r \cdot, -i + (n-k)} \psi_{r \cdot, -i }' |^2 \nonumber
\\&\leq N^{2} 
\max_{k,n=1,\dots,N} \left( | \sum_{i=L + 1}^{\infty} \| \psi_{r \cdot, i} \|_{F} \| \psi_{r \cdot, i-(n-k) } \|_{F} |^2
+
| \sum_{i=L + 1}^{\infty} \| \psi_{r \cdot, -i + (n-k)} \|_{F} \| \psi_{r \cdot, -i } \|_{F} |^2 \right) \label{al:kkkk1}
\\&\leq N^{2}
\max_{k,n=1,\dots,N} \Bigg( \sum_{i=L + 1}^{\infty} \| \psi_{r \cdot, i} \|^{2}_{F} \sum_{i=L + 1}^{\infty} \| \psi_{r \cdot, i-(n-k) } \|^2_{F} \nonumber
\\&\hspace{3cm}+
\sum_{i=L + 1}^{\infty} \| \psi_{r \cdot, -i + (n-k)} \|^{2}_{F} \sum_{i=L + 1}^{\infty} \| \psi_{r \cdot, -i } \|^2_{F} \Bigg) \label{al:kkkk2}
\to 0 
\text{ as } 
L \to \infty,
\end{align}
where both \eqref{al:kkkk1} and \eqref{al:kkkk2} follow by the Cauchy-Schwarz inequality.
\end{proof}

\begin{lemma} \label{le:XLX}
Recall the data matrices $\mathcal{X}_{L} = [X^{L}_{1} : \cdots : X^{L}_{N} ]$ and $\mathcal{X} = [X_{1} : \cdots : X_{N} ]$. Then,
\begin{equation*}
\E \| e_{r}' \mathcal{X}_{L}' - e_{r}' \mathcal{X'} \|^{2}_{F} \to 0, 
\hspace{0.2cm}
\text{ as }
\hspace{0.2cm}
L \to \infty.
\end{equation*}
\end{lemma}

\begin{proof}
The expected value can be explicitly calculated as
\begin{align*}
\E \| e_{r}' \mathcal{X}_{L}' - e_{r}' \mathcal{X'} \|^{2}_{F}
&=
\sum_{n =1}^{N} \E | X_{r,n}^{L} - X_{r,n} |^2 
=
\sum_{n =1}^{N} \E | \sum_{j = L+1}^{\infty} \psi_{r\cdot,j} \varepsilon_{n-j} + \sum_{j = - \infty}^{-L-1} \psi_{r\cdot,j} \varepsilon_{n-j} |^2 
\\&\leq
2 \sum_{n =1}^{N} \left( \E | \sum_{j = L+1}^{\infty} \psi_{r\cdot,j} \varepsilon_{n-j} |^2 + \E | \sum_{j = - \infty}^{-L-1} \psi_{r\cdot,j} \varepsilon_{n-j} |^2 \right)
\\&=
2 \sum_{n =1}^{N} \sum_{j_{1},j_{2} = L+1}^{\infty} \left( \psi_{r\cdot,j_{1}} 
\E ( \varepsilon_{n-j_{1}} \varepsilon'_{n-j_{2}} ) \psi'_{r\cdot,j_{2}}
+
\psi_{r\cdot,-j_{1}} 
\E ( \varepsilon_{n+j_{1}} \varepsilon'_{n+j_{2}} ) \psi'_{r\cdot,-j_{2}} \right)
\\&=
2N \sum_{j = L+1}^{\infty} ( \psi_{r\cdot,j} \psi'_{r\cdot,j} + \psi_{r\cdot,-j}  \psi'_{r\cdot,-j} ) < \infty
\end{align*}
since $\sum_{j = 0}^{\infty} \| \VPsi_{j} \|^2_{F} < \infty$. Due to the summability of the last expression, letting $L \to \infty$ yields the desired convergence result.
\end{proof}

\begin{lemma} \label{le:LconvD}
The following convergence result holds:
\begin{equation*}
\begin{aligned}
&
\sup_{(d_{r},d_{s}) \in [a_{r},b_{r}] \times [a_{s},b_{s}]}
| e_{r}' \mathcal{X}'_{L} \widetilde{F}(d_{r},d_{s}) \mathcal{X}_{L} e_{s} 
- 
\E ( e_{r}' \mathcal{X}'_{L} \widetilde{F}(d_{r},d_{s}) \mathcal{X}_{L} e_{s} ) |
\\ & \hspace{1cm}
\overset{\distr}{\to}
\sup_{(d_{r},d_{s}) \in [a_{r},b_{r}] \times [a_{s},b_{s}]}
| e_{r}' \mathcal{X}' \widetilde{F}(d_{r},d_{s}) \mathcal{X} e_{r} 
- 
\E ( e_{r}' \mathcal{X}' \widetilde{F}(d_{r},d_{s}) \mathcal{X} e_{r}) |,
\hspace{0.2cm}
\text{ as }
\hspace{0.2cm}
L \to \infty.
\end{aligned}
\end{equation*}
\end{lemma}

\begin{proof}
Note that Lemma \ref{le:XLX} implies the convergence in probability,
\begin{equation*}
e_{r}' \mathcal{X}_{L}' \overset{\prob}{\to} e_{r}' \mathcal{X}'.
\end{equation*}
We write $C_{b}([a_{r},b_{r}] \times [a_{s},b_{s}], \RR^{N \times N})$ for the space of $\RR^{N \times N}$-valued, bounded and continuous functions on $[a_{r},b_{r}] \times [a_{s},b_{s}]$. The space is equipped with the uniform metric
\begin{equation*}
\| A - B \|_{\infty} = \sup_{(d_{r},d_{s}) \in [a_{r},b_{r}] \times [a_{s},b_{s}]} \| A(d_{r},d_{s}) - B(d_{r},d_{s}) \|_{F}.
\end{equation*}
Since the $\RR^{N} \to C_{b}([a_{r},b_{r}] \times [a_{s},b_{s}], \RR)$ function $x \mapsto x' A x$ is continuous, the continuous mapping theorem implies
\begin{equation} \label{eq:convprobL}
e_{r}' \mathcal{X}_{L}' \widetilde{F}(d_{r},d_{s}) \mathcal{X}_{L} e_{r}
\overset{\prob}{\to}
e_{r}' \mathcal{X}' \widetilde{F}(d_{r},d_{s}) \mathcal{X} e_{r}.
\end{equation}
Lemma \ref{le:XLX} also allows us to infer 
\begin{align} \label{eq:convexpL}
\E(e_{r}' \mathcal{X}_{L}' \widetilde{F}(d_{r},d_{s}) \mathcal{X}_{L} e_{r} )
\to
\E(e_{r}' \mathcal{X}^{'} \widetilde{F}(d_{r},d_{s}) \mathcal{X} e_{r} )
\end{align}
since for any matrix function $A \in C_{b}([a_{r},b_{r}] \times [a_{s},b_{s}], \RR^{N \times N})$
\begin{align*}
&
\| \E(e_{r}' \mathcal{X}_{L}' A \mathcal{X}_{L} e_{r} )
-
\E(e_{r}' \mathcal{X}' A \mathcal{X} e_{r} ) \|_{\infty}
\\&\leq
\| \E(e_{r}' \mathcal{X}_{L}' A (\mathcal{X}_{L} e_{r}-\mathcal{X} e_{r}) ) \|_{\infty}
+
\| \E((e_{r}' \mathcal{X}' - e_{r}' \mathcal{X}_{L}') A \mathcal{X} e_{r} ) \|_{\infty}
\end{align*}
and
\begin{align*}
\| \E(e_{r}' \mathcal{X}_{L}' A (\mathcal{X}_{L} e_{r}-\mathcal{X} e_{r}) ) \|_{\infty}^2
\leq
\|A\|^2_{\infty} \E \| e_{r}' \mathcal{X}_{L}' \|_{F}^2
\E \| e_{r}' \mathcal{X}_{L}' - e_{r}' \mathcal{X}' \|_{F}^{2}
\to 0.
\end{align*}
Combining \eqref{eq:convprobL} and \eqref{eq:convexpL} yields
\begin{align*}
&
e_{r}' \mathcal{X}_{L}' \widetilde{F}(d_{r},d_{s}) \mathcal{X}_{L} e_{r} 
-
\E(e_{r}' \mathcal{X}_{L}' \widetilde{F}(d_{r},d_{s}) \mathcal{X}_{L} e_{r} )
\\ & \hspace{1cm}
\overset{\prob}{\to}
e_{r}' \mathcal{X}' \widetilde{F}(d_{r},d_{s}) \mathcal{X} e_{r} 
-
\E(e_{r}' \mathcal{X}' \widetilde{F}(d_{r},d_{s}) \mathcal{X} e_{r} ).
\end{align*}
Convergence in probability implies convergence in distribution. Finally, the continuous mapping theorem with $C_{b}([a_{r},b_{r}] \times [a_{s},b_{s}], \RR^{N \times N}) \to \RR$ and $A(d_{r},d_{s}) \mapsto \sup_{(d_{r},d_{s}) \in [a_{r},b_{r}] \times [a_{s},b_{s}]} | A(d_{r},d_{s}) |$ proves the claim of the lemma.
\end{proof}

\section{Complementary simulation results}
\label{app:table}


\subsection{Performance measures}
\label{se:F1}
This section presents our simulation results for the thresholding and graphical local Whittle estimators discussed in Section \ref{s:sim}. A number of different measures are used to assess the performance of our estimators.

The first five measures in Tables \ref{tab:threshold} and \ref{tab:graphicalLW} concern whether the matrix entries are correctly estimated as zero. We use the standard true positive (TP), false negative (FN), false positive (FP) and true negative (TN) measures. For instance, TP quantifies how often the respective estimator correctly detected a nonzero matrix entry as nonzero. For the respective rates, we write TPR, FNR, TNR and FPR. The fifth metric we consider is the so-called precision calculated as TP/(TP+FP) which measures the estimator's accuracy in classifying entries as nonzero. The introduced measures perform well, even for small sample size $N$ and dimension $p$ and improve with increasing $N$ and $p$.

The last five measures in Tables \ref{tab:threshold} and \ref{tab:graphicalLW} consider different distance measures between our estimators and the true quantities. Besides the mean squared error for the memory parameters (MSE($D$)), we consider the distance measures (Frobenius and spectral norm) used in our main results to prove consistency of our estimators (Propositions \ref{prop:Opspectral} and \ref{prop:OpPrecision}). The results have been discussed in more detail in Section \ref{s:sim}.
Finally, Rel-Frobenius and -spectral denote the ratio between the distance of our sparse estimator and the distance between the associated nonsparse estimator.
For example, Rel-Frobenius for Tables \ref{tab:threshold} and \ref{tab:graphicalLW} are respectively defined as
\begin{equation} \label{appendix:relF}
\frac{\| T_\rho (\widehat G(\widehat D)) - G_0 \|_F}{ \| \widehat G(\widehat D) - G_0 \|_F }, 
\hspace{0.2cm}
\frac{\| \widehat{P}_\rho (\widehat D) - P_0 \|_F}{ \| \widehat P(\widehat D) - P_0 \|_F }, 
\end{equation}
where $\widehat G(\widehat D)$ is a nonsparse estimator of $G_{0}$ as defined in \eqref{eq:G(D)} which is the same as setting the threshold $\rho=0$ and analogously $P(\widehat D) := \widehat{P}_0 (\widehat D)$. Rel-spectral can be defined analogously by replacing the Frobenius norm with the spectral norm in \eqref{appendix:relF}.
In Tables \ref{tab:threshold} and \ref{tab:graphicalLW}, Rel-Frobenius and -spectral take values smaller than one, which indicates that the denominators in \eqref{appendix:relF} take values larger than the numerators, that is, the nonsparse estimators perform worse for our sparse DGPs.

\begin{landscape}
\begin{table}[htbp]
  \centering
   \small
    \begin{tabular}{ccccccccccc} \hline
    \multirow{2}[0]{*}{Models} & \multirow{2}[0]{*}{Measures} & \multicolumn{3}{c}{$p=20$} & \multicolumn{3}{c}{$p=40$} & \multicolumn{3}{c}{$p=60$} \\
          &       & $N=200$ & $N=400$ & $N=1000$ & $N=200$ & $N=400$ & $N=1000$ & $N=200$ & $N=400$ & $N=1000$ \\ \hline \hline

           \multirow{10}[0]{*}{thDGP1} & TPR   & 0.991 & 0.999 & 1.000 & 0.976 & 0.999 & 1.000 & 0.975 & 0.999 & 1.000 \\
          & FNR   & 0.009 & 0.001 & 0.000 & 0.024 & 0.001 & 0.000 & 0.025 & 0.001 & 0.000 \\
          & TNR   & 0.991 & 0.999 & 0.996 & 0.999 & 0.999 & 0.996 & 0.999 & 0.998 & 0.999 \\
          & FPR   & 0.009 & 0.001 & 0.004 & 0.001 & 0.001 & 0.004 & 0.001 & 0.002 & 0.001 \\
          & Precision & 0.899 & 0.984 & 0.949 & 0.973 & 0.964 & 0.900 & 0.960 & 0.936 & 0.963 \\
          & MSE($D$) & 0.095 & 0.050 & 0.022 & 0.193 & 0.102 & 0.045 & 0.295 & 0.155 & 0.069 \\
          & Frobenius & 0.174 & 0.110 & 0.088 & 0.228 & 0.171 & 0.139 & 0.294 & 0.228 & 0.146 \\
          & Spectral & 0.108 & 0.066 & 0.055 & 0.115 & 0.092 & 0.075 & 0.139 & 0.113 & 0.070 \\
          & Rel-Frobenius   & 0.379 & 0.318 & 0.367 & 0.248 & 0.247 & 0.293 & 0.213 & 0.219 & 0.204 \\
          & Rel-spectral  & 0.450 & 0.378 & 0.452 & 0.294 & 0.321 & 0.401 & 0.268 & 0.299 & 0.282 \\ \hline
          \multirow{10}[0]{*}{thDGP2} & TPR   & 0.968 & 0.996 & 1.000 & 0.947 & 0.994 & 1.000 & 0.936 & 0.990 & 1.000 \\
          & FNR   & 0.032 & 0.004 & 0.000 & 0.053 & 0.007 & 0.000 & 0.064 & 0.010 & 0.000 \\
          & TNR   & 0.997 & 0.998 & 0.999 & 0.999 & 0.999 & 1.000 & 1.000 & 1.000 & 1.000 \\
          & FPR   & 0.003 & 0.002 & 0.001 & 0.001 & 0.001 & 0.000 & 0.000 & 0.000 & 0.000 \\
          & Precision & 0.958 & 0.972 & 0.987 & 0.973 & 0.978 & 0.993 & 0.977 & 0.985 & 0.992 \\
          & MSE($D$) & 0.095 & 0.052 & 0.023 & 0.197 & 0.104 & 0.047 & 0.293 & 0.157 & 0.073 \\
          & Frobenius & 2.569 & 1.949 & 1.374 & 5.047 & 3.837 & 2.671 & 7.512 & 5.692 & 3.960 \\
          & Spectral & 1.253 & 0.911 & 0.606 & 1.997 & 1.430 & 0.926 & 2.629 & 1.846 & 1.191 \\
          & Rel-Frobenius   & 0.358 & 0.364 & 0.421 & 0.266 & 0.265 & 0.311 & 0.223 & 0.220 & 0.259 \\
          & Rel-spectral  & 0.403 & 0.404 & 0.497 & 0.303 & 0.297 & 0.365 & 0.247 & 0.250 & 0.298 \\ \hline
          \multirow{10}[0]{*}{thDGP3} & TPR   & 0.939 & 0.995 & 1.000 & 0.877 & 0.990 & 1.000 & 0.835 & 0.982 & 1.000 \\
          & FNR   & 0.061 & 0.005 & 0.000 & 0.123 & 0.010 & 0.000 & 0.165 & 0.018 & 0.000 \\
          & TNR   & 0.981 & 0.980 & 0.990 & 0.995 & 0.992 & 0.997 & 0.997 & 0.997 & 0.998 \\
          & FPR   & 0.019 & 0.020 & 0.010 & 0.005 & 0.008 & 0.003 & 0.003 & 0.003 & 0.002 \\
          & Precision & 0.900 & 0.900 & 0.945 & 0.930 & 0.913 & 0.969 & 0.941 & 0.943 & 0.962 \\
          & MSE($D$) & 0.106 & 0.064 & 0.034 & 0.215 & 0.128 & 0.068 & 0.323 & 0.192 & 0.100 \\
          & Frobenius & 2.666 & 2.211 & 1.817 & 4.943 & 3.965 & 3.073 & 7.231 & 5.695 & 4.302 \\
          & Spectral & 1.170 & 0.935 & 0.769 & 1.848 & 1.342 & 0.952 & 2.471 & 1.758 & 1.155 \\
          & Rel-Frobenius  & 0.690 & 0.725 & 0.803 & 0.554 & 0.576 & 0.665 & 0.484 & 0.489 & 0.585 \\
          & Rel-spectral  & 0.665 & 0.701 & 0.750 & 0.473 & 0.520 & 0.619 & 0.378 & 0.416 & 0.524 \\
          \hline
		\end{tabular}%
		\caption{Simulation results for the thresholding local Whittle estimator.} 	\label{tab:threshold}%
	\end{table}%
\end{landscape}

\clearpage
	
\begin{landscape}
\begin{table}[t!]
  \centering
  \small
	
    \begin{tabular}{ccccccccccc} \hline
    \multirow{2}[0]{*}{Models} & \multirow{2}[0]{*}{Measures} & \multicolumn{3}{c}{$p=20$} & \multicolumn{3}{c}{$p=40$} & \multicolumn{3}{c}{$p=60$} \\
          &       & $N=200$ & $N=400$ & $N=1000$ & $N=200$ & $N=400$ & $N=1000$ & $N=200$ & $N=400$ & $N=1000$  \\ \hline \hline
    
          \multirow{10}[0]{*}{DGP1} & TPR   & 0.957 & 0.994 & 1.000 & 0.943 & 0.980 & 1.000 & 0.940 & 0.973 & 1.000 \\
          & FNR   & 0.043 & 0.006 & 0.000 & 0.057 & 0.020 & 0.000 & 0.060 & 0.027 & 0.000 \\
          & TNR   & 1.000 & 1.000 & 1.000 & 0.912 & 1.000 & 1.000 & 0.887 & 0.942 & 1.000 \\
          & FPR   & 0.000 & 0.000 & 0.000 & 0.088 & 0.000 & 0.000 & 0.113 & 0.058 & 0.000 \\
          & Precision & 1.000 & 0.998 & 1.000 & 0.910 & 0.998 & 0.999 & 0.776 & 0.941 & 0.998 \\
          & MSE($D$) & 0.095 & 0.050 & 0.022 & 0.193 & 0.102 & 0.045 & 0.292 & 0.157 & 0.070 \\
          & Frobenius & 4.911 & 3.197 & 2.323 & 7.490 & 5.066 & 3.367 & 10.365 & 6.760 & 4.080 \\
          & Spectral & 3.006 & 1.592 & 1.137 & 3.892 & 2.411 & 1.335 & 4.498 & 3.446 & 1.407 \\
          & Rel-Frobenius & 0.184 & 0.209 & 0.258 & 0.065 & 0.112 & 0.159 & 0.012 & 0.064 & 0.108 \\
          & Rel-spectral & 0.167 & 0.177 & 0.238 & 0.053 & 0.103 & 0.139 & 0.007 & 0.063 & 0.089 \\ \hline
          \multirow{10}[0]{*}{DGP2} & TPR   & 0.935 & 0.989 & 1.000 & 0.956 & 0.988 & 1.000 & 0.969 & 0.985 & 1.000 \\
          & FNR   & 0.065 & 0.011 & 0.000 & 0.044 & 0.012 & 0.000 & 0.031 & 0.015 & 0.000 \\
          & TNR   & 1.000 & 1.000 & 1.000 & 0.782 & 1.000 & 1.000 & 0.556 & 0.958 & 1.000 \\
          & FPR   & 0.000 & 0.000 & 0.000 & 0.218 & 0.000 & 0.000 & 0.444 & 0.042 & 0.000 \\
          & Precision & 1.000 & 1.000 & 0.999 & 0.787 & 1.000 & 0.999 & 0.550 & 0.959 & 1.000 \\
          & MSE($D$) & 0.095 & 0.053 & 0.024 & 0.196 & 0.103 & 0.047 & 0.292 & 0.159 & 0.071 \\
          & Frobenius & 0.913 & 0.670 & 0.574 & 1.305 & 0.948 & 0.800 & 1.735 & 1.192 & 0.975 \\
          & Spectral & 0.586 & 0.366 & 0.325 & 0.660 & 0.414 & 0.354 & 0.775 & 0.495 & 0.370 \\
          & Rel-Frobenius & 0.219 & 0.273 & 0.386 & 0.070 & 0.134 & 0.236 & 0.014 & 0.072 & 0.163 \\ 
          & Rel-spectral & 0.217 & 0.259 & 0.426 & 0.058 & 0.114 & 0.236 & 0.009 & 0.059 & 0.152 \\ \hline
          \multirow{10}[0]{*}{DGP3} & TPR   & 0.350 & 0.380 & 0.659 & 0.534 & 0.361 & 0.599 & 0.648 & 0.410 & 0.539 \\
          & FNR   & 0.650 & 0.620 & 0.341 & 0.466 & 0.639 & 0.401 & 0.352 & 0.590 & 0.461 \\
          & TNR   & 1.000 & 1.000 & 0.950 & 0.714 & 1.000 & 0.987 & 0.540 & 0.915 & 0.996 \\
          & FPR   & 0.000 & 0.000 & 0.050 & 0.286 & 0.000 & 0.013 & 0.460 & 0.085 & 0.004 \\
          & Precision & 0.999 & 0.997 & 0.757 & 0.735 & 0.999 & 0.815 & 0.559 & 0.919 & 0.876 \\
          & MSE($D$) & 0.093 & 0.048 & 0.022 & 0.188 & 0.098 & 0.044 & 0.286 & 0.149 & 0.067 \\
          & Frobenius & 1.547 & 1.489 & 1.201 & 2.204 & 2.122 & 1.772 & 2.727 & 2.611 & 2.281 \\
          & Spectral & 0.608 & 0.582 & 0.489 & 0.638 & 0.600 & 0.523 & 0.668 & 0.612 & 0.545 \\
          & Rel-Frobenius & 0.380 & 0.615 & 0.790 & 0.126 & 0.306 & 0.523 & 0.024 & 0.161 & 0.383 \\
          & Rel-spectral & 0.235 & 0.421 & 0.649 & 0.060 & 0.166 & 0.352 & 0.008 & 0.075 & 0.223 \\
          
          \hline
             \end{tabular}%
		\caption{Simulation results for the graphical local Whittle estimator.}
		\label{tab:graphicalLW}%
	\end{table}%

\end{landscape}

\subsection{Illustration: Rate of convergence} \label{appF:rate}
To illustrate our non-asymptotic results numerically, consider the result in Corollary \ref{cor:Op}. Corollary \ref{cor:Op} is a consequence of our main result Proposition \ref{prop:Op} for only short- or long-range dependence, that is, the true memory parameters satisfy $D_{0} \succcurlyeq 0$. Then, using \eqref{eq:cor:Rs} with $\mathcal{R}_{1} \equiv \mathcal{R}_{11}$ and $m=N^{.8}$, we get
$$ \| \widehat G(\widehat D) - G_0 \|_{\rm max}  \approx \sqrt{\frac{\log p}{N^{.8 \times (1-2\Delta_2)}} }$$
with high probability. 
Taking the log-transformation gives
\begin{equation} \label{eq:xxxx}
\log (\| \widehat G(\widehat D) - G_0 \|_{\rm max}) \approx .5 \big( \log(\log(p)) - .8(1-2\Delta_2) \log N \big).
\end{equation}
We checked this relationship by considering the model thDGP3 with sample sizes $N = 200$, 400, 600, 800, 1000, 1200, 1400, 1600, 1800, 2000 and dimensions $p = 20, 40, 60, 80, 100, 120$.  

In Figure \ref{f:rate-N} we fixed the sample size as $N = 200, 400, 600$ and consider \eqref{eq:xxxx} as a function in $\log(\log(p))$. The expected slope is .5 in the log-log plot. Figure \ref{f:rate-N} shows that the estimated slope is close to .5.

In Figure \ref{f:rate-p} we fixed the dimension $p = 20, 30, 40$. In this setting, the expected slope for $\log N$  is $-.104$ calculated as
$$ .5 \times .8 (1-2 \Delta_2) = .4 \times (1 - 2 \max_{r =1, \dots,r} \widehat{d}_r) = .104, $$
where we chose $\Delta_2$ as the largest LRD parameter across all dimensions. In Figure \ref{f:rate-p}, it can be seen that the slope is close to $-.104$, as expected.

\begin{figure}[ht!]
	\centering
	\includegraphics[width=6.5 in, height=2.3 in]{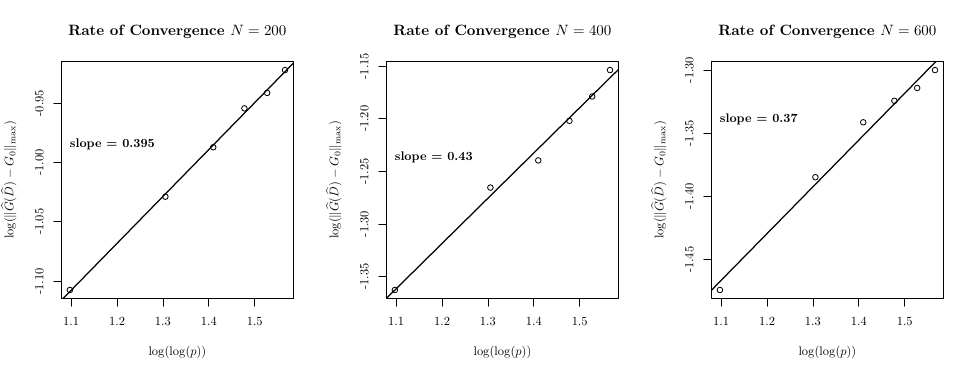}
	\vspace{-2 mm}
	\caption{The rate of convergence with fixed sample size $N$. Theoretically expected  slope is .5.} \label{f:rate-N}
\end{figure}

\begin{figure}[ht!]
	\centering
	\includegraphics[width=6.5 in, height=2.3 in]{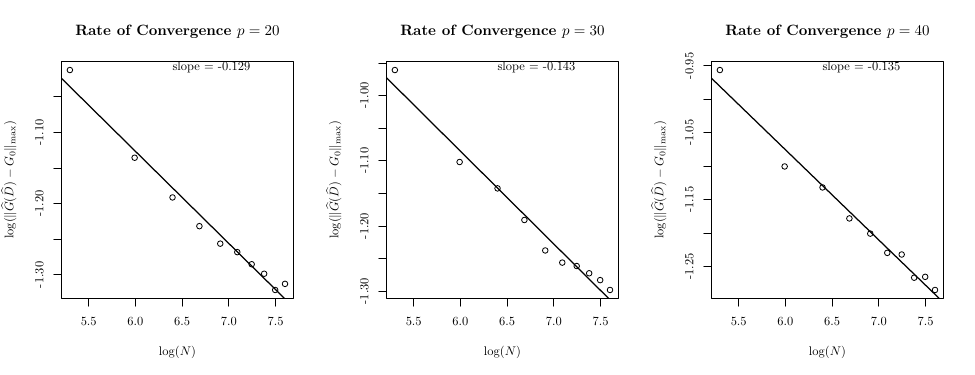}
	\vspace{-2 mm}
	\caption{The rate of convergence with fixed dimension $p$. Theoretically expected slope is $-.104$.} \label{f:rate-p}
\end{figure}

\subsection{Univariate versus multivariate estimation of the memory parameters} \label{appF:univsmulti}
From a theoretical perspective, Remark \ref{re:univd} argues that our proofs remain valid for multivariate estimation of $D_{0}$. However, an additional $p$ would appear in the bounds and significantly weaken the results.

In Table \ref{tab:addlabel1} below, we report on a small simulation study evaluating the efficiency of our proposed estimator compared to multivariate estimation  and also a shrinkage estimator. The shrinkage estimator is obtained by plugging the thresholded estimator $T_{\rho}(\widehat G (D))$ in \eqref{eq:thresholding} into the local Whittle estimation for $D_{0}$ in \eqref{eq:R(D)} and is labeled as ``Threshold" in Table \ref{tab:addlabel1}. We use the same DGPs as introduced in Section \ref{s:sim} with sample size $N=200$ and dimension $p=40$. 
For comparison we use the mean squared error of the estimated $D_{0}$ (MSE($D$)) and study the impact on $\widehat{P}(\widehat D)$ of the different estimation procedures by calculating the Frobenius and spectral distances to the true $P_{0}$. The computational time is measured as the duration time for five repetitions.  

\begin{table}[ht!]  \centering  
	\small
	\begin{tabular}{cccccc}    \hline
		Models & Methods & MSE($D$) & Frobenius & Spectral    & Time \\   \hline \hline
		 \multirow{3}[0]{*}{DGP1} & Univariate & 0.190 & 7.760 & 3.977  & 2 sec \\          & Multivariate & 0.190 & 7.745 & 3.964  & 8 sec \\          
		 & Threshold & 0.189 & 7.744 & 3.964 & 20 sec \\   \hline 
		 
		 \multirow{3}[0]{*}{DGP2} & Univariate & 0.196 & 1.413 & 0.707  & 2 sec \\          & Multivariate & 0.287 & 1.478 & 0.751  & 146 sec \\          
		 & Threshold & 0.196 & 1.449 & 0.717  & 270 sec \\ \hline    
		 
		\multirow{3}[0]{*}{DGP3} & Univariate & 0.186 & 2.206 & 0.642 & 2 sec \\          
		& Multivariate & 0.298 & 2.218 & 0.646  & 102 sec \\          
		& Threshold & 0.194 & 2.201 & 0.639  & 240 sec \\  \hline  \end{tabular}
	\caption{Performance measures for different estimation methods of the memory parameters $D_{0}$. }    
 \label{tab:addlabel1}
\end{table}%

As can be seen in the table, estimating the memory parameters $D_{0}$ univariately performs well compared to the other methods (Multivariate, Threshold).
Furthermore, the univariate estimation reduces computational time dramatically compared to multivariate or thresholded estimation. Note also that the mean squared error appears to be larger for multivariate estimation than for univariate estimation. One would expect the asymptotic variance of the multivariate estimators for $D_{0}$ to be smaller compared to the univariate case which is not reflected in the simulation results. We suspect that this may be due to numerical optimization issues with multivariate estimation. The Threshold method seems to perform slightly better than multivariate estimation. Since all our DGPs are highly sparse (see Figure \ref{f:graphical-DGP}), it may not come as a surprise that the univariate and thresholded estimators perform well.

\subsection{Modified precision matrix estimators and CLIME}
\label{se:F4}
In this section, we compare the performance of our graphical local Whittle estimator for the precision matrix with the alternative estimators presented in Section \ref{se:alternativeestimators}. The modified precision matrix estimators in Section \ref{se:alternativeestimators} are an estimator based on the coherence matrix (Section \ref{se:Spice}) and a local Whittle CLIME estimator (Section \ref{se:Clime}). 

For the coherence-based graphical local Whittle, we use Algorithm \ref{al:ALM} as introduced in Section \ref{ss:tuning-Graphical}.
Similar to the graphical local Whittle estimator, the local Whittle CLIME estimator can be computed by using an ADMM algorithm. \cite{wang2013large} proposed an algorithm for sparse inverse covariance matrix estimation.
We modify their Algorithm 1 to our setting, using it for estimation in the spectral domain.
The details can be found in Algorithm \ref{al:ADMM-CLIME}. For the algorithm, recall the function ${\rm shrink}(M,\nu) = \sign(M_{rs}) \max( |M_{rs}| - \nu, 0) $ for a matrix $M=(M_{rs})_{r,s =1, \dots, p}$ and some $\nu \geq 0$.

\begin{algorithm} \caption{Alternating direction method of multipliers (ADMM) algorithm for local Whittle CLIME estimator}
	\KwIn{Initial estimator $P^0 = {\rm diag}((\widehat G(\widehat D) + .1 I_p)^{-1})$, $V^0 = Y^0 = 0$, $\mu$, $\rho$.} \label{al:ADMM-CLIME}
	\KwOut{Sparse estimation of $P$.}
	{\bf Repeat until convergence:} \\
	\For{ $k=0, 1, \ldots$}{
		1. $P^{k+1} = {\rm shrink}(P^k - V^k, \mu)$. \\
		2. $U^{k+1} = \widehat G(\widehat D) P^{k+1} + Y^k$.\\
		3. $Z^{k+1} = Y^{k} + {\rm shrink}(I_p - U^{k+1}, \lambda)$. \\
		4. $Y^{k+1} = U^{k+1} - Z^{k+1}$. \\
		5. $V^{k+1} = \widehat G(\widehat D) (2Y^{k+1} - Y^k)(\rho \mu)$.
	}
\end{algorithm}

Comparison results are summarized in Tables \ref{t:DGP1-comparison}--\ref{t:DGP3-comparison}. Note that we also present the simulation results for our graphical local Whittle estimator (LW-GLASSO). All methods' performances improve with increasing sample size. The modified precision matrix estimator, labeled as LW-GLASSO (modified) in Tables \ref{t:DGP1-comparison}--\ref{t:DGP3-comparison} performs very similar to the classical LW-GLASSO. Our study does not reveal any significant differences between the two estimators. On the other hand, the local Whittle CLIME estimator tends to result in  higher TNR while the LW-GLASSO gives higher TPR. For example, in Table \ref{t:DGP3-comparison} when $p=60$ and $N=200$, LW-GLASSO gives .54 for TNR while LW-CLIME gives .999. It means that LW-GLASSO is good at finding a nonzero coefficient as nonzero, while LW-CLIME finds a zero coefficient as zero. 

\begin{landscape}
	\begin{table}[t!]
		\centering
		\small
		\begin{tabular}{ccccccccccc} \hline
			\multirow{2}[0]{*}{Methods} & \multirow{2}[0]{*}{Measures} & \multicolumn{3}{c}{$p=20$} & \multicolumn{3}{c}{$p=40$} & \multicolumn{3}{c}{$p=60$} \\
			&       & $N=200$ & $N=400$ & $N=1000$ & $N=200$ & $N=400$ & $N=1000$ & $N=200$ & $N=400$ & $N=1000$ \\ \hline \hline
    \multirow{9}[0]{*}{LW-GLASSO} & TPR   & 0.957 & 0.994 & 1.000 & 0.943 & 0.980 & 1.000 & 0.940 & 0.973 & 1.000 \\
& FNR   & 0.043 & 0.006 & 0.000 & 0.057 & 0.020 & 0.000 & 0.060 & 0.027 & 0.000 \\
& TNR   & 1.000 & 1.000 & 1.000 & 0.912 & 1.000 & 1.000 & 0.887 & 0.942 & 1.000 \\
& FPR   & 0.000 & 0.000 & 0.000 & 0.088 & 0.000 & 0.000 & 0.113 & 0.058 & 0.000 \\
& Precision & 1.000 & 0.998 & 1.000 & 0.910 & 0.998 & 0.999 & 0.776 & 0.941 & 0.998 \\
& Frobenius & 4.911 & 3.197 & 2.323 & 7.490 & 5.066 & 3.367 & 10.365 & 6.760 & 4.080 \\
& Spectral & 3.006 & 1.592 & 1.137 & 3.892 & 2.411 & 1.335 & 4.498 & 3.446 & 1.407 \\
& Rel-Frobenius & 0.184 & 0.209 & 0.258 & 0.065 & 0.112 & 0.159 & 0.012 & 0.064 & 0.108 \\
& Rel-spectral & 0.167 & 0.177 & 0.238 & 0.053 & 0.103 & 0.139 & 0.007 & 0.063 & 0.089 \\ \hline
\multicolumn{1}{c}{\multirow{9}[0]{*}{\shortstack[c]{LW-GLASSO \\ (modified)}}} & TPR   & 0.960 & 0.998 & 1.000 & 0.964 & 0.996 & 1.000 & 0.965 & 0.995 & 1.000 \\
& FNR   & 0.040 & 0.002 & 0.000 & 0.036 & 0.004 & 0.000 & 0.035 & 0.005 & 0.000 \\
& TNR   & 1.000 & 1.000 & 1.000 & 0.632 & 0.999 & 1.000 & 0.547 & 0.788 & 1.000 \\
& FPR   & 0.000 & 0.000 & 0.000 & 0.368 & 0.001 & 0.000 & 0.453 & 0.212 & 0.000 \\
& Precision & 1.000 & 1.000 & 0.999 & 0.639 & 0.999 & 1.000 & 0.539 & 0.793 & 1.000 \\
& Frobenius & 4.870 & 3.137 & 2.323 & 8.101 & 4.575 & 3.365 & 11.115 & 5.857 & 4.076 \\
& Spectral & 2.973 & 1.559 & 1.139 & 4.352 & 1.847 & 1.335 & 5.189 & 2.269 & 1.401 \\
& Rel-Frobenius & 0.183 & 0.207 & 0.258 & 0.072 & 0.103 & 0.159 & 0.014 & 0.056 & 0.108 \\
& Rel-spectral & 0.166 & 0.174 & 0.238 & 0.062 & 0.080 & 0.139 & 0.009 & 0.042 & 0.089 \\ \hline
\multirow{9}[0]{*}{LW-CLIME} & TPR   & 0.975 & 0.998 & 1.000 & 0.864 & 0.995 & 1.000 & 0.864 & 0.984 & 1.000 \\
& FNR   & 0.025 & 0.002 & 0.000 & 0.136 & 0.005 & 0.000 & 0.136 & 0.016 & 0.000 \\
& TNR   & 0.996 & 0.994 & 0.998 & 1.000 & 0.995 & 0.997 & 1.000 & 0.996 & 0.997 \\
& FPR   & 0.004 & 0.006 & 0.002 & 0.000 & 0.005 & 0.003 & 0.000 & 0.004 & 0.003 \\
& Precision & 0.956 & 0.935 & 0.972 & 0.998 & 0.887 & 0.930 & 1.000 & 0.875 & 0.899 \\
& Frobenius & 4.595 & 3.348 & 2.388 & 10.145 & 5.056 & 3.535 & 12.358 & 6.494 & 4.400 \\
& Spectral & 2.561 & 1.728 & 1.183 & 4.963 & 2.242 & 1.465 & 5.044 & 2.775 & 1.648 \\
& Rel-Frobenius & 0.176 & 0.219 & 0.265 & 0.085 & 0.114 & 0.167 & 0.013 & 0.061 & 0.116 \\
& Rel-spectral & 0.151 & 0.192 & 0.248 & 0.066 & 0.095 & 0.153 & 0.007 & 0.051 & 0.105 \\ \hline
		\end{tabular}%
		\caption{Simulation results for the modified graphical local Whittle and CLIME estimators with DGP1.}
		\label{t:DGP1-comparison}%
	\end{table}%
	
\end{landscape}

\clearpage

\begin{landscape}
	\begin{table}[t!]
		\centering
		\small
		\begin{tabular}{ccccccccccc} \hline
	\multirow{2}[0]{*}{Methods} & \multirow{2}[0]{*}{Measures} & \multicolumn{3}{c}{$p=20$} & \multicolumn{3}{c}{$p=40$} & \multicolumn{3}{c}{$p=60$} \\
	&       & $N=200$ & $N=400$ & $N=1000$ & $N=200$ & $N=400$ & $N=1000$ & $N=200$ & $N=400$ & $N=1000$ \\ \hline \hline
	
	    \multirow{9}[0]{*}{LW-GLASSO} & TPR   & 0.935 & 0.989 & 1.000 & 0.956 & 0.988 & 1.000 & 0.969 & 0.985 & 1.000 \\
	& FNR   & 0.065 & 0.011 & 0.000 & 0.044 & 0.012 & 0.000 & 0.031 & 0.015 & 0.000 \\
	& TNR   & 1.000 & 1.000 & 1.000 & 0.782 & 1.000 & 1.000 & 0.556 & 0.958 & 1.000 \\
	& FPR   & 0.000 & 0.000 & 0.000 & 0.218 & 0.000 & 0.000 & 0.444 & 0.042 & 0.000 \\
	& Precision & 1.000 & 1.000 & 0.999 & 0.787 & 1.000 & 0.999 & 0.550 & 0.959 & 1.000 \\
	& Frobenius & 0.913 & 0.670 & 0.574 & 1.305 & 0.948 & 0.800 & 1.735 & 1.192 & 0.975 \\
	& Spectral & 0.586 & 0.366 & 0.325 & 0.660 & 0.414 & 0.354 & 0.775 & 0.495 & 0.370 \\
	& Rel-Frobenius & 0.219 & 0.273 & 0.386 & 0.070 & 0.134 & 0.236 & 0.014 & 0.072 & 0.163 \\
	& Rel-spectral & 0.217 & 0.259 & 0.426 & 0.058 & 0.114 & 0.236 & 0.009 & 0.059 & 0.152 \\
	\hline
	\multicolumn{1}{c}{\multirow{9}[0]{*}{\shortstack[c]{LW-GLASSO \\ (modified)}}} & TPR   & 0.928 & 0.989 & 1.000 & 0.938 & 0.988 & 1.000 & 0.940 & 0.985 & 1.000 \\
	& FNR   & 0.073 & 0.011 & 0.000 & 0.062 & 0.012 & 0.000 & 0.060 & 0.015 & 0.000 \\
	& TNR   & 1.000 & 1.000 & 1.000 & 0.796 & 1.000 & 1.000 & 0.661 & 0.953 & 1.000 \\
	& FPR   & 0.000 & 0.000 & 0.000 & 0.204 & 0.000 & 0.000 & 0.339 & 0.047 & 0.000 \\
	& Precision & 1.000 & 1.000 & 0.999 & 0.802 & 1.000 & 0.999 & 0.667 & 0.954 & 1.000 \\
	& Frobenius & 0.926 & 0.670 & 0.574 & 1.383 & 0.948 & 0.800 & 1.824 & 1.203 & 0.975 \\
	& Spectral & 0.604 & 0.366 & 0.326 & 0.728 & 0.415 & 0.354 & 0.832 & 0.511 & 0.370 \\
	& Rel-Frobenius & 0.224 & 0.273 & 0.386 & 0.075 & 0.134 & 0.236 & 0.014 & 0.073 & 0.163 \\
	& Rel-spectral & 0.224 & 0.259 & 0.426 & 0.065 & 0.114 & 0.236 & 0.008 & 0.060 & 0.152 \\
	\hline
	\multirow{9}[0]{*}{LW-CLIME} & TPR   & 0.951 & 0.991 & 1.000 & 0.957 & 0.997 & 0.999 & 0.954 & 0.996 & 1.000 \\
	& FNR   & 0.049 & 0.009 & 0.000 & 0.043 & 0.003 & 0.001 & 0.046 & 0.004 & 0.000 \\
	& TNR   & 1.000 & 1.000 & 0.999 & 0.999 & 0.998 & 1.000 & 0.999 & 0.999 & 1.000 \\
	& FPR   & 0.000 & 0.000 & 0.001 & 0.001 & 0.002 & 0.000 & 0.001 & 0.001 & 0.000 \\
	& Precision & 0.995 & 0.998 & 0.986 & 0.966 & 0.953 & 0.997 & 0.969 & 0.944 & 0.997 \\
	& Frobenius & 0.870 & 0.673 & 0.580 & 1.280 & 0.976 & 0.804 & 1.588 & 1.238 & 0.978 \\
	& Spectral & 0.526 & 0.368 & 0.328 & 0.660 & 0.419 & 0.357 & 0.701 & 0.470 & 0.371 \\
	& Rel-Frobenius & 0.209 & 0.273 & 0.388 & 0.068 & 0.138 & 0.237 & 0.011 & 0.075 & 0.164 \\
	& Rel-spectral & 0.194 & 0.259 & 0.428 & 0.055 & 0.115 & 0.238 & 0.006 & 0.056 & 0.152 \\ \hline
		\end{tabular}%
		\caption{Simulation results for the modified graphical local Whittle and CLIME estimators with DGP2.}
	\label{t:DGP2-comparison}%
	
	\end{table}%
	
\end{landscape}

\clearpage

\begin{landscape}
	\begin{table}[t!]
		\centering
		\small
		\begin{tabular}{ccccccccccc} \hline
	\multirow{2}[0]{*}{Methods} & \multirow{2}[0]{*}{Measures} & \multicolumn{3}{c}{$p=20$} & \multicolumn{3}{c}{$p=40$} & \multicolumn{3}{c}{$p=60$} \\
	&       & $N=200$ & $N=400$ & $N=1000$ & $N=200$ & $N=400$ & $N=1000$ & $N=200$ & $N=400$ & $N=1000$ \\ \hline \hline
	
	    \multirow{9}[0]{*}{LW-GLASSO} & TPR   & 0.350 & 0.380 & 0.659 & 0.534 & 0.361 & 0.599 & 0.648 & 0.410 & 0.539 \\
	& FNR   & 0.650 & 0.620 & 0.341 & 0.466 & 0.639 & 0.401 & 0.352 & 0.590 & 0.461 \\
	& TNR   & 1.000 & 1.000 & 0.950 & 0.714 & 1.000 & 0.987 & 0.540 & 0.915 & 0.996 \\
	& FPR   & 0.000 & 0.000 & 0.050 & 0.286 & 0.000 & 0.013 & 0.460 & 0.085 & 0.004 \\
	& Precision & 0.999 & 0.997 & 0.757 & 0.735 & 0.999 & 0.815 & 0.559 & 0.919 & 0.876 \\
	& Frobenius & 1.547 & 1.489 & 1.201 & 2.204 & 2.122 & 1.772 & 2.727 & 2.611 & 2.281 \\
	& Spectral & 0.608 & 0.582 & 0.489 & 0.638 & 0.600 & 0.523 & 0.668 & 0.612 & 0.545 \\
	& Rel-Frobenius & 0.380 & 0.615 & 0.790 & 0.126 & 0.306 & 0.523 & 0.024 & 0.161 & 0.383 \\
	& Rel-spectral & 0.235 & 0.421 & 0.649 & 0.060 & 0.166 & 0.352 & 0.008 & 0.075 & 0.223 \\
	\hline
	\multicolumn{1}{c}{\multirow{9}[0]{*}{\shortstack[c]{LW-GLASSO \\ (modified)}}} & TPR   & 0.349 & 0.380 & 0.645 & 0.511 & 0.361 & 0.618 & 0.591 & 0.411 & 0.523 \\
	& FNR   & 0.651 & 0.620 & 0.355 & 0.489 & 0.639 & 0.382 & 0.409 & 0.589 & 0.477 \\
	& TNR   & 1.000 & 1.000 & 0.961 & 0.748 & 1.000 & 0.986 & 0.625 & 0.915 & 0.997 \\
	& FPR   & 0.000 & 0.000 & 0.039 & 0.252 & 0.000 & 0.014 & 0.375 & 0.085 & 0.003 \\
	& Precision & 1.000 & 0.996 & 0.786 & 0.766 & 0.999 & 0.804 & 0.642 & 0.919 & 0.900 \\
	& Frobenius & 1.548 & 1.491 & 1.222 & 2.202 & 2.124 & 1.752 & 2.710 & 2.611 & 2.311 \\
	& Spectral & 0.609 & 0.584 & 0.501 & 0.638 & 0.601 & 0.525 & 0.654 & 0.612 & 0.556 \\
	& Rel-Frobenius & 0.380 & 0.615 & 0.801 & 0.125 & 0.306 & 0.520 & 0.022 & 0.161 & 0.388 \\
	& Rel-spectral & 0.236 & 0.423 & 0.669 & 0.059 & 0.166 & 0.354 & 0.007 & 0.075 & 0.229 \\
	\hline
	\multirow{9}[0]{*}{LW-CLIME} & TPR   & 0.373 & 0.353 & 0.350 & 0.384 & 0.379 & 0.370 & 0.388 & 0.391 & 0.377 \\
	& FNR   & 0.627 & 0.647 & 0.650 & 0.616 & 0.621 & 0.630 & 0.612 & 0.609 & 0.623 \\
	& TNR   & 0.999 & 1.000 & 1.000 & 0.999 & 1.000 & 1.000 & 0.999 & 0.999 & 1.000 \\
	& FPR   & 0.001 & 0.000 & 0.000 & 0.001 & 0.000 & 0.000 & 0.001 & 0.001 & 0.000 \\
	& Precision & 0.992 & 0.999 & 1.000 & 0.962 & 0.999 & 0.998 & 0.938 & 0.980 & 0.999 \\
	& Frobenius & 1.540 & 1.509 & 1.484 & 2.197 & 2.107 & 2.074 & 2.714 & 2.578 & 2.528 \\
	& Spectral & 0.607 & 0.592 & 0.574 & 0.636 & 0.600 & 0.580 & 0.661 & 0.608 & 0.584 \\
	& Rel-Frobenius & 0.379 & 0.622 & 0.979 & 0.120 & 0.303 & 0.614 & 0.019 & 0.158 & 0.425 \\
	& Rel-spectral & 0.235 & 0.428 & 0.771 & 0.056 & 0.166 & 0.391 & 0.006 & 0.073 & 0.241 \\
	 \hline
	\end{tabular}%
	\caption{Simulation results for the modified graphical local Whittle and CLIME estimators with DGP3.}
	\label{t:DGP3-comparison}%
	\end{table}%
	
\end{landscape}
\subsection{Comparison to existing methods} \label{appF:comparisonSRD}
\label{se:F5}
In this section we emphasize the relevance of considering estimators which account for strong temporal correlation beyond short-range dependence. 
\cite{Sun2018:LargeSpectral} consider possibly high-dimensional time series under short-range dependence ($D_{0} \equiv 0$) and study estimation of the spectral density and its inverse. See also Section \ref{se:comparison} for a detailed comparison.
We conduct a simulation study with synthetic, long-range dependent data and apply the estimators proposed in \cite{Sun2018:LargeSpectral}. 
More precisely, we simulated long-range dependent data based on thDGP3 and DGP2 in Sections \ref{s:sub2Sim} and \ref{s:sub1Sim} following the sparsity pattern in Figure \ref{f:graphical-DGP}. We then applied the graphical LASSO and thresholded long-run variance estimators in \cite{Sun2018:LargeSpectral} to recover the sparsity patterns. As it can be seen from Figure  \ref{f:sparsity-simulation}, the estimators for short-range dependent models (right column) perform poorly and fail to find the true underlying zero coefficients. More detailed performance measures are provided in Table \ref{t:SRDmodels} indicating that the estimators for short-range dependent models perform poorly under long-range dependence.

\begin{figure}[t!]
	\centering
	\vspace{-1 mm}
	\includegraphics[width=4in, height=1.8 in]{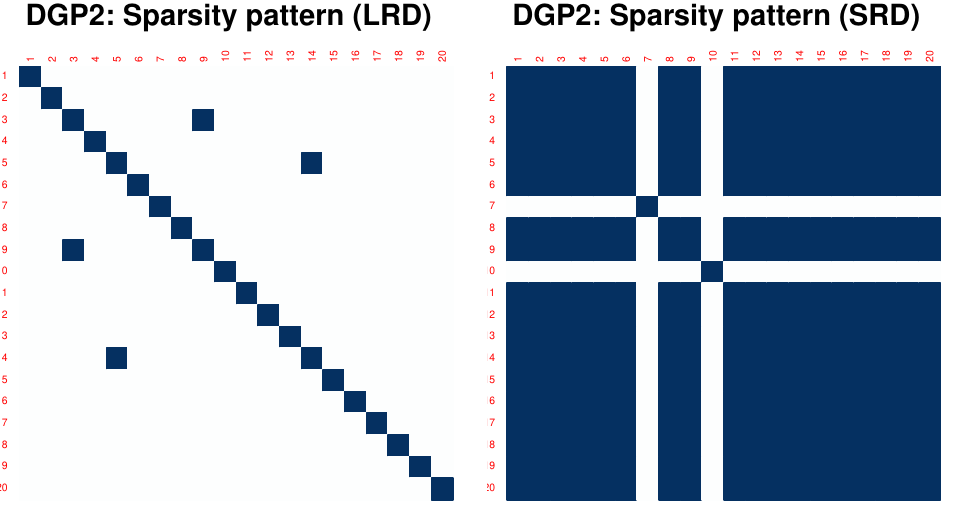}
	\includegraphics[width=4 in, height=1.8 in]{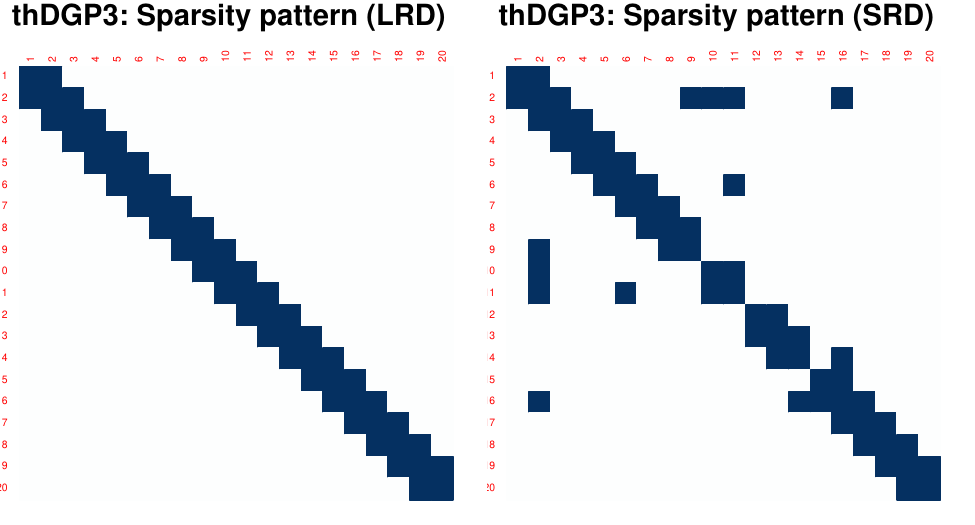}
	\vspace{-2 mm}
	\caption{Sparsity pattern when SRD model is used for LRD simulated data.} \label{f:sparsity-simulation}
\end{figure}

\begin{landscape}
\begin{table}[t!]
	\small
	\centering

	\begin{tabular}{cccccccccccc} \hline
		\multirow{2}[0]{*}{Models} & \multirow{2}[0]{*}{Methods} & \multirow{2}[0]{*}{Measures} & \multicolumn{3}{c}{$p=20$} & \multicolumn{3}{c}{$p=40$} & \multicolumn{3}{c}{$p=60$} \\
		&       &       & $N=200$ & $N=400$ & $N=1000$ & $N=200$ & $N=400$ & $N=1000$ & $N=200$ & $N=400$ & $N=1000$ \\ \hline \hline
		\multirow{10}[0]{*}{DGP2} & \multirow{5}[0]{*}{
			 \shortstack[c]{LW-GLASSO \\ (LRD)}} & TPR   & 0.969 & 0.998 & 1.000 & 0.980 & 0.998 & 1.000 & 0.989 & 0.996 & 1.000 \\
		&       & FNR   & 0.031 & 0.002 & 0.000 & 0.020 & 0.003 & 0.000 & 0.011 & 0.004 & 0.000 \\
		&       & TNR   & 1.000 & 0.999 & 0.998 & 0.654 & 1.000 & 0.999 & 0.325 & 0.940 & 1.000 \\
		&       & FPR   & 0.000 & 0.001 & 0.002 & 0.346 & 0.000 & 0.001 & 0.675 & 0.060 & 0.000 \\
		&       & Precision & 0.997 & 0.990 & 0.980 & 0.661 & 0.991 & 0.985 & 0.316 & 0.936 & 0.990 \\ 
		& \multicolumn{1}{c}{\multirow{5}[0]{*}{ \shortstack[c]{GLASSO\\(SRD)}}} & TPR   & 1.000 & 1.000 & 1.000 & 1.000 & 1.000 & 1.000 & 1.000 & 1.000 & 1.000 \\
		&       & FNR   & 0.000 & 0.000 & 0.000 & 0.000 & 0.000 & 0.000 & 0.000 & 0.000 & 0.000 \\
		&       & TNR   & 0.117 & 0.284 & 0.559 & 0.006 & 0.108 & 0.348 & 0.001 & 0.013 & 0.259 \\
		&       & FPR   & 0.883 & 0.716 & 0.441 & 0.994 & 0.892 & 0.652 & 0.999 & 0.987 & 0.741 \\
		&       & Precision & 0.068 & 0.085 & 0.138 & 0.030 & 0.034 & 0.046 & 0.020 & 0.020 & 0.027 \\ \hline
		\multirow{10}[0]{*}{thDGP3} & \multicolumn{1}{c}{\multirow{5}[0]{*}{ \shortstack[c]{Thresholding\\(LRD)}}} & TPR   & 0.938 & 0.996 & 1.000 & 0.884 & 0.992 & 1.000 & 0.843 & 0.987 & 1.000 \\
		&       & FNR   & 0.062 & 0.004 & 0.000 & 0.116 & 0.008 & 0.000 & 0.157 & 0.013 & 0.000 \\
		&       & TNR   & 0.981 & 0.981 & 0.989 & 0.994 & 0.993 & 0.997 & 0.997 & 0.996 & 0.998 \\
		&       & FPR   & 0.019 & 0.019 & 0.011 & 0.006 & 0.007 & 0.003 & 0.003 & 0.004 & 0.002 \\
		&       & Precision & 0.897 & 0.906 & 0.943 & 0.927 & 0.921 & 0.969 & 0.942 & 0.935 & 0.956 \\
		& \multicolumn{1}{c}{\multirow{5}[0]{*}{ \shortstack[c]{Thresholding\\(SRD)}}} & TPR   & 0.586 & 0.783 & 0.979 & 0.453 & 0.584 & 0.882 & 0.412 & 0.495 & 0.802 \\
		&       & FNR   & 0.414 & 0.217 & 0.021 & 0.547 & 0.416 & 0.118 & 0.588 & 0.505 & 0.198 \\
		&       & TNR   & 0.989 & 0.983 & 0.971 & 0.997 & 0.995 & 0.990 & 0.998 & 0.998 & 0.995 \\
		&       & FPR   & 0.011 & 0.017 & 0.029 & 0.003 & 0.005 & 0.010 & 0.002 & 0.002 & 0.005 \\
		&       & Precision & 0.914 & 0.898 & 0.858 & 0.924 & 0.916 & 0.882 & 0.935 & 0.940 & 0.896 \\ \hline
	\end{tabular}%
	\caption{Performance measures when SRD models are used for LRD models.} \label{t:SRDmodels}%
\end{table}%

\end{landscape}

\small
\bibliographystyle{plainnat}
\bibliography{LW_largedim_bib}

\flushleft
\begin{tabular}{lp{1 in}l}
Changryong Baek & & Marie-Christine D\"uker\\
Dept.\ of Statistics & & Dept.\ of Statistics and Data Science\\
Sungkyunkwan University & &  Cornell University\\
25-2, Sungkyunkwan-ro, Jongno-gu & & 129 Garden Ave, Comstock Hall\\
Seoul, 110-745, Korea & & Ithaca, NY 14850, USA\\
{\it crbaek@skku.edu} & & {\it duker@cornell.edu}\\
\end{tabular}

\flushleft
\begin{tabular}{lp{1 in}l}
Vladas Pipiras\\
Dept.\ of Statistics and Operations Research\\
UNC at Chapel Hill\\
CB\#3260, Hanes Hall\\
Chapel Hill, NC 27599, USA\\
{\it pipiras@email.unc.edu} \\
\end{tabular}

\end{document}